\let\CMcal\mathcal
\newenvironment*{eqdiagr}{%
\begin{equation}%
}{%
  \end{equation}\ignorespacesafterend%
}
\theoremstyle{plain}
\newtheorem{lemma}[subsection]{Lemma}
\newtheorem{proposition}[subsection]{Proposition}
\newtheorem{theorem}[subsection]{Theorem}
\newtheorem{corollary}[subsection]{Corollary}
\theoremstyle{remark}
\newtheorem{example}[subsection]{Example}
\newtheorem{remark}[subsection]{Remark}
\theoremstyle{definition}
\newtheorem{definition}[subsection]{Definition}
\newcommand*{\comment}[4][]{%
  \@ifundefined{c@#2}{\newcounter{#2}}{}%
  \begingroup
  \ifblank{#1}{%
    \newcommand*{\header}{\csname the#2\endcsname}%
  }{%
    \newcommand*{\header}{(#1\csname the#2\endcsname)}%
  }%
  \addtocounter{#2}{1}%
  \textnormal{\textcolor{#3}{\header.[#4\@]}}%
  \endgroup
}
\newlist{tfae}{enumerate}{1}
\setlist[tfae]{label=\textnormal{(\roman*)}}
\newcommand*{\CondName}[1]{\textrm{(#1)}}
\newcommand*{\LACC}{\CondName{LACC}}
\newcommand{\defn}{\textbf}
\newcommand{\del}{\partial}
\newcommand{\comp}{\raisebox{0.2mm}{\ensuremath{\scriptstyle{\circ}}}}
\newcommand*{\id}[1]{1_{#1}}
\newcommand*{\from}{\colon}
\newcommand{\join}{\vee}
\newcommand{\meet}{\wedge}
\newcommand*{\biprod}{\oplus}
\newcommand{\tensor}{\otimes}
\newcommand{\subobj}{\leq}
\newcommand{\supobj}{\geq}
\newcommand{\links}{\lgroup}
\newcommand{\rechts}{\rgroup}
\newcommand{\To}{\Rightarrow}
\newcommand{\point}{\rightleftarrows}
\newcommand{\cosmash}{\ensuremath{\diamond}}
\newcommand{\cre}{\ensuremath{\mathrm{cr}}}
\newcommand{\talf}{\text{\reflectbox{\(\flat\)}}}
\renewcommand{\H}{\ensuremath{\mathrm{H}}}
\renewcommand{\Im}{\ensuremath{\mathrm{Im}}}
\DeclareMathOperator{\gl}{\mathfrak{gl}}
\DeclareMathOperator{\Aut}{Aut}
\DeclareMathOperator{\Der}{Der}
\DeclareMathOperator{\End}{End}
\DeclareMathOperator{\Hom}{Hom}
\DeclareMathOperator{\ev}{ev}
\DeclareMathOperator{\Ker}{Ker}
\DeclareMathOperator{\Coker}{Coker}
\DeclareMathOperator{\ab}{Ab}
\DeclareMathOperator{\gp}{Gp}
\DeclareMathOperator{\nIL}{NIL}
\newcommand{\nil}{\mathrm{Nil}}
\newcommand{\im}{\mathrm{im}}
\DeclareMathOperator{\nilunit}{nil}
\DeclareMathOperator{\tw}{tw}
\renewcommand{\S}{\ensuremath{\mathfrak{S}}}
\newcommand{\DefEq}{\coloneq} 
\newcommand{\noproof}{\hfil\qed}
\newcommand{\normal}{\mathrel{\lhd}}
\newcommand{\y}{\ensuremath{\mathbf{y}}}
\newcommand{\ctgry}[1]{\mathcal{#1}}
\newcommand{\A}{\ensuremath{\ctgry{A}}}
\newcommand{\B}{\ensuremath{\ctgry{B}}}
\newcommand{\C}{\ensuremath{\ctgry{C}}}
\newcommand{\D}{\ensuremath{\ctgry{D}}}
\newcommand{\V}{\ensuremath{\ctgry{V}}}
\newcommand{\X}{\ensuremath{\ctgry{X}}}
\newcommand{\Y}{\ensuremath{\ctgry{Y}}}
\renewcommand{\P}{\ensuremath{\CMcal{P}}}
\newcommand{\T}{\ensuremath{\CMcal{T}}}
\newcommand{\Z}{\ensuremath{\mathbb{Z}}}
\newcommand{\Ab}{\ensuremath{\mathsf{Ab}}}
\newcommand{\Alg}{\ensuremath{\mathsf{Alg}}}
\newcommand{\Algg}[1]{\Alg\text{-}#1}
\newcommand{\PAlg}{\ensuremath{\Algg\P}}
\newcommand{\CAlg}{\ensuremath{\mathsf{CAlg}}}
\newcommand{\CRng}{\ensuremath{\mathsf{CRng}}}
\newcommand{\CURng}{\ensuremath{\mathsf{CRing}}}
\newcommand{\CUAlg}{\ensuremath{\mathsf{UCAlg}}}
\newcommand{\CStarAlg}{\ensuremath{\text{\(\mathsf{C}^{*}\)-\(\mathsf{Alg}\)}}}
\newcommand{\Ext}{\ensuremath{\mathsf{Ext}}}
\newcommand{\Gp}{\ensuremath{\mathsf{Gp}}}
\newcommand{\HSLat}{\ensuremath{\mathsf{HSLat}}}
\newcommand{\Lie}{\ensuremath{\mathsf{Lie}}}
\newcommand{\Leib}{\ensuremath{\mathsf{Leib}}}
\newcommand{\Loop}{\ensuremath{\mathsf{Loop}}}
\newcommand{\Mod}{\ensuremath{\mathsf{Mod}}}
\newcommand{\Nil}{\ensuremath{\mathsf{Nil}}}
\newcommand{\NIL}{\ensuremath{\mathsf{NIL}}}
\newcommand{\Pt}{\ensuremath{\mathsf{Pt}}}
\newcommand{\RG}{\ensuremath{\mathsf{RG}}}
\newcommand{\Sh}{\ensuremath{\mathsf{Sh}}}
\newcommand{\XMod}{\ensuremath{\mathsf{XMod}}}
\DeclarePairedDelimiterX{\IndArr}[1]{(}{)}{
  \renewcommand*{\and}{,}
  #1
}
\DeclarePairedDelimiterX{\CoindArr}[1]{\langle}{\rangle}{
  \renewcommand*{\and}{,}
  #1
}
\newcommand*{\DIndArr}[1]{
  \begingroup
  \renewcommand*{\and}{\\}
  \left\links\begin{smallmatrix}
    #1
  \end{smallmatrix}\right\rechts
  \endgroup
}
\newcommand*{\DCoindArr}[1]{
  \begingroup
  \renewcommand*{\and}{&}
  \left\links\begin{smallmatrix}
    #1
  \end{smallmatrix}\right\rechts
  \endgroup
}
\newcommand*{\mono}{\rightarrowtail}
\def\pullback{
 \ar@{-}[]+R+<6pt,-1pt>;[]+RD+<6pt,-6pt>%
 \ar@{-}[]+D+<1pt,-6pt>;[]+RD+<6pt,-6pt>}
\def\pushout{%
 \ar@{-}[]+L+<-6pt,1pt>;[]+LU+<-6pt,6pt>%
 \ar@{-}[]+U+<-1pt,6pt>;[]+LU+<-6pt,6pt>}
\def\splitpullback{%
 \ar@{-}[]+R+<6pt,-.51ex>;[]+RD+<6pt,-6pt>%
 \ar@{-}[]+D+<.51ex,-6pt>;[]+RD+<6pt,-6pt>}
\def\skewpullback{%
 \ar@{-}[]+LD+<-6pt,-6pt>;[]+LDD+<-6pt,-15.5pt>%
 \ar@{-}[]+D+<-1pt,-6pt>;[]+LDD+<-6pt,-15.5pt>}
\DeclareMathDelimiter\AMSlrcorner{\mathclose}{AMSa}{"79}{AMSa}{"79}
\newcommand*{\pb}[2][.2]{
  \arrow[#2, to path={
    let
    \p0 = ($(\tikztotarget.center)-(\tikztostart.center)$)
    in
    (\tikztostart.center) -- +(-#1*\y0,#1*\y0) node{\Large{\(\AMSlrcorner\)}}
  }, phantom]
}
\begin{document}

\title[Intrinsic tensor products]{Intrinsic tensor products and a Ganea-type extension of the five-term exact sequence}

\dedicatory{Dedicated to Trueman MacHenry}

\author{Bo Shan Deval}
\author{Manfred Hartl}
\author{Tim Van~der Linden}

\email{bo.deval@uclouvain.be}
\email{mhartl@gmx.fr}
\email{tim.vanderlinden@uclouvain.be}

\address[Manfred Hartl]{Northeast Normal University, Changchun, Jilin, China}

\address[Manfred Hartl]{Universit\'e Polytechnique Hauts-de-France, CERAMATH and FR CNRS 2956,   F-59313~Valenciennes, France}

\address[Manfred Hartl]{Mathematics \& Data Science, Vrije Universiteit Brussel, Pleinlaan 2, B--1050 Brussel, Belgium}

\address[Bo Shan Deval, Tim Van der Linden]{Institut de Recherche en Mathématique et Physique, Université catholique de Louvain, chemin du cyclotron 2 bte L7.01.02, B--1348 Louvain-la-Neuve, Belgium}

\address[Manfred Hartl, Tim Van der Linden]{Mathematics \& Data Science, Vrije Universiteit Brussel, Pleinlaan 2, B--1050 Brussel, Belgium}

\address[Tim Van der Linden]{CMUC\@, University of Coimbra, 3001--454 Coimbra, Portugal}

\thanks{The first author's research is supported by a grant of the Fund for Research Training in Industry and Agriculture (FRIA) of the Fonds de la Recherche Scientifique--FNRS\@.}
\thanks{The second author wishes to thank the Centro de Matem\'atica da Universidade de Coimbra, the Institut de Recherche en Math\'ematique et Physique, the Max-Planck-Institut f\"ur Ma\-the\-ma\-tik, the Chern Institute of Mathematics, the CINVESTAV, and the Departments of Mathematics of the Universities of La Rioja and S\~ao Paulo (supported by FAPESP, project 2015/07917-2) and of the North East Normal University for their kind hospitality and ideal working conditions provided during his stay at Coimbra, Louvain-la-Neuve, Bonn, Tianjin, Mexico City, Logro\~no, S\~ao Paulo and his employment at Changchun, respectively\@.}
\thanks{The third author is a Senior Research Associate of the Fonds de la Recherche Scientifique--FNRS\@. His research was partially supported by Centro de Matem\'atica da Universidade de Coimbra and by Funda\c c\~ao para a Ci\^encia e a Tecnologia (grant number SFRH/BPD/38797/2007). He began studying some of the preliminary material during a stay at York University in 2007 supported by Walter Tholen's Natural Sciences and Engineering Research Council of Canada Discovery Grant no.\ 501260. He wishes to thank the Max-Planck-Institut f\"ur Ma\-the\-ma\-tik and Universidad de Vigo for their kind hospitality during his stays at Bonn and at Pontevedra.}

\date{\today}

\subjclass[2020]{18E10, 18E13, 18G50, 18M05, 18M70, 20J05}

\keywords{Algebra over a nilpotent operad; algebraic coherence; central extension; crossed module; cosmash product; Ganea term; homology; internal action; Kronecker sum; local algebraic cartesian closedness; nilpotent object; non-abelian tensor product; representation; semi-abelian category}

\begin{abstract}
  We define an intrinsic symmetric bi-right-exact (and for varieties, bi-cocontinuous) bilinear product on objects of a semi-abelian category, constructed as the cosmash product in the two-nilpotent reflection. When applied to abelian objects, this recovers classical tensor products in many cases. A recognition theorem states that any symmetric bi-cocontinuous bifunctor on an abelian variety of algebras is realised as the bilinear product in the variety of algebras over a suitable \(2\)-nilpotent symmetric
  operad in the monoidal category of abelian groups. For abelian groups replaced with any commutative ring, the bilinear product of algebras over such an operad is associative as long as the only unary operations are given by multiplication with scalars, but not in general.

  This relies on a right-exactness theorem for cross-effects of bifunctors, and consequently for cosmash products. We develop basic properties, compare the bilinear product to the Brown--Loday non-abelian tensor product, and prove a categorical version of Ganea's six-term exact homology sequence. We further characterise abelian extensions via internal action cores, obtaining explicit descriptions of bilinear products in categories of representations; in particular, the bilinear product of the associated Beck modules generalises the classical tensor product of representations for groups and Lie algebras.
\end{abstract}

\maketitle

\section*{Introduction}\label{Section Intro}
In a pointed variety of universal algebras \(\V\), let the expression \(t(x_1,\dots, x_m)\) denote a term with distinct variables \(x_1\), \dots, \(x_m\). Let \(0\) denote the unique constant in the theory of \(\V\). In particular, this implies that \(t(0,\dots, 0)=0\). We say that \(t(x_1,\dots, x_m)\) is a \defn{commutator term} if \(t(x_1,\dots, x_m)=0\) whenever \(x_i=0\), for some \(1\leq i\leq m\). For instance, in the theory of groups (where the constant is denoted \(1\)), we may take \(t(x,y)\coloneq [x,y]=xyx^{-1}y^{-1}\) because  \(t(x,1)=1=t(1,y)\). The term \(u(x,y,z)\coloneq [x,[y,z]]\) is another commutator term. It is well known that in the variety of groups, any commutator term can be written as a product of such basic commutator terms; and in fact, reciprocally, many of the ``usual'' commutators naturally occurring in pointed varieties of universal algebras may indeed be characterised by means of abstract commutator terms in the above sense. In the case of commutative associative algebras, for instance, \(v(x,y)\coloneq xy\) is a commutator term, because \(0y=0=0x\). Likewise, so is \(w(x,y,z)\coloneq [x,[y,z]]\) in the case of Lie algebras. In the context of loops, the associator \(a(x,y,z)\coloneq (xy\cdot z)/(x\cdot yz)\) is an example of a somewhat different kind. Commutator terms generate commutator objects: for example, the \defn{commutator subgroup} \([X,X]\coloneq {\langle[x,y]\mid \text{\(x\), \(y\in X\)}\rangle}\) of a group \(X\) vanishes if and only if \(X\) is an abelian group.

What interests us here is the remarkable observation that, in the seemingly quite remote (abelian!) situation of a tensor product \(A\tensor_\Z B\) of two abelian groups, the pure tensors \(a\tensor b\) behave precisely like this. Not only do we have that \(a\tensor 0=0=0\tensor b\) for all \(a\in A\) and \(b\in B\); it turns out that the entire tensor product \(A\tensor_\Z B\) may be recovered as a commutator object \emph{in the variety of nil-\(2\) groups}---as was first proved by MacHenry in~\cite{MacHenry}.

The aim of this article is to explore the idea of viewing tensor products as commutators in a two-nilpotent category from a categorical perspective, using it to prove some fundamental general results in homological algebra involving tensors, which so far had remained out of reach of categorical algebra, relying on tools borrowed from abstract polynomial functor calculus and certain results in this framework we are adding to it here. As we shall see, a general version of the nilpotency condition that occurs in the case of groups is what extends the mere preservation of zeroes in both variables of the product to \emph{bilinearity}. A key tool here is the concept of a \emph{cosmash product}.

\subsection*{Cosmash products}
In the article~\cite{Smash} where \emph{smash products} are studied in a general, categorical setting, Carboni and Janelidze remark that for algebraic objects, the dual concept is in fact far more interesting. Indeed, unlike in topology, smash products in algebraic categories tend to be trivial, while cosmash products turn out to have many uses, the limits of which are still unknown today. It is already well established that the cosmash product of two objects in a semi-abelian category~\cite{Janelidze-Marki-Tholen} may serve as their \emph{formal commutator}~\cite{MM-NC} in order to capture aspects of Higgins's approach to commutator theory~\cite{Higgins} in the context of varieties of \(\Omega\)-groups, as well as a ``non-monadic'' view on internal object actions~\cite{Actions} and internal crossed modules~\cite{Janelidze,HVdL}. Yet, ever since its introduction in~\cite{Smash}, it has also been clear that certain tensor products appear as a cosmash product---for instance, the tensor product of commutative rings does.

This naturally leads to the question of how to capture other tensor products occurring ``in nature'' as a cosmash product. Assuming, for instance, that the classical tensor product of modules over a commutative ring is a cosmash product, then what we must ask ourselves is: \emph{Where?} or, more precisely: \emph{In which category?} Answering this question is one of our two main goals\footnote{Actually, this particular instance of the problem has a simple answer, as explained in~\cite{Smash}---see Subsection~\ref{Commutative algebras}.}. In particular, motivated by the fundamental example of the tensor product of abelian groups, we obtain our first main result: a recognition theorem (Theorem~\ref{Recognition Theorem}) showing that any symmetric bi-cocontinuous product on an abelian variety indeed is a cosmash product in a canonical (actually two-nilpotent) semi-abelian variety containing the given abelian variety as its abelian core. (In fact, it is even possible to capture \emph{all} two-nilpotent semi-abelian varieties solving this problem, but this will be presented elsewhere.)

Our second main goal, to which we devote a large portion of the text, is to answer the converse question: given a (semi-abelian) category \(\X\), does it admit an \emph{intrinsic symmetric bilinear product} playing the multiple roles for  \(\X\) which the tensor product of abelian groups plays in the theory of groups? Indeed, such a bilinear product can be constructed in two different ways, each of which proves to be useful depending on the context, but both of which again rely on the notion of cosmash product---as is made explicit in sections~\ref{Section Revision} and~\ref{Section Two-Nilpotency} below. Either way, it is an object obtained by combining certain limits and colimits.

We shall, moreover, explain how this new categorical-algebraic viewpoint on tensor products leads to results in (co)homology theory---of which we present an instance here, that will be substantially extended in subsequent work~\cite{D-H-VdL26}. The example of such a result developed here is a categorical version of \emph{Ganea's Theorem} in low-dimensional homology. In~\cite{Ganea}, Ganea proved that any central extension of groups
\begin{equation*}
  \xymatrix{0 \ar[r] & K \ar@{{ |>}->}[r] & B \ar@{-{ >>}}[r] & A \ar[r] & 0}
\end{equation*}
induces a six-term exact sequence
\[
  {\xymatrix{K\tensor_{\Z} \frac{B}{[B,B]} \ar[r] & \H_{2}B \ar[r] & \H_{2}A \ar[r] & K \ar[r] & \H_{1}B \ar@{-{ >>}}[r] & \H_{1}A \ar[r] & 0}}
\]
in integral homology. Later, a number of authors have considered versions of this theorem in other categories~\cite{Lue:Ganea, MR897010, Casas:CELA, CP, Pira:Ganea, Casas:Ganea, AriasLadra}, which naturally leads to the question of whether perhaps all those different versions are special cases of one categorical result. Although a categorical-algebraic interpretation of the right hand side of the sequence (starting from \(\H_{2}B\)) has been available for quite some time~\cite{EverVdL1} and this five-term exact sequence may in fact be seen as the tail of a long exact homology sequence~\cite{Tomasthesis, GVdL2}, until recently we could provide no such interpretation for the leftmost term~\(K\tensor_{\Z}({B}/{[B,B]})\). This is where our intrinsic approach to tensor products, defined in categorical-algebraic terms as a certain quotient of a (formal) commutator, helps: it allows us to prove a version of Ganea's Theorem for semi-abelian categories \(\X\) where the key term~\(K\tensor_{\Z}({B}/{[B,B]})\) is generalised to our bilinear product~\(K\tensor B\) --- provided~\(\X\) satisfies a fairly mild additional condition called \emph{algebraic coherence}~\cite{acc}. This includes all known results mentioned above (except the one given in~\cite{Lue:Ganea}) but excludes loops, for example, and algebras with genuine ternary or higher operations (that is, such operations not decomposable into binary operations), in particular \(n\)-Leibniz algebras~\cite{Filippov,CaLoPi} and algebras arising as linearisations of certain structures on manifolds (loops, webs) such as Akivis algebras and Sabinin algebras, or from homotopy theory such as \(A_{\infty}\)- and \(E_{\infty}\)-algebras.

To provide further evidence of the potential of the bilinear product introduced here let us give a quick preview of further developments to come: under the before-mentioned hypothesis of algebraic coherence we construct in subsequent work a categorical generalisation of J.~H.~C.\ Whitehead's classical \(\Gamma\)-functor and a natural morphism \(\sigma\colon \Gamma(K) \to K\otimes K\) such that our six-term extension of the five-term exact sequence further extends by at least two more terms (namely the third homologies of~\(A\) and \(B\)) after replacing our Ganea term \(K\otimes B\) with the cokernel of the composite map \(\Gamma(K) \to K\otimes K \to K\otimes B\), just like in the case of groups, where according to~\cite{EH71} we find expressions such as the \(K\tensor_{\Z}({B}/{[B,B]})\) considered above.

Mimicking the case of groups, the cokernel of \(\sigma\) will be defined to be the \emph{exterior square} of \(K\) which turns out to be naturally isomorphic with \(\H_2(K)\). So again just like in groups, the second homology of abelian objects naturally identifies with their exterior square. Together with our eight-term extension of the five-term exact sequence this implies that a Künneth type formula holds for \(\H_1\) and \(\H_2\) of \(K\times B\), where our bilinear product plays the role of the classical tensor product of abelian groups in the classical Künneth formula for products of groups. This motivates the question of whether a Künneth type formula in general holds for semi-abelian homology, where our bilinear product and its first derived functor play the role of the classical tensor and torsion products of abelian groups in the case of groups, see~\cite{Hilton-Stammbach-2}.

Another application of the tensor product we may mention here is a natural pairing
\[
  \tfrac{Z_k(X)}{Z_{k-1}(X)} \tensor \tfrac{\gamma_n(X)}{\gamma_{n+1}(X)} \to \tfrac{Z_{k-n}(X)}{Z_{k-n-1}(X)}
\]
where \(0=Z_0(X)\subobj Z_1(X)\subobj Z_2(X)\subobj\cdots\) denotes the \emph{(Higgins) upper central series} defined for semi-abelian varieties in~\cite{CCC} and \(X=\gamma_{1}(X)\supobj \gamma_{2}(X)\supobj \cdots\) denotes the \emph{(Higgins) lower central series} (see Subsection \ref{The lower central series}), generalising\footnote{Let us point out from the start that this lower central series, being based on Higgins commutators rather than Huq commutators, is fundamentally different from the concepts considered in \cite{EverVdL1} and \cite{BeBou} as shown, for instance, by means of a counterexample in the category of Moufang loops provided in the latter article. As explained in detail in \cite{SVdL3}, the two types of nilpotency will coincide in the context of an algebraically coherent~\cite{acc} semi-abelian category. Both categorical renditions of this classical concept have their use; for our purposes here, the lower central series defined in terms of higher Higgins commutators is the appropriate one.} the well-known such pairing in nilpotent group theory~\cite{MR409661}. Moreover, thanks to the above pairing and its higher analogues obtained by the multilinearisation of the higher cosmash products, if we collect all of the underlying abelian groups of~the upper central quotients into a (negatively) graded abelian group, the latter becomes a \emph{graded module}
over the (positively) graded abelian group formed by the underlying abelian groups of the lower central quotients, viewed as an \emph{algebra} over the reduced symmetric \emph{operad in abelian groups associated with the variety}. All this is introduced in~\cite{CCC} and generalises the corresponding fact in group theory, where the associated operad is the Lie operad and the (negatively graded) sequence of the quotients of the upper central series is a representation of the graded Lie ring formed by the quotients of the lower central series. However, the analogous fact seems to be new in particular for the variety of loops where the associated operad is the Sabinin operad.

All these facts provide further evidence that our bilinear product in a given semi-abelian category indeed plays the exact same multiple roles which the tensor product plays in the category of groups.

Coming back to our original question
\begin{quote}
  \emph{Given some known tensor product in an abelian category, in which category may it be seen as a cosmash product?}
\end{quote}
let us now briefly sketch how we characterise tensor products as cosmash products in semi-abelian categories, and then lay out the structure of the article.

\subsection*{The tensor product of abelian groups}
First recall that, for two given objects \(X\) and \(Y\) of a semi-abelian category \(\X\), the \defn{cosmash product} \(X\cosmash Y\) is defined as in the short exact sequence
\[
  \xymatrix@=3em{ 0 \ar[r] & X\cosmash Y \ar@{{ |>}->}[r] &
  X+Y \ar@{-{ >>}}[r]^-{\left\links\begin{smallmatrix}1_{X} & 0 \\ 0 & 1_{Y}\end{smallmatrix}\right\rechts} & X\times Y \ar[r] & 0\text{.}}
\]
As an illustration, we consider abelian groups \(A\), \(B\) and their tensor product as \(\Z\)-modules \(A\tensor_{\Z}B\). Our aim is then to find a semi-abelian category \(\X\) in which there exists an isomorphism \(A\cosmash B\cong A\tensor_{\Z}B\).

If \(\X\) is the category \(\Mod_{\Z}\) of modules over the ring of integers \(\Z\) itself, then since the canonical comparison \(A+B\to A\times B\) is an isomorphism, the object \(A\cosmash B\) is zero, so it need not be isomorphic to \(A\tensor_{\Z}B\). That is to say, ``\(\X=\Mod_{\Z}\)'' is not the right answer to the above question. The same argument shows that also no other abelian category \(\X\) containing \(\Mod_{\Z}\) can be.

The idea is now to view \(\Mod_{\Z}\) as contained in a larger \emph{semi}-abelian category~\(\X\) of which it forms the \defn{abelian core} \(\Ab(\X)\), the full reflective subcategory determined by those objects which admit an internal abelian group structure. We can, for instance, choose \(\X=\Gp\), the category of (non-abelian) groups, which is semi-abelian and has abelian groups, so \(\Z\)-modules, for its abelian objects.

It is, however, well known that the cosmash product \(X\cosmash Y\) of two given groups \(X\) and \(Y\) is the subgroup of \(X+Y\) generated (in fact, freely generated) by the commutators \(xyx^{-1}y^{-1}\) of non-trivial elements \(x\), \(y\) of~\(X\) and~\(Y\), respectively---so again \(A\cosmash B\) is different from~\(A\tensor_{\Z}B\). In other words, also the answer ``\(\X=\Gp\)'' is wrong.

A correct answer, first discovered by MacHenry~\cite{MacHenry}, lies in between the categories \(\Mod_{\Z}=\Ab\) and \(\Gp\): it is the category \(\Nil_{2}(\Gp)\) of all groups of nilpotency class at most \(2\) (also called \defn{nil-\(2\) groups} or \defn{\(2\)-step nilpotent} groups). As shown in~\cite{MacHenry,Hartl-Vespa}, the cosmash product of two abelian groups \(A\) and~\(B\), computed in this category, will indeed be precisely \(A\tensor_{\Z}B\).

In fact, this answer is not unique: other semi-abelian categories~\(\X\) exist in which the tensor \(A\tensor_{\Z}B\) may be recovered as a cosmash product; examples will be given below. One characteristic feature those categories have in common is that they are \emph{two-nilpotent}.

\subsection*{Bilinear cosmash products and two-nilpotent categories}
Recall that the tensor product of modules over a commutative ring \(R\) is \emph{bilinear} in the sense that
\[
  (A\oplus B)\tensor_{R}C\cong (A\tensor_{R}C)\oplus (B\tensor_{R}C)
\]
which, by symmetry of \(\tensor_R\), also implies linearity with respect to the second variable, namely \(A \tensor_R (B \biprod C) \cong (A \tensor_R B) \biprod (A \tensor_R C)\).

So if we hope to recover the tensor as a cosmash product in some semi-abelian category \(\X\), the latter should behave similarly. This leads us to ask that the canonical morphism
\[
  (X+Y)\cosmash Z \to (X\cosmash Z)\times(Y\cosmash Z)
\]
is an isomorphism in~\(\X\). (We shall later explain why this choice of \(+\) and \(\times\) is the ``right'' one.) As it thus turns out~\cite{HVdL}, this happens precisely when the kernel in the natural short exact sequence
\[
  \xymatrix@=3em{ 0 \ar[r] & X\cosmash Y\cosmash Z \ar@{{ |>}->}[r] &
  (X+Y)\cosmash Z \ar@{-{ >>}}[r] & (X\cosmash Z)\times(Y\cosmash Z) \ar[r] & 0\text{,}}
\]
which is nothing but the \defn{ternary cosmash product} of \(X\), \(Y\) and \(Z\) in \(\X\), vanishes. (For the original definition, which is symmetric in \(X\), \(Y\) and \(Z\), see Subsection~\ref{Binary and ternary cosmash products}). A semi-abelian category which satisfies this condition is called \defn{two-nilpotent}, for the following reason.

Given three subobjects \(K\), \(L\), \(M\leq X\) represented by monomorphisms \(k\), \(l\) and~\(m\), according to~\cite{Actions,HVdL} the \defn{ternary commutator} \([K,L,M]\leq X\) is defined to be the image of the composite
\[
  \xymatrix@=3em{K\cosmash L\cosmash M \ar@{{ |>}->}[r]^-{\iota_{K,L,M}} & K+L+M \ar[r]^-{\DCoindArr{k \and l \and m}} & X\text{.}}
\]
We shall see that all ternary commutators in \(\X\) vanish if and only if every object~\(X\) of \(\X\) is \defn{two-nilpotent}, which means that \([X,X,X]=0\)---see Subsection~\ref{Subsec Two-nilpotent}.

In many categories (all \emph{Orzech categories of interest}~\cite{Orzech}, for instance, as demonstrated in~\cite{acc}; see also~\cite{SVdL3}) this ternary commutator coincides with the repeated binary commutator \([X,[X,X]]\) one expects it to be, so that in those cases two-nilpotency is a familiar concept. In the category of groups, for instance, \([X,X,X]=0\) if and only if \(X\) is a group of nilpotency class at most two. On the other hand, examples exist of semi-abelian categories in which the two concepts are different---the category of loops, for instance~\cite{Mostovoy1,M-PI-SH,SVdL3}---and in those categories the right one for our purposes is \([X,X,X]\).

In summary, we need cosmash products to be bilinear, and we can only expect this to happen in semi-abelian categories which are two-nilpotent. This at once leads to the following procedure: given an abelian category \(\A\), let \(\X\) be a semi-abelian category such that \(\A=\Ab(\X)\). Then \(\A\subseteq \Nil_{2}(\X)\subseteq \X\) where \(\Nil_{2}(\X)\) is the \defn{two-nilpotent core} of \(\X\), the full reflective subcategory determined by the two-nilpotent objects. Given objects \(M\) and \(N\) in \(\A\), the cosmash product \(M\cosmash N\) in \(\Nil_{2}(\X)\) is now a good candidate for a tensor product of \(M\) and~\(N\).

\subsection*{The bilinear product on a semi-abelian category}
Let
\[
  \nil_{2}\colon \X\to \Nil_{2}(\X)\colon X\mapsto X/[X,X,X]
\]
denote the left adjoint to the inclusion functor. Given objects \(X\) and \(Y\) of \(\X\), we shall write
\[
  X\tensor_{\X}Y\DefEq \nil_{2}(X)\cosmash_2 \nil_{2}(Y)
\]
for the cosmash product in \(\Nil_{2}(\X)\) of the reflections of \(X\) and \(Y\). Since, as we will prove later on, this object is always abelian, we obtain a functor
\[
  \tensor_{\X}\colon\X\times\X\to \Ab(\X)\colon (X,Y)\mapsto X\tensor_{\X} Y
\]
which we call the \defn{bilinear product} on \(\X\). Note that the product \(X\tensor_{\X} Y\) of two given two-nilpotent objects \(X\), \(Y\) only depends on \(\Nil_2(\X)\) and not on the ambient category \(\X\).

\begin{table}
  \resizebox{\textwidth}{!}
  {\begin{tabular}{cccc}
      \toprule
      category \(\X\)   & objects                                & abelian core \(\Ab(\X)\) & bilinear product \(A\tensor_{\X} B\)             \\
      \midrule
      \(\Gp\)           & groups                                 & \(\Ab\)                  & \(A\tensor_{\Z}B\)                               \\
      \(\Loop\)         & loops                                  & \(\Ab\)                  & \(A\tensor_{\Z}B\)                               \\
      \(\Nil_{2}(\Gp)\) & two-nilpotent groups                   & \(\Ab\)                  & \(A\tensor_{\Z}B\)                               \\
      \(\CRng\)         & commutative rings                      & \(\Ab\)                  & \(A\tensor_{\Z}B\)                               \\
      \(\Mod_{R}\)      & \(R\)-modules                          & \(\Mod_{R}\)             & \(0\)                                            \\
      \(\Pt_G(\Gp)\)    & \(G\)-actions                          & \(\Mod_{\Z[G]}\)         & \(A\tensor_{\Z}B\) with appropriate \(G\)-action \\
      \(\CAlg_{R}\)     & commutative associative \(R\)-algebras & \(\Mod_{R}\)             & \(A\tensor_{R}B\)                                \\
      \(\Alg_{R}\)      & associative \(R\)-algebras             & \(\Mod_{R}\)             & \((A\tensor_{R}B)\oplus (B\tensor_{R}A)\)        \\
      \(\Lie_{R}\)      & \(R\)-Lie algebras                     & \(\Mod_{R}\)             & \(A\tensor_{R}B\)                                \\
      \(\Leib_{R}\)     & \(R\)-Leibniz algebras                 & \(\Mod_{R}\)             & \((A\tensor_{R}B)\oplus (B\tensor_{R}A)\)        \\
      \(\CStarAlg\)     & \(C^*\)-algebras                       & \(\{0\}\)                & \(0\)                                            \\
      \(\HSLat\)        & Heyting semilattices                   & \(\{0\}\)                & \(0\)                                            \\
      \bottomrule
    \end{tabular}}
  \bigskip
  \caption{The bilinear product \(A\tensor_{\X} B\) of \(A\), \(B\in \Ab(\X)\) in terms of the surrounding semi-abelian category \(\X\). Here \(R\) is a commutative ring with unit, and an object of \(\Ab(\Pt_G(\Gp))\) is viewed as a \(G\)-action on an abelian group \(A\) or \(B\). Details and additional examples are worked out in Section~\ref{Section Examples} and Section~\ref{Section Tensor of G-actions}.}\label{Overview}
\end{table}

In homological-algebraic applications such as Ganea's Theorem (Theorem~\ref{Theorem-Ganea}), the functor \(\tensor_{\X}\) may play the role of an \emph{intrinsic tensor product} on the category~\(\X\). In contrast, when considered as a functor \(\Ab(\X)\times\Ab(\X)\to \Ab(\X)\), the bilinear product \(\tensor_{\X}\) is not intrinsic but relative, since it depends on the surrounding semi-abelian category~\(\X\). In Table~\ref{Overview}, which gives an overview of some examples, we see for instance that the tensor product of \(R\)-modules may be captured as the bilinear product in either the category of commutative associative algebras or in the category of Lie algebras over \(R\). On the other hand, the category of associative \(R\)-algebras induces a different bilinear product on~\(\Mod_{R}\).

\subsection*{Limitations}
Of course, not all tensor products ``in nature'' are readily captured as cosmash products in some two-nilpotent semi-abelian category \(\X\). An important limitation of our approach is that a bilinear cosmash product of objects in \(\X\) will automatically be an abelian object in \(\X\), which is a requirement that seems hard to satisfy in certain cases. One ``bad'' example is the tensor product of associative algebras, which is supposed to be again an associative algebra---but certainly not an abelian one, which means that it has a trivial multiplication. We do not know in which way this example fits the theory, or whether it fits the theory at all.

In fact, our original motivating example taken from~\cite{Smash}, the category of commutative rings, does not follow the pattern we just sketched: here the cosmash product in the category itself is already the tensor product in \(\CRng\). This example is further developed in~\cite{Illinois}.

Another such example is the Brown--Loday \emph{non-abelian tensor product}~\cite{Brown-Loday} of two groups mutually and compatibly acting on each other, where again it is not obvious which category to choose, or whether such a category can be found at all. However, in this case, one connection between the two tensor products is clear, because the bilinear product of two abelian groups is the non-abelian tensor product of these groups, when viewed as acting trivially on each other. As proved in~\cite[Theorem~7.4]{dMVdL19.3}, this result stays valid for the bilinear product in any \emph{algebraically coherent}~\cite{acc} semi-abelian category: given two abelian objects acting trivially on each other, their non-abelian tensor product as introduced in~\cite{dMVdL19.3} is precisely the bilinear product. In such categories, the bilinear product may be viewed as a special case of the non-abelian tensor product.

Finally, other examples are just complicated: for instance, finding a precise formula for the bilinear product of internal actions in a semi-abelian category is difficult, at least in a category which is not \emph{locally algebraically cartesian closed}---cf.\ Section~\ref{Section Tensor of G-actions}. Nevertheless, first results have recently been obtained for semi-abelian varieties, cf.\ subsection \ref{reps}; however, this topic remains a subject of future work.

\subsection*{Structure of the text}
The first section recalls the basic definitions and results on cosmash products. In Section~\ref{Section Two-Nilpotency}, this is used in the description of abelian and two-nilpotent objects, which are essential in the definition of the bilinear product (Subsection~\ref{Subsection Bilinear Product}). In the ensuing Section~\ref{Section Nilpotentisation} we study nilpotency in general, with the aim of proving that nilpotentisation commutes with Birkhoff reflectors (Theorem~\ref{Nil vs Birkhoff}). The next Section~\ref{Section Further Properties} is devoted to proving more advanced properties of the bilinear product, in particular the following two most fundamental ones: firstly, a description of the bilinear product in terms of~\(\X\) alone (Theorem~\ref{prop:internal descr of tensor}), namely as being the \emph{bilinearisation} of the cosmash product of~\(\X\), which is the key to its various \emph{applications} as it comes with a (co)universal property---in addition to being its origin, see Example \ref{bilinearisation of the cosmash product of groups}---while the description as being the cosmash product of the two-nilpotent core of~\(\X\) is more suitable for its \emph{computation} (as in many basic examples of two-nilpotent categories it is not hard to explicitly construct binary sums), next to the conceptual aspect that it thus appears as a cosmash product itself (and thus inherits the key properties of the latter). A second fundamental property of the bilinear product is the sequential right exactness of the functors~\({X\tensor(-)}\) (Proposition~\ref{Proposition-RightExact-Tensor}).

Section~\ref{Section Examples} treats a first set of examples. To those which appear in Table~\ref{Overview}, we add sheaves of abelian groups and internal (pre)crossed modules.

In Section~\ref{Section Operads} we consider the case of algebras over an operad in full detail, analysing nilpotency in this context, comparing it with the operadic concept of nilpotency, computing the bilinear product and considering the question of its associativity. The next Section~\ref{Section Varieties} collects general results on bilinear products in varieties of algebras. We start with structural properties of the bilinear product such as cocontinuity (Proposition~\ref{Proposition Tensor Cocontinuous}) and left adjointness (Proposition~\ref{Prop pre SMC}), leading to the question when it is closed monoidal. We then work towards one of our main results: the Recognition Theorem~\ref{Recognition Theorem}, where we prove that any symmetric bi-cocontinuous bifunctor on an abelian variety of algebras can be recovered as the bilinear product in the semi-abelian variety of algebras over a certain \(2\)-nilpotent operad.

Section~\ref{Section Ganea} provides an application in homological algebra: Theorem~\ref{Theorem-Ganea}, a categorical version of Ganea's six term exact sequence, of which known instances now become a special case. Section~\ref{Section Cross-Effects} focuses on a key technical result which is already used in Section~\ref{Section Two-Nilpotency}: Theorem~\ref{Theorem-Rightex} on the right exactness of cosmash products. Section~\ref{Section Abelian Extensions} treats the concept of an abelian extension from the cosmash product viewpoint, and in Section~\ref{Section Tensor of G-actions} we use this to calculate the bilinear product in certain categories of internal actions: Example~\ref{Example Lie algebra representations} and Example~\ref{Example group representations}. Here, the bilinear product yields a tensor product on Beck modules in any semi-abelian category, generalising the one of representations of groups and Lie algebras, even if we cannot yet provide an explicit computation in general. We end with Section~\ref{Section Further Questions} listing some additional questions for further investigation.

\tableofcontents

\section{\texorpdfstring{Cosmash products and Higgins commutators:\except{toc}{\linebreak} overview of definitions and basic properties}{Cosmash products and Higgins commutators}}\label{Section Revision}
We are defining cosmash products and the induced tensor products as in~\cite{Smash, Actions, HVdL} and~\cite{Hartl-Vespa}. As recalled in Section~\ref{Section Cross-Effects}, these cosmash products are actually instances of cross-effects of functors.

Throughout the text, unless mentioned otherwise, we shall assume that \(\X\) is a \emph{semi-abelian} category in the sense of~\cite{Janelidze-Marki-Tholen,Borceux-Bourn}.

\subsection{Binary and ternary cosmash products.}\label{Binary and ternary cosmash products}
Given objects \(X\) and \(Y\) of \(\X\), their \defn{cosmash product}, written \(X\cosmash Y\) or \(X\cosmash_{\X}Y\), is defined as in the short exact sequence
\[
  \xymatrix@=3em{ 0 \ar[r] & X\cosmash Y \ar@{{ |>}->}[r]^-{\iota_{X,Y}} &
  X+Y \ar@{-{ >>}}[r]^-{r_{X,Y}} & X\times Y \ar[r] & 0}
\]
where \(r_{X,Y}\coloneq\left\links\begin{smallmatrix}1_{X} & 0\\ 0 & 1_{Y}\end{smallmatrix}\right\rechts\).
(Exactness of this sequence on the right corresponds to~\(\X\) being unital, together with the coincidence of the classes of regular epimorphisms and normal epimorphisms in~\(\X\); these properties follow from \(\X\) being pointed, regular and protomodular~\cite{Borceux-Bourn}.) Note that if \(X\) or \(Y\) is trivial, then so is \(X\cosmash Y\).

Given a third object \(Z\) of \(\X\),
\[
  \iota_{X,Y,Z}\colon X\cosmash Y\cosmash Z \to X+Y+Z
\]
denotes the kernel of
\[
  r_{X,Y,Z}\coloneq\left\links\begin{smallmatrix} i_{X} & i_{Y} & 0     \\
    i_{X} & 0     & i_{Z} \\
    0     & i_{Y} & i_{Z}\end{smallmatrix}\right\rechts\colon X+Y+Z \to (X+Y)\times (X+Z) \times (Y+Z)\text{.}
\]
The object \(X\cosmash Y\cosmash Z\) is the \defn{ternary cosmash product} of \(X\), \(Y\) and~\(Z\). The following alternative construction of this object appears in Remark~2.8 of~\cite{HVdL}: we view it as a \emph{cross-effect} of the identity functor---see Section~\ref{Section Cross-Effects}. It is actually the \emph{third} cross-effect of \(1_\X\)---this corresponds to the definition we gave just above---which happens to coincide with the \emph{second} cross-effect of the functor \((-)\cosmash Z\colon \X\to \X\):

\begin{lemma}\label{Lemma-Bilinear}
  There is a short exact sequence
  \[
    \xymatrix@=3em{ 0 \ar[r] & X\cosmash Y\cosmash Z \ar@{{ |>}->}[r] &
    (X+Y)\cosmash Z \ar@{-{ >>}}[r] & (X\cosmash Z)\times(Y\cosmash Z) \ar[r] & 0}
  \]
  for all \(X\), \(Y\), \(Z\) in \(\X\).
\end{lemma}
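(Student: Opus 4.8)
The plan is to produce the right-hand map of the sequence by bifunctoriality of the cosmash product, to show that it is a normal epimorphism, and then to identify its kernel with \(X\cosmash Y\cosmash Z\) by comparing the kernels of a few maps out of \(X+Y+Z\). First I would fix notation. Writing \(i_X\), \(i_Y\), \(i_Z\) for the coproduct injections into \(X+Y+Z\) and using associativity to identify \((X+Y)+Z\) with \(X+Y+Z\), recall that \(X\cosmash Y\cosmash Z=\Ker r_{X,Y,Z}\) is the intersection \(\Ker\alpha\cap\Ker\beta\cap\Ker\gamma\) of the kernels of the three rows
\[
  \alpha=\left\links\begin{smallmatrix} i_X & i_Y & 0\end{smallmatrix}\right\rechts,\qquad \beta=\left\links\begin{smallmatrix} i_X & 0 & i_Z\end{smallmatrix}\right\rechts,\qquad \gamma=\left\links\begin{smallmatrix} 0 & i_Y & i_Z\end{smallmatrix}\right\rechts
\]
of \(r_{X,Y,Z}\), whereas \((X+Y)\cosmash Z=\Ker r_{X+Y,Z}=\Ker\alpha\cap\Ker\delta\), with \(\delta=\left\links\begin{smallmatrix} 0 & 0 & 1_Z\end{smallmatrix}\right\rechts\colon X+Y+Z\to Z\). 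The two retractions \(p=\left\links\begin{smallmatrix} 1_X & 0\end{smallmatrix}\right\rechts\colon X+Y\to X\) and \(q=\left\links\begin{smallmatrix} 0 & 1_Y\end{smallmatrix}\right\rechts\colon X+Y\to Y\) induce, by functoriality of \(-\cosmash Z\), a morphism
\[
  s\DefEq\langle p\cosmash 1_Z,\ q\cosmash 1_Z\rangle\colon (X+Y)\cosmash Z\to (X\cosmash Z)\times(Y\cosmash Z),
\]
and since \(p\cosmash 1_Z\) and \(q\cosmash 1_Z\) are the restrictions of \(p+1_Z=\beta\) and \(q+1_Z=\gamma\) to \((X+Y)\cosmash Z\), it is this \(s\) that I would take as the right-hand map.

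For the normal epimorphy of \(s\) I would exhibit a factorisation through a binary comparison map. The injections induce \(i_X\cosmash 1_Z\colon X\cosmash Z\to(X+Y)\cosmash Z\) and \(i_Y\cosmash 1_Z\colon Y\cosmash Z\to(X+Y)\cosmash Z\), and their copairing \(u\colon(X\cosmash Z)+(Y\cosmash Z)\to(X+Y)\cosmash Z\) satisfies \(s\circ u=r_{X\cosmash Z,\,Y\cosmash Z}\). Indeed, on the first summand \((p\cosmash 1_Z)\circ(i_X\cosmash 1_Z)=(p\comp i_X)\cosmash 1_Z=1_{X\cosmash Z}\) while \((q\cosmash 1_Z)\circ(i_X\cosmash 1_Z)=(q\comp i_X)\cosmash 1_Z=0\cosmash 1_Z=0\), the last equality holding because \(0\cosmash Z=0\) and functoriality; the second summand is symmetric. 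As \(r_{X\cosmash Z,\,Y\cosmash Z}\) is a normal epimorphism (right-exactness of the defining cosmash sequence, which holds since \(\X\) is semi-abelian), and since in a regular category a composite \(s\circ u\) being a regular epimorphism forces \(s\) to be a regular, hence normal, epimorphism, we obtain that \(s\) is a normal epimorphism.

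Finally, for the kernel I would compute \(\Ker s=(X+Y)\cosmash Z\cap\Ker\beta\cap\Ker\gamma=\Ker\alpha\cap\Ker\delta\cap\Ker\beta\cap\Ker\gamma\). The condition coming from \(\delta\) is redundant: since \(\delta=\left\links\begin{smallmatrix} 0 & 1_Z\end{smallmatrix}\right\rechts\comp\beta\), we have \(\Ker\beta\subseteq\Ker\delta\). Therefore \(\Ker s=\Ker\alpha\cap\Ker\beta\cap\Ker\gamma=X\cosmash Y\cosmash Z\), so that the canonical comparison \(X\cosmash Y\cosmash Z\mono(X+Y)\cosmash Z\) is precisely the kernel of \(s\), and the sequence is short exact.

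I expect the normal epimorphy of \(s\) to be the only genuine difficulty; the crux is spotting the factorisation \(s\circ u=r_{X\cosmash Z,\,Y\cosmash Z}\) through the binary comparison map, whose right-exactness is already available. The kernel identification is then bookkeeping with the kernels of maps out of \(X+Y+Z\), the one substantive point being the redundancy \(\Ker\beta\subseteq\Ker\delta\) that eliminates the spurious \(\delta\)-condition.
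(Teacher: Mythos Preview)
Your argument is correct. The paper does not actually prove this lemma; it cites \cite[Remark~2.8]{HVdL} and points to the cross-effect interpretation in Section~\ref{Section Cross-Effects}, where the ternary cosmash product is recovered as the second cross-effect of the functor \((-)\cosmash Z\). That packaging immediately gives the short exact sequence as an instance of Proposition~\ref{crossdecomp'} (see Figure~\ref{Figure Ternary}), without ever unpacking the kernels of the individual maps \(\alpha\), \(\beta\), \(\gamma\), \(\delta\).

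Your route is the unpacked, hands-on version of the same computation: you identify \((X+Y)\cosmash Z\) and \(X\cosmash Y\cosmash Z\) as intersections of kernels inside \(X+Y+Z\), and then eliminate the redundant \(\delta\)-condition. This buys a self-contained argument that does not require the reader to know the recursive description of cross-effects, at the price of a little bookkeeping. The factorisation \(s\circ u=r_{X\cosmash Z,\,Y\cosmash Z}\) that you use to get the regular epimorphy of \(s\) is exactly the splitting data underlying the cross-effect decomposition \((X+Y)\cosmash Z\cong\bigl((X\cosmash Y\cosmash Z)\rtimes(X\cosmash Z)\bigr)\rtimes(Y\cosmash Z)\), so the two approaches are really the same argument in different clothes.
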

\begin{proof}
  A direct, explicit proof of the exactness of this sequence on the left is worked out in~\cite[Proposition~2.12]{Actions}; see also \cite[Proposition~4.1]{SVdL3} for a higher-order generalisation. Since \((-) \cosmash Z\) preserves \(0\), we may apply Lemma~\ref{Lemma-Reduced} to see that the right-hand arrow is a regular epimorphism. Hence it is a cokernel of its kernel, which we already know is the left-hand arrow.
\end{proof}

With this alternative point of view on the ternary commutator, we can apply Proposition~\ref{crossdecomp'} to obtain:

\begin{lemma}\label{Lemma Smash of Sum}
  If \(X\), \(Y\), and \(Z\) are objects in \(\X\), then there is a decomposition
  \[
    (X+Y)\cosmash Z \cong \bigl((X\cosmash Y \cosmash Z)\rtimes (X\cosmash Z)\bigr)\rtimes (Y \cosmash Z)\text{.}
  \]
  More precisely, there exists an object \(V\) and split short exact sequences
  \[
    \xymatrix{0 \ar[r] & V \ar@{{ |>}->}[r] & (X+Y)\cosmash Z \ar@<.5ex>@{-{ >>}}[r] & Y \cosmash Z \ar[r] \ar@<.5ex>[l] & 0}
  \]
  and
  \begin{equation*}
    \xymatrix{0 \ar[r] & X\cosmash Y \cosmash Z \ar@{{ |>}->}[r] & V \ar@<.5ex>@{-{ >>}}[r] & X \cosmash Z \ar[r] \ar@<.5ex>[l] & 0}
  \end{equation*}
  in \(\X\)---see~Figure~\ref{Figure Ternary}.\noproof
\end{lemma}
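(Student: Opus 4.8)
The plan is to realise the cosmash product as a cross-effect of the identity functor and then read off the decomposition from the general splitting result of Proposition~\ref{crossdecomp'}. Fixing \(Z\), the functor \(A\mapsto A\cosmash Z\) is the second cross-effect \(\cre_2(\mathrm{Id})\) of the identity functor on \(\X\) with its second argument frozen at \(Z\); in particular it is reduced, i.e.\ it sends \(0\) to \(0\). The first thing I would record is that \emph{its} own second cross-effect \(\cre_2\bigl((-)\cosmash Z\bigr)(X,Y)\) is exactly the ternary cosmash product \(X\cosmash Y\cosmash Z\)---this is precisely what Lemma~\ref{Lemma-Bilinear} says, since that short exact sequence exhibits \(X\cosmash Y\cosmash Z\) as the kernel of the comparison \((X+Y)\cosmash Z \to (X\cosmash Z)\times(Y\cosmash Z)\).

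With this identification in place, the decomposition follows at once by applying Proposition~\ref{crossdecomp'} to the reduced functor \((-)\cosmash Z\) and the coproduct \(X+Y\): it expresses \((X+Y)\cosmash Z\) as the iterated semidirect product
\[
  \bigl(\cre_2\bigl((-)\cosmash Z\bigr)(X,Y)\rtimes(X\cosmash Z)\bigr)\rtimes(Y\cosmash Z),
\]
which is the stated formula once the top cross-effect is rewritten as \(X\cosmash Y\cosmash Z\). The two split short exact sequences of the statement are then simply the two successive layers of this filtration, with \(V\DefEq (X\cosmash Y\cosmash Z)\rtimes(X\cosmash Z)\); see Figure~\ref{Figure Ternary}.

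The only point genuinely requiring argument---and the thing I would isolate---is that both sequences really \emph{split}, so that they assemble into semidirect products rather than mere extensions. The splittings come functorially from the coproduct structure of \(X+Y\). Writing \(p_Y\colon X+Y\to Y\) for the morphism that is \(1_Y\) on \(Y\) and \(0\) on \(X\), functoriality of \((-)\cosmash Z\) yields a map \((p_Y)_*\colon (X+Y)\cosmash Z\to Y\cosmash Z\) whose kernel is \(V\); the coprojection \(Y\to X+Y\) induces a section of \((p_Y)_*\), because \(p_Y\) is the identity on \(Y\). This gives the first split sequence. Restricting \((p_X)_*\) to \(V\) and arguing symmetrically yields the second split sequence \(0\to X\cosmash Y\cosmash Z\to V\to X\cosmash Z\to 0\), split by the coprojection of \(X\); here one uses that \(X\cosmash Z\) is the kernel of the projection \((X\cosmash Z)\times(Y\cosmash Z)\to Y\cosmash Z\) occurring in Lemma~\ref{Lemma-Bilinear}.

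I expect the main obstacle to be purely organisational: checking that the two factorisations coming from the coproduct projections are compatible with the sequence of Lemma~\ref{Lemma-Bilinear}, so that the kernel \(V\) of \((p_Y)_*\) really does sit in the claimed extension of \(X\cosmash Z\) by \(X\cosmash Y\cosmash Z\). This is a routine diagram chase in the semi-abelian setting, where regularity and protomodularity guarantee exactness of the induced rows and columns; it is exactly the bookkeeping that Proposition~\ref{crossdecomp'} packages away.
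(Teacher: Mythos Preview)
Your proposal is correct and matches the paper's own approach: the paper states the lemma without proof, deriving it immediately from Proposition~\ref{crossdecomp'} applied to the reduced functor \((-)\cosmash Z\), with Lemma~\ref{Lemma-Bilinear} supplying the identification of the second cross-effect of this functor with the ternary cosmash product. Your additional remarks on how the splittings arise from the coproduct injections and retractions simply unpack what Proposition~\ref{crossdecomp'} already encodes.
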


\begin{figure}
  \hfil\xymatrix{& 0 \ar[d] & 0 \ar[d] & 0\ar[d] \\
    0 \ar[r] & X\cosmash Y \cosmash Z \ar@{{ |>}->}[r] \ar@{{ |>}->}[d] & W \ar@{{ |>}->}[d] \ar@<.5ex>@{-{ >>}}[r] & \ar@<.5ex>[l] Y\cosmash Z \ar@{=}[d] \ar[r]& 0\\
    0 \ar[r] & V \ar@<.5ex>@{-{ >>}}[d] \ar@{{ |>}->}[r] & (X+Y)\cosmash Z \ar@<.5ex>@{-{ >>}}[r] \ar@<.5ex>@{-{ >>}}[d] & Y\cosmash Z \ar@<.5ex>[l] \ar[d] \ar[r]& 0\\
    0\ar[r] & X\cosmash Z \ar@<.5ex>[u] \ar@{=}[r] \ar[d] & X\cosmash Z \ar@<.5ex>[u]\ar[r] & 0\\
    & 0}\hfil
  \caption{Alternate description of \(X\cosmash Y \cosmash Z\)}\label{Figure Ternary}
\end{figure}

\subsection{The Higgins commutator.}\label{Subsection Binary Higgins Commutator}
Given two objects \(K\) and \(L\) of \(\X\), the cosmash product \(K\cosmash L\)
behaves as a kind of ``formal commutator'' of~\(K\) and~\(L\) (see~\cite{Actions} and~\cite{MM-NC}). If now \(k\colon K \to X\) and \(l\colon L \to X\) are subobjects of an object \(X\), their \defn{(Higgins) commutator} \([K,L]\leq X\) is the image of the induced composite morphism
\[
  \xymatrix@=2em{K\cosmash L \ar@{{ |>}->}[r]^-{\iota_{K,L}} & K+L \ar[r]^-{\DCoindArr{k \and l}} & X\text{.}}
\]
We write \(e_{K,L}^{X}\colon {K\cosmash L\to [K,L]}\) for the regular epi--part of the image factorisation, and \([k,l]\colon{[K,L]\to X}\) for the image itself.

If \(K\join L=X\), then the Higgins commutator \([K,L]\) is normal in \(X\)~\cite[Proposition~5.2]{MM-NC} so that it coincides with the Huq commutator considered in~\cite{Bourn-Gran, Borceux-Bourn}. Hence any extension in~\(\X\) such as
\begin{equation*}
  \xymatrix{0 \ar[r] & K \ar@{{ |>}->}[r]^{k} & B \ar@{-{ >>}}[r]^{f} & A \ar[r] & 0}
\end{equation*}
is central in the sense of~\cite{Bourn-Gran} if and only if \([K,B] \normal B\) is trivial.

Another instance of a situation where \(K\join L=X\) occurs when \(X=K+L\) and \(k=i_1\colon K\to K+L\), \(l=i_2\colon L\to K+L\) are the coproduct inclusions. Then the canonical inclusion \(\iota_{K,L}\) of cosmash product \(K\cosmash L\) into \(X\) plays the role of the Higgins/Huq commutator \([k(K),l(L)]\) of \(K\) and \(L\), viewed as subobjects of \(X=K+L\). In this precise sense, the cosmash product is a ``formal commutator''.

\subsection{The ternary commutator.}
The Higgins commutator generally does not preserve joins, but the defect may be measured---it is a ternary commutator which can be computed by means of a ternary cosmash product. Let us extend the definition above: given a third subobject \(m\colon M \to X\) of the object \(X\), the ternary commutator \([K,L,M]\leq X\) is the image of the composite
\[
  \xymatrix@=3em{K\cosmash L\cosmash M \ar@{{ |>}->}[r]^-{\iota_{K,L,M}} & K+L+M \ar[r]^-{\DCoindArr{k \and l \and m}} & X\text{.}}
\]
Like in the binary case, the cosmash product \(K\cosmash L\cosmash M\) may be viewed as the (``formal'') commutator \([i_1(K),i_2(L),i_3(M)]\normal K+L+M\) of \(K\), \(L\) and \(M\), canonically included into their sum.

The following fundamental join decomposition result or \defn{distributive law} (first obtained in~\cite{Higgins} in the context of varieties of \(\Omega\)-groups, then extended to semi-abelian categories in~\cite{Actions, HVdL}) is a consequence of Lemma~\ref{Lemma Smash of Sum}.

\begin{lemma}\cite[Proposition 2.22]{HVdL}\label{Lemma-Join}
  If \(K\), \(L\), \(M\leq X\), then
  \[
    [K,L\join M]=[K,L]\join [K,M]\join [K,L,M]
  \]
  where all joins are computed in \(X\).\noproof
\end{lemma}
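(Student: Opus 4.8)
The plan is to replace the join \(L\join M\) by the coproduct \(L+M\), apply the decomposition of Lemma~\ref{Lemma Smash of Sum} to \(K\cosmash(L+M)\), and then read off the three constituent subobjects as the three commutators appearing on the right-hand side.

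First I would reduce \(L\join M\) to \(L+M\). Write \(j\colon L\join M\to X\) for the inclusion of the join and \(q\colon L+M\to L\join M\) for the regular-epi corestriction of \(\DCoindArr{l \and m}\colon L+M\to X\), so that \(\DCoindArr{l \and m}=j\comp q\). Functoriality of the cosmash product in its second variable gives \(K\cosmash q\colon K\cosmash(L+M)\to K\cosmash(L\join M)\), which is a regular epimorphism because \(K\cosmash(-)\) preserves regular epimorphisms (right-exactness, Theorem~\ref{Theorem-Rightex}). Naturality of the cosmash inclusion \(\iota\) gives \(\iota_{K,L\join M}\comp(K\cosmash q)=(\id{K}+q)\comp\iota_{K,L+M}\), and combined with \(\DCoindArr{k \and j}\comp(\id{K}+q)=\DCoindArr{k \and l \and m}\) this yields
\[
  \DCoindArr{k \and l \and m}\comp\iota_{K,L+M}=\DCoindArr{k \and j}\comp\iota_{K,L\join M}\comp(K\cosmash q)\text{.}
\]
Since \(K\cosmash q\) is a regular epi, both sides have the same image, so \([K,L\join M]=\Im\bigl(\DCoindArr{k \and l \and m}\comp\iota_{K,L+M}\bigr)\).

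Next I would decompose \(K\cosmash(L+M)\). By symmetry of the cosmash product, \(K\cosmash(L+M)\iso(L+M)\cosmash K\), and Lemma~\ref{Lemma Smash of Sum} (with \(L\), \(M\), \(K\) in the roles of \(X\), \(Y\), \(Z\)) exhibits this object as an iterated semidirect product of \(K\cosmash L\cosmash M\), \(K\cosmash L\) and \(K\cosmash M\). In particular these three occur as subobjects of \(K\cosmash(L+M)\) whose join is the whole object, the relevant inclusions being \(K\cosmash\iota_1\) and \(K\cosmash\iota_2\) for the coproduct inclusions \(\iota_1\colon L\to L+M\), \(\iota_2\colon M\to L+M\), together with the kernel inclusion of Lemma~\ref{Lemma-Bilinear} for the ternary factor. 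Then I would identify each image in \(X\): for the copy of \(K\cosmash L\), naturality of \(\iota\) and the identity \(\DCoindArr{k \and l \and m}\comp(\id{K}+\iota_1)=\DCoindArr{k \and l}\) give that its image under \(\DCoindArr{k \and l \and m}\comp\iota_{K,L+M}\) equals \(\Im(\DCoindArr{k \and l}\comp\iota_{K,L})=[K,L]\); symmetrically the copy of \(K\cosmash M\) gives \([K,M]\); and the copy of \(K\cosmash L\cosmash M\) gives \([K,L,M]\), using that the Lemma~\ref{Lemma-Bilinear} inclusion composed with \(\iota_{K,L+M}\) recovers \(\iota_{K,L,M}\). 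Because the direct image of a join of subobjects along a fixed morphism is the join of their images in a regular category, and the three summands join to \(K\cosmash(L+M)\), I conclude
\[
  [K,L\join M]=[K,L]\join[K,M]\join[K,L,M]\text{.}
\]

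The main obstacle I anticipate is the bookkeeping in the last two steps: one must verify that the subobject inclusions of the three summands into \(K\cosmash(L+M)\) produced by Lemma~\ref{Lemma Smash of Sum} are compatible, through \(\iota\), with the coproduct inclusions and with the ternary inclusion \(\iota_{K,L,M}\), so that their images are exactly the three commutators rather than merely subobjects of them. Everything else is naturality of \(\iota\) together with the preservation of joins by direct images in a regular category, and the reduction to \(L+M\) via right-exactness of the cosmash product.
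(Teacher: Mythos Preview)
Your proposal is correct and follows exactly the approach the paper indicates: the paper gives no proof of its own here, merely citing \cite[Proposition~2.22]{HVdL} and noting that the result is a consequence of Lemma~\ref{Lemma Smash of Sum}, and your argument spells out precisely this derivation. One minor remark: for the fact that \(K\cosmash(-)\) preserves regular epimorphisms you invoke Theorem~\ref{Theorem-Rightex}, which is a forward reference in the paper; the paper itself cites \cite[Proposition~2.9]{HVdL} for this same fact (see the proof of Proposition~\ref{Proposition-cosmash-to-Tensor}), which would be the cleaner citation here and avoids any appearance of circularity.
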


By the symmetry of the commutator, which follows from symmetry of the cosmash product, which in turn follows from symmetry of sums and products, a similar result of course also holds for a commutator with a join in its first slot.

\section{Two-nilpotency and the bilinear product}\label{Section Two-Nilpotency}

A \defn{Birkhoff subcategory}~\cite[Section~1.2]{Janelidze-Kelly} of a semi-abelian category is a full reflective subcategory closed under subobjects and regular quotient objects. As is well known, the unit of a Birkhoff reflection is always a regular epimorphism.

In what follows, we consider certain Birkhoff subcategories any semi-abelian category has, each determined by objects satisfying a commutator condition.

\subsection{Abelian objects.}
An object \(A\) of a semi-abelian category \(\X\) is \defn{abelian} when it admits a (necessarily unique) structure of internal abelian group. It is well known that this happens precisely when \([A,A]=0\), or equivalently, when the codiagonal morphism \(\nabla_A=\CoindArr{1_A \and 1_A}\colon A+A \to A\) factors through the regular epimorphism \(\xymatrix@1{r_{A,A}\colon A+A \ar@{- >>}[r]&A\times A}\) (where the resulting map \(A\times A\to A\) is the internal abelian group structure), cf.\ Subsection~\ref{Binary and ternary cosmash products}.

The (abelian) Birkhoff subcategory of~\(\X\) determined by the abelian objects is denoted \(\Ab(\X)\) and called the \defn{abelian core} of \(\X\). The reflector \(\ab\colon{\X\to\Ab(\X)}\) is determined by \(\ab(X)=X/[X,X]\). Note that all objects of \(\Ab(\X)\) are indeed abelian, and that \(\Ab(\X)\) is the largest Birkhoff subcategory with this property.

\subsection{Two-nilpotent objects.}\label{Subsec Two-nilpotent}
Likewise, an object \(U\) is called \defn{two-nilpotent} or \defn{nilpotent of class \(2\)} when the ternary commutator \([U,U,U]\) vanishes. This determines a Birkhoff subcategory of \(\X\) denoted \(\Nil_{2}(\X)\) and called the \defn{two-nilpotent core} of \(\X\). The reflector \(\nil_{2}\colon{\X\to \Nil_{2}(\X)}\) is given by \(\nil_{2}(X)=X/[X,X,X]\).

\subsection{Two-nilpotent categories.}
A semi-abelian category is called \defn{two-nilpotent} when all ternary cosmash products in it vanish. This happens precisely when its objects are two-nilpotent, because for any three given objects \(U\), \(V\) and \(W\), the object \(U\cosmash V\cosmash W\) is the commutator \([i_1(U),i_2(V),i_3(W)]\normal U+V+W\) of \(U\), \(V\) and \(W\), smaller than the commutator \([U+V+W,\;U+V+W,\;U+V+W]\), which is trivial when \(U+V+W\) is two-nilpotent. As a consequence, the two-nilpotent core \(\Nil_{2}(\X)\) of any semi-abelian category \(\X\) is indeed two-nilpotent. The converse is immediate from the definition. Similarly to what happens for \(\Ab(\X)\), since~\(\Nil_{2}(\X)\) includes all two-nilpotent objects of \(\X\), it is the largest two-nilpotent Birkhoff subcategory of \(\X\).

\subsection{Products and coproducts}\label{Notation coproduct}
Since the right adjoint inclusions preserve all limits, the product in \(\Ab(\X)\) or in~\(\Nil_{2}(\X)\) is given by the product in~\(\X\). For coproducts, however, this is not the case. In \(\Ab(\X)\) the coproduct is the product \(A\times B\) which we denote \(A\oplus B\), the biproduct of \(A\) and \(B\). And in order to avoid confusion with the coproduct \(X+Y\) in~\(\X\), we write \(U+_{2} V\) for the sum in \(\Nil_{2}(\X)\). Always \(\nil_{2}(X+Y)=\nil_{2}(X)+_{2} \nil_{2}(Y)\) so that when \(A\) and \(B\) are abelian (or two-nilpotent), \(A+_{2} B=\nil_{2}(A+B)\).

\subsection{Linear functors.}
A functor \(F \colon \X \to \Y\) between pointed categories is \defn{reduced} whenever it preserves zero objects. The intuition from polynomial functor theory that a reduced functor between semi-abelian categories should be called \defn{linear} when it sends binary sums to products is made precise by asking that the canonical comparison regular epimorphism
\[
  r^F_{X,Y}=\DIndArr{F(\DCoindArr{\id{X} \and 0}) \and F(\DCoindArr{0 \and \id{Y}})} \from F(X+Y) \to F(X)\times F(Y)
\]
is an isomorphism---so that, in the terminology of Section~\ref{Section Cross-Effects}, its \emph{second cross-effect} vanishes, which means that this morphism has a trivial kernel. Note that the morphism \(F(X+Y) \to F(X)\times F(Y)\) is indeed a regular epimorphism by Lemma~\ref{Lemma-Reduced}.

For instance, the identity functor \(1_{\X}\colon \X\to \X\) is linear precisely when the category \(\X\) is \defn{linear} in the sense that the canonical comparison morphism \(X+Y\to X\times Y\) is an isomorphism\footnote{This is the terminology used in~\cite{Borceux-Bourn}, not to be confused with the concept of a \defn{\(R\)-linear category}, which is a category enriched in \(R\)-vector spaces, so that in particular its hom-sets carry an \(R\)-vector space structure.}, which is well known to be equivalent with the category being enriched over commutative monoids---a condition that, in the present semi-abelian context, is equivalent to the abelianness of \(\X\), and ensures that the commutative monoid structures of the hom-sets are actually abelian groups.

On the other hand, Lemma~\ref{Lemma-Bilinear} tells us that each functor \((-)\cosmash Z\colon \X\to \X\) is linear if and only if \(\X\) is a two-nilpotent category.

\begin{proposition}\label{linear functors have abelian values}
  Any linear functor \(F \colon \X \to \Y\) between semi-abelian categories takes values in (that is, factors through) the abelian core of \(\Y\).
\end{proposition}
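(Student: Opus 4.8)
The plan is to use the characterisation of abelian objects recalled just above the statement: an object $A$ of $\Y$ is abelian precisely when its codiagonal $\nabla_{A}\colon A+A\to A$ factors through the comparison regular epimorphism $r_{A,A}\colon A+A\to A\times A$. So, fixing an object $X$ of $\X$, I aim to show that $\nabla_{F(X)}$ factors through $r_{F(X),F(X)}$, using nothing more than the linearity of $F$ evaluated at the pair $(X,X)$ together with the fact that $F$, being reduced, preserves zero morphisms.

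The one auxiliary arrow I need is the canonical comparison $\phi\colon F(X)+F(X)\to F(X+X)$ in $\Y$ induced by $F(\iota_{1})$ and $F(\iota_{2})$, where $\iota_{1}$, $\iota_{2}\colon X\to X+X$ are the coproduct inclusions in $\X$; that is, $\phi=\DCoindArr{F(\iota_{1}) \and F(\iota_{2})}$. First I would check that $\phi$ mediates between the two comparison maps, in the sense that $r^{F}_{X,X}\comp \phi=r_{F(X),F(X)}$. Since a morphism out of a coproduct is determined by its composites with the inclusions, it suffices to compose with the inclusions of $F(X)+F(X)$: precomposing the left-hand side with the first inclusion gives $\bigl\langle F(\DCoindArr{\id{X}\and 0}\comp\iota_{1}),\,F(\DCoindArr{0\and\id{X}}\comp\iota_{1})\bigr\rangle=\langle F(\id{X}),F(0)\rangle=\langle\id{F(X)},0\rangle$, using reducedness for $F(0)=0$; and symmetrically the second inclusion yields $\langle 0,\id{F(X)}\rangle$. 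These are exactly the composites defining $r_{F(X),F(X)}$, so the identity follows.

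Next I would run the same computation with the codiagonal in place of the comparison map, obtaining $F(\nabla_{X})\comp\phi=\nabla_{F(X)}$: precomposing with either inclusion of $F(X)+F(X)$ yields $F(\nabla_{X}\comp\iota_{i})=F(\id{X})=\id{F(X)}$, which characterises the codiagonal. Now I invoke linearity: $r^{F}_{X,X}$ is an isomorphism, so $\phi=(r^{F}_{X,X})^{-1}\comp r_{F(X),F(X)}$, and therefore
\[
  \nabla_{F(X)}=F(\nabla_{X})\comp\phi=\bigl(F(\nabla_{X})\comp (r^{F}_{X,X})^{-1}\bigr)\comp r_{F(X),F(X)}\text{.}
\]
This exhibits $\nabla_{F(X)}$ as factoring through $r_{F(X),F(X)}$, whence $F(X)$ is abelian. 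Since this holds for every object $X$ and $\Ab(\Y)$ is a full subcategory, $F$ factors through $\Ab(\Y)$.

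I do not anticipate a genuine obstacle here: the argument is a formal diagram chase, and the only points requiring care are bookkeeping ones---keeping the coproducts and products in $\X$ distinct from those in $\Y$, checking that $\phi$ points from $F(X)+F(X)$ into $F(X+X)$ and not the other way, and using reducedness at exactly the step where $F(0)=0$ is needed. The essential input is the single hypothesis that $r^{F}_{X,X}$ is invertible, which is precisely what upgrades the always-available factorisation $\nabla_{F(X)}=F(\nabla_{X})\comp\phi$ into a factorisation through the comparison map $r_{F(X),F(X)}$.
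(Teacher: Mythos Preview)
Your proof is correct and follows essentially the same approach as the paper: the paper presents the same argument as a single commutative diagram in which your arrow \(\phi=\DCoindArr{F(\iota_{1})\and F(\iota_{2})}\) appears explicitly, and the factorisation \(F(\nabla_{X})\comp(r^{F}_{X,X})^{-1}\) of \(\nabla_{F(X)}\) through \(r_{F(X),F(X)}\) is exactly the one obtained there. Your write-up simply spells out the commutativity of each triangle in the diagram individually.
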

\begin{proof}
  For an object \(X\) of \(\X\), consider the following diagram.
  \[
    \xymatrix{
    F(X) + F(X) \ar[dd]_{\CoindArr{1_{F(X)} \and 1_{F(X)}}} \ar[rd]|{\CoindArr{F(i_1) \and F(i_2) }} \ar@/^/[rrd]^{r_{F(X),F(X)}} & &\\
    & F(X+X) \ar[ld]|{F(\CoindArr{1_X \and 1_X})}\ar[r]^-{\cong}_-{r^F_{X,X}} &F(X)\times F(X) \ar@/^/[dll]^{\hspace{18pt} F(\CoindArr{1_X \and 1_X})\comp (r^F_{X,X})^{-1} }\\
    F(X) & &
    }
  \]
  All internal triangles commute, hence so does the exterior one, which shows that \(F(X)\) is abelian, as desired.
\end{proof}

\subsection{Bilinear bifunctors.}\label{Bilinear bifunctors}
A bifunctor \(F \colon \X \times \Y \to \Z\), where \(\X\), \(\Y\) and \(\Z\) are pointed categories, is \defn{bireduced} whenever \(F(X,Y) \) is trivial as soon as one of \(X\) and \(Y\) is trivial\footnote{In other words, \(F(X,Y) \) behaves as a \emph{commutator term} in the sense of the first paragraph of the Introduction.}. A bireduced bifunctor \(F\colon{\X\times\X\to\X}\) on a semi-abelian category \(\X\) is called \defn{bilinear} when for all \(X\), \(Y\), \(Z\in\X\), the canonical arrows
\[
  \DIndArr{F(\DCoindArr{\id{X} \and 0},\id{Z}) \and F(\DCoindArr{0 \and \id{Y}},\id{Z})} \from F(X+Y,Z) \to F(X,Z)\times F(Y,Z)
\]
and
\[
  \DIndArr{F(\id{X},\DCoindArr{\id{Y} \and 0}) \and F(\id{X},\DCoindArr{0 \and \id{Z}})} \from F(X,Y+Z) \to F(X,Y)\times F(X,Z)
\]
are isomorphisms---cf.~\cite{LACK2012593} where isomorphisms of this kind are studied in general. Note that, as the name suggests, a functor is bilinear whenever it is linear in each of its variables. Let us remark that, if \(F\) is symmetric (as is the case for the cosmash product, for example), then it is enough to check that only one of the two arrows above is an isomorphism, the other one following by symmetry. Lemma~\ref{Lemma-Bilinear} now tells us:

\begin{proposition}\label{Cosmash Bilinear iff Category Two-Nilpotent}
  The bifunctor \(\cosmash\colon \X\times\X\to\X\colon (X,Y)\mapsto X\cosmash Y\) is bilinear if and only if \(\X\) is a two-nilpotent category.\noproof
\end{proposition}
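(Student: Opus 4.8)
The plan is to deduce the statement almost directly from Lemma~\ref{Lemma-Bilinear}. First I would record that \(\cosmash\) is \emph{bireduced}: putting \(Y=0\) in the defining short exact sequence makes the comparison \(X+0\to X\times 0\) an isomorphism, so \(X\cosmash 0=0\), and symmetrically \(0\cosmash Y=0\); hence it makes sense to ask whether \(\cosmash\) is bilinear at all. Because \(\cosmash\) is symmetric (symmetry descending from that of \(+\) and \(\times\)), the remark preceding the statement reduces bilinearity to the single requirement that, for all \(X\), \(Y\), \(Z\) in \(\X\), the canonical comparison
\[
  (X+Y)\cosmash Z \to (X\cosmash Z)\times (Y\cosmash Z)
\]
be an isomorphism.

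Next I would identify this comparison with the regular epimorphism of Lemma~\ref{Lemma-Bilinear}. Unwinding the definition of bilinearity for \(F=\cosmash\), the arrow is the pairing \(\DIndArr{(\DCoindArr{\id{X}\and 0})\cosmash\id{Z} \and (\DCoindArr{0\and\id{Y}})\cosmash\id{Z}}\) obtained by applying the functor \((-)\cosmash Z\) to the two ``projections'' \(\DCoindArr{\id{X}\and 0}\colon X+Y\to X\) and \(\DCoindArr{0\and\id{Y}}\colon X+Y\to Y\) and pairing the results; by functoriality and the universal property of the product this is exactly the natural surjection appearing in Lemma~\ref{Lemma-Bilinear}. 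This bookkeeping identification is the only place where a little care is needed, and is where I expect the main (albeit minor) obstacle to lie.

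Granting the identification, Lemma~\ref{Lemma-Bilinear} presents the comparison as the cokernel part of the short exact sequence
\[
  0 \to X\cosmash Y\cosmash Z \to (X+Y)\cosmash Z \to (X\cosmash Z)\times(Y\cosmash Z)\to 0,
\]
so it is a regular epimorphism whose kernel is the ternary cosmash product \(X\cosmash Y\cosmash Z\). Since \(\X\) is semi-abelian---in particular pointed, regular and protomodular---a regular epimorphism is an isomorphism if and only if its kernel is trivial. Hence the comparison is an isomorphism for the triple \((X,Y,Z)\) precisely when \(X\cosmash Y\cosmash Z=0\).

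Therefore \(\cosmash\) is bilinear if and only if \(X\cosmash Y\cosmash Z=0\) for all \(X\), \(Y\), \(Z\), which is exactly the defining condition that \(\X\) be a two-nilpotent category. All the real work sits in Lemma~\ref{Lemma-Bilinear}; once that is available the equivalence is immediate, which is presumably why the statement is recorded without proof.
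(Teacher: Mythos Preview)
Your proposal is correct and follows exactly the route the paper intends: the proposition is stated without proof immediately after the sentence ``Lemma~\ref{Lemma-Bilinear} now tells us:'', and your argument simply spells out why that lemma yields the equivalence. The only content you add beyond the paper is the explicit verification that \(\cosmash\) is bireduced and that the bilinearity comparison coincides with the epimorphism of Lemma~\ref{Lemma-Bilinear}, which is precisely the bookkeeping the paper leaves implicit.
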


\subsection{The bilinear product on a semi-abelian category.}\label{Subsection Bilinear Product}
On a semi-abelian category \(\X\), the \defn{bilinear product} is the extension of the cosmash product in \(\Nil_{2}(\X)\) to all of \(\X\):
\[
  \tensor_{\X}\colon \X\times\X\to\X\colon (X,Y)\mapsto X\tensor_{\X}Y\DefEq \nil_{2}(X)\cosmash_2 \nil_{2}(Y)\text{.}
\]
This bifunctor \(\tensor_{\X}\) is indeed bilinear, because the reflector \(\Nil_{2}\colon{\X\to \Nil_{2}(\X)}\) preserves binary sums.

When the category \(\X\) is understood, we may sometimes drop the subscript \(\X\) and write \(\tensor\) for the bilinear product.

\subsection{Symmetry, reducedness.}\label{Subsection Symmetry}
It is immediately clear that bilinear products are symmetric---it comes naturally equipped with a \defn{symmetry isomorphism} or \defn{twist map}
\[
  \xymatrix{\tau=\tau_{X,Y}=\tau_{X,Y}^{\X}\colon X\tensor Y \ar[r]^-{\cong} & Y\tensor X}
\]
---and bireduced (\(X\tensor 0= 0=0\tensor Y\)) because cosmash products have the same properties. Remarkably, the isomorphism \(\tau_{X,Y}\) is not necessarily the usual or ``obvious'' one: in Section~\ref{Section Examples}, we will make it explicit for some concrete cases. Let us emphasise here the importance of viewing the natural isomorphism \(\tau\) as a structure the product \(\tensor\) comes equipped with (rather than viewing its existence as a mere property of the product); this will become crucial in Section~\ref{Section Varieties}, in particular in the proof of the Recognition Theorem~\ref{Recognition Theorem}.

We shall further see that \(\tensor\) need not be part of a monoidal structure on \(\X\), or even on \(\Ab(\X)\), since the bilinear product can be non-associative or lack a unit (Example~\ref{non-associative bilinear product} and Subsection~\ref{Associative algebras}, respectively). Before giving examples of such situations, we list some properties which the bilinear product does satisfy. We may already prove that bilinear products are abelian; more advanced properties will be discussed in Section~\ref{Section Further Properties}, which depends on the concept of higher commutators of Section~\ref{Section Nilpotentisation}, of which properties are developed there.

\begin{proposition}\label{Proposition-Abelian}
  For all objects \(X\), \(Y\) in a semi-abelian category \(\X\), we have \(X\tensor_{\X}Y\in\Ab(\X)\). This makes the bilinear product on \(\X\) into a functor
  \[
    \tensor_{\X}\colon \X\times\X\to\Ab(\X)\colon (X,Y)\mapsto X\tensor_{\X}Y\text{.}
  \]
\end{proposition}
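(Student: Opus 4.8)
The plan is to deduce the statement from the two results just proved, by working entirely inside the two-nilpotent core. By definition, \(X \tensor_{\X} Y = \nil_{2}(X) \cosmash_2 \nil_{2}(Y)\) is the cosmash product of the category \(\Nil_{2}(\X)\), applied to the objects \(\nil_{2}(X)\) and \(\nil_{2}(Y)\). Being a Birkhoff subcategory of the semi-abelian category \(\X\), the two-nilpotent core \(\Nil_{2}(\X)\) is itself semi-abelian, and it is two-nilpotent (Subsection~\ref{Subsec Two-nilpotent}). Hence Proposition~\ref{Cosmash Bilinear iff Category Two-Nilpotent}, applied to \(\Nil_{2}(\X)\) in place of \(\X\), says that its cosmash product \(\cosmash_2\) is bilinear. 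Fixing the second variable at \(B \DefEq \nil_{2}(Y)\), this means in particular that the functor
\[
  (-) \cosmash_2 B \colon \Nil_{2}(\X) \to \Nil_{2}(\X)
\]
is linear, since a bilinear bifunctor is linear in each of its variables.

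I would then apply Proposition~\ref{linear functors have abelian values} to this linear functor, taking both source and target to be the semi-abelian category \(\Nil_{2}(\X)\): any linear functor factors through the abelian core of its codomain. Consequently the object \(X \tensor_{\X} Y = \nil_{2}(X) \cosmash_2 B\) lies in \(\Ab(\Nil_{2}(\X))\).

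The step I expect to require the most care is the last one: passing from abelianness computed in \(\Nil_{2}(\X)\) back to abelianness in the ambient category \(\X\), i.e.\ identifying \(\Ab(\Nil_{2}(\X)) = \Ab(\X)\). Here I would use that the inclusion \(\Nil_{2}(\X) \hookrightarrow \X\) is right adjoint to \(\nil_{2}\), hence preserves all limits, in particular finite products and the zero object; being also full and faithful, it therefore both preserves and reflects internal abelian group structures, as these are defined purely in terms of finite products and the zero object. Thus an object of \(\Nil_{2}(\X)\) carries an internal abelian group structure in \(\Nil_{2}(\X)\) if and only if it does so in \(\X\), which yields \(\Ab(\Nil_{2}(\X)) = \Ab(\X)\); the inclusion \(\Ab(\X) \subseteq \Nil_{2}(\X)\) entering one direction follows from Lemma~\ref{Lemma-Join}, as \([A, A] = [A, A \join A] = [A,A] \join [A,A,A]\) forces \([A,A,A] \subobj [A,A]\) and hence \([A,A,A] = 0\) whenever \(A\) is abelian. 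We conclude \(X \tensor_{\X} Y \in \Ab(\X)\) for all \(X\), \(Y\); and since the bilinear product is already a functor into \(\X\), this value constraint corestricts it to the asserted functor \(\X \times \X \to \Ab(\X)\).
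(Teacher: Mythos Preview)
Your proof is correct and follows essentially the same approach as the paper's first proof: invoke Proposition~\ref{linear functors have abelian values} on a linear functor coming from the (bi)linear product. The paper applies it directly to \(X\tensor(-)\colon\X\to\X\), whose bilinearity was already recorded in Subsection~\ref{Subsection Bilinear Product}, thereby avoiding your detour through \(\Ab(\Nil_{2}(\X))=\Ab(\X)\); it also gives a second, independent proof via commutator calculus.
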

\begin{proof}
  We give two proofs, each of which is instructive. The first one consists in applying Proposition \ref{linear functors have abelian values} to the linear functor \(X\tensor -\colon \X\to\X\). The second proof is based on the use of commutator calculus. By definition of the bilinear product (\ref{Subsection Bilinear Product}), we may assume that \(X\) and \(Y\) are two-nilpotent. The commutator
  \[
    [X\cosmash_2Y,X\cosmash_2Y]\leq X+_2Y
  \]
  in \(\Nil_2(\X)\) is then indeed trivial, because \(X\cosmash_2Y=[i_1(X),i_2(Y)]\), so that
  \begin{align*}
    [X\cosmash_2Y,X\cosmash_2Y] & =\bigl[[i_1(X),i_2(Y)],[i_1(X),i_2(Y)]\bigr] \\
                                & \leq [i_1(X),i_2(Y),i_1(X),i_2(Y)]           \\
                                & \leq [i_1(X),i_1(X),i_2(Y)]=0
  \end{align*}
  since removing brackets or duplicate objects makes a commutator larger~\cite[Proposition 2.21]{HVdL} and the category \(\Nil_2(\X)\) is two-nilpotent.
\end{proof}

\section{Higher-order commutators and nilpotency}\label{Section Nilpotentisation}

In what follows, we need commutators of arbitrary length, and the concept of nilpotency that comes with it. Then, the two-nilpotentisation functor considered in Section~\ref{Section Two-Nilpotency} extends to arbitrary nilpotency degrees. Our main aim in this section is to prove that all nilpotentisation functors commute with Birkhoff reflectors: Theorem~\ref{Nil vs Birkhoff} generalises Theorem~3.4 in~\cite{PVdL1} from abelianisation to arbitrary nilpotentisation degrees.

Recall from~\cite{HVdL, Actions} that commutators of an arbitrary length are defined similarly to the cases of length \(2\) and \(3\) by means of cosmash products. These cosmash products may either be seen as instances of cross-effects as recalled in Section~\ref{Section Cross-Effects}, or explicitly as follows.

\subsection{Cosmash products and commutators}
Given objects \(X_1\), \dots, \(X_n\), their \defn{cosmash product} \(X_1\cosmash\cdots\cosmash X_n\) is the kernel \(\iota_{X_1,\dots,X_n}\colon X_1\cosmash\cdots\cosmash X_n\to X_1+\cdots+ X_n\) of the morphism
\[
  r=r_{X_1,\ldots,X_n}\colon X_1 + \dots + X_n \to \prod_{j=1}^n (X_1 + \dots + \widehat{X_j} + \dots + X_n)
\]
which is such that \(\pi_j \comp r \comp i_k = i_{X_k}\) if \(k \neq j\) and \(0\) if \(k=j\), for all \(j\), \(k \in \{1,\dots,n\}\).

Given a natural number \(n\geq 2\), an object \(X\), and \(n\) subobjects \(K_i\leq X\) with respective representing monomorphisms \(k_i\colon K_i\to X\) (for \(1\leq i\leq n\)), the commutator \([K_1,\dots,K_n]\leq X\) is defined as the image of the composite morphism
\[
  \xymatrix@=4em{X_1\cosmash\cdots\cosmash X_n \ar@{{ |>}->}[r]^{\iota_{X_1,\ldots,X_n}} & X_1+\cdots+ X_n \ar[r]^-{\DCoindArr{k_1 \and \cdots \and k_n}} &X\text{.}}
\]

As a consequence of Proposition~\ref{crossdecomp'}, Lemma~\ref{Lemma-Join} now extends to higher commutators as follows~\cite[Proposition 2.22]{HVdL}:

\begin{lemma}\label{Lemma-Long-Join}
  If \(n\geq 1\) and \(K_1\), \dots, \(K_n\), \(L\), \(M\leq X\) then
  \[
    [K_1,\dots, K_n,L\join M]=[K_1,\dots, K_n,L]\join [K_1,\dots, K_n,M]\join [K_1,\dots, K_n,L,M]
  \]
  where all joins are computed in \(X\).\noproof
\end{lemma}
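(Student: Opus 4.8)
The plan is to freeze the objects $K_1$, \dots, $K_n$ and to regard the higher cosmash product as a reduced functor of its \emph{last} variable, thereby reducing the statement to the binary-sum cross-effect decomposition already used to establish Lemma~\ref{Lemma Smash of Sum}. Write
\[
  F(W)\DefEq K_1\cosmash\cdots\cosmash K_n\cosmash W
\]
for the associated functor $\X\to\X$; it is reduced, and by the theory of cross-effects recalled in Section~\ref{Section Cross-Effects} its second cross-effect is $\cre_2 F(L,M)=K_1\cosmash\cdots\cosmash K_n\cosmash L\cosmash M$. Unwinding the definition, the commutator $[K_1,\dots,K_n,N]$ is the image in $X$ of the composite $F(N)\to K_1+\cdots+K_n+N\to X$ whose second leg is induced by the representing monomorphisms, and likewise $[K_1,\dots,K_n,L,M]$ is the image of the corresponding composite out of $\cre_2 F(L,M)$.

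First I would pass from the join $L\join M$ to the coproduct $L+M$. Since $L\join M$ is by definition the image of $\DCoindArr{l\and m}\colon L+M\to X$, the comparison $L+M\to L\join M$ is a regular epimorphism; because the cosmash product preserves regular epimorphisms in each variable—a property the cross-effects inherit from the regular-epi-preserving identity functor—applying $F$ produces a regular epimorphism $F(L+M)\to F(L\join M)$. Naturality of the cosmash inclusions makes this compatible with the maps down to $X$, and precomposing with a regular epimorphism leaves an image unchanged, so $[K_1,\dots,K_n,L\join M]$ equals the image of $F(L+M)\to X$.

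Now I would apply Proposition~\ref{crossdecomp'} to the reduced functor $F$ at the binary sum $L+M$. Exactly as in Lemma~\ref{Lemma Smash of Sum}, this yields an iterated semidirect-product decomposition
\[
  F(L+M)\cong\bigl((K_1\cosmash\cdots\cosmash K_n\cosmash L\cosmash M)\rtimes(K_1\cosmash\cdots\cosmash K_n\cosmash L)\bigr)\rtimes(K_1\cosmash\cdots\cosmash K_n\cosmash M),
\]
whose three factors are $\cre_2 F(L,M)$, $F(L)$ and $F(M)$, and in which the induced map from the coproduct of the three factors onto $F(L+M)$ is a regular epimorphism. Composing each factor with $F(L+M)\to X$ and using naturality together with the fact that $\DCoindArr{l\and m}$ restricts to $l$ and to $m$ on the two coproduct summands, the three factor images come out as $[K_1,\dots,K_n,L,M]$, $[K_1,\dots,K_n,L]$ and $[K_1,\dots,K_n,M]$. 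Since $F(L+M)$ is a regular quotient of the coproduct of these factors, the image of $F(L+M)\to X$ coincides with the image of that coproduct, which in turn is the join of the images of its three components; this is exactly the claimed identity.

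The one non-formal ingredient is the appeal to Proposition~\ref{crossdecomp'}: the whole argument rests on knowing that $F$ is reduced with second cross-effect the $(n+2)$-fold cosmash product, i.e.\ that iterating cross-effects of the identity functor simply appends a further cosmash factor. Granting this, everything else is the same bookkeeping of coproduct inclusions and image factorisations that proves the case $n=1$ in Lemma~\ref{Lemma-Join}. Note that no separate induction on $n$ is needed, since the decomposition of a \emph{binary} sum involves only the subsets $\{L\}$, $\{M\}$ and $\{L,M\}$ and hence always produces exactly three factors, independently of the length of the frozen prefix.
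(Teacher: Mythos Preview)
Your argument is correct and follows exactly the route the paper indicates: the sentence preceding the lemma says it is ``a consequence of Proposition~\ref{crossdecomp'}'', and you spell out precisely how---freeze \(K_1,\dots,K_n\), apply the semidirect-product decomposition of \(F(L+M)\) for the reduced functor \(F=K_1\cosmash\cdots\cosmash K_n\cosmash(-)\), and read off the three images. The one point you flag as non-formal (that \(\cre_2 F(L,M)\) is the \((n{+}2)\)-fold cosmash) is the recursive description of cross-effects mentioned just after Proposition~\ref{crossdecomp'} in the paper, so your honesty there is well placed but no real gap remains.
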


Note that cosmash products, and hence commutators, are symmetric in their variables, so that this join decomposition formula holds in all slots of the commutator, not just the last one.

\subsection{The lower central series}\label{The lower central series}
Given an object \(X\) of \(\X\), we write \(\gamma_{n}(X)=[\underbrace{X,\dots,X}_{\text{\(n\) terms}}]\) to obtain the \defn{lower central series}
\[
  X=\gamma_{1}(X) \geq \gamma_{2}(X)\geq\cdots\geq \gamma_{n}(X)\geq\cdots
\]
of \(X\). Note that, by the above, the object \(\gamma_{n}(X)\) can be obtained explicitly as the image \(i^X_n\colon \gamma_n(X)\to X\) of the composite morphism
\begin{equation*}
  \xymatrix@=4em{X^{\diamond n}\ar@{{ |>}->}[r]^{\iota_{X,\ldots,X}} & X^{+ n}  \ar@{- >>}[rr]^-{\nabla^n_X=(1_X\ldots 1_X)} &&X}
\end{equation*}
whose source is the \(n\)-fold cosmash product \(X^{\diamond n} = X \diamond \cdots\diamond X\). Note that the natural transformation \(i_n\colon \gamma_n\To 1_\X\) represents \(\gamma_n\colon \X\to \X\) as a normal subfunctor of \(1_\X\). (It is the image of the kernel \(\iota\) along the natural transformation \(\nabla^n\).)

\subsection{\texorpdfstring{The \(n\)-nilpotent core}{The n-nilpotent core}}\label{Subsection n-nilpotent core}
Recall~\cite{Actions,HVdL,SVdL3} (but also~\cite{BeBou} which uses slightly different terminology) that for \(n\geq 1\), an object \(U\) is called \defn{\(n\)-nilpotent} or \defn{nilpotent of class \(\le n\)} when the \((n+1)\)-ary commutator \([U,\dots,U]\) vanishes. This determines a Birkhoff subcategory of \(\X\) denoted \(\Nil_{n}(\X)\) and called the \defn{\(n\)-nilpotent core} of~\(\X\). The reflector \(\nil_{n}\colon{\X\to \Nil_{n}(\X)}\) is given by \(\nil_{n}(X)=X/[X,\dots,X]\). We write \(\nilunit_n\colon 1_{\X}\to \nil_{n}\) for the unit of the adjunction. Note that \emph{\(1\)-nilpotent} is the same as \emph{abelian}.

Theorem~\ref{Nil vs Birkhoff} depends on the next lemma.

\begin{lemma}\label{Subvariety Lemma}
  Given a semi-abelian category \(\X\) with a Birkhoff subcategory \(\B\), for any \(n\geq 2\) and all objects \(B_1\), \dots, \(B_n\) of \(\B\) there is a natural quotient morphism
  \[
    \nu_{B_1,\dots,B_n}\colon B_1\cosmash_\X\cdots\cosmash_\X B_n\to B_1\cosmash_\B\cdots\cosmash_\B B_n
  \]
  between the cosmash product of the \(B_i\) in the category \(\X\) and the same cosmash product computed in \(\B\). This morphism is the canonical restriction of the unit of the reflection from \(\X\) to \(\B\) at the coproduct of the \(B_i\), computed in \(\X\).
\end{lemma}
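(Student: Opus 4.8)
The plan is to construct \(\nu\) as the restriction to cosmash products of the reflection unit at the coproduct, and then to show that this restriction is a regular epimorphism. Write \(F\colon\X\to\B\) for the reflector, \(\eta\) for its unit, \(E\DefEq B_1+\dots+B_n\) for the coproduct in \(\X\), and \(E_j\DefEq B_1+\dots+\widehat{B_j}+\dots+B_n\). Since \(F\) is a left adjoint it preserves coproducts, and since each \(B_i\) lies in \(\B\) the unit \(\eta_{B_i}\) is invertible; hence \(F(E)\) is canonically the coproduct of the \(B_i\) computed in \(\B\). As \(\B\) is reflective it is closed under limits, so the products occurring in the definition of \(r_{B_1,\dots,B_n}\) already lie in \(\B\), and naturality of \(\eta\) at the structural projections \(r_j\colon E\to E_j\) shows that \(r^\B_{B_1,\dots,B_n}=\langle F(r_j)\rangle_j\) satisfies \(r^\B\comp\eta_E=p\comp r\), where \(p=\prod_j\eta_{E_j}\). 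Since \(r^\B\comp\eta_E\) thus factors through \(r\), it vanishes on \(\ker r=B_1\cosmash_\X\dots\cosmash_\X B_n\); so \(\eta_E\) carries this kernel into \(\ker r^\B=B_1\cosmash_\B\dots\cosmash_\B B_n\), and the resulting map is by construction the asserted restriction \(\nu\). Its naturality in the \(B_i\) is immediate from naturality of \(\eta\) and of the \(r_j\).

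First I would record the mechanism behind the word ``quotient''. For a Birkhoff subcategory the unit \(\eta_X\) is always a regular epimorphism (its image, being a subobject of an object of \(\B\), lies in \(\B\), whence the factorisation splits off the reflection), and the naturality square of \(\eta\) at a regular epimorphism \(g\colon X\to Y\) is a pushout~\cite{Janelidze-Kelly}; in the semi-abelian setting this yields \(\ker F(g)=\eta_X(\ker g)\). Combined with \(\eta_{\ker g}\) being a regular epimorphism, it follows that the canonical comparison \(F(\ker g)\to\ker F(g)\) is a regular epimorphism for every regular epimorphism \(g\). For \(n=2\) the map \(r_{B_1,B_2}\) is itself a regular epimorphism and \(F\) fixes its codomain \(B_1\times B_2\in\B\), so \(F(r_{B_1,B_2})=r^\B_{B_1,B_2}\) and this applies directly: \(\nu\) is the composite of the regular epimorphism \(\eta_{B_1\cosmash_\X B_2}\) with the comparison \(F(B_1\cosmash_\X B_2)\to B_1\cosmash_\B B_2\), hence a regular epimorphism.

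The difficulty for \(n\ge 3\) is that \(r_{B_1,\dots,B_n}\) is no longer a regular epimorphism, so the comparison cannot be applied to it directly. Each projection \(r_j\) is a split, hence regular, epimorphism, so the previous paragraph gives \(\ker F(r_j)=\eta_E(K_j)\) with \(K_j\DefEq\ker r_j\); thus \(B_1\cosmash_\X\dots\cosmash_\X B_n=\bigcap_j K_j\) while \(B_1\cosmash_\B\dots\cosmash_\B B_n=\bigcap_j\eta_E(K_j)\), and \(\nu=\eta_E|_{\bigcap_j K_j}\). Surjectivity of \(\nu\) therefore amounts to \(\eta_E\bigl(\bigcap_j K_j\bigr)=\bigcap_j\eta_E(K_j)\) (the inclusion \(\subseteq\) being automatic), equivalently to the distributivity \(\bigcap_j\bigl(K_j\join\ker\eta_E\bigr)=\bigl(\bigcap_j K_j\bigr)\join\ker\eta_E\). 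This is the main obstacle: such an identity fails for arbitrary normal subobjects, since lattices of normal subobjects in a semi-abelian category are only modular, so the argument must exploit the special position of the \(K_j\) inside the coproduct rather than formal lattice theory.

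To overcome it I would appeal to the cross-effect decomposition of Proposition~\ref{crossdecomp'} (of which Lemma~\ref{Lemma Smash of Sum} is the ternary instance). It exhibits \(E\) as an iterated semidirect product, indexed by the nonempty subsets \(S\subseteq\{1,\dots,n\}\), of the cosmash products \(\cosmash_{i\in S}B_i\), in such a way that \(\bigcap_j K_j\) is the deepest kernel (the top piece \(S=\{1,\dots,n\}\)) and each \(K_j\) is the join of those pieces with \(j\in S\). Applying the coproduct-preserving reflector \(F\) to this tower of split extensions and invoking the comparison of the second paragraph in each slot produces the corresponding tower for \(F(E)\), whose deepest kernel is \(B_1\cosmash_\B\dots\cosmash_\B B_n\); the induced map on deepest kernels is exactly \(\nu\), which is consequently a regular epimorphism. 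The care needed here is to check that \(F\) transports the decomposition of \(E\) onto the intrinsic decomposition of \(F(E)\) compatibly, that is, that passing to the reflection commutes with forming the semidirect factors, which is precisely where the split (absolute) nature of the extensions and the regular-epi comparison are used.
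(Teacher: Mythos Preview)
Your argument is correct, but it follows a different route from the paper's primary proof. The paper dispatches the general case in one stroke by invoking Lemma~2.8 of~\cite{SVdL3}: the comparison of the two ``cosmash cubes'' \(E_\X(B_1,\dots,B_n)\to E_\B(B_1,\dots,B_n)\) is a pointwise regular epimorphism between split \(n\)-cubic extensions, hence an \((n+1)\)-cubic extension, and that lemma says that the map induced on total kernels (i.e.\ on cosmash products) is then automatically a regular epimorphism. This packages exactly the distributivity \(\eta_E\bigl(\bigcap_j K_j\bigr)=\bigcap_j\eta_E(K_j)\) you isolated, without unfolding the semidirect-product tower.

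Your approach via the iterated decomposition of Proposition~\ref{crossdecomp'} is essentially the alternative the paper sketches in the Remark immediately following the lemma (exactness of taking \(n\)-th cross-effects, proved for \(n=2\) by the \(3\times 3\)-Lemma and then by induction). What you gain is self-containment: you avoid importing the higher-extension machinery of~\cite{SVdL3}. What the paper's route gains is brevity and uniformity in \(n\). The one place your write-up would need tightening is the last paragraph: you should make explicit that the split projections in the tower for \(E\) are induced by the canonical retractions \(r_j\), so that naturality of \(\eta\) forces \(F\) to send this tower to the \emph{intrinsic} tower of \(F(E)\) (the one obtained by applying Proposition~\ref{crossdecomp'} inside \(\B\)); once that identification is pinned down, the pushout property of the \(\eta\)-naturality squares along split epimorphisms gives the surjectivity on each successive kernel, and an induction on the depth of the tower finishes the job.
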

\begin{proof}
  This is essentially a consequence of Lemma~2.8 in~\cite{SVdL3}. In explaining why, we here freely use the concepts and results in that article, leading up to this lemma. We prove that the restriction of the component
  \[
    \eta_{B_1+_\X\cdots+_\X B_n}\colon B_1+_\X\cdots+_\X B_n\to B_1+_\B\cdots+_\B B_n
  \]
  of the adjunction unit \(\eta\) to cosmashes is a regular epimorphism. Indeed, the natural comparison morphism \(E_\X(B_1,\dots,B_n)\to E_\B(B_1,\dots,B_n)\) induced by \(\eta\) is an \((n+1)\)-cubic extension, as a pointwise regular epimorphism between split \(n\)-cubic extensions. The lemma mentioned above implies that the induced morphism \(\nu_{B_1,\dots,B_n}\colon{B_1\cosmash_\X\cdots\cosmash_\X B_n\to B_1\cosmash_\B\cdots\cosmash_\B B_n}\) is a \(1\)-cubic extension, hence a regular epimorphism, in \(\X\).
\end{proof}

\begin{remark}
  Alternatively, this result can be viewed as a consequence of a general theorem of functor calculus stating that the functor mapping a reduced functor to its \(n\)-th cross effect is exact. As in the classical case of functors with values in abelian categories this is a straightforward application of the \(3\times 3\)-Lemma for \(n=2\) and then follows by induction for higher \(n\).
\end{remark}

\begin{theorem}\label{Nil vs Birkhoff}
  Let \(\X\) be a semi-abelian category and \(n\geq 1\). Any Birkhoff reflector \(F\colon{\X\to \B}\) commutes with \(\nil_{n}\), up to isomorphism (square on the left) and under~\(F\) (triangle on the right).
  \[
    \vcenter{\xymatrix{\X \ar@{}[rd]|(.55){\alpha}|(.6){\swarrow} \ar[d]_-{\nil_n^\X} \ar[r]^-{F} & \B \ar[d]^-{\nil_n^\B}\\
    \Nil_n(\X)  \ar[r]_-{F} & \Nil_n(\B)}}
    \qquad\qquad
    \vcenter{\xymatrix{& F(X) \ar[dl]_-{F((\nilunit_n^{\X})_X)} \ar[dr]^-{(\nilunit_n^{\B})_{F(X)}}\\
    F(\nil^\X_n(X)) && \nil^\B_n(F(X)) \ar[ll]^-{\alpha_X}}}
  \]
  More precisely, there exists a natural isomorphism \(\alpha\colon \nil_n^{\B} \comp F \To F\comp \nil_n^{\X}\) such that \(\alpha_X \comp (\nilunit_n^{\B})_{F(X)} = F((\nilunit_n^{\X})_X)\) for each object \(X\) of \(\X\).
\end{theorem}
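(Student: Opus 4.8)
The plan is to produce the comparison $\alpha_X\colon\nil_n^{\B}(F(X))\to F(\nil_n^{\X}(X))$ and its inverse purely from the universal properties of the reflectors involved, once I know that $n$-nilpotency is \emph{absolute} along the Birkhoff reflection $F$. Throughout I use that $F$, as a left adjoint, preserves coproducts and regular epimorphisms, and that the unit components of any Birkhoff reflector --- here $\eta\colon 1_{\X}\To F$ as well as the nilpotentisation units $\nilunit_n^{\X}$ and $\nilunit_n^{\B}$ --- are regular epimorphisms (closure under subobjects splits off the monomorphic part of the image factorisation of a unit).

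The decisive step, which I expect to be the only non-formal one, is to show that for every object $B$ of $\B$ the two commutators agree: $\gamma_{n+1}^{\X}(B)=\gamma_{n+1}^{\B}(B)$ as subobjects of $B$. First I would note that the codiagonal $B+_{\X}\cdots+_{\X}B\to B$ (with $n+1$ summands) factors through the unit $\eta\colon B+_{\X}\cdots+_{\X}B\to B+_{\B}\cdots+_{\B}B$ --- which is legitimate since $B\in\B$ and $F$ preserves coproducts, so $F(B+_{\X}\cdots+_{\X}B)\cong B+_{\B}\cdots+_{\B}B$ --- namely as the codiagonal computed in $\B$ precomposed with $\eta$; this is immediate from the universal property of $\eta$ and its behaviour on coproduct injections. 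By Lemma~\ref{Subvariety Lemma}, this same $\eta$ restricts on cosmash products to the regular epimorphism $\nu\colon B\cosmash_{\X}\cdots\cosmash_{\X}B\to B\cosmash_{\B}\cdots\cosmash_{\B}B$. Since $\gamma_{n+1}^{\X}(B)$ and $\gamma_{n+1}^{\B}(B)$ are by definition the images of the two composites $B\cosmash_{\X}\cdots\cosmash_{\X}B\to B$ and $B\cosmash_{\B}\cdots\cosmash_{\B}B\to B$, and since $\nu$ is a regular epimorphism sitting in a commutative square over $B$, these two images coincide. I extract two consequences: first, $\Nil_n(\B)=\B\cap\Nil_n(\X)$, so that an object of $\B$ is $n$-nilpotent in $\B$ exactly when it is $n$-nilpotent in $\X$; second, $F$ preserves $n$-nilpotency, because if $\gamma_{n+1}^{\X}(U)=0$ then the regular epimorphism $\eta_U\colon U\to F(U)$ forces $\gamma_{n+1}^{\X}(F(U))=0$ (commutators preserve regular epimorphisms), whence $\gamma_{n+1}^{\B}(F(U))=0$ by the equality just proved.

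Granting this, the construction is formal. As $F(\nil_n^{\X}(X))$ is $n$-nilpotent in $\B$, the regular epimorphism $F((\nilunit_n^{\X})_X)\colon F(X)\to F(\nil_n^{\X}(X))$ has $n$-nilpotent codomain and therefore factors uniquely through $(\nilunit_n^{\B})_{F(X)}$, defining $\alpha_X$ together with the required triangle $\alpha_X\comp(\nilunit_n^{\B})_{F(X)}=F((\nilunit_n^{\X})_X)$. For the inverse, the composite $(\nilunit_n^{\B})_{F(X)}\comp\eta_X\colon X\to\nil_n^{\B}(F(X))$ has $n$-nilpotent codomain (now using $\Nil_n(\B)\subseteq\Nil_n(\X)$), so it factors through $(\nilunit_n^{\X})_X$ via some $\psi\colon\nil_n^{\X}(X)\to\nil_n^{\B}(F(X))$; as the latter lies in $\B$, the map $\psi$ in turn factors through $\eta_{\nil_n^{\X}(X)}$ to yield $\beta_X\colon F(\nil_n^{\X}(X))\to\nil_n^{\B}(F(X))$.

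Finally I would verify $\beta_X\comp\alpha_X=1$ and $\alpha_X\comp\beta_X=1$ by epimorphism cancellation: using naturality of $\eta$ at $(\nilunit_n^{\X})_X$ one gets $\beta_X\comp F((\nilunit_n^{\X})_X)=(\nilunit_n^{\B})_{F(X)}$ after cancelling the epimorphism $\eta_X$, and then $\beta_X\comp\alpha_X=1$ after cancelling $(\nilunit_n^{\B})_{F(X)}$; dually $\alpha_X\comp\psi=\eta_{\nil_n^{\X}(X)}$ after cancelling $(\nilunit_n^{\X})_X$, and then $\alpha_X\comp\beta_X=1$ after cancelling $\eta_{\nil_n^{\X}(X)}$. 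Naturality of $\alpha$ in $X$ follows by the standard argument, precomposing the two candidate composites with the epimorphism $(\nilunit_n^{\B})_{F(X)}$ and invoking naturality of the units together with that of $F(\nilunit_n^{\X})$. The whole argument thus reduces to the absoluteness statement of the second paragraph, which is where Lemma~\ref{Subvariety Lemma} and the regular-epimorphism-preservation of commutators do the real work.
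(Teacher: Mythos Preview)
Your proof is correct and uses the same key ingredient as the paper, namely Lemma~\ref{Subvariety Lemma}, but it is organised differently. The paper works directly with the cokernel presentation: it writes \(\nil_n^{\X}(X)\) and \(\nil_n^{\B}(F(X))\) as cokernels of the respective composites \(X^{\cosmash_{\X}(n+1)}\to X\) and \(F(X)^{\cosmash_{\B}(n+1)}\to F(X)\), applies cocontinuity of \(F\) to the first, and then uses Lemma~\ref{Subvariety Lemma} together with an orthogonality lifting to manufacture a regular epimorphism \(\sigma\colon F(X^{\cosmash_{\X}(n+1)})\to F(X)^{\cosmash_{\B}(n+1)}\) exhibiting the two cokernels as isomorphic. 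Your route instead isolates an \emph{absoluteness} statement --- that \(\gamma_{n+1}^{\X}(B)=\gamma_{n+1}^{\B}(B)\) for \(B\in\B\), hence \(\Nil_n(\B)=\B\cap\Nil_n(\X)\) --- and then lets the reflector universal properties do the rest. The advantage of your approach is modularity: the absoluteness lemma is of independent interest and the remaining argument is pure formal nonsense with no diagram-chasing; the paper's approach is more hands-on and avoids the intermediate stability-under-direct-images step for commutators, at the cost of a slightly heavier diagrammatic computation.
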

\begin{proof}
  For an object \(X\) in \(\X\), its reflection \(\nil^\X_n(X)\) into \(\Nil_n(\X)\) is the cokernel \((\nilunit_n^\X)_X\colon X\to \nil^\X_n(X)\) of the composite
  \[
    \xymatrix{X\cosmash_\X\cdots\cosmash_\X X\ar[r]^-{\iota_{X,\dots, X}} & X+_\X\cdots+_\X X \ar[r]^-{\nabla^{n+1}_X} & X\text{,}}
  \]
  where the first morphism is the canonical inclusion and the second morphism is the \((n+1)\)-fold codiagonal. Since \(F\) is cocontinuous, we find \(F(\nil^\X_n(X))\) as the cokernel \(F((\nilunit_n^{\X})_X)\) of the composite
  \[
    \xymatrix@=4em{F(X\cosmash_\X\cdots\cosmash_\X X)\ar[r]^-{F(\iota_{X,\dots, X})} & F(X+_\X\cdots+_\X X) \ar[r]^-{F(\nabla^{n+1}_X)} & F(X)\text{.}}
  \]
  Likewise, the reflection \(\nil^\B_n(F(X))\) of \(F(X)\) into \(\Nil_n(\B)\) is the cokernel
  \[
    (\nilunit_n^{\B})_{F(X)}\colon F(X)\to \nil^\B_n(F(X))
  \]
  of the composite
  \[
    \xymatrix@=4em{{F(X)}\cosmash_\B\cdots\cosmash_\B {F(X)}\ar[r]^-{\iota_{{F(X)},\dots, {F(X)}}} & {F(X)}+_\B\cdots+_\B {F(X)} \ar[r]^-{\nabla^{n+1}_{F(X)}} & {F(X)}\text{.}}
  \]
  Lemma~\ref{Subvariety Lemma} provides us with the dotted regular epimorphism \(\nu_{F(X),\dots,F(X)}\) induced by the commutative diagram of plain arrows in Figure~\ref{Figure Nil Cube} where \(\varphi\) is induced by the universal property of the unit \(\eta_{X+_\X\cdots+_\X X}\).
  \begin{figure}
    \resizebox{\textwidth}{!}{
    \(
    \xymatrix@!0@C=9em@R=4.5em{X\cosmash_\X\cdots\cosmash_\X X \ar@{-{ >>}}[rd]|-{\eta_{X\cosmash_\X\cdots\cosmash_\X X}} \ar@{-{ >>}}[dd]_-{\eta_X\cosmash_\X\cdots\cosmash_\X \eta_X} \ar@{{ |>}->}[rr]^-{\iota_{X,\dots,X}} && X+_\X\cdots+_\X X \ar@{-{ >>}}[dd]_(.25){\eta_X+_\X\cdots+_\X \eta_X} \ar@{-{ >>}}[rd]^-{\eta_{X+_\X\cdots+_\X X}}\\
    & F(X\cosmash_\X\cdots\cosmash_\X X) \ar@{->}[rr]|(.5){\hole}_(.65){F(\iota_{X,\dots,X})} \ar@{--{ >>}}[dd]|(.45){\hole}|(.5){\hole}|(.55){\hole}^(.7){\sigma} && F(X+_\X\cdots+_\X X) \ar[dd]^{\varphi}\\
    F(X)\cosmash_\X\cdots\cosmash_\X F(X) \ar@{.{ >>}}[rd]_-{\nu_{F(X),\dots,F(X)}} \ar@{{ |>}->}[rr]^-{\iota_{F(X),\dots,F(X)}} && F(X)+_\X\cdots+_\X F(X) \ar@{-{ >>}}[rd]|-{\eta_{F(X)+_\X\cdots+_\X F(X)}} \\
    & F(X)\cosmash_\B\cdots\cosmash_\B F(X) \ar@{{ |>}->}[rr]_-{\iota_{F(X),\dots,F(X)}} && F(X)+_\B\cdots+_\B F(X)}
    \)
    }
    \caption{Comparing nilpotent reflections}\label{Figure Nil Cube}
  \end{figure}
  Orthogonality of regular epimorphisms with respect to monomorphisms applied to the commutative square
  \[
    \xymatrix@C=8em{X\cosmash_\X\cdots\cosmash_\X X \ar[r]^-{\nu_{F(X),\dots,F(X)}\circ \eta_{X\cosmash_\X\cdots\cosmash_\X X}} \ar@{-{ >>}}[d]_-{\eta_{X\cosmash_\X\cdots\cosmash_\X X}} & F(X)\cosmash_\B\cdots\cosmash_\B F(X) \ar@{{ |>}->}[d]^-{\iota_{F(X),\dots,F(X)}}\\
    F(X\cosmash_\X\cdots\cosmash_\X X) \ar@{-->}[ru]^-{\sigma} \ar[r]_-{\varphi\circ F(\iota_{X,\dots,X})} & F(X)+_\B\cdots+_\B F(X)}
  \]
  provides us with the dashed lifting \(\sigma\) in the diagram, which is a regular epimorphism as well (since \(\nu_{F(X),\dots,F(X)}\circ \eta_{X\cosmash_\X\cdots\cosmash_\X X}\) is one by composition). It satisfies
  \begin{equation*}
    \nabla^{n+1}_{F(X)} \comp \iota_{F(X),\dots,F(X)} \comp \sigma  = \nabla^{n+1}_{F(X)} \comp \varphi\comp F(\iota_{X,\dots,X}) = F(\nabla^{n+1}_X)\comp F(\iota_{X,\dots, X})\text{,}
  \end{equation*}
  where the last equality holds because \(\eta_{X+_\X\cdots+_\X X}\) is an epimorphism. This proves that the cokernels determining \(F(\nil^\X_n(X))\) and \(\nil^\B_n(F(X))\) mentioned above are isomorphic. Let \(\alpha_X\) denote the unique isomorphism such that \(\alpha_X \comp (\nilunit_n^{\B})_{F(X)} = F((\nilunit_n^{\X})_X) \); it clearly satisfies the final claim in the statement of the theorem.
\end{proof}

Note that we use this result in the examples of loops and internal crossed modules worked out in Subsections~\ref{Loops} and~\ref{crossed modules}, respectively, and also to study nilpotency of low class of algebras over certain operads in Subsection~\ref{Nilpotency for operads}.

\section{Further properties of the bilinear product}\label{Section Further Properties}

\subsection{Description of the bilinear product in terms of \(\X\) alone.}\label{Subsection-Folding}
To use the above definition of bilinear product in practice, some knowledge about the two-nilpotent core of the category at hand is needed. We now explain how this may be avoided: we describe the bilinear product as a quotient of the cosmash product in the category \(\X\) itself. In order to do so, we first need to introduce some notation.

Given objects \(X\) and \(Y\) in~\(\X\), we consider the \defn{folding operations}
\[
  S_{1,2}^{X,Y}\colon {X\cosmash Y\cosmash Y\to X\cosmash Y}\qquad\text{and}\qquad S_{2,1}^{X,Y}\colon {X\cosmash X\cosmash Y\to X\cosmash Y}\text{.}
\]
The one on the left is induced by
\[
  \xymatrix@=3em{X\cosmash Y\cosmash Y \ar[r]^-{\iota_{X,Y,Y}} & X+Y+Y \ar[r]^-{1_{X}+\nabla_{Y}} & X+Y;
  }\]
indeed, \(\Im((1_{X}+\nabla_{Y})\comp \iota_{X,Y,Y}) = [i_1(X),i_2(Y),i_2(Y)] \subobj [i_1(X),i_2(Y)] = X \cosmash Y\). Being a restriction of the direct image of a kernel, the image
\[
  S_{1,2}^{X,Y}(X\cosmash Y\cosmash Y) = [i_1(X),i_2(Y),i_2(Y)]\normal X\cosmash Y
\]
of \(S_{1,2}^{X,Y}\) is normal in \(X\cosmash Y\). Likewise, \(S_{2,1}^{X,Y}(X\cosmash X\cosmash Y)\normal X\cosmash Y\). We denote their join in \(X\cosmash Y\), which of course also is a normal subobject~\cite{Borceux-Semiab,Huq,EverVdLRCT}, by \(N_{X,Y}\). By the above,
\[
  N_{X,Y} = [i_1(X),i_2(Y),i_2(Y)] \join [i_1(X),i_1(X),i_2(Y)]\normal X\cosmash Y
\]
which is useful in the proof of the next result.

\begin{proposition}
  \label{prop:internal descr of tensor}
  If \(X\) and \(Y\) are objects in a semi-abelian category~\(\X\), then their bilinear product \(X\tensor_{\X} Y\) is isomorphic to the quotient \((X\cosmash Y)/N_{X,Y}\) of the cosmash product \(X\cosmash Y\) by the normal subobject~\(N_{X,Y}\). In other words, we have a short exact sequence
  \[
    \xymatrix{0 \ar[r] & N_{X,Y} \ar@{{ |>}->}[r] & X\cosmash Y \ar@{-{ >>}}[r]^{\eta_{X,Y}} & X\tensor_{\X}Y \ar[r] & 0\text{.}}
  \]
\end{proposition}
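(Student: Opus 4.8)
The plan is to realise the map $\eta_{X,Y}$ as a corestriction of the two-nilpotent reflection unit and then to compute its kernel as an intersection of Higgins commutators inside $X+Y$. Write $W\DefEq X+Y$ and regard $X\cosmash Y$ as the normal subobject $[\iota_1(X),\iota_2(Y)]\normal W$, the formal commutator of Subsection~\ref{Binary and ternary cosmash products}. Since $\nil_2$ preserves binary sums, $\nil_2(W)=\nil_2(X)+_2\nil_2(Y)$, and inside it $X\tensor_\X Y=\nil_2(X)\cosmash_2\nil_2(Y)$ is likewise the formal commutator $[\iota_1(\nil_2 X),\iota_2(\nil_2 Y)]$. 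By naturality of the unit $\eta$, the regular epimorphism $\eta_W\from W\to\nil_2(W)$ carries $\iota_1(X)$ onto $\iota_1(\nil_2 X)$ and $\iota_2(Y)$ onto $\iota_2(\nil_2 Y)$; and because the cosmash product is right exact (Theorem~\ref{Theorem-Rightex}), Higgins commutators are preserved by regular epimorphisms. Hence $\eta_W$ restricts to a regular epimorphism $\eta_{X,Y}\from X\cosmash Y\to X\tensor_\X Y$ with
\[
\ker\eta_{X,Y}=[\iota_1(X),\iota_2(Y)]\cap\ker\eta_W=[\iota_1(X),\iota_2(Y)]\cap\gamma_3(W)\text{,}
\]
using $\ker\eta_W=\gamma_3(W)=[W,W,W]$. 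Everything thus reduces to the commutator identity $[\iota_1(X),\iota_2(Y)]\cap\gamma_3(W)=N_{X,Y}$.

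For the inclusion $\supseteq$, both generators of $N_{X,Y}$ lie on the left: $[\iota_1(X),\iota_2(Y),\iota_2(Y)]$ and $[\iota_1(X),\iota_1(X),\iota_2(Y)]$ are contained in $[\iota_1(X),\iota_2(Y)]$ (this is exactly the observation used to define the folding operations) and, being ternary commutators with all slots below $W$, are contained in $\gamma_3(W)$ by monotonicity~\cite[Proposition~2.21]{HVdL}; hence so is their join. The inclusion $\subseteq$ is the crux. Since $N_{X,Y}$ is a join of ternary commutators whose slots jointly span $W$, it is normal in $W$; let $q\from W\to\overline{W}\DefEq W/N_{X,Y}$ be the quotient and put $\overline{P}=q(\iota_1 X)$, $\overline{Q}=q(\iota_2 Y)$. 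As $q$ preserves commutators and $\gamma_3$, it suffices to prove $[\overline{P},\overline{Q}]\cap\gamma_3(\overline{W})=0$, for then pulling back gives $[\iota_1(X),\iota_2(Y)]\cap\gamma_3(W)\subseteq\ker q=N_{X,Y}$.

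In $\overline{W}$ the mixed ternary commutators $[\overline{P},\overline{P},\overline{Q}]$ and $[\overline{P},\overline{Q},\overline{Q}]$ vanish by construction. Expanding $\gamma_3(\overline{W})=[\overline{P}\join\overline{Q},\overline{P}\join\overline{Q},\overline{P}\join\overline{Q}]$ by the distributive law (Lemma~\ref{Lemma-Long-Join}), every term involving both letters is dominated, via symmetry and monotonicity, by one of the two vanishing commutators, while the single-letter terms reduce to $\gamma_3(\overline{P})$ and $\gamma_3(\overline{Q})$; thus $\gamma_3(\overline{W})=\gamma_3(\overline{P})\join\gamma_3(\overline{Q})$. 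Because $N_{X,Y}\leq X\cosmash Y=\ker r_{X,Y}$, the projection $r_{X,Y}$ factors as $\overline{r}\comp q$ with $\overline{r}\from\overline{W}\to X\times Y$ a regular epimorphism; one checks that $\overline{r}$ is monomorphic on $\overline{P}$ and on $\overline{Q}$ (identifying them with $X\times 0$ and $0\times Y$), that $[\overline{P},\overline{Q}]=\ker\overline{r}$, and that $\overline{P}\cap\overline{Q}=0$. The same bookkeeping yields $[\gamma_3(\overline{P}),\overline{Q}]=0=[\overline{P},\gamma_3(\overline{Q})]$, so $\gamma_3(\overline{P})$ and $\gamma_3(\overline{Q})$ are normal subobjects of $\overline{W}$ that centralise each other and intersect trivially; their join is therefore their biproduct, and $\overline{r}$ restricts to a monomorphism $\gamma_3(\overline{P})\times\gamma_3(\overline{Q})\to X\times Y$. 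Consequently $\gamma_3(\overline{W})\cap[\overline{P},\overline{Q}]=(\gamma_3(\overline{P})\join\gamma_3(\overline{Q}))\cap\ker\overline{r}=0$, which completes the inclusion $\subseteq$. Together with the regular epimorphism $\eta_{X,Y}$ this produces the asserted short exact sequence.

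I expect the main obstacle to be the commutator bookkeeping in $\overline{W}$: showing that imposing \emph{only} the two folding relations already forces $\gamma_3(\overline{W})=\gamma_3(\overline{P})\join\gamma_3(\overline{Q})$, and that this join meets the formal commutator trivially. The higher-commutator monotonicity of~\cite[Proposition~2.21]{HVdL} and the normality/centralisation facts must be applied with care, precisely because intersection of subobjects does not distribute over joins in general, so one cannot argue term by term against $[\overline{P},\overline{Q}]$ directly; the passage through the factorisation $r_{X,Y}=\overline{r}\comp q$ is what makes the final intersection computable.
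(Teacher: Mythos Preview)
Your argument is correct, and the overall architecture is sound: you correctly identify that everything reduces to the intersection identity $[\iota_1(X),\iota_2(Y)]\cap\gamma_3(W)=N_{X,Y}$, and your route through the quotient $\overline{W}=W/N_{X,Y}$ and the factorisation $r_{X,Y}=\overline{r}\comp q$ works. The crucial step---that $\overline{r}$ is monic on $\gamma_3(\overline{P})\vee\gamma_3(\overline{Q})$---follows, as you say, from the fact that the two pure-letter commutators Huq-commute and map into the complementary factors $X\times 0$ and $0\times Y$, so that uniqueness of the cooperator forces $\overline{r}\comp m$ to be the product of two monomorphisms. Two points deserve tightening: first, normality of $N_{X,Y}$ \emph{in $W$} (not merely in $X\cosmash Y$) is needed to form $\overline{W}$, and your one-line justification is too quick; it does follow from $[N_{X,Y},W]\leq N_{X,Y}$ via the monotonicity rules of~\cite[Proposition~2.21]{HVdL} together with~\cite[Proposition~4.14]{Actions}, but this should be spelled out. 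Second, the appeal to Theorem~\ref{Theorem-Rightex} for ``Higgins commutators are preserved by regular epimorphisms'' is misplaced: this is the more elementary direct-image stability of commutators from~\cite{HVdL}, and Theorem~\ref{Theorem-Rightex} is not needed here.

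By way of comparison, the paper proves the same intersection identity but organises it via the $3\times 3$~Lemma. It sets up the grid whose middle column is $X\cosmash Y\to X+Y\to X\times Y$, whose right column is its $\nil_2$-image, and whose left column is $N_{X,Y}\to\gamma_3(X+Y)\to\gamma_3(X)\times\gamma_3(Y)$; exactness of the top row then follows from exactness of the left column. The paper establishes the latter by constructing auxiliary comparison maps $\beta$ and $\delta$ and showing $\beta$ is an isomorphism, which is essentially your observation that $\gamma_3(W)/N_{X,Y}\cong\gamma_3(X)\times\gamma_3(Y)$ recast diagrammatically. Both proofs rest on the same expansion $\gamma_3(X+Y)=N_{X,Y}\vee\iota_1\gamma_3(X)\vee\iota_2\gamma_3(Y)$; the paper's version stays inside $W$ and does its bookkeeping with kernels and images, while yours passes to the quotient $\overline{W}$ and argues with cooperators. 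Your approach is slightly more hands-on with commutator calculus and avoids the $3\times 3$ machinery; the paper's approach makes the r\^ole of the column $\gamma_3(W)\to\gamma_3(X)\times\gamma_3(Y)$ more visible and sidesteps the need to verify normality of $N_{X,Y}$ in $W$.
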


Note that this is not exactly a particular case of Lemma~\ref{Subvariety Lemma} since we do not ask that \(X\) and \(Y\) are two-nilpotent. However, a straightforward generalisation of that lemma does indeed hold, where a cosmash product of objects in \(\X\) is compared with the cosmash of their reflections in \(\B\). On the other hand, what interests us here is the precise shape of the kernel of the comparison morphism \(\eta_{X,Y}\).

In the next proof, we freely use the calculation rules for Higgins commutators, listed in Proposition~2.21 of~\cite{HVdL}. For instance: commutators are symmetric and preserved under direct images (for a morphism \(f\colon A\to B\) and subobjects \(A_1\), \dots, \(A_n\) of \(A\), we have \(f[A_1,\ldots,A_n] =[f(A_1),\ldots,f(A_n)]\)); removing brackets or duplicate elements enlarges a commutator.

\begin{proof}
  We have to prove that the inclusion of \(\gamma_3(X+Y) = [X+Y,X+Y,X+Y]\normal X+Y\) into the cosmash product \(X\cosmash Y\) in~\(\X\) coincides with the object \(N_{X,Y}\): see Figure~\ref{Figure N}.
  \begin{figure}
    \begin{tikzcd}
      & 0 \arrow[d]& 0 \arrow[d] && 0 \arrow[d] \\
      0 \arrow[r] & N_{X,Y} \arrow[r, nmono] \arrow[d, nmono] & X \cosmash Y \arrow[rr, "\eta_{X,Y}", induced] \arrow[d, "\iota_{X,Y}",nmono] && X \tensor_{\X} Y \arrow[r] \arrow[d, nmono] & 0 \\
      0 \arrow[r] & {\gamma_3(X+Y)} \arrow[r,"i_3^{X+Y}", nmono] \arrow[d, "\alpha", induced]& X+Y \arrow[rr, "\nilunit_2^{X+Y}", repi] \arrow[d, "r_{X,Y}",repi] && \nil_2(X) +_2 \nil_2(Y) \arrow[r] \arrow[d, repi] & 0 \\
      0 \arrow[r] & {\gamma_3(X)} \times {\gamma_3(Y)}  \ar[d]  \arrow[r,"i_3^{X}\times i_3^{Y}"', nmono]   &  X \times Y \arrow[rr, "\nilunit_2^X\times \nilunit_2^Y"',repi] \arrow[d] && \nil_2(X) \times \nil_2(X) \arrow[d]\arrow[r]&0 \\
      & 0& 0 && 0
    \end{tikzcd}
    \caption{Proving that \(N_{X,Y}=[X+Y,X+Y,X+Y]\cap (X\cosmash Y)\)}\label{Figure N}
  \end{figure}
  First of all we decompose the commutator \([X+Y,X+Y,X+Y]\) by rewriting the sum as a join in order to be able to repeatedly use the distributive law (Lemma~\ref{Lemma-Long-Join}) together with the fact that removing duplicate objects makes a commutator larger, that commutators are symmetric, and that they are preserved under direct images. We find:
  \begin{align*}
    [X+Y,X+Y,X+Y] & = [i_1(X)\join i_2(Y),
    i_1(X)\join i_2(Y),i_1(X)\join i_2(Y)]                                           \\
                  & =[i_1(X), i_2(Y),i_2(Y)] \join [i_1(X), i_1(X),i_2(Y)]           \\
                  & \quad\join [i_1(X), i_1(X),i_1(X)] \join [i_2(Y), i_2(Y),i_2(Y)] \\
                  & = N_{X,Y}\join i_1([X,X,X]) \join i_2([Y,Y,Y])\text{.}
  \end{align*}

  Consider the commutative diagram in Figure~\ref{Figure N} of which the second and third row and second and third column are exact, so that commutativity of the right-hand bottom square induces the existence of the morphisms \(\eta_{X,Y}\) and \(\alpha\). So the \(3\times 3\)-Lemma provides the desired exactness of the top row, once we have shown exactness of the first column. To see this, first note that
  \begin{align*}
    [i_1([X,X,X]) , i_2([Y,Y,Y])] & \subobj
    [i_1(X), i_1(X),i_1(X),i_2(Y),i_2(Y),i_2(Y)]                                    \\
                                  & \subobj [i_1(X), i_1(X),i_2(Y)] \subobj N_{X,Y}
  \end{align*}
  since removing brackets enlarges the commutator. This provides the upper left-hand square in the commutative diagram with exact rows in Figure~\ref{Figure Beta Delta}, which in turn induces the map \(\beta\), while the bottom left-hand square induces \(\delta\).
  \begin{figure}
    \hfil\xymatrix{
    [i_1(\gamma_3(X)),i_2(\gamma_3(Y))] \ar@{{ >}->}[d]  \ar@{{ |>}->}[r] &\gamma_3(X)+\gamma_3(Y) \ar@{- >>}[rr]^-{r_{\gamma_3(X),\gamma_3(Y)}} \ar[d]^-{\DCoindArr{\gamma_3(i_1) \and \gamma_3(i_2)}} & &\gamma_3(X)\times\gamma_3(Y) \ar@{.>}[d]^-{\beta}\\
    N_{X,Y} \ar@{{ >}->}[d] \ar@{{ |>}->}[r] & \gamma_3(X+Y) \ar@{{ >}->}[d]^-{i_3^{X+Y}} \ar@{- >>}[rr]^-{q} & &\gamma_3(X+Y)/N_{X,Y} \ar@{.>}[d]^-{\delta}\\
    [i_1(X),i_2(Y)] \ar@{{ |>}->}[r]_-{\iota_{X,Y}} & X+Y \ar@{- >>}[rr]_-{r_{X,Y}} & &X\times Y
    }\hfil
    \caption{Constructing the morphisms \(\beta\) and \(\delta\)}\label{Figure Beta Delta}
  \end{figure}
  Now, by naturality of \(i_3\),
  \begin{align*}
    \delta\comp \beta\comp r_{\gamma_3(X),\gamma_3(Y)} & = r_{X,Y}\comp
    i_3^{X+Y}\comp \CoindArr{\gamma_3(i_1) \and \gamma_3(i_2)}                               \\
                                                       & =  r_{X,Y} \comp  (i_3^{X} + i_3^Y)
    = (i_3^{X} \times i_3^Y) \comp r_{\gamma_3(X),\gamma_3(Y)}\text{,}
  \end{align*}
  whence \(\delta\comp \beta = i_3^{X} \times i_3^Y\) is a monomorphism, and thus so is \(\beta\). But by the decomposition of \(\gamma_3(X+Y)\) as the join \(N_{X,Y}\join i_1([X,X,X]) \join i_2([Y,Y,Y])\) obtained above, the composite \(\beta\comp r_{\gamma_3(X),\gamma_3(Y)} = q\comp \CoindArr{\gamma_3(i_1) \and \gamma_3(i_2)} \) is a regular epimorphism, hence so is \(\beta\). So \(\beta\) is an isomorphism and thus \(\delta\) is a monomorphism. Now \( (i_3^{X} \times i_3^Y)\comp \alpha = r_{X,Y} \comp  i_3^{X+Y} = \delta \comp q\), whence \(\Ker(\alpha) = \Ker((i_3^{X} \times i_3^Y)\comp \alpha) = \Ker(\delta \comp q) = \Ker(q) = N_{X,Y}\). Thus the first column of the diagram in Figure~\ref{Figure N} is exact in \(\gamma_3(X+Y)\). To show that \(\alpha\) is a regular epimorphism, note that \(\im((i_3^{X} \times i_3^Y)\comp \alpha) = \delta = \im(\delta\comp \beta) = i_3^{X} \times i_3^Y\), so there exists a regular epimorphism \(q'\colon \gamma_3(X+Y) \to\gamma_3(X)\times\gamma_3(Y) \) such that \((i_3^{X} \times i_3^Y)\comp \alpha = (i_3^{X} \times i_3^Y)\comp q'\). Thus \(\alpha =q'\) is a regular epimorphism as desired.
\end{proof}

\begin{remark}
  This may be used in an alternative proof of Proposition~\ref{Proposition-Abelian}. Indeed, the composite of canonical morphisms
  \[
    (X\cosmash Y)\cosmash (X\cosmash Y) \to X\cosmash Y\cosmash X\cosmash Y \to X\cosmash X\cosmash Y\text{,}
  \]
  together with the analogous arrow into \(X \cosmash Y \cosmash Y\), induces an inclusion \([X\cosmash Y,X\cosmash Y]\leq N_{X,Y}\). As a consequence, the regular epimorphism \(\eta_{X,Y}\colon{X\cosmash Y\to X\tensor_{\X}Y}\) factors through \(\ab(X\cosmash Y)\). The result now follows because the class of abelian objects is closed under quotients~\cite[Proposition~1.6.11]{Borceux-Bourn}.
\end{remark}

\begin{remark}\label{tensor = bilinearisation of cosmash}
  From the viewpoint of functor calculus, the bifunctor
  \[
    {(X,Y)\mapsto \frac{X\cosmash Y}{N_{X,Y}}}
  \]
  is in fact the \defn{bilinearisation} of the bifunctor given by the cosmash product---see~\cite{Hartl-Vespa} for the case of \(\Ab\)-valued functors, and~\cite{CCC} for the general case. So Proposition~\ref{prop:internal descr of tensor} states that the bilinear product of \(\X\) can be described as the bilinearisation of its cosmash product.
\end{remark}

\begin{example}\label{bilinearisation of the cosmash product of groups}
  As mentioned above, a classical theorem of combinatorial group theory states that for groups \(G\), \(H\) the kernel of the canonical map from the coproduct \(G+H=G*H\) to \(G\times H\) (in our terms, \(G\cosmash H\)) is free on the commutators \([g,h]\), for \((g,h)\in G^*\times H^*\) where \(K^*=K\backslash \{1\}\). Hence as a bifunctor, \(G\cosmash H\) can be described as the quotient of the free group on \(G\times H\) divided by the normal subgroup generated by \((G\times \{1\}) \cup (\{1\}\times H)\). It was shown in~\cite{Hartl-Vespa} that the bilinearisation of this bifunctor is the tensor product \(\ab(G)\tensor_\Z \ab(H)\), an observation which suggested to us that the bilinearisation of the cosmash product might be a suitable generalisation of the classical tensor product of groups to arbitrary semi-abelian categories, and thus led to this paper.
\end{example}

\subsection{From cosmash product to bilinear product.}
The collection of quotients \(\eta_{X,Y}\colon{X\cosmash Y\to X\tensor_{\X}Y}\) from the cosmash product to the bilinear product clearly form a natural transformation \({\cosmash\To\tensor}\). The passage is well-behaved with respect to regular epimorphisms, in the sense of Proposition~\ref{Proposition-cosmash-to-Tensor} below.

To see this, first recall from~\cite{Bourn2003} that a commutative square such as \(f\comp b=a\comp f'\) below is called a \defn{double extension} or a \defn{regular pushout square} when all its arrows and the comparison \((b,f') \from B'\to B\times_{A}A'\) to the induced pullback are regular epimorphisms. Note that a regular pushout square is always a pushout, as follows from the next result.

\begin{lemma}\label{Lemma-LeftRight}\cite{Bourn2001, EGVdL}
  Consider, in a homological category, a commutative diagram with exact rows
  \begin{equation*}\label{Short-Five-Lemma}
    \vcenter{\xymatrix{0 \ar[r] & K' \ar@{{ |>}->}[r] \ar[d]_-k & B' \ar@{-{ >>}}[r]^-{f'} \ar[d]_-b & A' \ar[d]^-a \ar[r] & 0\\
    0 \ar[r] & K \ar@{{ |>}->}[r] & B \ar@{-{ >>}}[r]_-f & A \ar[r] & 0\text{.}}}
  \end{equation*}
  \begin{enumerate}
    \item \label{pb iff k iso} The right hand square \(f\comp b=a\comp f'\) is a pullback if and only if \(k\) is an isomorphism.
    \item \label{reg po iff k repi} Suppose that \(b\) and \(a\) are regular epimorphisms. Then the square \(f\comp b=a\comp f'\) is a regular pushout if and only if \(k\) is a regular epimorphism.
  \end{enumerate}
\end{lemma}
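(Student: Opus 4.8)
The plan is to reduce both statements to a single comparison morphism into the pullback and then to analyse that morphism through the short five lemma. Write $P\DefEq B\times_{A}A'$ for the pullback of $f$ along $a$, with projections $\pi_{1}\colon P\to B$ and $\pi_{2}\colon P\to A'$, and let $w\DefEq\langle b,f'\rangle\colon B'\to P$ be the induced comparison, so that $\pi_{1}\comp w=b$ and $\pi_{2}\comp w=f'$. The key preliminary observation is that the kernel of the projection $\pi_{2}$ is canonically $\Ker(f)=K$ (the kernel of a pullback projection is the kernel of the morphism being pulled back), so that $0\to K\to P\xrightarrow{\pi_{2}}A'\to 0$ is a short exact sequence. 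Since $\pi_{2}\comp w=f'$, the morphism $w$ carries $K'=\Ker(f')$ into $\Ker(\pi_{2})=K$, and using $\pi_{1}\comp w=b$ together with commutativity of the left-hand square one checks that the restriction of $w$ to these kernels is exactly $k$. Thus $(k,w,\id{A'})$ is a morphism of short exact sequences whose right-hand leg is the identity on $A'$.

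For part~\eqref{pb iff k iso}, the right-hand square is a pullback precisely when the comparison $w$ is an isomorphism. If $w$ is an isomorphism then, isomorphisms being preserved by the kernel functor, its restriction $k$ to kernels is an isomorphism; conversely, if $k$ is an isomorphism then the short five lemma applied to $(k,w,\id{A'})$ forces $w$ to be an isomorphism. This settles the first statement.

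For part~\eqref{reg po iff k repi}, observe first that, since $f'$ and $f$ are regular epimorphisms and $b$, $a$ are so by hypothesis, being a regular pushout is equivalent to the single condition that the comparison $w$ be a regular epimorphism; hence it suffices to prove that $w$ is a regular epimorphism if and only if $k$ is. One implication is formal: the square with vertical legs $k$, $w$ and horizontal legs the two kernel inclusions is a pullback, because pulling the monomorphism $K\mono P$ back along $w$ produces $\Ker(\pi_{2}\comp w)=\Ker(f')=K'$ with projection $k$. As a pullback of the regular epimorphism $w$, the morphism $k$ is then a regular epimorphism by regularity of $\X$.

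The remaining implication---that $k$ being a regular epimorphism forces $w$ to be one---is the crux and the expected main obstacle, since it cannot be handled by a direct diagram chase but rather needs the image factorisation of $w$ in tandem with protomodularity. Here I would factor $w=m\comp e$ through its regular image $I\mono P$. From $f'=(\pi_{2}\comp m)\comp e$ being a regular epimorphism it follows that its left factor $\pi_{2}\comp m\colon I\to A'$ is a regular epimorphism. On the other hand, denoting the two kernel inclusions by $u'$ and $u$, the equality $w\comp u'=u\comp k$ together with $k$ being a regular epimorphism gives $\im(w\comp u')=u(K)=K$, so that $K\leq I$ as subobjects of $P$; consequently the kernel of $\pi_{2}\comp m$ equals $I\meet K=K$. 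Therefore $0\to K\to I\xrightarrow{\pi_{2}\comp m}A'\to 0$ is exact and $m$ fits into a morphism of short exact sequences with identities on both outer legs; the short five lemma then yields that $m$ is an isomorphism, i.e.\ $w$ is a regular epimorphism, completing the proof.
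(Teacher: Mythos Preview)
Your proof is correct. The paper itself does not give an argument: it simply records that part~\eqref{pb iff k iso} combines Lemma~1 and Proposition~7 of~\cite{Bourn2001}, and that part~\eqref{reg po iff k repi} follows from Proposition~8 of the same reference. Your self-contained treatment---reducing everything to the comparison $w\colon B'\to B\times_{A}A'$, identifying $k$ as its restriction to kernels, and then invoking the short five lemma (for part~\eqref{pb iff k iso}, and again after the image factorisation $w=m\comp e$ for part~\eqref{reg po iff k repi})---is essentially the argument underlying those cited results, and has the advantage of being entirely internal to the paper. The only steps worth flagging as relying on ambient hypotheses are: that $\pi_{2}\comp m$ is a regular epimorphism because regular and extremal epimorphisms coincide in a regular category (so the left factor of a regular epimorphism is one), and that $k$ is a pullback of $w$ along the kernel inclusion (so that pullback-stability of regular epimorphisms gives the forward implication in~\eqref{reg po iff k repi}); both are standard in the homological setting and your use of them is sound.
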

\begin{proof}
  \eqref{pb iff k iso} is Lemma~1 and Proposition~7 of~\cite{Bourn2001} combined. \eqref{reg po iff k repi} follows from Proposition~8 in~\cite{Bourn2001}.
\end{proof}

\begin{proposition}\label{Proposition-cosmash-to-Tensor}
  Suppose \(f\colon{B\to A}\) and \(g\colon {D\to C}\) are regular epimorphisms. Then the commutative square
  \[
    \xymatrix{B\cosmash D \ar@{-{ >>}}[d]_-{\eta_{B,D}} \ar@{-{ >>}}[r]^-{f\cosmash g} & A\cosmash C \ar@{-{ >>}}[d]^-{\eta_{A,C}}\\
    B\tensor D \ar@{-{ >>}}[r]_-{f\tensor g} & A\tensor C}
  \]
  is a regular pushout.
\end{proposition}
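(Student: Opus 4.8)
The plan is to deduce everything from Lemma~\ref{Lemma-LeftRight}\eqref{reg po iff k repi}. By Proposition~\ref{prop:internal descr of tensor}, each pair of objects sits in a short exact sequence $0\to N_{\cdot,\cdot}\to{-}\cosmash{-}\xrightarrow{\eta}{-}\tensor{-}\to 0$; since the cosmash product and the bilinear product are both bifunctors and $\eta$ is natural, the maps $f\cosmash g$ and $f\tensor g$ assemble into a commutative ladder with exact rows
\[
  \xymatrix@=2.6em{
  0 \ar[r] & N_{B,D} \ar@{{ |>}->}[r] \ar[d]_-{k} & B\cosmash D \ar@{-{ >>}}[r]^-{\eta_{B,D}} \ar@{-{ >>}}[d]_-{f\cosmash g} & B\tensor D \ar[r] \ar[d]^-{f\tensor g} & 0\\
  0 \ar[r] & N_{A,C} \ar@{{ |>}->}[r] & A\cosmash C \ar@{-{ >>}}[r]_-{\eta_{A,C}} & A\tensor C \ar[r] & 0\text{,}}
\]
where $k$ is the restriction of $f\cosmash g$. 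Applying Lemma~\ref{Lemma-LeftRight}\eqref{reg po iff k repi} to this ladder, the right-hand square---which is exactly the square in the statement---is a regular pushout as soon as the two middle verticals are regular epimorphisms and the induced kernel map $k$ is a regular epimorphism. So the whole proof reduces to checking that all three verticals are regular epimorphisms (and, along the way, that $k$ exists).

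First I would treat the two easy verticals. The map $f\cosmash g$ is a regular epimorphism: factoring it as $(f\cosmash\id{C})\comp(\id{B}\cosmash g)$ and invoking the right exactness of the cosmash product (Theorem~\ref{Theorem-Rightex}) in each variable shows that both factors, hence the composite, are regular epimorphisms. For $f\tensor g$, commutativity of the right-hand square gives $(f\tensor g)\comp\eta_{B,D}=\eta_{A,C}\comp(f\cosmash g)$, a composite of regular epimorphisms and so itself a regular epimorphism; since in a regular category the last arrow of a composite that is a regular epimorphism is again one, $f\tensor g$ is a regular epimorphism.

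The heart of the argument is to produce $k$ and show it is a regular epimorphism; equivalently, that the direct image of $N_{B,D}$ along $f\cosmash g$ equals $N_{A,C}$. Recall from Subsection~\ref{Subsection-Folding} that, viewed inside $B+D$,
\[
  N_{B,D}=[\iota_1(B),\iota_2(D),\iota_2(D)]\vee[\iota_1(B),\iota_1(B),\iota_2(D)]\normal B\cosmash D\text{.}
\]
By naturality of $\iota$, the map $f\cosmash g$ is the restriction of $f+g\colon B+D\to A+C$, and $(f+g)\comp\iota_1=\iota_1\comp f$, $(f+g)\comp\iota_2=\iota_2\comp g$. As $f$ and $g$ are regular epimorphisms we have $f(B)=A$ and $g(D)=C$, whence $(f+g)(\iota_1(B))=\iota_1(A)$ and $(f+g)(\iota_2(D))=\iota_2(C)$. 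Using that commutators are stable under direct images (the rule $h[A_1,\ldots,A_n]=[h(A_1),\ldots,h(A_n)]$ recalled in the proof of Proposition~\ref{prop:internal descr of tensor}) and that direct image preserves joins of subobjects, I compute
\[
  (f+g)(N_{B,D})=[\iota_1(A),\iota_2(C),\iota_2(C)]\vee[\iota_1(A),\iota_1(A),\iota_2(C)]=N_{A,C}\text{.}
\]
Hence the image of the composite $N_{B,D}\hookrightarrow B\cosmash D\xrightarrow{f\cosmash g}A\cosmash C$ is exactly $N_{A,C}$; in particular $f\cosmash g$ does restrict to a map $N_{B,D}\to N_{A,C}$, and this map $k$ is the regular-epimorphism part of the image factorisation, so it is a regular epimorphism. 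Lemma~\ref{Lemma-LeftRight}\eqref{reg po iff k repi} then finishes the proof.

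The hard part will be this last direct-image computation: one must keep careful track of the fact that $N_{X,Y}$ lives as a normal subobject of the cosmash product rather than of the sum $X+Y$, and verify that passing to $f+g$ is legitimate and compatible with the join defining $N_{X,Y}$. The commutator manipulations themselves are routine once surjectivity ($f(B)=A$, $g(D)=C$) and stability under direct images are in hand; the genuine care is in the bookkeeping that links $f\cosmash g$ to $f+g$ and guarantees that $k$ is indeed the restriction appearing in the ladder.
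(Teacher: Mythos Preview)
Your proof is correct and follows essentially the same strategy as the paper's: both use Lemma~\ref{Lemma-LeftRight}\eqref{reg po iff k repi} on the ladder coming from Proposition~\ref{prop:internal descr of tensor}, reducing the problem to showing that the induced map \(N_{B,D}\to N_{A,C}\) is a regular epimorphism. The paper dispatches this last step in one line (``it is a join of two regular epimorphisms'', using that cosmash products and joins preserve regular epimorphisms), whereas you spell it out via the direct-image computation under \(f+g\); these are two ways of saying the same thing.
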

\begin{proof}
  Cosmash products preserve regular epimorphisms by~\cite[Proposition~2.9]{HVdL}. Clearly, also joins do. By Lemma~\ref{Lemma-LeftRight} it now suffices to note that the induced morphism \({N_{B,D}\to N_{A,C}}\) is a regular epimorphism, because it is a join of two regular epimorphisms.
\end{proof}

\begin{proposition}\label{Proposition Nilpotent}
  Let \(Z\) be a two-nilpotent object and \(x\colon {X\to Z}\), \(y\colon{Y\to Z}\) two morphisms with codomain \(Z\). Then the induced morphism \(\CoindArr{x \and y}\comp\iota_{X,Y}\colon {X\cosmash Y \to Z}\) factors through \(\eta_{X,Y}\) to a morphism \({X\tensor Y \to Z}\).
\end{proposition}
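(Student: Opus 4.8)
The plan is to read off the factorisation from the explicit presentation of the bilinear product given in Proposition~\ref{prop:internal descr of tensor}. There, $\eta_{X,Y}\colon X\cosmash Y\to X\tensor_{\X}Y$ is exhibited as the cokernel of the normal subobject $N_{X,Y}\normal X\cosmash Y$. By the universal property of this cokernel, the morphism $\CoindArr{x \and y}\comp\iota_{X,Y}$ factors (necessarily uniquely) through $\eta_{X,Y}$ if and only if its restriction to $N_{X,Y}$ is zero. So the entire argument reduces to showing that the composite
\[
  \xymatrix@C=3em{N_{X,Y} \ar@{{ |>}->}[r] & X\cosmash Y \ar[r]^-{\iota_{X,Y}} & X+Y \ar[r]^-{\CoindArr{x \and y}} & Z}
\]
vanishes.

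For this, I would first recall from the proof of Proposition~\ref{prop:internal descr of tensor} that, transported along $\iota_{X,Y}$, the subobject $N_{X,Y}$ of $X\cosmash Y$ sits inside the ternary commutator $\gamma_3(X+Y)=[X+Y,X+Y,X+Y]$ of $X+Y$: indeed, $N_{X,Y}$ occurs as one of the joinands in the decomposition of $[X+Y,X+Y,X+Y]$ established there (the first column of Figure~\ref{Figure N} records exactly this inclusion). Consequently it suffices to prove that $\CoindArr{x \and y}$ annihilates $\gamma_3(X+Y)$.

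This is where the nilpotency hypothesis enters. Writing $I\leq Z$ for the image of $\CoindArr{x \and y}$, I would use the two commutator properties already invoked in the excerpt: commutators are stable under direct images, so $\CoindArr{x \and y}\bigl([X+Y,X+Y,X+Y]\bigr)=[I,I,I]$; and commutators are monotone, so $[I,I,I]\leq [Z,Z,Z]$. Since $Z$ is two-nilpotent, $[Z,Z,Z]=0$, whence $\CoindArr{x \and y}(\gamma_3(X+Y))=0$. Thus $\gamma_3(X+Y)\leq\Ker\CoindArr{x \and y}$, the displayed composite above is zero, and the required factorisation of $\CoindArr{x \and y}\comp\iota_{X,Y}$ through $\eta_{X,Y}$ exists.

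I do not anticipate a serious obstacle, since once Proposition~\ref{prop:internal descr of tensor} is in hand the statement is essentially a reformulation of two-nilpotency of $Z$. The only point demanding a little care is the identification, as subobjects of $X+Y$ along $\iota_{X,Y}$, of $N_{X,Y}$ with a part of $\gamma_3(X+Y)$ --- but this is provided verbatim by the commutator decomposition in the proof of Proposition~\ref{prop:internal descr of tensor}. One could instead bypass that proposition by factoring $\CoindArr{x \and y}$ through the reflection $\nilunit_2^{X+Y}\colon X+Y\to\nil_2(X)+_2\nil_2(Y)$ and restricting the resulting map to cosmash products; however, the cokernel argument sketched above is more direct and reuses machinery already developed.
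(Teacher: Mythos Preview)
Your proof is correct, but the paper takes precisely the alternative route you mention in your final paragraph: it factors \(\CoindArr{x \and y}\) through the reflection \(X+Y\to\nil_2(X+Y)=\nil_2(X)+_2\nil_2(Y)\) using that \(\nil_2(Z)=Z\), and then restricts the resulting map \(\nil_2(\CoindArr{x \and y})\) to the cosmash product \(X\tensor Y=\nil_2(X)\cosmash_2\nil_2(Y)\). This is a two-line diagram chase that needs only the \emph{definition} of the bilinear product and the universal property of \(\nil_2\); your argument, by contrast, invokes Proposition~\ref{prop:internal descr of tensor} and its commutator decomposition of \(\gamma_3(X+Y)\), which is heavier machinery. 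Your approach has the virtue of making the obstruction concrete --- you see exactly which subobject must die --- while the paper's approach is shorter and conceptually cleaner, since it never unpacks \(N_{X,Y}\) at all.
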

\begin{proof}
  As the diagram
  \[
    \begin{tikzcd}
      X \cosmash Y \arrow[r, "\iota_{X,Y}", nmono] \arrow[d, "\eta_{X,Y}"', repi] & X+Y \arrow[rr, "\CoindArr{x \and y}"] \arrow[d, repi] && Z \arrow[d, equal] \\
      X \tensor Y \arrow[r, nmono] & \nil_2(X+Y) \arrow[rr, "\nil_2(\CoindArr{x \and y})"'] && \nil_2(Z) = Z
    \end{tikzcd}
  \]
  indicates, the needed morphism is the restriction of \(\nil_2(\CoindArr{x \and y})\) to \(X\tensor Y=\nil_{2}(X)\cosmash_2\nil_{2}(Y)\).
\end{proof}

\subsection{Sequential right exactness.}

We now show that the functor \(X\tensor (-)\) is \defn{sequentially right exact}, which means that it preserves right exact sequences in the following sense. A sequence of the form
\[
  \begin{tikzcd}
    K \arrow[r, "k"] & B \arrow[r, "f", repi] & A \arrow[r] & 0
  \end{tikzcd}
\]
is \defn{right exact} whenever \(k\) is a \defn{proper arrow}, i.e.\ an arrow whose image is a normal subobject of its codomain, and \(f\) is a cokernel of \(k\). Note that, when an arrow \(k \colon K \to B\) is proper, the image \(i \colon I \to B\) of \(k\) is given by the kernel of the cokernel of \(k\).

The sequential right exactness of \(X \cosmash (-)\) is a consequence (Proposition~\ref{Proposition-RightExact-Tensor}) of the following theorem, of which the proof uses cross-effects and is deferred to Section~\ref{Section Cross-Effects}, page \pageref{Proof of Theorem-RightExact-HVdL}. Note that all these results crucially depend on Barr exactness of the category, as explained in Subsection~\ref{Subsection Basic Principle}. This is the main reason we chose the context of semi-abelian categories as our work environment\footnote{The definition of the bilinear product itself, however, makes sense in all homological categories.}.

\begin{theorem}\label{Theorem-RightExact-HVdL}
  Suppose that \(\X\) is a semi-abelian category. Consider an object~\(X\) and a cokernel
  \[
    \xymatrix{K \ar[r]^-{k} & B \ar@{-{ >>}}[r]^-{f} & A \ar[r] & 0}
  \]
  in~\(\X\). Then we obtain a right exact sequence
  \[
    \resizebox{\textwidth}{!}
    {\xymatrix@=4em{(X\cosmash K\cosmash B) \rtimes (X\cosmash K) \ar[rr]^-{\DCoindArr{S_{1,2}^{X,B}\circ (1_{X}\cosmash k\cosmash 1_{B}) \and 1_{X}\cosmash k}} && X\cosmash B \ar@{-{ >>}}[r]^-{1_{X}\cosmash f} & X\cosmash A \ar[r] & 0}}
  \]
  in~\(\X\).
\end{theorem}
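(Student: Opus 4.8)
The plan is to reduce right exactness to a single kernel computation and then carry it out with the distributive law. Since $\id{X}\cosmash f$ is a regular epimorphism by \cite[Proposition~2.9]{HVdL}, and a regular epimorphism is the cokernel of the inclusion of its kernel, the whole statement reduces to the identity $\Ker(\id{X}\cosmash f)=\im(\phi)$ for the displayed arrow $\phi=\DCoindArr{S_{1,2}^{X,B}\comp(\id{X}\cosmash k\cosmash \id{B}) \and \id{X}\cosmash k}$; this simultaneously shows $\phi$ is proper, its image being then a kernel. First I would replace $K$ by $I\DefEq\Ker(f)=\im(k)\normal B$: as $k$ factors as a regular epimorphism onto $I$ followed by the inclusion $I\normal B$ and cosmash products preserve regular epimorphisms, neither component of $\phi$ changes its image. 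Computing inside $X+B$, the image descriptions of the folding operation and of the inclusions give $\im(\id{X}\cosmash k)=[\iota_1(X),\iota_2(I)]$ and $\im\bigl(S_{1,2}^{X,B}\comp(\id{X}\cosmash k\cosmash \id{B})\bigr)=[\iota_1(X),\iota_2(I),\iota_2(B)]$, so that $\im(\phi)$, being the join of the images of its two components, equals $[\iota_1(X),\iota_2(I)]\join[\iota_1(X),\iota_2(I),\iota_2(B)]$. The inclusion $\im(\phi)\leq\Ker(\id{X}\cosmash f)$ is immediate: $(\id{X}\cosmash f)\comp(\id{X}\cosmash k)=\id{X}\cosmash(f\comp k)=0$, while naturality of $S_{1,2}$ gives $(\id{X}\cosmash f)\comp S_{1,2}^{X,B}=S_{1,2}^{X,A}\comp(\id{X}\cosmash f\cosmash f)$, which kills the first component because $f\comp k=0$.

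For the reverse inclusion I would first identify the kernel. The defining short exact sequences of the two cosmash products form a morphism of extensions, and the square $\iota_{X,A}\comp(\id{X}\cosmash f)=(\id{X}+f)\comp\iota_{X,B}$ with $\iota_{X,A}$ monic yields $\Ker(\id{X}\cosmash f)=(X\cosmash B)\cap\Ker(\id{X}+f)$ inside $X+B$. Moreover $\id{X}+f$ is the cokernel of $\iota_2|_I\colon I\to X+B$ (a morphism out of $X+A=X+(B/I)$ is a morphism out of $X$ together with one out of $B$ killing $I$), so $\Ker(\id{X}+f)$ is the normal closure $\overline{\iota_2(I)}$ of $\iota_2(I)$ in $X+B$. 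Using that the normal closure of a subobject $H\leq G$ equals $H\join[H,G]$ — which one checks is normal via Lemma~\ref{Lemma-Long-Join} and the containment $[\iota_1(X),\iota_2(Y),\iota_2(Y)]\leq[\iota_1(X),\iota_2(Y)]$ — and expanding $[\iota_2(I),\iota_1(X)\join\iota_2(B)]$ by Lemma~\ref{Lemma-Long-Join}, I obtain
\[
  \overline{\iota_2(I)}=\iota_2(I)\join[\iota_1(X),\iota_2(I)]\join[\iota_1(X),\iota_2(I),\iota_2(B)]=\iota_2(I)\join\im(\phi),
\]
the term $[\iota_2(I),\iota_2(B)]$ being absorbed into $\iota_2(I)$ since $I\normal B$. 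By monotonicity of commutators (and one folding) both generators of $\im(\phi)$ lie in $X\cosmash B=[\iota_1(X),\iota_2(B)]$, so $\im(\phi)\leq\Ker(r_{X,B})$.

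It remains to intersect $\overline{\iota_2(I)}=\iota_2(I)\join\im(\phi)$ with $X\cosmash B$, and this is the main obstacle: $\iota_2(I)$ is \emph{not} normal in $X+B$, so the modular law is not directly available. I would resolve it as follows. Restricting $r_{X,B}$ to $\overline{\iota_2(I)}$ and using $\im(\phi)\leq\Ker(r_{X,B})$ gives a regular epimorphism $p\colon\overline{\iota_2(I)}\to r_{X,B}(\iota_2(I))\cong I$ with kernel $N\DefEq(X\cosmash B)\cap\overline{\iota_2(I)}=\Ker(\id{X}\cosmash f)$; since $r_{X,B}$ restricts to an isomorphism on $\iota_2(I)$, the map $p$ is split by $\iota_2|_I$ and $\iota_2(I)\cap N=0$. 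A separate commutator computation — repeatedly removing brackets and merging duplicate copies of $\iota_1(X)$ and of $\iota_2(I)$ to land inside $[\iota_1(X),\iota_2(I),\iota_2(B)]$ — shows $\im(\phi)\normal\overline{\iota_2(I)}$. Then $\iota_2(I)\hookrightarrow\overline{\iota_2(I)}\to\overline{\iota_2(I)}/\im(\phi)$ is an isomorphism (as $\overline{\iota_2(I)}=\iota_2(I)\join\im(\phi)$ and $\iota_2(I)\cap\im(\phi)=0$), and tracing the composite $\iota_2(I)\hookrightarrow\overline{\iota_2(I)}\to\overline{\iota_2(I)}/\im(\phi)\to\overline{\iota_2(I)}/N\xrightarrow{\cong}I$ shows it equals the isomorphism $p|_{\iota_2(I)}$; hence the comparison $\overline{\iota_2(I)}/\im(\phi)\to\overline{\iota_2(I)}/N$ is an isomorphism, forcing $N=\im(\phi)$.

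This yields $\Ker(\id{X}\cosmash f)=\im(\phi)$, with $\im(\phi)$ normal, and the asserted right exactness follows. I expect the genuinely delicate points to be exactly this final normality-and-intersection bookkeeping together with the single-step normal-closure formula; the cross-effect formalism of Section~\ref{Section Cross-Effects} is precisely what organises this computation, which is why the self-contained commutator argument sketched above is there replaced by the cross-effect version.
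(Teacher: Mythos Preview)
Your argument is essentially correct, but it follows a genuinely different route from the paper. The paper does \emph{not} compute \(\Ker(1_X\cosmash f)\) directly; instead it proves a general functorial statement: any reduced functor \(F\colon\C\to\X\) that preserves coequalisers of reflexive graphs has the same property for each cross-effect \(F(X|-)\) (Theorem~\ref{Theorem-Preservation-Coequalisers}), and preservation of reflexive coequalisers is \emph{equivalent} to the right-exactness conclusion (Proposition~\ref{Proposition-Rightex}). Theorem~\ref{Theorem-RightExact-HVdL} is then the one-line specialisation \(F=1_\X\). The key device is to replace the cokernel \(K\to B\to A\) by the reflexive pair \(K+B\rightrightarrows B\), apply \(X\cosmash(-)\), and invoke the ``basic principle'' of Subsection~\ref{Subsection Basic Principle} together with Lemma~\ref{Lemma-LeftRight} to control the comparison to the kernel pair.

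Your approach, by contrast, is a hands-on computation inside \(X+B\): you translate everything into Higgins commutators via the embedding \(X\cosmash B=[\iota_1(X),\iota_2(B)]\hookrightarrow X+B\), identify \(\Ker(1_X+f)\) as a normal closure, expand it with Lemma~\ref{Lemma-Join}, and finish with a split-extension argument to pin down the intersection. This is more elementary and gives an explicit description of the kernel, but it is tied to the identity functor---the commutator interpretation of \(F(X|B)\) is unavailable for general \(F\)---whereas the paper's argument immediately yields Theorem~\ref{Theorem-Rightex} for \emph{all} cross-effects \(F(X_1|\cdots|X_n|-)\). The step you flag as delicate, the normality \(\im(\phi)\normal\overline{\iota_2(I)}\), does go through by the bracket-removal and duplicate-removal rules of \cite[Proposition~2.21]{HVdL}, and the normal-closure formula you use is indeed \cite[Proposition~4.14]{Actions}; you might cite these explicitly rather than leaving them as sketches.
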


Note that the morphism \(S_{1,2}^{X,B}\circ (1_{X}\cosmash k\cosmash 1_{B})\colon X\cosmash K\cosmash B\to X\cosmash B\) factors through \(X\cosmash B\cosmash B\), so that it vanishes when we pass to the quotient~\({X\tensor B}\). Hence:

\begin{proposition}\label{Proposition-RightExact-Tensor}
  Suppose that \(\X\) is a semi-abelian category. Consider an object~\(X\) and a cokernel
  \[
    \xymatrix{K \ar[r]^-{k} & B \ar@{-{ >>}}[r]^-{f} & A \ar[r] & 0}
  \]
  in~\(\X\). Then we obtain the right exact sequence of abelian objects
  \[
    {\xymatrix@=5em{X\tensor K \ar[r]^-{1_{X}\tensor k} & X\tensor B \ar@{-{ >>}}[r]^-{1_{X}\tensor f} & X\tensor A \ar[r] & 0\text{.}}}
  \]
\end{proposition}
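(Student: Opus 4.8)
The plan is to deduce this from the cosmash-level statement, Theorem~\ref{Theorem-RightExact-HVdL}, by pushing its right exact sequence down along the natural quotient $\eta_{X,-}\colon X\cosmash(-)\To X\tensor_{\X}(-)$ of Proposition~\ref{prop:internal descr of tensor}. The guiding observation, already flagged in the paragraph preceding the statement, is that the ``extra'' generator of the image of the connecting map at the cosmash level is annihilated on passage to the bilinear product; what remains to be organised is the bookkeeping of images and kernels.

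First I would record the concrete shape of the obstruction. By Theorem~\ref{Theorem-RightExact-HVdL}, the object $X\cosmash A$ is the cokernel of
\[
  \phi=\DCoindArr{S_{1,2}^{X,B}\comp (1_{X}\cosmash k\cosmash 1_{B}) \and 1_{X}\cosmash k}\colon (X\cosmash K\cosmash B)\rtimes(X\cosmash K)\to X\cosmash B\text{,}
\]
so that $\Ker(1_{X}\cosmash f)=\im(\phi)$ is the join of the images of the two components of $\phi$. The first component factors through $S_{1,2}^{X,B}\colon X\cosmash B\cosmash B\to X\cosmash B$, whose image $[\iota_1(X),\iota_2(B),\iota_2(B)]$ is one of the two joinands constituting $N_{X,B}$; hence $\eta_{X,B}$ kills it. By naturality of $\eta$ and the fact that $\eta_{X,K}$ is a regular epimorphism, $\eta_{X,B}$ sends the image of the second component $1_{X}\cosmash k$ onto $\im(1_{X}\tensor k)$. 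I therefore obtain $\eta_{X,B}\bigl(\Ker(1_{X}\cosmash f)\bigr)=\im(1_{X}\tensor k)$.

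Next I would control the comparison of kernels. Applying Proposition~\ref{Proposition-cosmash-to-Tensor} to the regular epimorphisms $f\colon B\to A$ and $1_{X}\colon X\to X$ produces a regular pushout square relating $1_{X}\cosmash f$ to $1_{X}\tensor f$ through the quotients $\eta_{X,B}$ and $\eta_{X,A}$; in particular $1_{X}\tensor f$ is a regular epimorphism. Viewing this regular pushout as the right-hand square of a diagram of short exact sequences with kernels $\Ker(1_{X}\cosmash f)$ and $\Ker(1_{X}\tensor f)$, Lemma~\ref{Lemma-LeftRight}\eqref{reg po iff k repi} tells me that the induced comparison on kernels, which is the restriction of $\eta_{X,B}$, is a regular epimorphism. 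Combining this with the previous step yields
\[
  \Ker(1_{X}\tensor f)=\eta_{X,B}\bigl(\Ker(1_{X}\cosmash f)\bigr)=\im(1_{X}\tensor k)\text{.}
\]

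Finally I would conclude. The equality $\Ker(1_{X}\tensor f)=\im(1_{X}\tensor k)$ shows at once that $1_{X}\tensor k$ is proper (its image is a kernel, hence normal) and, since in a homological category a regular epimorphism is the cokernel of its kernel, that $1_{X}\tensor f$ is a cokernel of $1_{X}\tensor k$; this is exactly right exactness, while abelianness of the three terms is Proposition~\ref{Proposition-Abelian}. The main obstacle is the middle passage: one must show that quotienting by $N_{X,B}$ simultaneously erases the $S_{1,2}$-component and is compatible with the kernel comparison. It is precisely the regular-pushout criterion of Lemma~\ref{Lemma-LeftRight} that upgrades the naive inclusion $\im(1_X\tensor k)\subseteq\Ker(1_X\tensor f)$ to the equality needed to identify the cokernel; without it one is left only with an inequality of subobjects.
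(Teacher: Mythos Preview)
Your proof is correct and follows essentially the same approach as the paper's: both start from Theorem~\ref{Theorem-RightExact-HVdL}, kill the \(S_{1,2}^{X,B}\)-component via \(\eta_{X,B}\), invoke the regular pushout of Proposition~\ref{Proposition-cosmash-to-Tensor}, and use naturality of \(\eta\) together with the fact that \(\eta_{X,K}\) is epic. The only cosmetic difference is that the paper argues on the cokernel side (the pushout of a cokernel is a cokernel, so \(1_X\tensor f\) is directly the cokernel of \(\eta_{X,B}\comp\phi\), which then simplifies), whereas you argue on the kernel side via Lemma~\ref{Lemma-LeftRight}\eqref{reg po iff k repi} to identify \(\Ker(1_X\tensor f)\) as the direct image of \(\Ker(1_X\cosmash f)\); the two routes are equivalent.
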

\begin{proof}
  This is a consequence of Theorem~\ref{Theorem-RightExact-HVdL}.
  By Proposition~\ref{Proposition-cosmash-to-Tensor}, the regular epimorphism \(1_{X}\tensor f\) is the pushout of \(1_X\cosmash f\) along \(\eta_{X,B}\), and hence the cokernel of
  \begin{equation}\label{semidir}
    \eta_{X,B}\comp\DCoindArr{S_{1,2}^{X,B}\circ (1_{X}\cosmash k\cosmash 1_{B}) \and 1_{X}\cosmash k}\colon (X\cosmash K\cosmash B) \rtimes (X\cosmash K) \to X\tensor B\text{.}
  \end{equation}
  But on \(X\cosmash K\cosmash B\) this morphism vanishes, so this term may be removed without changing the cokernel. Now it suffices to use that \(\eta_{X,B}\comp(1_{X}\cosmash k)=(1_{X}\tensor k)\comp \eta_{X,K}\) and \(1_{X}\tensor k\) have the same cokernel (since \(\eta_{X,K}\) is an epimorphism). Furthermore, the morphism \(1_{X}\tensor k\) is proper, since such is~\eqref{semidir}, so that its image is the kernel of~\(1_{X}\tensor f\).
\end{proof}

Note that this result is stronger than sequential right exactness since we do not ask \(k\) to be proper.

\begin{corollary}\label{Corollary-Products-Tensor}
  Suppose that \(\X\) is a semi-abelian category. Then for any object~\(X\) in~\(\X\), the functor \(X\tensor (-)\colon \X\to \Ab(\X)\) preserves finite products.
\end{corollary}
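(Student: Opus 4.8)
The plan is to reduce product preservation to the sequential right exactness of Proposition~\ref{Proposition-RightExact-Tensor}, applied to the short exact sequence that relates the binary coproduct with the binary product. Fix objects \(Y\) and \(Z\) and consider the defining short exact sequence of the cosmash product (Subsection~\ref{Binary and ternary cosmash products})
\[
  0 \to Y\cosmash Z \xrightarrow{\iota_{Y,Z}} Y+Z \xrightarrow{r_{Y,Z}} Y\times Z \to 0\text{,}
\]
in which \(r_{Y,Z}\) is a cokernel of the normal (hence proper) monomorphism \(\iota_{Y,Z}\). Applying Proposition~\ref{Proposition-RightExact-Tensor} with \(k=\iota_{Y,Z}\) and \(f=r_{Y,Z}\), I obtain that the sequence
\[
  X\tensor(Y\cosmash Z) \xrightarrow{1_X\tensor\iota_{Y,Z}} X\tensor(Y+Z) \xrightarrow{1_X\tensor r_{Y,Z}} X\tensor(Y\times Z) \to 0
\]
is right exact, so that \(1_X\tensor r_{Y,Z}\) is a cokernel of \(1_X\tensor\iota_{Y,Z}\).

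The key observation is then that \(1_X\tensor\iota_{Y,Z}=0\). Write \(F\DefEq X\tensor(-)\), which is a linear functor by Subsection~\ref{Subsection Bilinear Product}, so that the canonical comparison \(r^{F}_{Y,Z}=\DIndArr{F(\DCoindArr{\id{Y} \and 0}) \and F(\DCoindArr{0 \and \id{Z}})}\from F(Y+Z)\to F(Y)\times F(Z)\) is an isomorphism. Since \(r_{Y,Z}\comp\iota_{Y,Z}=0\), both composites \(\DCoindArr{\id{Y} \and 0}\comp\iota_{Y,Z}\) and \(\DCoindArr{0 \and \id{Z}}\comp\iota_{Y,Z}\) vanish (they are obtained from \(r_{Y,Z}\comp\iota_{Y,Z}\) by post-composing with the two product projections). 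Hence \(r^{F}_{Y,Z}\comp F(\iota_{Y,Z})=\DIndArr{0 \and 0}=0\), and because \(r^{F}_{Y,Z}\) is an isomorphism this forces \(F(\iota_{Y,Z})=0\). As any cokernel of the zero morphism is an isomorphism, \(1_X\tensor r_{Y,Z}\) is an isomorphism.

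It remains to identify this with the canonical comparison \(c=\DIndArr{1_X\tensor\pi_Y \and 1_X\tensor\pi_Z}\from X\tensor(Y\times Z)\to (X\tensor Y)\times(X\tensor Z)\), where \(\pi_Y\), \(\pi_Z\) are the product projections. Using \(\pi_Y\comp r_{Y,Z}=\DCoindArr{\id{Y} \and 0}\) and \(\pi_Z\comp r_{Y,Z}=\DCoindArr{0 \and \id{Z}}\), one computes \(c\comp(1_X\tensor r_{Y,Z})=r^{F}_{Y,Z}\); since both \(1_X\tensor r_{Y,Z}\) and \(r^{F}_{Y,Z}\) are isomorphisms, so is \(c\). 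This yields preservation of binary products, and since \(X\tensor 0=0\) by bireducedness (Subsection~\ref{Subsection Symmetry}) the empty product (the terminal object, which in \(\Ab(\X)\) agrees with that of \(\X\)) is preserved as well; together these give preservation of all finite products.

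I expect no serious obstacle here: the entire argument rests on the single identity \(1_X\tensor\iota_{Y,Z}=0\), and both this vanishing and the compatibility \(c\comp(1_X\tensor r_{Y,Z})=r^{F}_{Y,Z}\) are immediate consequences of the linearity of \(F\) combined with \(r_{Y,Z}\comp\iota_{Y,Z}=0\). The only mild care needed is to phrase right exactness so that \(1_X\tensor r_{Y,Z}\) is genuinely realised as a cokernel, which is exactly what Proposition~\ref{Proposition-RightExact-Tensor} supplies.
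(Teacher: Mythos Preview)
Your argument is correct, but it takes a different route from the paper's. The paper simply observes that a binary product \(Y\times Z\) sits in the \emph{split} short exact sequence
\[
  0 \to Y \to Y\times Z \to Z \to 0
\]
and applies Proposition~\ref{Proposition-RightExact-Tensor} to that: right exactness plus the (functorially preserved) splitting yields a split short exact sequence in \(\Ab(\X)\), hence \(X\tensor(Y\times Z)\cong(X\tensor Y)\times(X\tensor Z)\). Your approach instead feeds the cosmash sequence \(0\to Y\cosmash Z\to Y+Z\to Y\times Z\to 0\) into the same proposition and then uses linearity of \(X\tensor(-)\) to kill \(1_X\tensor\iota_{Y,Z}\), identifying the resulting isomorphism with the canonical comparison via \(r^F_{Y,Z}\). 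Both arguments ultimately rest on Proposition~\ref{Proposition-RightExact-Tensor}; the paper's version is shorter and avoids the extra bookkeeping of the comparison \(c\comp(1_X\tensor r_{Y,Z})=r^F_{Y,Z}\), while yours has the virtue of making the role of bilinearity completely explicit.
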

\begin{proof}
  The functor \(X\tensor (-)\) being reduced, it preserves the terminal object \(1=0\). Binary products are preserved because they may be presented as split short exact sequences.
\end{proof}

The bilinear product commutes with abelianisation:

\begin{corollary}\label{Tensor commutes with Ab}
  For all objects \(X\), \(Y\) of \(\X\) there exist natural isomorphisms
  \[
    X\tensor Y\cong X\tensor \ab(Y)\cong \ab(X)\tensor \ab(Y)\text{.}
  \]
  As a consequence, the triangle of functors on the left
  \[
    \xymatrix@!0@=4em{\X\times \X \ar[rr]^{\tensor_{\X}} \ar[rd]_{\ab\times\ab} && \Ab(\X)\\
    & \Ab(\X)\times \Ab(\X) \ar[ru]_-{\tensor_{\X}}}
    \qquad\qquad
    \xymatrix@!0@=4em{\X \ar[rr]^{X\tensor_{\X}(-)} \ar[rd]_{\ab} && \Ab(\X)\\
    & \Ab(\X) \ar[ru]_-{X\tensor_{\X}(-)}}
  \]
  commutes up to isomorphism, and for each \(X\) of \(\X\), so does the one on the right.
\end{corollary}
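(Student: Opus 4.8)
The plan is to reduce the whole statement to the single claim that the functor \(X\tensor(-)\) annihilates the inclusion \(\iota\colon[Y,Y]\to Y\), i.e.\ that \(1_X\tensor\iota=0\). Once this is established, the first isomorphism is formal. By definition the abelianisation \(q_Y\colon Y\to\ab(Y)=Y/[Y,Y]\) is a cokernel of \(\iota\), so Proposition~\ref{Proposition-RightExact-Tensor}, applied to the cokernel \(\xymatrix@1{[Y,Y]\ar[r]^-{\iota} & Y\ar[r]^-{q_Y} & \ab(Y)\ar[r]& 0}\), yields a right exact sequence
\[
  X\tensor [Y,Y]\xrightarrow{1_X\tensor\iota}X\tensor Y\xrightarrow{1_X\tensor q_Y}X\tensor\ab(Y)\to0\text{.}
\]
If \(1_X\tensor\iota=0\), then \(1_X\tensor q_Y\) is a cokernel of the zero map and hence an isomorphism, giving \(X\tensor Y\cong X\tensor\ab(Y)\). (Here \(\iota\) is even proper, since \([Y,Y]\normal Y\) because \(Y\join Y=Y\), although Proposition~\ref{Proposition-RightExact-Tensor} does not require this.)

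The heart of the argument is thus the vanishing \(1_X\tensor\iota=0\), which I would prove by commutator calculus. Since \(\eta\colon\cosmash\To\tensor\) is a natural transformation, one has \((1_X\tensor\iota)\comp\eta_{X,[Y,Y]}=\eta_{X,Y}\comp(1_X\cosmash\iota)\), and as \(\eta_{X,[Y,Y]}\) is a regular epimorphism it suffices to show \(\eta_{X,Y}\comp(1_X\cosmash\iota)=0\), that is, that the image of \(1_X\cosmash\iota\colon X\cosmash[Y,Y]\to X\cosmash Y\) lies in \(N_{X,Y}=\Ker(\eta_{X,Y})\) (Proposition~\ref{prop:internal descr of tensor}). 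Writing \(\iota_1\), \(\iota_2\) for the coproduct inclusions of \(X\), \(Y\) into \(X+Y\) and using naturality of the cosmash inclusions \(\iota_{X,-}\), this image is exactly the subobject \([\iota_1(X),\iota_2([Y,Y])]\) of \(X\cosmash Y\leq X+Y\). I would then invoke stability of commutators under direct images to rewrite \(\iota_2([Y,Y])=[\iota_2(Y),\iota_2(Y)]\), followed by the rule that removing brackets makes a commutator larger~\cite[Proposition~2.21]{HVdL}:
\[
  [\iota_1(X),\iota_2([Y,Y])]=[\iota_1(X),[\iota_2(Y),\iota_2(Y)]]\leq[\iota_1(X),\iota_2(Y),\iota_2(Y)]\leq N_{X,Y}\text{,}
\]
the last inclusion being immediate since \([\iota_1(X),\iota_2(Y),\iota_2(Y)]\) is one of the two joinands defining \(N_{X,Y}\). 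This gives \(1_X\tensor\iota=0\).

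It then remains to assemble the conclusion. The isomorphism \(X\tensor Y\cong X\tensor\ab(Y)\) is natural in both variables because \(q\), \(\eta\) and the universal property of cokernels are. Since the bilinear product is symmetric (\ref{Subsection Symmetry}), the same statement applied in the first variable—equivalently, transporting the first isomorphism across the twist \(\tau\)—gives \(X\tensor\ab(Y)\cong\ab(X)\tensor\ab(Y)\), and composing produces the full chain. The two commuting triangles are merely restatements of these isomorphisms: the right-hand one is the one-variable factorisation \(X\tensor(-)\cong(X\tensor(-))\comp\ab\) for fixed \(X\), and the left-hand one is the composite \(\tensor_{\X}\cong\tensor_{\X}\comp(\ab\times\ab)\). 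The only genuine content is the vanishing \(1_X\tensor\iota=0\); everything else is bookkeeping with right exactness and symmetry. I expect the main obstacle to be purely notational—tracking the various coproduct inclusions and the ambient object of each commutator—rather than conceptual, since the commutator rules needed (direct-image stability and the bracket inequality) are already at hand.
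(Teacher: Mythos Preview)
Your proof is correct and follows essentially the same route as the paper: show that \(1_X\tensor\iota\) vanishes because the nested commutator \([\iota_1(X),[\iota_2(Y),\iota_2(Y)]]\) sits inside the ternary \([\iota_1(X),\iota_2(Y),\iota_2(Y)]\leq N_{X,Y}\), then invoke Proposition~\ref{Proposition-RightExact-Tensor}. The paper phrases the key step at the level of cosmash products (\(X\cosmash(Y\cosmash Y)\leq X\cosmash Y\cosmash Y\)) rather than commutators in \(X+Y\), but this is only a cosmetic difference.
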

\begin{proof}
  By Proposition~\ref{prop:internal descr of tensor}, since \(X\cosmash (Y\cosmash Y)\leq X\cosmash Y\cosmash Y\) is divided out in \(X\tensor Y\), so that the map \(X \tensor [Y,Y] \to X \tensor Y\) is trivial, this follows from Proposition~\ref{Proposition-RightExact-Tensor} and the fact that \(\ab(Y)=Y/[Y,Y]\).
\end{proof}

This sheds some new light on the discussion in~\ref{Bilinear bifunctors}: since the left adjoint \(\ab\) sends both the binary sum + in \(\X\) and the binary sum \(+_{2}\) in \(\Nil_{2}(\X)\) to the binary sum (=~binary biproduct) \(\oplus\) in \(\Ab(\X)\), which is the binary product \(\times\) in both \(\X\) and \(\Nil_{2}(\X)\) and that, by Corollary~\ref{Corollary-Products-Tensor}, \(X \tensor (-) \from \Ab(\X) \to \Ab(\X)\) preserves binary products, the functor \(X\tensor(-)\) sends binary sums to binary products, independently of which category (\(\X\), \(\Nil_{2}(\X)\) or \(\Ab(\X)\)) we took for domain or codomain. Since the \emph{second cross-effect} of Section~\ref{Section Cross-Effects} does nothing but measuring how far a functor is from sending binary sums to binary products, we are interested in asking that it vanishes---which in the case of \(X\cosmash (-)\) leads to ternary commutators, two-nilpotent objects and the bilinear product. On the other hand, Corollary~\ref{Corollary-Products-Tensor} may now be rephrased as follows.

\begin{corollary}\label{Corollary Additive}
  The bifunctor \(\tensor_\X\colon \Ab(\X)\times \Ab(\X) \to \Ab(\X)\) is biadditive\footnote{In the classical sense, valid for functors between additive categories; for instance, addition of morphisms is preserved in each variable.}.\noproof
\end{corollary}

\section{Examples of bilinear products}
\label{Section Examples}

\subsection{Groups and the tensor product of \(\Z\)-modules.}\label{Groups}
In the category of groups,
\[
  X\cosmash Y=\langle [x,y]\mid \text{\(x\in X\), \(y\in Y\)}\rangle
\]
where \([x,y]\coloneq xyx^{-1}y^{-1}\), and \(X\cosmash Y\cosmash Z\) contains all
\[
  [x,[y,z]]=xyzy^{-1}z^{-1}x^{-1}zyz^{-1}y^{-1}
\]
for \(x\in X\), \(y\in Y\) and \(z\in Z\). A~group is two-nilpotent when all commutator words of the latter type vanish in it. It is shown in~\cite{MacHenry} and~\cite[Proposition~2.3]{Hartl-Vespa} that for any two groups \(X\) and~\(Y\), the bilinear product \(X\tensor Y\) coincides with \(\ab(X)\tensor_{\Z} \ab(Y)\), the tensor product as \(\Z\)-modules of their abelianisations. Here a generator \(x[X,X]\tensor y[Y,Y]\) of \(\ab(X)\tensor_{\Z} \ab(Y)\) corresponds to the equivalence class of \([x,y]\) in the quotient \(\nil_2(X) +_2 \nil_2(Y)=\nil_2(X+Y)\) of \(X+Y\). As explained in~\cite{dMVdL19.3}, this result also follows from its being a special case of the Brown--Loday non-abelian tensor product~\cite{Brown-Loday}.

Note that the bilinear product in \(\Gp\) is associative. However, since non-abelian groups exist, the bilinear product does in general not have a unit. Of course, when~\(A\) is abelian, we \emph{do} have that \(\Z\tensor A\cong A\). For us this is an argument in favour of the idea that the bilinear product should be considered as an operation on the abelian core. It is also worth noticing that the functor \(\Z\tensor_\Gp (-)\colon{\Gp\to \Ab}\) is precisely the abelianisation functor, which follows from the fact that in \(\Ab(\Gp)=\Ab\), the bilinear product \(\tensor_\Gp\) has \(\Z\) as a unit. As mentioned before, this remains true in other cases where a unit exists for the bilinear product viewed as a product on \(\Ab(\X)\), like in some of the examples that follow below. On the other hand, the example of internal crossed modules shows that tensoring with the free object on a single generator need not agree with abelianisation---see Subsection~\ref{crossed modules}.

For \(A\) and \(B\) abelian groups, we now examine the isomorphism
\[
  \tau^{\Gp}_{A,B}\colon A\tensor B\to B\tensor A\text{,}
\]
induced by the symmetry of sum and product, under the identification of these bilinear products with the corresponding tensor products of abelian groups, explicitly given by the isomorphism
\[
  \xymatrix{
  \theta_{A,B}\colon A\tensor_{\Z} B \ar[r]^-{\cong} & A\tensor B \subobj A+_2B
  }
\]
which sends \(a\tensor b\) to the commutator \([i_1(a), i_2(b)] = i_1(a)i_2(b)i_1(a)^{-1}i_2(b)^{-1}\), as mentioned above. What we want to compute is the dotted map \(\tilde{\tau}_{A,B}^{\Gp}\) rendering the diagram
\[
  \xymatrix{
  A\tensor_{\Z} B \ar[r]^{\theta_{A,B}} \ar@{.>}[d]_{\tilde{\tau}_{A,B}^{\Gp}}& A\tensor B \ar@{-->}[d]_-{\tau_{A,B}^{\Gp}} \ar@{{ |>}->}[r]^-{\iota_{A,B}} & A+_2B \ar[d]^-{\tw}\\
  B\tensor_{\Z} A \ar[r]_{\theta_{B,A}} &   B\tensor A \ar@{{ |>}->}[r]_-{\iota_{B,A}} & B+_2A}
\]
commutative where \(\iota_{A,B}\) and \(\iota_{B,A}\) are the canonical kernel inclusions and
\[
  \tw\coloneq \CoindArr{i_2,i_1}\colon A+_2B\to B+_2A
\]
is the  twist map. Now
\begin{align*}
  \tw\comp\, \theta_{A,B}(a\tensor b) & = \tw(i_1(a)i_2(b)i_1(a)^{-1}i_2(b)^{-1}) =i_2(a)i_1(b)i_2(a)^{-1}i_1(b)^{-1}          \\
                                      & = \bigl(i_1(b)i_2(a)i_1(b)^{-1}i_2(a)^{-1}\bigr)^{-1} = \theta_{B,A}(b \tensor a)^{-1} \\
                                      & = \theta_{B,A}(- b \tensor a)\text{.}
\end{align*}
Hence the symmetry isomorphism \(\tilde{\tau}^{\Gp}_{A,B}\colon A\tensor_{\Z} B \to B\tensor_{\Z} A\) is given by
\begin{align*}
  \tilde{\tau}_{A,B}(a\tensor b) & = (\theta_{B,A})^{-1}\comp \tw\comp \,\theta_{A,B}(a\tensor b)
    = {}- b \tensor a\text{,}
\end{align*}
that is, \(\tilde{\tau}^{\Gp} = -\tau\) where \(\tau\) is the canonical twist of the tensor product.

\subsection{Loops}\label{Loops}
In the category of loops essentially the same result holds: for loops \(X\) and \(Y\) we have \(X\tensor Y\cong \ab(X)\tensor_{\Z} \ab(Y)\). This is a consequence of Theorem~\ref{Nil vs Birkhoff}, which in the situation at hand amounts to the fact that the reflector \(\gp\colon \Loop \to \Gp\) commutes with the nilpotentisation functors \(\nil^\Gp_{2}\colon \Gp\to \Nil_{2}(\Gp)\) and \({\nil^\Loop_{2}\colon \Loop\to \Nil_{2}(\Loop)}\) in the sense that \(\nil^\Gp_2\comp \gp=\nil^\Loop_2\). This makes sense because \(\nil^\Loop_2\) ``makes associative'' by quotienting out the associator object \(\ldbrack X,X,X\rdbrack\), which---see, for instance, the explanation in~\cite{EverVdL4}---is contained in the ternary commutator \([X,X,X]_\Loop\) (in \(\Loop\)) for any loop \(X\). Thus we see that \([X,X,X]_\Loop\cong [X,X,X]_\Gp\) for any group \(X\). As a consequence, \(\Nil_{2}(\Loop)\simeq \Nil_{2}(\Gp)\) and the resulting bilinear products coincide. Moreover, the analysis of the symmetry isomorphism we carried out for groups above remains valid for loops as well.

\subsection{Commutative associative algebras: the tensor product of \(R\)-modules.}\label{Commutative algebras}
Given a unitary commutative ring \(R\), the category \(\CUAlg_{R}\) of unitary commutative associative algebras over \(R\) is the coslice category \((R\downarrow \CURng)\) where \(\CURng\) is the category of unitary commutative rings. Instead of the non-pointed category \(\CUAlg_{R}\) we consider its semi-abelian slice \(\CAlg_{R}=(\CUAlg_{R}\downarrow 0)\)---where \(0\) denotes the initial object of \(\CUAlg_R\), i.e.\ the ring \(R\)---of non-unitary commutative associative algebras over \(R\). (As usual, we remove the unit by adding an augmentation.) As explained in~\cite{Smash}, given objects \(X\) and \(Y\) of \(\CAlg_{R}\), the cosmash product \(X\cosmash Y\) is the ordinary tensor product \(X\tensor_{R}Y\) of \(X\) and \(Y\) over \(R\), because \(X+Y={X\oplus Y\oplus (X\tensor_{R}Y)}\) with the obvious multiplication. So the classical tensor product of commutative associative algebras is captured as a cosmash product, which implies that the cosmash product is associative. Associativity is actually rare for a cosmash product: see~\cite{Illinois} where this is explained.

Then what is the bilinear product in \(\CAlg_{R}\)? We have to divide out the images of the ternary cosmash products \(X\cosmash X\cosmash Y=X\tensor_{R} X\tensor_{R} Y\) and \(X\cosmash Y\cosmash Y=X\tensor_{R} Y\tensor_{R} Y\), so that \(X\tensor Y\cong \ab(X)\tensor_{R} \ab(Y)\). (In this category, abelianisation kills the multiplication.)

Thus, when restricting \(\tensor\) to \(\Mod_{R}\simeq \Ab(\CAlg_{R})\), we regain the tensor product of \(R\)-modules. For instance, the bilinear product of non-unitary commutative rings \(X\) and \(Y\) is \(\ab(X)\tensor_{\Z} \ab(Y)\), so that the \(\Z\)-tensor product on \(\Ab\) is regained as ``intrinsic tensor product'' of the category \(\CRng = \CAlg_\Z\).

Here, the symmetry isomorphism \(A\tensor B\to B\tensor A\) is just the usual twist map \(\tau\colon A\tensor_R B\to B\tensor_R A\). So, even though when \(R=\Z\) we find the same tensor product \(A\tensor_\Z B\) on \(\Ab\cong\Mod_\Z\) as in Subsection~\ref{Groups}, the induced symmetry isomorphisms are different, which proves that the shape of the symmetry isomorphism depends on the surrounding semi-abelian category in which the bilinear product is computed.

\subsection{Non-commutative algebras.}\label{Associative algebras}
The case of non-commutative algebras is different, and provides an example of a situation where we do \emph{not} find the ordinary tensor product of \(R\)-modules out of a construction in a bigger category of which \(\Mod_{R}\) forms the abelian core. This illustrates how the bilinear product depends on the surrounding category.

In the category \(\Alg_{R}\) of non-unitary non-commutative associative algebras over a commutative unitary ring \(R\), sums are computed as follows~\cite{Berstein}. Given algebras \(A_{1}\) and \(A_{2}\) we write \((A_{1},A_{2})_{n}\) for the alternating tensor product over \(R\) of length \(n\): so \((A_{1},A_{2})_{0}=R\), \((A_{1},A_{2})_{1}=A_{1}\), \((A_{1},A_{2})_{2}=A_{1}\tensor_{R}A_{2}\), \((A_{1},A_{2})_{3}={A_{1}\tensor_{R}A_{2}\tensor_{R} A_{1}}\), etc. Then the coproduct of \(A_{1}\) and \(A_{2}\) is
\[
  A_{1}+A_{2}=\bigoplus_{n\geq 1}\bigl((A_{1},A_{2})_{n}\oplus (A_{2},A_{1})_{n}\bigr)
\]
with multiplication \((A_{i_{1}},A_{i_{2}})_{k}\tensor (A_{j_{1}},A_{j_{2}})_{l}\to (A_{i_{1}},A_{i_{2}})_{m}\) induced either by the multiplication on \(A_{j_{1}}\) (if the last term in the tensor product \((A_{i_{1}},A_{i_{2}})_{k}\) is \(A_{j_{1}}\); then we take \(m=k+l-1\)) or by identities (if not; then we take \(m=k+l\)).

It follows that for three given algebras \(A_{1}\), \(A_{2}\) and \(A_{3}\), their cosmash product \(A_{1}\cosmash A_{2}\cosmash A_{3}\) is a direct sum
\[
  \bigoplus_{n\geq 3}(A_{1},A_{2},A_{3})_{n}
\]
where \((A_{1},A_{2},A_{3})_{n}\) sums all possible tensor products of length \(n\) in \(A_{1}\), \(A_{2}\) and \(A_{3}\) such that any two adjacent factors are different. So, for instance, the products \({A_{1}\tensor_{R}A_{2}\tensor_{R}A_{3}\tensor_{R} A_{2}}\) and \(A_{3}\tensor_{R}A_{2}\tensor_{R}A_{1}\tensor_{R} A_{2}\) are terms in \((A_{1},A_{2},A_{3})_{4}\), but \(A_{1}\tensor_{R}A_{2}\tensor_{R}A_{3}\tensor_{R} A_{3}\) and \(A_{1}\tensor_{R}A_{2}\tensor_{R}A_{1}\tensor_{R} A_{2}\) are not.

When computing the bilinear product \(X\tensor Y\), we now have to divide the images of \(X\cosmash X\cosmash Y\) and \(X\cosmash Y\cosmash Y\) out of \(X\cosmash Y=\bigoplus_{n\geq 2}\bigl((X,Y)_{n}\oplus (Y,X)_{n}\bigr)\). It is clear that this quotient simply makes the terms of length three and higher vanish, so
\begin{align*}\label{algebras}
  X\tensor Y & \cong \bigl(\ab(X)\tensor_{R}\ab(Y)\bigr)\oplus \bigl(\ab(Y)\tensor_{R}\ab(X)\bigr) \\
             & \cong 2\cdot\bigl(\ab(X)\tensor_{R}\ab(Y)\bigr)\text{.}
\end{align*}

In this example, the symmetry isomorphism \(A\tensor B\to B\tensor A\) sends \((a \tensor b) \oplus (b' \tensor a')\) to \((b' \tensor a') \oplus (a \tensor b)\).

When \(R\) is a field, we find an example showing that a bilinear product need not have a unit. Indeed, if \(I\) is a unit, then
\[
  R=I\tensor R=(I\tensor_{R}R)\oplus (R\tensor_{R}I)=I\oplus I\text{,}
\]
which gives a contradiction with the fact that \(R\) is one-dimensional as a vector space over itself. On the other hand, via a direct calculation it is easy to see that associativity does hold for this bilinear product. An explicit example where associativity fails will be constructed at the end of Section~\ref{Section Operads}, in Example~\ref{non-associative bilinear product}.

\subsection{Lie algebras and the tensor product of \(R\)-modules.}\label{Lie algebras}
Lie algebras over~\(R\) behave quite similarly to commutative associative algebras---here also we regain the tensor product of \(R\)-modules when restricting the tensor to abelian objects. This may be explained by the fact that Lie brackets are antisymmetric. The two-nilpotent core of \(\Lie_R\) consists of algebras \((X, [-,-])\) where \([x,[y,z]]=0\) and \([x,y]=-[y,x]\) for all \(x\), \(y\), \(z\in X\). Hence the coproduct \(X+_2Y\) in \(\Nil_2(\Lie_R)\) of two abelian Lie algebras \(A\) and \(B\) is \(A\oplus B\oplus (A\tensor_RB)\), so that \(A\cosmash B\) is \(A\tensor_{R}B\), and \(X\tensor Y\cong \ab(X)\tensor_{R} \ab(Y)\) for all \(X\) and \(Y\) in \(\Lie_R\). Here the symmetry isomorphisms are as explained for groups in Subsection~\ref{Groups}. We adapt the argument given there to the case of Lie algebras, recycling the notations used there without further mention. In the case of \(R\)-Lie algebras, the isomorphism
\[
  \xymatrix{
  \theta_{A,B}\colon A\tensor_{R} B \ar[r]^-{\cong} & A\tensor B \subobj A+_2B
  }
\]
sends \(a\tensor b\) to the Lie bracket \([i_1(a),i_2(b)]\) calculated in \(A+_2B\). The morphism \(\tw=\CoindArr{i_2 \and i_1}\colon A+_2B\to B+_2A\) sends this to \([i_2(a),i_1(b)]\in B+_2A\), which is equal to \(-[i_1(b),i_2(a)]=-\theta_{B,A}(b\tensor a)\). It follows that \(\tilde{\tau}^{\Lie_R} = -\tau\) where \(\tau\) is the canonical twist of the tensor product.

\subsection{Leibniz algebras.}
Similarly to the difference between commutative and non-commutative algebras, we may now compare the Lie algebra with the Leibniz algebra tensor product. Leibniz algebras being ``non-antisymmetric Lie algebras'', we may expect a similar result. Recall from~\cite{Loday-Leibniz} that for a field \(R\), an object \(X\) in \(\Leib_{R}\) is an \(R\)-vector space with a bilinear bracket \([-,-]\colon X\times X\to X\) that satisfies the identity
\[
  [x,[y,z]]=[[x,y],z]-[[x,z],y]
\]
for \(x\), \(y\), \(z\in X\). And indeed, using that the coproduct \(A+_2B\) in \(\Nil_2(\Leib_R)\) of two abelian Leibniz algebras \(A\) and \(B\) is \(A\oplus B\oplus (A\tensor_RB)\oplus (B\tensor_RA)\), in accordance with~\cite{CP}, we obtain the isomorphism
\[
  X\tensor Y \cong \bigl(\ab(X)\tensor_{R}\ab(Y)\bigr)\oplus \bigl(\ab(Y)\tensor_{R}\ab(X)\bigr)\text{.}
\]
The symmetry isomorphisms are as in Subsection~\ref{Associative algebras}.

\subsection{Heyting semilattices.}
The semi-abelian category \(\HSLat\) of Heyting semilattices is arithmetical~\cite{Pedicchio, Borceux-Bourn, Jo}, which implies that the only abelian object in \(\HSLat\) is the zero semilattice. As a consequence, all bilinear products in \(\HSLat\) are trivial. For the same reason, bilinear products vanish in the dual of the category of pointed objects in any topos~\cite{Bourn:Dual-topos}, in the categories of \emph{boolean} and \emph{von Neumann regular} rings~\cite{Borceux-Bourn} and in the category of \(C^{*}\)-algebras~\cite{Gran-Rosicky:Monadic}.

\subsection{Abelian categories.}\label{Trivial Tensor Abelian}
On the other end of the spectrum we have the context of abelian categories. Here also all bilinear products vanish, because all cosmash products are trivial. To obtain the bilinear product of modules over a commutative ring \(R\), those modules may be considered as commutative algebras over~\(R\) (with a trivial multiplication \(x\cdot y=0\)) as in~\ref{Commutative algebras}.

\subsection{Sheaves of abelian groups.}
The usual tensor product of sheaves of abelian groups (for a certain topology \(\T\)) coincides with the bilinear product in the semi-abelian category \(\Sh_{\T}(\Nil_{2}(\Gp))\) of sheaves of two-nilpotent groups. Indeed, the tensor product of two sheaves is the sheafification of their tensor product as pre\-sheaves, which is taken pointwise. Following~\ref{Groups}, these pointwise tensor products may be computed in \(\Nil_{2}(\Gp)\). Now it suffices to note that sheafification is an exact functor. Note that here the symmetry isomorphisms are pointwise as in Subsection~\ref{Groups}.

\subsection{Beck modules}
In Section~\ref{Section Tensor of G-actions}, we study the bilinear product in categories of internal actions, which leads to a tensor product of Beck modules~\cite{Beck,Barr-Beck}. We recover the known tensor products of representations of groups and Lie algebras as special cases.

\subsection{Internal crossed modules.}\label{crossed modules}
We show that the tensor product of crossed modules of groups introduced in~\cite{Pira:Ganea} is intrinsic, and prove a general formula for bilinear products of internal crossed modules~\cite{Janelidze,HVdL} in a given semi-abelian category.

In a semi-abelian category \(\X\), let \((G,A,\del\colon{A\to G})\) and \((H,B,\delta\colon{B\to H})\) be two abelian crossed modules. As explained in~\cite{Bourn-Gran}, this means that their domain and codomain are abelian objects in \(\X\), and their action is trivial, so we omit it from the notation; in essence, they are just morphisms in \(\Ab(\X)\). We shall first of all prove that in the category \(\XMod(\X)\), the bilinear product \((G,A,\del)\tensor(H,B,\delta)\) coincides with \({(G\tensor H,\Coker(\alpha),\epsilon)}\), where the cokernel of
\[
  \alpha= \IndArr{\del\tensor 1_{B} \and -1_{A}\tensor \delta} \colon{A\tensor B\to (G\tensor B)\oplus (A\tensor H)}
\]
is taken in \(\Ab(\X)\), and the morphism \(\epsilon\colon \Coker(\alpha)\to G\tensor H\) is induced by
\[
  \CoindArr{1_{G}\tensor \delta \and \del\tensor 1_{H}}\colon (G\tensor B)\oplus (A\tensor H)\to G\tensor H\text{.}
\]
We compute the bilinear product of two abelian crossed modules in \(\X\) as the internal groupoid induced by the cosmash product of the respective associated reflexive graphs in~\(\Nil_{2}(\X)\). By Theorem~\ref{Nil vs Birkhoff}, the associated groupoid construction does indeed commute with the functor
\[
  \nil_{2}\colon{\RG(\X)\to \Nil_{2}(\RG(\X))= \RG(\Nil_{2}(\X))}\text{.}
\]
In order to obtain the internal groupoid universally induced by the cosmash product in the latter category, which is the reflexive graph (in fact, groupoid)
\begin{equation}\label{pcm}
  \xymatrix@C=4em{(G\oplus A)\tensor (H\oplus B) \ar@<1ex>[r]^-{\pi_{G}\tensor \pi_{H}} \ar@<-1ex>[r]_-{\DCoindArr{1_{G} \and \del}\tensor \DCoindArr{1_{H} \and \delta}} & G\tensor H \ar[l]}
\end{equation}
included in
\begin{equation*}
  \xymatrix@C=4em{(G\oplus A)+_2 (H\oplus B) \ar@<1ex>[r]^-{\pi_{G}+_2 \pi_{H}} \ar@<-1ex>[r]_-{\DCoindArr{1_{G} \and \del}+_2 \DCoindArr{1_{H} \and \delta}} & G+_2 H\text{,} \ar[l]}
\end{equation*}
by~\cite[Theorem~5.2]{HVdL} (combined with~\cite[Example~4.7]{HVdL}, to see that the ternary commutator mentioned in the theorem vanishes) we must divide out the commutator
\begin{align*}
   & \bigl[\Ker(\pi_{G}+_2 \pi_{H}),\Ker\bigl(\CoindArr{1_{G} \and \del}+_2 \CoindArr{1_{H} \and \delta}\bigr)\bigr] \\ & \normal (G\oplus A)\tensor (H\oplus B)                                    \\
   & = (G\tensor H) \oplus (G\tensor B) \oplus (A\tensor H)\oplus (A\tensor B)
\end{align*}
in \(\Nil_{2}(\X)\). Note that this commutator, which is originally obtained as a subobject of \({(G\oplus A)+_2 (H\oplus B)}\), is indeed contained in the abelian object \({(G\oplus A)\tensor (H\oplus B)}\), because the reflexive graph
\begin{equation*}
  \xymatrix@C=4em{(G\oplus A)\times_2 (H\oplus B) \ar@<1ex>[r]^-{\pi_{G}\times_2 \pi_{H}} \ar@<-1ex>[r]_-{\DCoindArr{1_{G} \and \del}\times_2 \DCoindArr{1_{H} \and \delta}} & G\times_2 H \ar[l]}
\end{equation*}
is a groupoid as well. Now \(\Ker(\pi_{G}+_2 \pi_{H})\) is the normal closure of
\[
  i_1\Ker(\pi_{G})\join i_2 \Ker(\pi_{H})
\]
in \({(G\oplus A)+_2 (H\oplus B)}\), so by~\cite[Proposition~4.14]{Actions} the join
\[
  i_1\Ker(\pi_{G})\join i_2 \Ker(\pi_{H})\join \bigl[i_1\Ker(\pi_{G})\join i_2 \Ker(\pi_{H}),(G\oplus A)+_2 (H\oplus B)\bigr]\text{,}
\]
while \(\Ker\bigl(\CoindArr{1_{G} \and \del}+_2 \CoindArr{1_{H} \and \delta}\bigr)\) is the normal closure of
\[
  i_1\Ker(\CoindArr{1_{G} \and \del})\join i_2 \Ker(\CoindArr{1_{H} \and \delta})
\]
in \({(G\oplus A)+_2 (H\oplus B)}\), so the join
\begin{multline*}
  i_1\Ker(\CoindArr{1_{G} \and \del})\join i_2 \Ker(\CoindArr{1_{H} \and \delta})\\
  \join \bigl[i_1\Ker(\CoindArr{1_{G} \and \del}) \join i_2 \Ker(\CoindArr{1_{H} \and \delta}),(G\oplus A)+_2 (H\oplus B)\bigr]\text{.}
\end{multline*}
Using Lemma~\ref{Lemma-Join}, we thus find that the commutator
\[
  \bigl[\Ker(\pi_{G}+_2 \pi_{H}),\Ker\bigl(\CoindArr{1_{G} \and \del}+_2 \CoindArr{1_{H} \and \delta}\bigr)\bigr]
\]
decomposes as
\begin{multline*}
  [i_1\Ker(\pi_{G}),i_1\Ker\CoindArr{1_{G} \and \del}]
  \join
  [i_1\Ker\CoindArr{1_{G} \and \del},i_2\Ker(\pi_{H})]\\*
  \join
  [i_1\Ker(\pi_{G}),i_2\Ker\CoindArr{1_{H} \and \delta}]
  \join
  [i_2\Ker(\pi_{H}),i_2\Ker\CoindArr{1_{H} \and \delta}]
\end{multline*}
since all higher-order commutators vanish by two-nilpotency of \(\Nil_{2}(\X)\). Here, only the two mixed middle terms are non-trivial, because \(G\oplus A\) and \(H\oplus B\) are abelian objects. Their join may be obtained through the image of the morphism
\[
  \CoindArr[\big]{\DIndArr{-\del \and 1_{A}} \tensor \DIndArr{0 \and 1_{B}} \and \DIndArr{0 \and 1_{A}} \tensor \DIndArr{-\delta \and 1_{B}}}
  \colon
  (A\tensor B)\oplus (A\tensor B) \to (G\oplus A)\tensor (H\oplus B)\text{,}
\]
which factors over the kernel of \(\pi_{G}\tensor \pi_{H}\) to the morphism
\[
  \resizebox{\textwidth}{!}
  {\mbox{\(\left\links\begin{matrix}
        (-\del)\tensor 1_{B} & 0                      \\
        0                    & 1_{A}\tensor (-\delta) \\
        1_{A\tensor B}       & 1_{A\tensor B}
      \end{matrix}\right\rechts
      \colon
      (A\tensor B)\oplus (A\tensor B) \to (G\tensor B) \oplus (A\tensor H)\oplus (A\tensor B)\text{.}\)}}
\]
It is clear that its cokernel coincides with \(\Coker(\alpha)\), which finishes the proof for abelian crossed modules. The general case now follows easily once we take into account that for a crossed module \((G,A,\mu,\del)\) in \(\X\), its abelianisation is
\[
  (G/[G,G], A/[A,G],\overline{\del})
\]
with \(\overline{\del}\) the induced morphism: simply substitute this in the above construction. Here we use that \([A,G] \normal A\) as recalled in Subsection~\ref{Subsection Binary Higgins Commutator}.

Considered as an operation on abelian crossed modules, this bilinear product has a unit as soon as the underlying bilinear product \(\tensor\) in~\(\Ab(\X)\) has one: it is easily seen to be \((I,0,0)\), where \(I\) is the unit in \(\Ab(\X)\). Note that this unit does not necessarily coincide with the abelianisation of the free object of rank one (which exists in case \(\X\) is a variety).

In the case of groups, for instance, this free object is the normal closure of the co\-product inclusion \(i_1\colon\Z\to \Z+ \Z\), so the crossed module \((\Z+\Z, \Z\flat\Z,c^{\Z\flat\Z,\Z+\Z},\kappa_{\Z,\Z})\) where \(\kappa_{\Z,\Z}\colon \Z\flat \Z\to \Z+\Z\) is the kernel of \(\CoindArr{1_\Z \and 0}\colon\Z+\Z\to \Z\) and \(c^{\Z\flat\Z,\Z+\Z}\) the induced conjugation action---which has \((\Z\oplus\Z, \Z,i_{\Z})\) for its abelianisation. Then we have, for example,
\[
  (\Z\oplus\Z, \Z,i_{\Z})\tensor(\Z\oplus\Z, \Z,i_{\Z})=(\Z^{4},\Z^{3},i_{\Z^{3}})\text{.}
\]

For associativity, the same result holds: if \(\tensor\) is associative on~\(\Ab(\X)\) then so is the cosmash product of reflexive graphs in~\(\Nil_{2}(\X)\), because its construction is degree-wise; hence \(\Ab(\XMod(\X))\) has an associative bilinear product. In case~\(\X\) is a variety, by Proposition~\ref{Closed SMC}, the product \(\tensor\) determines a (symmetric) monoidal closed structure on \(\Ab(\XMod(\X))\) as soon as \((\Ab(\X),\tensor,I)\) is a (symmetric) monoidal category. In either case, the symmetry isomorphism is induced by the respective symmetry isomorphisms in the underlying category.

\subsection{Internal precrossed modules}\label{precrossed modules}
The example of internal precrossed modules in a semi-abelian category is already implicit in the above: now we do not need to reflect to internal groupoids, so the bilinear product of two abelian precrossed modules \((G,A,\del\colon{A\to G})\) and \((H,B,\delta\colon{B\to H})\) simply is the normalisation of the reflexive graph~\eqref{pcm}, which is the abelian (pre)crossed module
\[
  (G\tensor H,\; (G\tensor B) \oplus (A\tensor H)\oplus (A\tensor B),\; \CoindArr{1_{G}\tensor \delta \and \del\tensor 1_{H}\and \del\tensor \delta})\text{.}
\]
This agrees with the definition proposed in~\cite{AriasLadra} in the case of crossed modules of groups.

The difference between~\ref{precrossed modules} and~\ref{crossed modules} provides another example showing that the bilinear product of objects in an abelian category depends on the surrounding semi-abelian category in which it is computed\footnote{This should, of course, also be compared with Subsection~\ref{Trivial Tensor Abelian}, which shows that the bilinear product \emph{obviously} depends on the surrounding semi-abelian category \(\X\) of an abelian category~\(\A\), simply because we may always take \(\X\DefEq\A\), which gives us zero bilinear products.}---note that an abelian precrossed module is automatically an abelian crossed module, so that the two abelian cores coincide.

Also in this case, the symmetry isomorphism is induced from the base category.

\section{\texorpdfstring{The bilinear product of algebras\except{toc}{\linebreak} over a reduced symmetric operad}{The bilinear product}}\label{Section Operads}

\subsection{Recollections on (right) symmetric operads and their algebras}
We briefly recall the notion of symmetric operad in the category of modules over a given commutative ring (which is, for instance, nicely explained in the \emph{Wikipedia} article on operads~\cite{Wikipedia:Operads}); however, we need operads to act on the right on their algebras and thus have to adapt the definitions; for convenience of the reader we write them out, as follows.

Let \(R\) be a commutative ring. Recall that a \emph{(constant-free)\footnote{\label{CF Footnote}Sometimes such operads, where \(\P(0)=0\), are called \defn{reduced} operads.} right symmetric operad in the standard monoidal category of \(R\)-modules}, or \defn{right symmetric operad\footnote{This should be read as ``right (symmetric operad)'', not ``(right symmetric) operad''.} in \(R\)-modules} \(\P\) for short, is a sequence of \(R\)-modules \(\P(n)\) with \(n\geq 1\) together with \(R\)-linear left actions of the symmetric group \(\S_n\) on \(\P(n)\) denoted by~\(\star\), an element \(1\in \P(1)\) called the \defn{identity} of \(\P\), and \(R\)-linear maps
\[
  \gamma_{k_1,\ldots,k_n;n} = \gamma_{k_1,\ldots,k_n;n}^{\P} \colon \P(k_1) \tensor_{R} \cdots \tensor_{R} \P(k_n) \tensor_{R} \P(n) \to \P(k_1+\cdots+ k_n)
\]
for \(n,k_1,\ldots,k_n\geq 1\) called \defn{composition operations} of \(\P\) satisfying the \defn{coherence axioms} below, where \(\theta \in \P(n)\), \(\theta_i \in \P(k_i)\).
\begin{itemize}
  \item \defn{Identity axiom}: \(\gamma_{1,\cdots ,1; n}(1\tensor\cdots\tensor 1\tensor\theta) = \theta = \gamma_{n;1}(\theta\tensor 1)\).
  \item \defn{Associativity axiom}: Given in addition \(\theta_{i,j} \in \P(l_{i,j})\) for \(i=1,\ldots,n\) and \(j=1,\ldots, k_i\), then for \(s_i=l_{i,1} + \cdots+l_{i,k_i}\), \(\underline{l}_i=l_{i,1}, \ldots , l_{i,k_i}\) and \(\underline{l}=l_{1,1},\ldots, l_{1,k_1}, \ldots,l_{n,1},\ldots, l_{n,k_n}\) we have
        \begin{multline*}
          \gamma_{s_1,\ldots,s_n;n}\bigl(\gamma_{\underline{l}_1;k_1}(\theta_{1,1}\tensor \cdots\tensor \theta_{1,k_1}\tensor \theta_1)\tensor \cdots\tensor \gamma_{\underline{l}_n;k_n}(\theta_{n,1}\tensor \cdots\tensor \theta_{n,k_n}\tensor \theta_n) \tensor \theta\bigr)\\
          =\gamma_{\underline{l};k_1+\ldots+k_n}\bigl(\theta_{1,1}\tensor\cdots\tensor \theta_{1,k_1}\tensor \cdots\tensor\theta_{n,1}\tensor\cdots\tensor \theta_{n,k_n} \tensor \gamma_{k_1,\ldots,k_n;n}(\theta_1\tensor \cdots \tensor\theta_n \tensor \theta)\bigr)\text{.}
        \end{multline*}
  \item \defn{Equivariance axioms}: \begin{enumerate}
          \item Given a permutation \(t \in \S_n\),
                \[
                  \gamma_{k_1,\ldots,k_n;n}(\theta _{1}\tensor\cdots \tensor\theta _{n}\tensor(t\star\theta))=
                  t'\star\gamma_{k_{t(1)},\ldots,k_{t(n)};n}
                  (\theta_{t(1)}\tensor\cdots \tensor\theta_{t(n)}\tensor \theta)
                \]
                where \(t'\) on the right hand side refers to the element of \(\S_{k_{1}+\cdots +k_{n}}\) that acts on the set \( \{1,2,\dots ,k_{1}+\dots +k_{n}\}\) by breaking it into \(n\) blocks, the first of size \(k_{1}\), the second of size \(k_2\), through the \(n\)-th block of size \(k_n\), and then permutes these \(n\) blocks by \(t^{-1}\), keeping each block intact.
          \item Given \(n\) permutations \(s_i \in \S_{k_i}\),
                \[
                  \gamma_{k_1,\ldots,k_n;n} ((s_1\star \theta_{1})\tensor\cdots \tensor (s_n\star\theta_n)\tensor \theta) = (s_{1},\ldots ,s_{n})\star \gamma_{k_1,\ldots,k_n;n} (\theta _{1}\tensor\cdots \tensor\theta _{n}\tensor \theta)
                \]
                where \( (s_{1},\ldots ,s_{n})\) denotes the element of \(\S_{k_{1}+\dots +k_{n}}\) that permutes the first of these blocks by \(s_{1}\), the second by \(s_{2}\), etc., and keeps their overall order intact.
        \end{enumerate}
\end{itemize}

In order to make the structure of \(\P(1)\) and \(\P(2)\) more explicit we recall the notion of (binary) \emph{wreath product} of associative \(R\)-algebras. For any \(R\)-algebra \(A\), the \defn{wreath product} \((A \otimes_{R} A) \wr \S_2\) is given by
\[(A \otimes_{R} A) \wr \S_2 \coloneq (A \otimes_{R} A) \oplus (A \otimes_{R} A).t\]
with multiplication defined by
\begin{multline*}
  (a_1\otimes a_2+(b_1\otimes b_2).t)(a_1'\otimes a_2'+(b_1'\otimes b_2').t)\coloneq \\*
  (a_1a_1' \otimes a_2a_2' + b_1b_2' \otimes b_2b_1' ) + (a_1b_1' \otimes a_2b_2' + b_1a_2' \otimes b_2a_1' ).t
\end{multline*}
for \(a_i\), \(a_i'\), \(b_i\), \(b_i'\in A\) and where \(t\) denotes the generator of the symmetric group \( \S_2\) of order 2.

\begin{remark}\label{structure of P1 and P2}
  Note that \(\P(1)\) endowed with the multiplication
  \[
    \gamma_{1;1}\colon \P(1)\tensor_{R} \P(1) \to \P(1)
  \]
  is an \(R\)-algebra whose unit is the identity \(1\) of the operad. Moreover, \(\P(2)\) is a \((\P(1)\tensor_{R} \P(1))\wr \S_2\)-\(\P(1)\)-bimodule whose left \((\P(1)\tensor_{R} \P(1))\wr \S_2\)-module structure is given by \(\gamma_{1,1:2}\) as for the action of \(\P(1)\tensor_{R} \P(1)\), and its right \(\P(1)\)-module structure is given by \(\gamma_{2;1}\); they commute with each other by the associativity axiom.
\end{remark}

An \defn{algebra over \(\P\)} or simply \defn{\(\P\)-algebra} is an \(R\)-module \(A\) endowed with operations
\[
  \mu_n = \mu_n^A \colon A^{\tensor_{R} n} \tensor_{R} \P(n) \to A
\]
for all \(n\geq 1\) such that \(\mu_1(a\tensor 1) = a\),
\begin{multline*}
  \mu_{k_{1}+\dots +k_{n}}(a_1\tensor\cdots\tensor a_{k_{1}+\dots +k_{n}}\tensor
  \gamma_{k_1,\ldots,k_n;n}(\theta _{1}\tensor\cdots \tensor\theta _{n}\tensor \theta))\\
  = \mu_{{n}}\big(\mu_{k_1}(a_1\tensor\cdots \tensor a_{k_1} \tensor\theta_1)\tensor \cdots \tensor \mu_{k_n}(a_{k_{1}+\dots +k_{n-1}+1}\tensor\cdots \tensor a_{k_{1}+\dots +k_n} \tensor\theta_n) \tensor \theta\big)
\end{multline*}
and
\[
  \mu_n(a_1\tensor\cdots\tensor a_n \tensor (t\star \theta))
  = \mu_n(a_{t(1)}\tensor\cdots\tensor a_{t(n)} \tensor \theta)
\]
for \(a\), \(a_i\in A\) and \(\theta\in \P(n)\), \(\theta_i\in \P(k_i)\) and \(t\in \S_n\).\medskip

\begin{remark}\label{mu1 and mu2}
  Note that a \(\P\)-algebra \(A\) has a natural structure of \emph{right \(\P(1)\)-module} via the map \(\mu_1\), and that \(\mu_2\) induces a homomorphism of right \(\P(1)\)-modules
  \[\mu_2'\colon (A\tensor_{R} A) \tensor_{(\P(1)\tensor_{R} \P(1)) \wr \S_2} \P(2) \to A
  \]
  where the generator \(t\) of \(\S_2\) acts on \({A}\tensor_{R} {A}\) by the twist \(\tau\colon a\tensor b\mapsto b\tensor a\).
\end{remark}

Now let \(A\) and \(B\) be \(\P\)-algebras. A \defn{morphism of \(\P\)-algebras} from \(A\) to \(B\) is an \(R\)-linear map \(f\colon A\to B\) commuting with the operations \(\mu_n\), that is,
\[
  f(\mu_n^A(a_1\tensor\cdots\tensor a_n \tensor \theta)) = \mu_n^B(f(a_1)\tensor\cdots\tensor f(a_n) \tensor \theta)
\]
for all \(n\geq 1\), \(a_i\in A\) and \(\theta\in \P(n)\). The category of \(\P\)-algebras \(\PAlg\) clearly is a variety of universal algebras. It is, moreover, semi-abelian, because it is a \emph{variety of \(\Omega\)-groups} in the sense of~\cite{Higgins}, as a group structure is part of the underlying module structure of the objects; pointedness is guaranteed because the operads we consider are constant-free---recall Footnote~\ref{CF Footnote} on page \pageref{CF Footnote}.

\subsection{Nilpotency of operads and of their algebras}\label{Nilpotency for operads}

As expected, for algebras over operads in \(R\)-modules, the notions of nilpotency of class \(\leq m\) in the operadic sense and in our categorical sense are equivalent; we here show this only for \(m\in \{1,2\}\) as this suffices for our purposes, and as the proof of the general case requires additional techniques of functor calculus for functors between semi-abelian categories, in particular the notion and properties of polynomial functors~\cite{CCC}---which exceeds the scope of the present paper.

We start by recalling the usual operadic definitions of nilpotency:

\begin{definition}\label{operadic nilpotency}
  Let \(m\geq 1\). An operad \(\P\) as above is said to be \defn{\(m\)-step nilpotent} if \(\P(n)=0\) for all \(n>m\). A \(\P\)-algebra \(A\) is said to be \defn{\(m\)-step nilpotent} if \(\mu_n^A=0\) for all \(n>m\). We denote by \(\NIL_m(\PAlg)\) the full subcategory of \(\PAlg\) whose objects are the \(m\)-step nilpotent \(\P\)-algebras.
\end{definition}

Note that if \(\P\) is \(m\)-step nilpotent then so are all its algebras.

\begin{remark}\label{1 and 2-step nilpotent P-algebras}
  Note that by Remark~\ref{structure of P1 and P2}, a \(2\)-step nilpotent right symmetric operad in \(R\)-modules is nothing but an \(R\)-algebra with unit \(\P(1)\), together with a \(\left((\P(1)\tensor_{R}\P(1))\wr \S_2\right)\)-\(\P(1)\)-bimodule \(\P(2)\).

  Moreover, by Remark~\ref{mu1 and mu2}, a \(1\)-step nilpotent \(\P\)-algebra is nothing but a right \(\P(1)\)-module, so \(\NIL_1(\PAlg) = \Mod_{\P(1)}\). Still following~\cite{CCC}, a \(2\)-step nilpotent \(\P\)-algebra \(A\) can be described as follows: \(A\) is a right \(\P(1)\)-module and has a sub-\(\P(1)\)-module \(D=D(A)\) (called the submodule of \defn{decomposables} of \(A\)) together with a surjective homomorphism of right \(\P(1)\)-modules
  \[
    \xymatrix{
    \overline{\mu_2} = \overline{\mu_2^A}\colon (\overline{A}\tensor_{R} \overline{A})\tensor_{(\P(1)\tensor_{R}\P(1))\wr \S_2} \P(2) \ar@{-{ >>}}[r] &D(A)
    }\]
  where \(\overline{A}=A/D(A)\).
\end{remark}

Next we review \emph{nilpotentisation} of operads and their algebras.

\begin{proposition}
  Consider \(m\geq 1\), a right symmetric operad in \(R\)-modules \(\P\), and a \(\P\)-algebra \(A\).
  \begin{enumerate}
    \item A right symmetric operad in \(R\)-modules \(\nil_m(\P)\) is derived from \(\P\) by \emph{truncation} at degree \(m\), that is, \(\nil_m(\P)(n) = \P(n)\) if \(n\leq m\) and \(\nil_m(\P)(n) = 0\) if \(n>m\), and where \(\gamma_{k_1,\ldots,k_n;n}^{\nil_m(\P)}\) equals \(\gamma_{k_1,\ldots,k_n;n}^{\P}\)  if \(k_1+\cdots+k_n\leq m\) and equals \(0\) otherwise.
    \item An \(m\)-step nilpotent \(\P\)-algebra \(\nIL_m(A)\) is defined by \(\nIL_m(A)= A/J_{m+1}\) with \(J_{n} =J_{n}(A) =\sum_{k\geq n} \Im(\mu_k^A)\) and \(\mu_n^{\NIL_m(A)}\) being induced by \(\mu_n^A\).
  \end{enumerate}
\end{proposition}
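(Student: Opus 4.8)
The plan is to verify that the two constructions are well defined, i.e.\ that $\nil_m(\P)$ really is a right symmetric operad in $R$-modules and that $\nIL_m(A)$ really is an $m$-step nilpotent $\P$-algebra. In both parts the only axiom requiring genuine work is associativity, and the single observation that makes truncation consistent is that a composition operation can never \emph{decrease} the total output arity: in $\gamma_{k_1,\dots,k_n;n}$ the output degree $k_1+\cdots+k_n$ is at least each individual $k_i$ (as all $k_j\geq 1$) and at least $n$.

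For part~(1), the $\S_n$-actions and the identity $1\in\P(1)=\nil_m(\P)(1)$ are simply inherited, so I would only dwell on the coherence axioms. The point is that every composite occurring in the identity, associativity, or equivariance axiom lands in a single module $\nil_m(\P)(s)$, where $s$ is the total output arity, \emph{the same on both sides of each equation}. If $s>m$, then $\nil_m(\P)(s)=0$ and both sides vanish, so the axiom holds trivially. If $s\leq m$, then by the arity-monotonicity observation every intermediate degree appearing on either side is also $\leq m$ (for associativity, each inner degree $s_i$ and the degree $k_1+\cdots+k_n$ are bounded by $s$), so each $\gamma^{\nil_m(\P)}$ occurring is literally the corresponding $\gamma^{\P}$, and the axiom reduces to the already-known one for $\P$.

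For part~(2), I first record that $J_{m+1}=\sum_{k\geq m+1}\Im(\mu_k^A)$ is an $R$-submodule of $A$ and that the $J_n$ form a descending chain (with $J_1=A$, since $\mu_1^A(a\tensor 1)=a$). The crux is to show $J_{m+1}$ is an \emph{ideal}, meaning that feeding a $J_{m+1}$-element into any slot of any $\mu_n^A$ returns an element of $J_{m+1}$; by $\S_n$-equivariance it suffices to treat the first slot, and by multilinearity it suffices to treat a generator $a_1=\mu_k^A(b_1\tensor\cdots\tensor b_k\tensor\theta')$ with $k\geq m+1$. Writing the remaining arguments as $a_i=\mu_1^A(a_i\tensor 1)$ and applying the $\P$-algebra associativity axiom with $k_1=k$ and $k_2=\cdots=k_n=1$, I would rewrite
\[
  \mu_n^A(a_1\tensor a_2\tensor\cdots\tensor a_n\tensor\theta)
  =\mu_{k+n-1}^A\bigl(b_1\tensor\cdots\tensor b_k\tensor a_2\tensor\cdots\tensor a_n\tensor\gamma_{k,1,\dots,1;n}(\theta'\tensor 1\tensor\cdots\tensor 1\tensor\theta)\bigr),
\]
which lies in $\Im(\mu_{k+n-1}^A)\subseteq J_{m+1}$ because $k+n-1\geq m+1$. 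Hence $\mu_n^A$ kills $J_{m+1}$ in each tensor slot, so it descends to a well-defined operation $\mu_n^{\NIL_m(A)}$ on $A/J_{m+1}$; the $\P$-algebra axioms, being equational, pass to the quotient since the projection is a surjective $R$-linear map compatible with the $\mu_n$. Finally, for $n>m$ one has $\Im(\mu_n^A)\subseteq J_{m+1}$ by definition, so $\mu_n^{\NIL_m(A)}=0$, which is exactly $m$-step nilpotency.

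I would close by noting that these constructions deserve the name \emph{nilpotentisation}: if $B$ is any $m$-step nilpotent $\P$-algebra and $f\colon A\to B$ a morphism, then $f\bigl(\Im\mu_k^A\bigr)\subseteq\Im(\mu_k^B)=0$ for $k\geq m+1$, so $f$ kills $J_{m+1}$ and factors uniquely through $A/J_{m+1}$; thus $\nIL_m$ is left adjoint to the inclusion $\NIL_m(\PAlg)\hookrightarrow\PAlg$, and the same argument at the operad level shows $\nil_m(\P)$ is the universal $m$-step nilpotent quotient of $\P$. The main obstacle throughout is purely the multi-index bookkeeping in the two associativity axioms; conceptually there is nothing beyond the output-arity monotonicity used in both halves.
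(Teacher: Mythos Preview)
Your proposal is correct and follows essentially the same approach as the paper. The only cosmetic difference is in part~(1): the paper packages your arity-monotonicity observation as the statement that the subsequence \(I=(0,\dots,0,\P(m+1),\P(m+2),\dots)\) is an operadic ideal of \(\P\), so that \(\nil_m(\P)=\P/I\), whereas you verify the coherence axioms on the quotient directly; the content is identical, and your part~(2) matches the paper's computation line for line.
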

\begin{proof}
  For (1) we just need to check that the subsequence
  \[
    I=(0,\ldots,0,\P(m+1),\P(m+2),\ldots)
  \]
  of the sequence of \(R\)-modules \((\P(1),\P(2),\ldots)\) is an ideal of \(\P\); then \(\nil_m(\P)=\P/I\). Indeed, consider an operation \(\gamma=\gamma_{k_1,\ldots,k_n;n}\) of \(\P\). If \(k_i>m\) for some \(i\), then \(k_1+\cdots+k_n>m\) and hence the target module \(\P(k_1+\cdots+k_n)\) of \(\gamma\) equals \(I_{k_1+\cdots+k_n}\). If \(n>m\) then again \(k_1+\cdots+k_n>m\) since \(k_i\geq 1\) for all \(i\), noting that we assume \(\P\) to be constant-free.

  For (2), we verify that \(J_{m+1}\) is an ideal of \(A\). Let \(p\geq 1\), \(a_1\), \dots, \(a_p\in A\) and \(\theta\in\P(p)\). Suppose that for some \(i\) between \(1\) and \(p\), we have \(a_i=\mu_q(a_1'\tensor \cdots\tensor a_q'\tensor \theta')\) for \(q>m\), \(a_1'\), \dots, \(a_q'\in A\) and \(\theta'\in\P(q)\). If \(i=1\), then
  \begin{align*}
     & \mu_p(a_1\tensor\cdots\tensor a_p\tensor \theta)                                                                                                                            \\
     & = \mu_p( \mu_q( a_1'\tensor\cdots\tensor a_q'\tensor \theta')\tensor a_2\tensor\cdots\tensor a_p\tensor \theta)                                                             \\
     & = \mu_{p+q-1}(a_1'\tensor\cdots\tensor a_q'\tensor a_2\tensor \cdots\tensor a_p\tensor \gamma_{q,1,\ldots,1;p}(\theta'\tensor 1\tensor\cdots\tensor1\tensor\theta))\text{.}
  \end{align*}
  But \(p+q-1>m\), so \(\mu_p(a_1\tensor\cdots\tensor a_p\tensor \theta)\in J\). For \(i=2,\ldots,p\) the argument is analogous.
\end{proof}

Note that \(\nIL_m(A)\) is \(m\)-step nilpotent and \(\nIL_m\) canonically defines a functor \(\nIL_m\colon \PAlg \to \nIL_m(\PAlg)\) which is a Birkhoff reflector whose unit is the canonical quotient map \(\eta_m^A\colon A \to \nIL_m(A)\). Note that \(\NIL_m(\PAlg)\) is semi-abelian as a subvariety of a semi-abelian category; in fact, \(\NIL_m(\PAlg) = \Algg{\nil_m(\P)}\) (which also shows that \(\NIL_m(\PAlg)\) is a semi-abelian variety). \(\nil_m(\P)\) and \(\NIL_m(A)\) are called the \defn{\(m\)-nilpotentisation} of \(\P\) and of \(A\), respectively.

We are now ready to compare the operadic notion of nilpotency with the categorical one, recalled in Section~\ref{Section Nilpotentisation}; this amounts to comparing the respective notions of lower central series.

\begin{lemma}\label{J_n in gamma_n}
  For an operad \(\P\) as above and a \(\P\)-algebra \(A\)  we have \(J_n(A) \subobj \gamma_n(A)\) for all \(n\geq 1\).
\end{lemma}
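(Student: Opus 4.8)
The plan is to reduce everything to the single inclusion $\Im(\mu_k^A)\subseteq \gamma_k(A)$, valid for each fixed $k\geq 1$. Granting this, the lemma follows at once: the lower central series is descending, so $\gamma_k(A)\subseteq\gamma_n(A)$ whenever $k\geq n$, and hence each summand $\Im(\mu_k^A)$ of $J_n(A)=\sum_{k\geq n}\Im(\mu_k^A)$ lies in $\gamma_n(A)$. Since $\gamma_n(A)$ is a normal subobject of $A$, and thus in particular an $R$-submodule closed under all the operations, the sum of these images is again contained in it.

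To establish $\Im(\mu_k^A)\subseteq \gamma_k(A)$, recall that $\gamma_k(A)$ is the image of the composite $\nabla^k_A\comp\iota_{A,\dots,A}\colon A^{\diamond k}\to A$, where $A^{\diamond k}=\Ker(r_{A,\dots,A})$ and $r=r_{A,\dots,A}\colon A^{+k}\to\prod_{j=1}^k(A^{+(k-1)})$ satisfies $\pi_j\comp r\comp\iota_l=\iota_l$ for $l\neq j$ and $\pi_j\comp r\comp\iota_j=0$. As $\PAlg$ is a variety of $\Omega$-groups, images and kernels are computed set-theoretically, and $\Im(\mu_k^A)$ is generated as an $R$-module by the evaluations $w\DefEq \mu_k^A(a_1\tensor\cdots\tensor a_k\tensor\theta)$ with $a_i\in A$, $\theta\in\P(k)$; so it suffices to place each such $w$ in $\gamma_k(A)$. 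The key move is to lift $w$ to
\[
  \tilde w\DefEq \mu_k^{A^{+k}}\bigl(\iota_1(a_1)\tensor\cdots\tensor\iota_k(a_k)\tensor\theta\bigr)\in A^{+k}\text{,}
\]
applying the $k$-ary operation of the $\P$-algebra $A^{+k}$ to the $k$ distinct coproduct inclusions.

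Two verifications then remain, both flowing from the fact that the structural maps are $\P$-algebra homomorphisms (hence commute with $\mu_k$), together with the $R$-linearity of $\mu_k$ in each of its algebra slots. First, $\tilde w\in\Ker(r)=A^{\diamond k}$: for every $j$ the homomorphism $\pi_j\comp r$ carries $\tilde w$ to $\mu_k\bigl((\pi_j r\iota_1)(a_1)\tensor\cdots\tensor(\pi_j r\iota_k)(a_k)\tensor\theta\bigr)$, whose $j$-th algebra argument $(\pi_j r\iota_j)(a_j)$ is zero, so that multilinearity annihilates the whole expression. Second, the codiagonal homomorphism $\nabla^k_A$, for which $\nabla^k_A\comp\iota_l=1_A$, sends $\tilde w$ to $\mu_k^A(a_1\tensor\cdots\tensor a_k\tensor\theta)=w$. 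Hence $w=\nabla^k_A(\iota_{A,\dots,A}(\tilde w))\in\gamma_k(A)$, as wanted.

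The only step requiring genuine care is the claim that $\tilde w$ really lands in the cosmash product $A^{\diamond k}$: this is precisely where one must use that each $\mu_k$ is not merely reduced but genuinely $R$-linear in every algebra argument, so that a single zero input kills the operation and forces $\pi_j\comp r$ to annihilate $\tilde w$ for all $j$. Everything else is formal, and the passage from generators to the full sum $J_n(A)$ uses only that $\gamma_n(A)$ is closed under the algebra operations.
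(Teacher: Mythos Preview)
Your proof is correct and follows essentially the same approach as the paper: lift a generator \(\mu_k^A(a_1\tensor\cdots\tensor a_k\tensor\theta)\) to \(\mu_k^{A^{+k}}(\iota_1(a_1)\tensor\cdots\tensor\iota_k(a_k)\tensor\theta)\), check that this lift lies in \(A^{\diamond k}\) because each projection \(\pi_j\comp r\) inserts a zero in the \(j\)-th slot and multilinearity kills the result, and then push forward along \(\nabla_A^k\). The paper's proof is slightly more terse but identical in substance.
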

\begin{proof}
  Let \(k\geq n \geq 1\), \(a_1\), \dots, \(a_k\in A\) and \(x\in \P(k)\). Then
  \begin{align*}
    \mu_k^A(a_1\tensor\cdots\tensor a_k\tensor x) & = \nabla^k_A \comp \mu_k^{A^{+ k}}(i_1(a_1)\tensor\cdots\tensor i_k(a_k)\tensor x) \\
                                                  & \in \gamma_k(A) \subobj \gamma_n(A)
  \end{align*}
  because \(\mu_k^{A^{+ k}}(i_1(a_1)\tensor\cdots\tensor i_k(a_k)\tensor x) \in A^{\diamond k} \)
  since
  \begin{align*}
     & \hat{r}_i\mu_k^{A^{+ k}}(i_1(a_1)\tensor\cdots\tensor i_k(a_k)\tensor x) \\
     & = \mu_k^{A^{+ (k-1)}}(i_1(a_1)\tensor\cdots\tensor
    i_{j-1}(a_{j-1})\tensor 0 \tensor i_{j+1}(a_{j+1})\tensor\cdots\tensor i_k(a_k)\tensor x) = 0
  \end{align*}
  for \(j=1,\ldots,k\).
\end{proof}

Note that this implies that \(\Nil_m(\PAlg)\) is a full subcategory of \(\NIL_m(\PAlg)\).

\begin{proposition}\label{abelian core of Alg-P}
  For any right symmetric operad in \(R\)-modules we have
  \[
    \Ab(\PAlg) = \Nil_1(\PAlg) =\NIL_1(\PAlg) = \Mod_{\P(1)}\text{.}
  \]
\end{proposition}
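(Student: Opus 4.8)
The plan is to present the four objects as a chain of inclusions that closes into a loop of equalities, so that two of the four identifications come essentially for free. Namely, $\Ab(\PAlg) = \Nil_1(\PAlg)$ holds because, as recalled in Subsection~\ref{Subsection n-nilpotent core}, \emph{$1$-nilpotent} is synonymous with \emph{abelian}, while $\NIL_1(\PAlg) = \Mod_{\P(1)}$ is exactly the content of Remark~\ref{1 and 2-step nilpotent P-algebras}. It therefore only remains to prove the two middle inclusions $\Nil_1(\PAlg) \subseteq \NIL_1(\PAlg)$ and $\NIL_1(\PAlg) \subseteq \Ab(\PAlg)$, which together force all four to coincide.

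The first of these is the inclusion already noted after Lemma~\ref{J_n in gamma_n}, and I would simply invoke that lemma: if $A$ is categorically $1$-nilpotent, then $\gamma_2(A) = [A,A] = 0$, so $J_2(A) \subseteq \gamma_2(A) = 0$ forces $\mu_k^A = 0$ for all $k \geq 2$, which is precisely operadic $1$-nilpotency. Thus $\Nil_1(\PAlg) \subseteq \NIL_1(\PAlg)$.

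For the reverse inclusion I would argue that every $1$-step nilpotent $\P$-algebra is an abelian object. Since $\Ab(\PAlg)$ is the \emph{largest} Birkhoff subcategory of $\PAlg$ all of whose objects are abelian, and $\NIL_1(\PAlg)$ is itself a Birkhoff subcategory (being the image of the Birkhoff reflector $\nIL_1$), it suffices to check that each object of $\NIL_1(\PAlg)$ carries an internal abelian group structure. Given such an $A$, a right $\P(1)$-module with $\mu_n^A = 0$ for $n \geq 2$, the underlying additive group of the module furnishes the candidate structure, and I would verify that the structure maps $+\colon A \times A \to A$, $0\colon 0 \to A$ and the negation $A \to A$ are morphisms of $\P$-algebras. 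Because products in $\PAlg$ are computed componentwise, $\mu_n^{A \times A} = \mu_n^A \times \mu_n^A$ vanishes for $n \geq 2$, so compatibility with those operations is automatic; for $n = 1$ it reduces to the $R$-linearity, hence additivity, of $\mu_1$ in its algebra variable. This makes $A$ an internal abelian group, whence $A \in \Ab(\PAlg)$, and closes the loop $\Ab(\PAlg) = \Nil_1(\PAlg) \subseteq \NIL_1(\PAlg) = \Mod_{\P(1)} \subseteq \Ab(\PAlg)$.

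The only genuinely computational point, and the main thing to get right, is the verification in the last paragraph that module addition commutes with $\mu_1$; everything else is bookkeeping with the already-established Birkhoff and lower-central-series machinery. An alternative to the internal-group argument would be to bound the Higgins commutator $\gamma_2(A)$ from above by the decomposables $J_2(A)$ directly and conclude $\gamma_2(A) = 0$ from $J_2(A) = 0$, but this requires unwinding the coproduct of $\P$-algebras and is far less transparent than exhibiting the abelian group structure, so I would avoid it.
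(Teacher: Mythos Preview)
Your proposal is correct and follows essentially the same route as the paper: both reduce to showing that every $1$-step nilpotent $\P$-algebra is an abelian object by verifying that module addition $A\times A\to A$ is a morphism of $\P$-algebras (trivially for $\mu_n$ with $n\geq 2$, and by $\P(1)$-linearity for $\mu_1$). The detour through the Birkhoff-maximality of $\Ab(\PAlg)$ is unnecessary---once each object of $\NIL_1(\PAlg)$ is shown to be abelian, the inclusion into $\Ab(\PAlg)$ is immediate by definition of the abelian core as a full subcategory---but it does no harm.
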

\begin{proof}
  We already know that
  \[
    \Ab(\PAlg) = \Nil_1(\PAlg) \subseteq\NIL_1(\PAlg) = \Mod_{\P(1)}
  \]
  so only need to show that every \(1\)-step nilpotent \(\P\)-algebra \(A\) is an abelian object of \(\PAlg\). Indeed, the homomorphism \(s\colon A\times A\to A\colon (a,b)\mapsto s(a,b)= a+b\) is \(\P(1)\)-linear and hence commutes with \(\mu_1\). It further commutes with all operations \(\mu_n\) for \(n\geq 2\), since these are trivial on both sides.
\end{proof}

\begin{proposition}\label{coproduct in Nil_2(Alg-P)}
  Let \(\P\) be a \(2\)-step nilpotent right symmetric operad in \(R\)-modules and let \(A\), \(B\) be \(\P\)-algebras. Using the description of \(\P\)-algebras in Remark~\ref{1 and 2-step nilpotent P-algebras}, their coproduct \(A+B\) in \(\PAlg\) is given as follows.
  \[
    A+B = A \oplus B\oplus (\overline{A}\tensor_{R} \overline{B})\tensor_{\P(1)\tensor_{R}\P(1)}\P(2)
  \]
  as right \(\P(1)\)-modules;
  \[
    D(A+B) = D(A) \oplus D(B)\oplus (\overline{A}\tensor_{R} \overline{B})\tensor_{\P(1)\tensor_{R}\P(1)}\P(2)
  \]
  whence \(\overline{A+B} = \overline{A} \oplus \overline{B}\). Then the map \(\overline{\mu_2^{A+B}}\) is induced by
  the \(\P(1)\)-linear map \(\tilde{\mu}_2\colon \overline{A+B}\tensor_{R} \overline{A+B}\tensor_{R} \P(2) \to D(A+B)\) given by the matrix
  \[
    \left\links\begin{smallmatrix}
      \overline{\mu_2^A} & 0                  & 0  & 0              \\
      0                  & \overline{\mu_2^B} & 0  & 0              \\
      0                  & 0                  & 1 & \tau\otimes t'
    \end{smallmatrix}\right\rechts
  \]
  with respect to the decompositions
  \[
    \resizebox{\textwidth}{!}{
      \begin{tikzcd}[squared]
        \left(\overline{A}\tensor_{R} \overline{A}\tensor_{R} \P(2)\right)
        \oplus
        \left(\overline{B}\tensor_{R} \overline{B}\tensor_{R} \P(2)\right)
        \oplus
        \left(\overline{A}\tensor_{R} \overline{B}\tensor_{R} \P(2)\right)
        \oplus
        \left(\overline{B}\tensor_{R} \overline{A}\tensor_{R} \P(2)\right)
        \arrow[d, "\tilde{\mu}_2"]
        \\
        D(A) \oplus D(B)\oplus \left((\overline{A}\tensor_{R} \overline{B})\tensor_{\P(1)\tensor_{R}\P(1)}\P(2)\right)
      \end{tikzcd}}
  \]
  Here \(\tau\) denotes the twist of the tensor product and \(t'\) the map given by the action of \(t\).
\end{proposition}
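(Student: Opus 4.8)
The plan is to identify the algebra \(C \DefEq A \oplus B \oplus M\), with \(M \DefEq (\overline{A}\tensor_{R}\overline{B})\tensor_{\P(1)\tensor_{R}\P(1)}\P(2)\), as the coproduct \(A+B\) by verifying the universal property of the coproduct in \(\PAlg\). Since \(\P\) is \(2\)-step nilpotent we have \(\PAlg = \NIL_2(\PAlg)\), so this is indeed the coproduct in the whole category, and I may describe all algebras in sight through the triples \((\,\cdot\,,D(\,\cdot\,),\overline{\mu_2})\) of Remark~\ref{1 and 2-step nilpotent P-algebras}. First I would check that the proposed data genuinely define such a triple. With \(D(C)\DefEq D(A)\oplus D(B)\oplus M\), and hence \(\overline{C}=\overline{A}\oplus\overline{B}\), the map \(\tilde\mu_2\) must descend through the \(\S_2\)-balancing of \((\overline{C}\tensor_{R}\overline{C})\tensor_{(\P(1)\tensor_{R}\P(1))\wr\S_2}\P(2)\), where the generator \(t\) acts by \(\tau\) on the two \(\overline{C}\)-factors and by \(t'\) on \(\P(2)\). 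On the diagonal blocks \(\overline{A}\tensor_{R}\overline{A}\) and \(\overline{B}\tensor_{R}\overline{B}\) this is exactly the equivariance already enjoyed by \(\overline{\mu_2^A}\) and \(\overline{\mu_2^B}\); on the off-diagonal blocks \(t\) interchanges \(\overline{A}\tensor_{R}\overline{B}\tensor_{R}\P(2)\) and \(\overline{B}\tensor_{R}\overline{A}\tensor_{R}\P(2)\), and the third and fourth columns \(1\) and \(\tau\tensor t'\) of the matrix are arranged (using that \(t\) is an involution) so that both yield the same class in \(M\). Surjectivity of \(\tilde\mu_2\) onto \(D(C)\) is immediate, so \(C\) is a \(2\)-step nilpotent \(\P\)-algebra.

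Next I would take the coprojections \(\iota_A\colon A\to C\) and \(\iota_B\colon B\to C\) to be the inclusions of the first two summands and confirm that they are morphisms of \(\P\)-algebras: each is \(\P(1)\)-linear, sends decomposables into \(D(C)\), and commutes with \(\overline{\mu_2}\) because the corresponding diagonal block of \(\tilde\mu_2\) is precisely \(\overline{\mu_2^A}\), respectively \(\overline{\mu_2^B}\).

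For the universal property, given a \(\P\)-algebra \(T\) with morphisms \(f\colon A\to T\) and \(g\colon B\to T\), I would define \(h\colon C\to T\) by \(f\) on \(A\), \(g\) on \(B\), and \(h([\overline a\tensor\overline b\tensor\theta])\DefEq\mu_2^T(f(a)\tensor g(b)\tensor\theta)\) on \(M\). Well-definedness on \(M\) is where the verification concentrates: replacing \(a\) by a decomposable sends \(f(a)\) into \(D(T)\), whence \(\mu_2^T\) of it vanishes by \(2\)-step nilpotency (the term becomes a \(\mu_3^T=0\) composite), which legitimises passing to \(\overline{A}\) and \(\overline{B}\); and the \(\P(1)\tensor_{R}\P(1)\)-balancing corresponds under \(h\) to the associativity identity \(\mu_2^T(f(a)p_1\tensor g(b)p_2\tensor\theta)=\mu_2^T(f(a)\tensor g(b)\tensor\gamma_{1,1;2}(p_1\tensor p_2\tensor\theta))\). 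One then checks that \(h\) is \(\P(1)\)-linear, using the right \(\P(1)\)-action on \(\P(2)\), and compatible with \(\mu_2\): the four blocks of \(\mu_2^C\) reproduce respectively \(f\comp\mu_2^A\), \(g\comp\mu_2^B\), the defining formula for \(h\) on \(M\), and — for the \(\overline{B}\tensor_{R}\overline{A}\) block — the identity \(\mu_2^T(g(b)\tensor f(a)\tensor\theta)=\mu_2^T(f(a)\tensor g(b)\tensor t\star\theta)\), which is exactly what the entry \(\tau\tensor t'\) encodes. Uniqueness is automatic because \(C\) is generated as a \(\P\)-algebra by the images of \(A\) and \(B\), so the values of \(h\) on \(M\) are forced by \(\mu_2\)-compatibility. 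This yields \(C\cong A+B\) together with the asserted description of \(\overline{\mu_2^{A+B}}\).

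The step I expect to be the main obstacle is the \(\S_2\)-equivariance bookkeeping encoded in the fourth column \(\tau\tensor t'\): one must keep track of how the single mixed summand \(M\) absorbs both orientations \(\overline{A}\tensor_{R}\overline{B}\) and \(\overline{B}\tensor_{R}\overline{A}\) of mixed products, why the balancing in \(M\) is taken over \(\P(1)\tensor_{R}\P(1)\) rather than over the full wreath product — the \(t\)-relation being used here to \emph{merge} the two orientations rather than to quotient a single one, as happens in the diagonal terms \(D(A)\) and \(D(B)\) — and to keep this consistent with the equivariance already built into \(\overline{\mu_2^A}\) and \(\overline{\mu_2^B}\). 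Once this is handled, the remaining verifications reduce to the associativity and equivariance axioms for \(T\) together with the vanishing of \(\mu_3^T\).
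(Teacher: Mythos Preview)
Your proposal is correct and follows essentially the same approach as the paper: verify that \(\tilde\mu_2\) descends through the \((\P(1)\tensor_{R}\P(1))\wr\S_2\)-balancing so that the proposed object is a \(\P\)-algebra, check that the summand inclusions are \(\P\)-algebra maps, and then define the universal arrow into a target \(T\) via \(f\) and \(g\) on the first two summands and \(\mu_2^T(f(a)\tensor g(b)\tensor\theta)\) on the mixed summand. The paper records only the skeleton of this argument, whereas you spell out the well-definedness and equivariance checks in more detail, but the strategy is the same.
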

\begin{proof}
  It is straightforward to check that the map \(\tilde{\mu}_2\) factors through
  \[
    (\overline{A+B}\tensor_{R} \overline{A+B})\tensor_{(\P(1)\tensor_{R}\P(1))\wr \S_2} \P(2)\text{,}
  \]
  hence \(A+B\) is a \(\P\)-algebra. Moreover, the coproduct inclusions \(i_1\), \(i_2\) of \(A\), \(B\) into \(A+B\), respectively, are maps of \(\P\)-algebras. If \(C\) is another \(\P\)-algebra and \(f\colon A\to C\), \(g\colon B\to C\) are maps of \(\P\)-algebras then the map \(\CoindArr{ f \and g}\colon A+B \to C\) defined such that
  \[
    \CoindArr{ f \and g}(a,b,a'\tensor b'\tensor x) = f(a)+g(b)+ \overline{\mu_2^C}(f(a')\tensor g(b')\tensor x)
  \]
  for \(a\), \(a'\in A\), \(b\), \(b'\in B\) and \(x\in P(2)\) is a map of \(\P\)-algebras such that \(\CoindArr{ f \and g}i_1= f\) and \(\CoindArr{ f \and g}i_2= g\), and is unique with respect to this property.
\end{proof}

\begin{corollary}\label{cosmash product of Nil_2(Alg-P)}
  Let \(\P\) be a \(2\)-step nilpotent right symmetric operad in \(R\)-modules. Then there is a natural isomorphism
  \[
    A \diamond B \hspace{2pt}\cong\hspace{2pt}(\overline{A}\tensor_{R} \overline{B})\tensor_{\P(1)\tensor_{R}\P(1)}\P(2)
  \]
  of right \(\P(1)\)-modules---that is, of abelian \(\P\)-algebras, by Proposition~\ref{abelian core of Alg-P}---for all \(\P\)-algebras \(A\), \(B\).
  \noproof
\end{corollary}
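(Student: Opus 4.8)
The plan is to read the cosmash product off directly from the explicit coproduct of Proposition~\ref{coproduct in Nil_2(Alg-P)}. By definition (Subsection~\ref{Binary and ternary cosmash products}), the object \(A\cosmash B\) is the kernel of the canonical comparison \(r_{A,B}\colon A+B\to A\times B\), whose restrictions along the coproduct injections are \(\DIndArr{1_A \and 0}\colon A\to A\times B\) and \(\DIndArr{0 \and 1_B}\colon B\to A\times B\). So it suffices to identify this kernel once both \(A+B\) and \(A\times B\) are described concretely. The coproduct is supplied by Proposition~\ref{coproduct in Nil_2(Alg-P)}, and the product \(A\times B\) has underlying \(\P(1)\)-module \(A\oplus B\) with componentwise operations \(\mu_n^{A\times B}=\mu_n^A\times \mu_n^B\), since products in a variety are formed componentwise.

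First I would evaluate \(r_{A,B}=\CoindArr{\DIndArr{1_A \and 0} \and \DIndArr{0 \and 1_B}}\) on each of the three summands of \(A+B=A\oplus B\oplus (\overline{A}\tensor_{R}\overline{B})\tensor_{\P(1)\tensor_{R}\P(1)}\P(2)\), using the formula for a map out of the coproduct obtained in the proof of Proposition~\ref{coproduct in Nil_2(Alg-P)}, namely \(\CoindArr{f \and g}(a,b,a'\tensor b'\tensor x)=f(a)+g(b)+\overline{\mu_2^{C}}(f(a')\tensor g(b')\tensor x)\) with \(C=A\times B\), \(f=\DIndArr{1_A \and 0}\), \(g=\DIndArr{0 \and 1_B}\). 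On the summand \(A\oplus B\) this is the identity onto \(A\times B=A\oplus B\); on the third summand the relevant term is \(\overline{\mu_2^{A\times B}}(\DIndArr{a' \and 0}\tensor \DIndArr{0 \and b'}\tensor x)\), which vanishes because \(\mu_2^{A\times B}\) is computed componentwise and each component then has a zero in one of its two \(R\)-multilinear algebra slots. Hence \(r_{A,B}\) is precisely the projection that discards the third summand, and \(\Ker(r_{A,B})=(\overline{A}\tensor_{R}\overline{B})\tensor_{\P(1)\tensor_{R}\P(1)}\P(2)\), giving the claimed isomorphism of right \(\P(1)\)-modules.

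It remains to check that this kernel is an \emph{abelian} \(\P\)-algebra, so that the isomorphism lives in \(\Ab(\PAlg)=\Mod_{\P(1)}\) (Proposition~\ref{abelian core of Alg-P}). This follows because \(A\cosmash B\) is a sub-\(\P\)-algebra contained in the decomposables \(D(A+B)=D(A)\oplus D(B)\oplus(\overline{A}\tensor_{R}\overline{B})\tensor_{\P(1)\tensor_{R}\P(1)}\P(2)\): for a \(2\)-step nilpotent algebra \(\mu_2\) factors through \(\overline{A+B}\), where every decomposable becomes zero, so \(\mu_2\) vanishes on \(A\cosmash B\); and \(\mu_n=0\) for \(n\geq 3\) as \(\P\) is \(2\)-step nilpotent. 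Thus only the restricted \(\mu_1\)-action survives, and \(A\cosmash B\) is abelian. Naturality in \(A\) and \(B\) is then automatic, since the coproduct, the product and the comparison \(r_{A,B}\) are all natural, whence so is the induced isomorphism on kernels. I expect no serious obstacle here: the only genuinely non-routine point is the vanishing of the mixed term under \(r_{A,B}\), and all the real structural work has already been carried out in Proposition~\ref{coproduct in Nil_2(Alg-P)}.
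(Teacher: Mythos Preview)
Your proposal is correct and is exactly the computation the paper leaves implicit by marking the corollary with \noproof: it follows directly from the explicit description of \(A+B\) in Proposition~\ref{coproduct in Nil_2(Alg-P)} by reading off the kernel of \(r_{A,B}\), precisely as you do. The verification that the mixed term vanishes under \(r_{A,B}\) and the abelianness check are the only details to supply, and you handle both correctly.
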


\begin{theorem}\label{gamma_n(A)}
  For an operad \(\P\) as above and a \(\P\)-algebra \(A\) we have \(\gamma_n(A) = J_n(A)\) for all \(n\geq 1\).
\end{theorem}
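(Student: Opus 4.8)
The plan is to combine Lemma~\ref{J_n in gamma_n}, which already supplies the inclusion $J_n(A)\subseteq\gamma_n(A)$, with a proof of the reverse inclusion $\gamma_n(A)\subseteq J_n(A)$. Since $\gamma_n(A)$ is by definition the image, under the codiagonal $\nabla^n_A\colon A^{+n}\to A$, of the canonical copy $\iota_{A,\dots,A}(A^{\diamond n})$ of the $n$-fold cosmash product sitting inside $A^{+n}$, it suffices to show that $\nabla^n_A$ carries this copy into $J_n(A)$.

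First I would pin down a workable element-level description of the coproduct $A^{+n}$. As $\PAlg$ is a variety of $\Omega$-groups, its underlying module structure lets me write any element of $A^{+n}$ as an $R$-linear combination of \emph{flattened words} $\mu_k^{A^{+n}}(\iota_{j_1}(a_1)\tensor\cdots\tensor\iota_{j_k}(a_k)\tensor x)$ with $k\geq 1$, $a_\ell\in A$, $x\in\P(k)$ and $j_1,\dots,j_k\in\{1,\dots,n\}$; the reduction of nested operations to a single $\mu_k$ is exactly the associativity axiom for $\P$-algebras (the $2$-step nilpotent instance of this normal form is Proposition~\ref{coproduct in Nil_2(Alg-P)}, and Berstein's description used in~\ref{Associative algebras} is the associative case). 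Because each $\mu_k$ is multilinear in its algebra arguments, the deletion projection $\hat r_i\colon A^{+n}\to A^{+(n-1)}$ — which sends $\iota_i\mapsto 0$ and fixes the other inclusions — annihilates precisely those words in which the slot $i$ occurs among $j_1,\dots,j_k$, and carries the remaining words to the analogous words in $A^{+(n-1)}$. Organising $A^{+n}$ by the set of slots actually occurring (equivalently, via the multilinear cross-effect decomposition of the coproduct recalled in Section~\ref{Section Cross-Effects}, of which the $n$-fold cosmash product is the top cross-effect) then identifies $A^{\diamond n}=\bigcap_{i}\Ker(\hat r_i)$ with the component supported on words in which \emph{every} slot $1,\dots,n$ occurs.

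Granting this, the conclusion is immediate: a word in which all $n$ slots occur must have length $k\geq n$, and applying $\nabla^n_A$ collapses each $\iota_{j_\ell}\mapsto 1_A$, sending such a word to $\mu_k^A(a_1\tensor\cdots\tensor a_k\tensor x)\in\Im(\mu_k^A)\subseteq J_n(A)$. Summing over the terms of an arbitrary element of $A^{\diamond n}$ gives $\gamma_n(A)=\nabla^n_A\bigl(A^{\diamond n}\bigr)\subseteq J_n(A)$, and together with Lemma~\ref{J_n in gamma_n} this yields $\gamma_n(A)=J_n(A)$.

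The main obstacle I anticipate is making the slot-support bookkeeping of the second paragraph fully rigorous: I must verify that the flattened words genuinely span $A^{+n}$ and that the coproduct decomposes as a direct sum of $R$-modules indexed by the subset of occurring slots, so that $\bigcap_i\Ker(\hat r_i)$ really is the top (all-slots) summand rather than merely containing it up to cancellation. Once this direct-sum decomposition of the coproduct of $\P$-algebras is in hand — it is the operadic incarnation of the cross-effect decomposition of Section~\ref{Section Cross-Effects} — the remaining multilinearity and codiagonal computations sketched above are routine.
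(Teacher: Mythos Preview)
Your approach is genuinely different from the paper's. The paper does \emph{not} prove the general case: it handles only $n=1,2,3$ and explicitly defers $n\geq 4$ to the functor-calculus machinery of~\cite{CCC}. For $n=3$ the argument is indirect: having computed the coproduct in $\Algg{\nil_2(\P)}$ explicitly (Proposition~\ref{coproduct in Nil_2(Alg-P)}) and seen that its cosmash is bilinear (Corollary~\ref{cosmash product of Nil_2(Alg-P)}), the paper knows $\NIL_2(\PAlg)$ is two-nilpotent; it then invokes Theorem~\ref{Nil vs Birkhoff} on the Birkhoff reflector $\nIL_2$ to transport this back and conclude that $\gamma_3(A)$ and $J_3(A)$ have the same quotient, hence coincide. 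Your route is instead a direct, uniform-in-$n$ combinatorial argument about the slot-support grading on $A^{+n}$.

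Your outline is sound and your diagnosis of the obstacle is exactly right. What you must prove is that the ideal defining $A^{+n}$ as a quotient of the free $\P$-algebra on $A^{\oplus n}$ is \emph{homogeneous for the support grading} (by nonempty subsets of $\{1,\dots,n\}$): each defining relation lives in a single $A_i$, hence has support $\{i\}$, and operadic composition with further elements preserves homogeneity since the support of $\mu_m(w_1,\dots,w_m,\theta)$ is the union of the supports of the $w_j$. Once that is on paper, the quotient inherits the grading, $A^{\diamond n}=\bigcap_i\Ker(\hat r_i)$ is precisely the top component, every word there has length $\geq n$, and your codiagonal computation finishes. So your argument buys a proof for all $n$ at the cost of one operad-theoretic lemma; the paper's argument avoids that lemma but only reaches $n\leq 3$ with the tools developed here.
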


We here prove only the cases \(n=1\), \(2\), \(3\) of interest in this paper. The general case is an immediate application of a characterisation in terms of functor calculus of the lower central series among all natural filtrations~\cite{CCC}.

\begin{proof}
  The case \(n=1\) is trivial, since \(\gamma_1(A) = A = J_1\). The case \(n=2\) follows from Proposition~\ref{abelian core of Alg-P}. For \(n=3\) we use Theorem~\ref{Nil vs Birkhoff} on the functor
  \[
    F=\nIL_2\colon \PAlg \to \NIL_2(\PAlg) = \Algg{\nil_2(\P)}
  \]
  to obtain a natural isomorphism
  \[
    \alpha\colon \nil_2^{\NIL_2(\PAlg)} \comp \nIL_2\To \nIL_2 \comp \nil_2^{\PAlg}
  \]
  such that
  \[
    \alpha_A \comp (\nilunit_2^{\NIL_2(\PAlg)})_{\nIL_2(A)} \comp \eta_2^A = \nIL_2((\nilunit_2^{\PAlg})_A) \comp \eta_2^A
  \]
  for any \(\P\)-algebra \(A\). Now by Corollary~\ref{cosmash product of Nil_2(Alg-P)}, the binary cosmash product of the category \(\NIL_2(\PAlg) = \Algg{\nil_2(\P)}\) is bilinear, so by Proposition~\ref{Cosmash Bilinear iff Category Two-Nilpotent} the category \(\NIL_2(\PAlg)\) is two-nilpotent. Hence \(\gamma_3(A/J_3(A)) = 0\). On the other hand, Lemma~\ref{J_n in gamma_n} implies that \(J_3(A/\gamma_3(A))\) is trivial, so we obtain the next commutative diagram.
  \[
    \xymatrix{
    & A/J_3(A)
    \ar@{=}[dd]  \ar@{-{ >>}}[rrr]^-{\nIL_2((\nilunit_2^{\PAlg})_A)} & & &(A/\gamma_3(A))/J_3(A/\gamma_3(A)) \ar[r]^-{\cong} & A/\gamma_3(A)
    \\
    A \ar[ur]^{\eta_2^A} \ar[dr]_{\eta_2^A} &&&&&\\
    &A/J_3(A)\ar@{-{ >>}}[rrr]_-{(\nilunit_2^{\NIL_2(\PAlg)})_{\nIL_2(A)}} & &&A/J_3(A)/\gamma_3(A/J_3(A)) \ar[uu]^{\alpha_A}_{\cong} \ar[r]_-{\cong} & A/J_3(A)
    }
  \]
  It follows that
  \[
    \gamma_3(A) = \Ker(\nIL_2((\nilunit_2^{\PAlg})_A) \comp \eta^2_A)
    \cong \Ker((\nilunit_2^{\NIL_2(\PAlg)})_{\nIL_2(A)} \comp \eta^2_A )
    =J_3(A),
  \]
  which completes the proof.
\end{proof}

Together with Corollary~\ref{cosmash product of Nil_2(Alg-P)}, this implies the following computation of the bilinear product of algebras over an algebraic operad, which unifies several of the results in Section~\ref{Section Examples}.

\begin{corollary}\label{bilinear product of Alg-P}
  For any right symmetric operad in \(R\)-modules \(\P\) we have
  \[
    \mathsf{Nil}_2(\PAlg)=\NIL_2(\PAlg) = \Algg{\nil_2(\P)}
    \quad\text{and}\quad
    \nil_2^{\PAlg} = \nIL_2^{\PAlg}\text{.}
  \]
  Furthermore,
  \[
    A \tensor B = \bigl(A/J_2(A)\tensor_{R} B/J_2(B)\bigr)\tensor_{\P(1)\tensor_{R}\P(1)}\P(2)
  \]
  is the bilinear product of \(\PAlg\), and
  the isomorphism
  \[\xymatrix{
    (A/J_2(A)\tensor_{R} B/J_2(B))\tensor_{\P(1)\tensor_{R}\P(1)}\P(2) \ar[d]_{\tilde{\tau}^{\Algg{\P}}_{A,B}} \\(B/J_2(B)\tensor_{R} A/J_2(A))\tensor_{\P(1)\tensor_{R}\P(1)}\P(2)
    }\]
  corresponding to the symmetry isomorphism
  \[
    {\tau}^{\Algg{\P}}_{A,B}\colon A\cosmash B \to B\cosmash A
  \]
  is given by \(\tilde{\tau}^{\Algg{\P}}_{A,B}(a\tensor b \tensor x)=b\tensor a \tensor (t\star x)\).\noproof
\end{corollary}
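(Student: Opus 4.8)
The plan is to assemble the three assertions from results already established. First I would settle the identification of the nilpotent cores and reflectors. By Theorem~\ref{gamma_n(A)} we have \(\gamma_3(A) = J_3(A)\) for every \(\P\)-algebra \(A\), so an object is categorically two-nilpotent (\(\gamma_3 = 0\)) if and only if it is operadically \(2\)-step nilpotent (\(J_3 = 0\)); this gives \(\Nil_2(\PAlg) = \NIL_2(\PAlg)\) as full subcategories, and the latter equals \(\Algg{\nil_2(\P)}\) as observed after the nilpotentisation proposition. The reflectors then agree because \(\nil_2^{\PAlg}(X) = X/\gamma_3(X) = X/J_3(X) = \nIL_2^{\PAlg}(X)\), again by Theorem~\ref{gamma_n(A)}.

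For the formula for the bilinear product, I would unfold the definition \(A \tensor B = \nil_2(A) \cosmash_2 \nil_2(B)\), the cosmash being computed in \(\Nil_2(\PAlg) = \Algg{\nil_2(\P)}\). Since \(\nil_2(\P)\) is \(2\)-step nilpotent, Corollary~\ref{cosmash product of Nil_2(Alg-P)} applies to it and yields
\[
  \nil_2(A)\cosmash_2\nil_2(B) \cong \bigl(\overline{\nil_2(A)} \tensor_R \overline{\nil_2(B)}\bigr)\tensor_{\P(1)\tensor_R\P(1)}\P(2),
\]
using that \(\nil_2(\P)(1) = \P(1)\) and \(\nil_2(\P)(2) = \P(2)\). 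It then remains to identify \(\overline{\nil_2(A)}\) with \(A/J_2(A)\). Here \(\overline{(-)}\) divides out the decomposables, which for a \(2\)-step nilpotent algebra is the image of \(\mu_2\); since \(\mu_2^{\nil_2(A)}\) is induced by \(\mu_2^A\) on \(\nil_2(A) = A/J_3(A)\), its image is \(J_2(A)/J_3(A)\), whence \(\overline{\nil_2(A)} = (A/J_3(A))/(J_2(A)/J_3(A)) = A/J_2(A)\). This produces the stated formula, and abelianness is already guaranteed by Proposition~\ref{abelian core of Alg-P}.

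Finally, for the symmetry isomorphism, the point is to trace the canonical twist \(\tw\colon A + B \to B + A\) through the identification of Corollary~\ref{cosmash product of Nil_2(Alg-P)}. A cosmash element \(a \tensor b \tensor x\) corresponds to \(\mu_2^{A+B}(\iota_1(a)\tensor\iota_2(b)\tensor x)\) in the coproduct; since \(\tw\) is a morphism of \(\P\)-algebras exchanging the two inclusions, it sends this to \(\mu_2^{B+A}(\iota_2(a)\tensor\iota_1(b)\tensor x)\). Applying the equivariance axiom for the transposition \(t \in \S_2\), namely \(\mu_2(a_1\tensor a_2\tensor (t\star\theta)) = \mu_2(a_2\tensor a_1\tensor \theta)\), rewrites this as \(\mu_2^{B+A}(\iota_1(b)\tensor\iota_2(a)\tensor (t\star x))\), which under the identification for \(B\cosmash A\) is exactly \(b \tensor a \tensor (t* x)\). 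Hence \(\tilde{\tau}^{\Algg{\P}}_{A,B}(a\tensor b\tensor x) = b\tensor a\tensor (t* x)\), as claimed. The main obstacle is this last step: one must keep the two \(\S_2\)-actions—the twist on the tensor factors and the action on \(\P(2)\)—correctly synchronised through the equivariance axiom, since the isomorphism of Corollary~\ref{cosmash product of Nil_2(Alg-P)} carries this bookkeeping only implicitly.
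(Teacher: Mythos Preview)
Your proof is correct and matches the paper's intended argument; in fact the paper leaves this corollary without proof (marked \verb|\noproof|), treating it as an immediate consequence of Theorem~\ref{gamma_n(A)} together with Corollary~\ref{cosmash product of Nil_2(Alg-P)}, and you have spelled out precisely the details that this entails. Your handling of the identification \(\overline{\nil_2(A)} = A/J_2(A)\) and of the equivariance bookkeeping for the symmetry isomorphism is exactly what is needed to fill the gap the paper leaves implicit.
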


We thus recover the calculation of the bilinear product of commutative, associative and Lie algebras (the latter for \(R\) being a field of characteristic different from~\(2\)) in Section~\ref{Section Examples} by taking \(\P(1)=R\) and \(\P(2)\) to be the one-dimensional trivial, regular and signature representation of \(\S_2\) with coefficients in \(R\), respectively.

Corollary~\ref{bilinear product of Alg-P} also allows us to determine a large class of cases where the bilinear product of \(\PAlg\) is \emph{naturally associative}:

\begin{proposition}\label{associativity of bilinear product}
  Let \(\P\) be a right symmetric operad in \(R\)-modules with \(\P(1)=R\). Then for all \(\P\)-algebras \(A\), \(B\), \(C\) there is a natural isomorphism \((A\tensor B)\tensor C\cong A\tensor(B\tensor C)\).
\end{proposition}
\begin{proof}
  First note that the two commuting left \(\P(1)\)-module structures of \(\P(2)\) given by \(\gamma_{1,1;2}\) as well as its right \(\P(1)\)-module structure given by \(\gamma_{2;1}\) coincide with its given \(R\)-module structure. Thus Corollary~\ref{bilinear product of Alg-P} gives \(A\tensor B=\overline{A}\tensor_{R}\overline{B}\tensor_{R}\P(2)\). Now
  \(\overline{A\tensor B}=A\tensor B\) as \(A\tensor B\) is an abelian \(\P\)-algebra, so applying this formula a second time and using associativity of \(\tensor_{R}\) we get:
  \begin{align*}
    (A\tensor B)\tensor C
     & \;\cong\;
    \overline{A}\tensor_{R}\overline{B}\tensor_{R}\P(2)\tensor_{R}\overline{C}\tensor_{R}\P(2)\text{,} \\
    A\tensor(B\tensor C)
     & \;\cong\;
    \overline{A}\tensor_{R}\overline{B}\tensor_{R}\overline{C}\tensor_{R}\P(2)\tensor_{R}\P(2)\text{.}
  \end{align*}
  Thus the map \(1_{\overline{A}}\tensor 1_{\overline{B}}\tensor T\tensor 1_{P(2)}\), where the flip \(T\colon \P(2)\tensor_{R}\overline{C}\xrightarrow{\;\cong\;}\overline{C}\tensor_{R}\P(2)\) is given by the natural symmetry of \(\tensor_{R}\), is a natural \(R\)-linear isomorphism between them.
\end{proof}

Note that the assumption \(\P(1)=R\) holds for all the examples in Section~\ref{Section Examples}. For arbitrary semi-abelian varieties \(\V\) conditions ensuring that \(\tensor_{\V}\) is associative or even a monoidal structure (though possibly without unit) will be studied elsewhere. An example in the operad framework where \emph{associativity fails} is given as follows:

\begin{example}\label{non-associative bilinear product}
  Let \(k\) be a field. Define a \(2\)-step nilpotent right symmetric operad \(\P\) over \(k\) with
  \[
    \P(1)=\P(2)=k^2\text{,}
  \]
  where the ring structure of \(\P(1)\) is the direct product of the ring \(k\) with itself and  the left \(\bigl((\P(1)\tensor_{k}\P(1))\wr\S_2\bigr)\)-module and right \(\P(1)\)-module structures on \(\P(2)\) are given, for \(P=(a,b)\) and \(Q=(a',b')\) in \(\P(1)\), by
  \begin{align*}
    \gamma_{1,1;2}(P\tensor Q\tensor(n_1,n_2)) & =(ab'n_1,\;a'bn_2)\text{,} \\
    t\cdot(n_1,n_2)                            & =(n_2,n_1)\text{,}
    \\
    \gamma_{2;1}((n_1,n_2)\tensor P)           & =(an_1,\;an_2)\text{.}
  \end{align*}
  The wreath product equivariance axiom \(\gamma_{1,1;2}(P\tensor Q\tensor(t\cdot n))=t\cdot\gamma_{1,1;2}(Q\tensor P\tensor n)\) holds: both sides equal \((ab'n_2,\;a'bn_1)\). Moreover, the left action of the wreath product \({(\P(1)\tensor_{R} \P(1))\wr \S_2}\) on \(\P(2)\) commutes with the right action of \(\P(1)\).

  Let \(S_1\) and \(S_2\), \(S_i\cong k\), denote the two simple abelian \(\P\)-algebras (that is, right \(\P(1)\)-modules with \(\mu_2=0\)) with right actions \(\lambda\cdot P=a\lambda\) and \(\mu\cdot P=b\mu\) respectively. By Corollary~\ref{bilinear product of Alg-P}:
  \begin{itemize}
    \item \( S_2\tensor S_2 = (S_2\tensor_{k}S_2)\tensor_{\P(1)\tensor_{k}\P(1)}\P(2)=0\): taking \(P=Q=(0,1)\) in the defining relation of the tensor product gives
          \begin{align*}
            (\mu\tensor\nu)\tensor(n_1,n_2) & =(\mu\tensor\nu)(P\tensor Q)\tensor(n_1,n_2) \\
                                            & =(\mu\tensor\nu)\tensor(P\tensor Q)(n_1,n_2) \\
                                            & =(\mu\tensor\nu)\tensor(0,0)=0.
          \end{align*}
    \item \( S_1\tensor S_2=(S_1\tensor_{k}S_2)\tensor_{\P(1)\tensor_{k}\P(1)}\P(2)\cong S_1\neq 0\): let
          \[
            \alpha\colon (S_1\tensor_{k}S_2)\tensor_{\P(1)\tensor_{k}\P(1)}\P(2) \to S_1
          \]
          be the group homomorphism given by $\alpha(\lambda\tensor \mu \tensor (n_1,n_2)) =
            \lambda\mu n_1$. It is well defined as
          \begin{align*}
            \alpha(
            (\lambda\tensor\mu)(P\tensor Q)\tensor(n_1,n_2)) & =
            \alpha((a\lambda\tensor b'\mu)\tensor(n_1,n_2))                                                               \\
                                                             & = a\lambda b'\mu n_1 = \alpha(
            \lambda\tensor \mu \tensor (ab'n_1,a'bn_2))                                                                   \\
                                                             & =\alpha((\lambda\tensor \mu)\tensor(P\tensor Q)(n_1,n_2)).
          \end{align*}
          Moreover, \(\alpha\) is right \(\P(1)\)-linear, as
          \begin{align*}
            \alpha(\lambda\tensor \mu \tensor (n_1,n_2)P) & =
            \alpha(\lambda\tensor \mu \tensor (an_1,an_2))    \\
                                                          & =
            \lambda\mu an_1                                   \\
                                                          & =
            \alpha(\lambda\tensor \mu \tensor (n_1,n_2))P.
          \end{align*}
          Now let \(\beta\colon k\to (S_1\tensor_{k}S_2)\tensor_{\P(1)\tensor_{k}\P(1)}\P(2)\) be given by \(\beta(\nu)= 1\tensor 1 \tensor (\nu,0)\). Then \(\alpha \comp \beta = 1_k\) and
          \begin{align*}
            (\beta \comp \alpha) ((\lambda\tensor \mu) \tensor (n_1,n_2)) & =
            1\tensor 1 \tensor (\lambda \mu n_1,0)                                                                                          \\
                                                                          & = (1\tensor 1) \tensor (((\lambda,0)\tensor (0,\mu))(n_1,n_2))  \\
                                                                          & =  (1\tensor 1)  (((\lambda,0)\tensor (0,\mu))\tensor(n_1,n_2)) \\
                                                                          & = (\lambda\tensor \mu) \tensor (n_1,n_2).
          \end{align*}
          Thus \(\alpha\) is an isomorphism of right \(\P(1)\)-modules, as desired.
  \end{itemize}
  It follows that
  \[
    (S_1\tensor S_2)\tensor S_2 \cong S_1\tensor S_2 \cong S_1 \neq 0
  \]
  while
  \[
    S_1\tensor (S_2 \tensor S_2) \cong S_1\tensor 0 = 0.
  \]
  Hence the bilinear product of \(\P\)-algebras is non-associative.
\end{example}

\section{Varieties of algebras, in general}\label{Section Varieties}
In this section we prove some general results on bilinear products in semi-abelian varieties of algebras. We work towards the Recognition Theorem~\ref{Recognition Theorem} which characterises any symmetric bi-cocontinuous bifunctor on an abelian variety as the bilinear product of a variety of algebras over an operad induced by the given bifunctor.

\subsection{Properties of tensoring with a fixed object of a variety}\label{Tensoring with a fixed object of a variety}

We resume the discussion at the end of Section~\ref{Section Further Properties} in the case of a variety of algebras \(\V\). Then the functors \(X\tensor(-)\colon\V\to \Ab(\V)\) do not just preserve all finite sums, but are actually cocontinuous:

\begin{proposition}\label{Proposition Tensor Cocontinuous}
  If \(\V\) is a semi-abelian variety of algebras and \(X\in \V\) then the functor \(X\tensor(-)\colon{\V\to \Ab(\V)}\) preserves all colimits.
\end{proposition}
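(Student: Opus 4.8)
The plan is to reduce the statement to a right-exactness computation inside an abelian category, and then to handle the passage to infinite coproducts separately by a local-presentability argument. First I would invoke Corollary~\ref{Tensor commutes with Ab}, which exhibits \(X\tensor_{\V}(-)\colon \V\to\Ab(\V)\) as the composite of the abelianisation reflector \(\ab\colon\V\to\Ab(\V)\) with the endofunctor \(X\tensor_{\V}(-)\colon\Ab(\V)\to\Ab(\V)\). Since \(\ab\) is a left adjoint it is cocontinuous, so it suffices to prove that the restricted functor \(X\tensor_{\V}(-)\colon\Ab(\V)\to\Ab(\V)\) preserves all colimits. The value of this reduction is that \(\Ab(\V)\) is an \emph{abelian} category, in which coequalisers coincide with cokernels of differences; consequently a functor between such categories is cocontinuous as soon as it is additive, right exact, and preserves arbitrary coproducts.

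Second, I would establish right exactness. The functor preserves finite biproducts by Corollary~\ref{Corollary-Products-Tensor} (finite products coincide with finite coproducts in \(\Ab(\V)\)), hence is additive; and it preserves cokernels by Proposition~\ref{Proposition-RightExact-Tensor}, whose right-exact sequence, applied to a cokernel in \(\Ab(\V)\), says precisely that \(X\tensor_{\V}(-)\) carries cokernels to cokernels. Additivity then upgrades cokernel preservation to coequaliser preservation, since a coequaliser of \(f\), \(g\) is the cokernel of \(f-g\). What remains is the preservation of arbitrary coproducts.

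Third, I would treat infinite coproducts through filtered colimits. Every coproduct \(\bigoplus_{i\in I}A_i\) in \(\Ab(\V)\) is the filtered colimit of its finite sub-coproducts \(\bigoplus_{i\in F}A_i\), so, finite coproducts being already handled by bilinearity (Subsection~\ref{Subsection Bilinear Product}), it is enough to check that \(X\tensor_{\V}(-)\) preserves filtered colimits. For abelian \(Y\) one has \(X\tensor_{\V}Y=\nil_2(X)\cosmash_2 Y\), computed as the kernel of the comparison morphism \(\nil_2(X)+_2 Y\to\nil_2(X)\times Y\) in \(\Nil_2(\V)\). I would then use that \(\V\), and its Birkhoff subcategories \(\Ab(\V)\) and \(\Nil_2(\V)\), are locally finitely presentable, so that finite limits commute with filtered colimits there and the subvariety inclusions preserve filtered colimits (these being computed on underlying sets). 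Because the coproduct functor \(\nil_2(X)+_2(-)\) preserves connected---hence filtered---colimits, the product \(\nil_2(X)\times(-)\) preserves them as a finite limit, and the kernel likewise, the cosmash functor \(\nil_2(X)\cosmash_2(-)\) preserves filtered colimits; composing with the cocontinuous reflector \(\nil_2\) yields the claim, and with Steps one and two the full cocontinuity.

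The main obstacle is exactly this last step: right exactness together with preservation of finite coproducts is genuinely insufficient to force preservation of \emph{infinite} coproducts, so the commutation of the finite limit defining the cosmash with filtered colimits---resting on local finite presentability of the ambient variety---is the essential ingredient that is not merely formal. By contrast, the reduction to \(\Ab(\V)\) is what makes the coequaliser half painless, converting it into plain additivity and right exactness rather than a delicate analysis of regular epimorphisms and normal closures in the non-abelian category \(\V\).
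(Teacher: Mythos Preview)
Your proposal is correct and follows essentially the same approach as the paper: factor through \(\ab\) (using Corollary~\ref{Tensor commutes with Ab}), deduce coequaliser preservation from additivity and Proposition~\ref{Proposition-RightExact-Tensor} in the abelian core, and obtain preservation of arbitrary coproducts from preservation of finite ones together with commutation of filtered colimits with the finite limits defining the cosmash. The paper presents these ingredients in the opposite order (filtered colimits first, then coequalisers) and phrases the filtered-colimit step as ``filtered colimits commute with finite limits in a variety'' rather than invoking local finite presentability, but the content is the same.
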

\begin{proof}
  In any variety of algebras, filtered colimits commute with finite limits (see, for instance, \cite{Borceux:Cats2}) so \(X\tensor(-)\) preserves filtered colimits---because so do the abelianisation functor and the functors \(\ab(X)+(-)\) and \(\ab(X)\times(-)\) in the category \(\Nil_2(\V)\). As a consequence, the functor \(X\tensor(-)\) preserves arbitrary sums: indeed, such a sum is a filtered colimit of finite sums, and we already know that both finite sums and filtered colimits are preserved. It also preserves all coequalisers, since \(X\tensor(-)\) factors through the abelianisation functor \(\ab\colon {\V\to \Ab(\V)}\), coequalisers in \(\Ab(\V)\) are calculated as the cokernel of a difference, and, by Corollary~\ref{Corollary Additive} and Proposition~\ref{Proposition-RightExact-Tensor}, the  functor \({X\tensor(-)}\colon {\Ab(\V)\to\Ab(\V)}\) is additive and right exact.
\end{proof}

Thus in view of the symmetry of the tensor product (Subsection~\ref{Subsection Symmetry}) it is bi-cocontinuous. It now follows that the functors \(X\tensor(-)\colon {\V\to \Ab(\V)}\) are always left adjoints. This is a consequence of the Special Adjoint Functor Theorem combined with~\cite[Theorem~1.58]{Adamek-Rosicky} which explains that any variety of algebras is well-co-powered. Note that as a generating set, we may use the free object on a single generator. Thus we proved:

\begin{proposition}\label{Prop pre SMC}
  For any object \(X\) of a semi-abelian variety \(\V\), the functor
  \[
    X\tensor(-)\colon {\V\to \Ab(\V)}
  \]
  is a left adjoint.\noproof
\end{proposition}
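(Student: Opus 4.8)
The plan is to deduce the existence of a right adjoint from the dual form of the Special Adjoint Functor Theorem, whose hypotheses are tailor-made for the situation at hand. Recall that this theorem guarantees that a functor \(F \colon \mathcal{C} \to \mathcal{D}\) between locally small categories admits a right adjoint as soon as \(\mathcal{C}\) is cocomplete, well-co-powered, equipped with a small generating set, and \(F\) preserves all small colimits. I would apply it with \(\mathcal{C} = \V\), \(\mathcal{D} = \Ab(\V)\) and \(F = X \tensor (-)\).

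The verification of the structural hypotheses on \(\V\) is then a matter of collecting standard facts about varieties of algebras. First, \(\V\) is cocomplete, as is any variety. Secondly, \(\V\) is well-co-powered, meaning that every object admits, up to isomorphism, only a set of regular quotients; this is precisely the content of~\cite[Theorem~1.58]{Adamek-Rosicky}. Thirdly, the free algebra on a single generator is a regular generator of \(\V\), so it constitutes a one-element generating set. Finally, since \(\Ab(\V)\) is a full subcategory of \(\V\) it is locally small, so the codomain presents no difficulty.

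The only genuinely substantial input is the preservation of colimits, and this has already been secured: by Proposition~\ref{Proposition Tensor Cocontinuous} the functor \(X \tensor (-) \colon \V \to \Ab(\V)\) preserves all colimits. With every hypothesis in place, the Special Adjoint Functor Theorem yields a right adjoint to \(X \tensor (-)\), which is exactly the assertion that this functor is a left adjoint.

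I do not expect any real obstacle here: once cocontinuity is in hand, the argument reduces to checking the well-known structural properties of a variety, the least routine of which---well-co-poweredness---is supplied by the cited theorem of Adámek and Rosický. The one point worth a moment's care is that the relevant notion of well-co-poweredness refers to regular quotients (equivalently, regular epimorphisms out of each object), which matches the regular-epimorphic generating set furnished by the free object on one generator; this compatibility is precisely what makes the Special Adjoint Functor Theorem applicable in its variety-theoretic form.
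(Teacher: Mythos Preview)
Your proposal is correct and follows essentially the same approach as the paper: invoke Proposition~\ref{Proposition Tensor Cocontinuous} for cocontinuity, then apply the Special Adjoint Functor Theorem using well-co-poweredness from~\cite[Theorem~1.58]{Adamek-Rosicky} and the free object on one generator as a generating set. One small quibble: in your final paragraph the insistence on \emph{regular} quotients and a \emph{regular}-epimorphic generator is unnecessary---the standard form of the Special Adjoint Functor Theorem only requires well-co-poweredness with respect to all epimorphisms and an ordinary generating set, both of which hold in any variety.
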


In case the bilinear product \(\tensor\) admits a unit \(I\) when restricted to \(\Ab(\V)\), the functor \(I\tensor(-)\colon \V\to \Ab(\V)\) is naturally isomorphic to abelianisation. Note that such a unit need not exist in general: see, for instance, the example of associative non-commutative algebras in Subsection~\ref{Associative algebras}. When a unit \(I\) does exist, we may ask under which conditions the triple \((\Ab(\V),\tensor,I)\) forms a monoidal category. The answer to this question is currently not known. However:

\begin{proposition}\label{Closed SMC}
  If \(\V\) is a semi-abelian variety such that \((\Ab(\V),\tensor,I)\) is a (symmetric) monoidal category, then \((\Ab(\V),\tensor,I)\) is a \emph{closed} (symmetric) monoidal category.\noproof
\end{proposition}

\subsection{Realisability of bilinear bifunctors on abelian varieties as cosmash products}\label{tensors from operads}

We are now ready to give a necessary and sufficient condition for a bilinear---and thus, biadditive: cf.\ Corollary~\ref{Corollary Additive}---bifunctor on an abelian variety~\(\A\) (in fact, a variety of modules over a unitary ring) to be realisable as the bilinear product in a semi-abelian variety \(\V\) containing \(\A\) (up to equivalence) as its abelian core \(\Ab(\V)\).

\begin{theorem}[Recognition Theorem]
  \label{Recognition Theorem}
  Let \(\A\) be an abelian variety of universal algebras and \(B\colon {\A\times \A \to \A}\) a bifunctor on~\(\A\). Then there exists a semi-abelian variety~\(\V\) whose abelian core is equivalent to~\(\A\) and whose cosmash product restricts to the functor \(B\) under this equivalence if and only if \(B\) is symmetric and bi-cocontinuous. Indeed, if these conditions are satisfied, then \(\V\) can be taken to be the category of algebras over a canonical \(2\)-nilpotent reduced (right) symmetric operad in abelian groups \(\P\).
\end{theorem}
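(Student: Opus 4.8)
The plan is to prove the two implications separately, with the construction of the variety~\(\V\) as the substantial half. For the forward direction, suppose a realizing variety is given. Symmetry of \(B\) is immediate from the symmetry of the cosmash product, and hence of the bilinear product (Subsection~\ref{Subsection Symmetry}). For bi-cocontinuity, on the abelian core the realizing product is identified with the bilinear product \(\tensor_\V\), so by Proposition~\ref{Proposition Tensor Cocontinuous} each functor \(X\tensor_\V(-)\) preserves all colimits; combined with symmetry this shows that \(B\iso{\tensor_\V}|_{\A}\) is cocontinuous in each variable, as required.

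For the converse I would first invoke the stated fact that an abelian variety of universal algebras is a category of modules, fixing a ring \(S\) with \(\A\simeq\Mod_S\) (for instance \(S=\End_\A\) of the free \(\A\)-algebra on one generator); since the operad is to live in abelian groups we take the ground ring \(R=\Z\), and \(S\) is a \(\Z\)-algebra. The idea is to read the operad off the values of \(B\) on the generator: set \(\P(1)\DefEq S\) with its ring structure, \(\P(2)\DefEq B(S,S)\), and \(\P(n)\DefEq 0\) for \(n\geq 3\). By Remark~\ref{1 and 2-step nilpotent P-algebras}, a \(2\)-step nilpotent reduced symmetric operad is exactly a unital algebra \(\P(1)\) together with a \(\bigl((\P(1)\tensor_\Z\P(1))\wr\S_2\bigr)\)-\(\P(1)\)-bimodule \(\P(2)\), so it remains to equip \(B(S,S)\) with this structure. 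The codomain \(S\)-action (from \(B\) taking values in \(\Mod_S\)) gives the right \(\P(1)\)-module structure; functoriality of \(B\) in each of its two variables gives two commuting left actions of \(\End_S(S)\iso S\), one through each slot, assembling into the left \((\P(1)\tensor_\Z\P(1))\)-action; and the symmetry isomorphism of \(B\) supplies the involution \(t\) interchanging the two slots, hence the \(\S_2\)-equivariance. All operad axioms that involve a degree \(\geq 3\) are vacuous (both sides land in \(\P(\ge 3)=0\)), and the remaining identity and associativity axioms reduce precisely to the bimodule laws just recorded.

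With \(\P\) constructed, Proposition~\ref{abelian core of Alg-P} gives \(\Ab(\PAlg)=\Mod_{\P(1)}=\Mod_S\simeq\A\), and Corollary~\ref{bilinear product of Alg-P} shows \(\PAlg\) is two-nilpotent, so its cosmash product equals its bilinear product. By Corollary~\ref{cosmash product of Nil_2(Alg-P)}, for abelian \(\P\)-algebras \(A\), \(C\) (that is, \(S\)-modules, where the decomposables vanish) one has \(A\cosmash C\iso(A\tensor_\Z C)\tensor_{\P(1)\tensor_\Z\P(1)}\P(2)=(A\tensor_\Z C)\tensor_{S\tensor_\Z S}B(S,S)\). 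The remaining task is to identify this with \(B(A,C)\) by a two-variable Eilenberg--Watts representation: since \(B\) is cocontinuous in each variable and every \(S\)-module is a colimit of copies of \(S\), the canonical comparison \((A\tensor_\Z C)\tensor_{S\tensor_\Z S}B(S,S)\to B(A,C)\) is an isomorphism for \(A=C=S\) by definition, for finite free modules by additivity, and then for all modules by passing to the defining colimits. Naturality is automatic, and the \(\S_2\)-action on \(B(S,S)\) matches the explicit twist \(a\tensor b\tensor x\mapsto b\tensor a\tensor(t*x)\) of Corollary~\ref{bilinear product of Alg-P}, so the identification respects the symmetry isomorphisms.

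The hard part will be this last identification together with checking that the three actions on \(B(S,S)\) genuinely form an operad bimodule: one must verify that the codomain action commutes with the two slot-wise actions (naturality of the \(S\)-module structure of \(B(S,S)\) in each argument) and that the reconstruction \(B\iso((-)\tensor_\Z(-))\tensor_{S\tensor_\Z S}B(S,S)\) is natural and symmetry-preserving. Both rest on bi-cocontinuity, which reduces every assertion about \(B\) to its single value \(B(S,S)\); once that reduction is in place, the operad axioms and the comparison isomorphism follow formally.
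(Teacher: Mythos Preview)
Your proposal is correct and follows essentially the same route as the paper. Both directions match: for necessity you invoke Proposition~\ref{Proposition Tensor Cocontinuous} and the symmetry discussion of Subsection~\ref{Subsection Symmetry}, and for sufficiency you identify \(\A\simeq\Mod_S\) via a free rank-one generator, set \(\P(1)=S\), \(\P(2)=B(S,S)\) with the bimodule structure coming from functoriality and the symmetry isomorphism, and then recover \(B\) from the cosmash formula of Corollary~\ref{cosmash product of Nil_2(Alg-P)} by an Eilenberg--Watts style comparison that is an isomorphism on the generator and extends by bi-cocontinuity---exactly the paper's natural transformation \(\Phi_{M,N}(m\tensor n\tensor x)=B(f_m,f_n)(x)\).
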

\begin{proof}
  The condition is necessary by Proposition~\ref{Proposition Tensor Cocontinuous} and the explanation in~\ref{Subsection Symmetry}. To show that it also is sufficient, suppose that \(B\) is bi-cocontinuous (preserves all colimits in both slots) and symmetric (comes with a natural automorphism \(\tau_B\) such that for morphisms \(f_i\colon A_i\to A_i'\), \(i=1,2\) in \(\A\), the square
  \[
    \xymatrix{B(A_1,A_2) \ar[d]_-{(\tau_B)_{A_1,A_2}} \ar[r]^-{B(f_1,f_2)} & B(A'_1,A'_2) \ar[d]^-{(\tau_B)_{A_1',A_2'}}\\
    B(A_2,A_1) \ar[r]_-{B(f_2,f_1)} & B(A'_2,A'_1)}
  \]
  commutes). We first recall a classical setup and a number of related facts.
  Let \(E\) be a free object of rank \(1\) of \(\A\) and let \({\Lambda}\) be its endomorphism ring. Then the representable functor \(\A(E,-)\) takes values in the category \(\Mod_{\Lambda}\) of right \({\Lambda}\)-modules via precomposition with endomorphisms of \(E\), and actually is an equivalence by the Gabriel--Mitchell theorem~\cite{Freyd}, since \(E\) is a small projective generator of \(\A\) and \(\A\) has infinite sums. So without any loss of generality, we may suppose that \(\A= \Mod_{\Lambda}\).

  Denote by \({\Lambda}_{\Lambda}\) the ring \({\Lambda}\) viewed as a right \({\Lambda}\)-module. For a right \({\Lambda}\)-module \(M\) and \(m\in M\), denote by \(f_m\colon {\Lambda}_{\Lambda}\to M\) the \({\Lambda}\)-linear map that sends \(1\) to \(m\). Then the map \(\ev_M\colon \Hom_{\Lambda}({\Lambda}_{\Lambda},M)\to M\) sending \(f\) to \(f(1)\) is an isomorphism of abelian groups, whose inverse sends \(m\in M\) to \(f_m\). In particular, the map \(\ev_{{\Lambda}_{\Lambda}}\) is a ring isomorphism, and \(\ev_M\) is right \({\Lambda}\)-linear in the sense that for \(g\in \Hom_{\Lambda}({\Lambda}_{\Lambda},M)\) and \(r\in \Lambda\) we have \(\ev_M(g \comp \ev_{\Lambda_\Lambda}^{-1}(r)) = \ev_M(g)r\).

  Now the right \({\Lambda}\)-module \(B({\Lambda}_{\Lambda},{\Lambda}_{\Lambda})\) is an \(\End_{\Lambda}({\Lambda}_{\Lambda}) \tensor \End_{\Lambda}({\Lambda}_{\Lambda})\)-\({\Lambda}\)-bimodule and thus a \({\Lambda}\tensor  {\Lambda}\)-\({\Lambda}\)-bimodule via the isomorphism \(\ev_{\Lambda}\). For right \({\Lambda}\)-modules \(M\),~\(N\) let
  \[
    \Phi_{M,N} \colon (M\tensor_{\Z} N) \tensor_{{\Lambda}\tensor_{\Z}{\Lambda}} B({\Lambda}_{\Lambda},{\Lambda}_{\Lambda}) \to B(M,N)
  \]
  be given by \(\Phi_{M,N}(m\tensor n \tensor x)\coloneq B(f_m,f_n)(x)\). \(\Phi\) is easily checked to be a natural transformation of symmetric bifunctors on \(\Mod_{\Lambda}\) where the symmetry isomorphism on the left, say \(\tau_{\tensor}\), is given by \(\tau_{\tensor}(m\tensor n \tensor x)= n\tensor m\tensor \tau_B(x)\). Moreover, \(\Phi_{{\Lambda},{\Lambda}}\) is an isomorphism, whence so is \(\Phi\), since its source and target bifunctor are bi-cocontinuous and each module can be constructed from \(\Lambda\) using colimits.

  Now we are ready to define the needed \(2\)-step nilpotent right symmetric operad \(\P\) in \(\Z\)-modules. Thanks to Remark~\ref{1 and 2-step nilpotent P-algebras} it is enough to put \(\P(1)={\Lambda}\) as a ring and \(\P(2) = B({\Lambda}_{\Lambda},{\Lambda}_{\Lambda})\) equipped with the action of \(\mathfrak{S}_2\) given by \((\tau_B)_{({\Lambda}_{\Lambda},{\Lambda}_{\Lambda})}\) as a \(({\Lambda}\tensor {\Lambda})\wr \mathfrak{S}_2\)-\({\Lambda}\)-bimodule. Then \(\Ab(\Algg{\P}) = \Mod_{\Lambda}\) by Proposition \ref{abelian core of Alg-P}, and we have natural isomorphisms
  \[
    \xymatrix{
    M \diamond N \ar[r]^-{\cong} & (M\tensor_{\Z} N)\tensor_{\Lambda\tensor_{\Z}\Lambda}B({\Lambda}_{\Lambda},{\Lambda}_{\Lambda}) \ar[r]_-{\Phi_{M,N}}^-{\cong} & B(M,N)
    }
  \]
  for right \(\Lambda\)-modules \(M\), \(N\) by Corollary \ref{cosmash product of Nil_2(Alg-P)}, as desired.
\end{proof}

\begin{remark}
  Note that the semi-abelian variety \(\V\) constructed out of \((\A,B)\) depends on the choice of symmetry isomorphism \(\tau_B\) on \(B\)---as it should, because we already saw in Section~\ref{Section Examples} that a different surrounding semi-abelian variety \(\V\), whether constructed as above or not, may give rise to a different symmetry isomorphisms of the same \((\A,B)\). Indeed, comparing the examples of groups and commutative rings which both codify \((\Ab,\tensor_\Z)\), a different choice of category~\(\V\)---either \(\Gp\) or \(\CRng\)---such that \(\tensor_\V=\tensor_\Z\) on the category \(\Ab\) gives rise to two different symmetry isomorphisms on \(\tensor_\Z\).
\end{remark}

We point out that in forthcoming work we give a description of \emph{all} \(2\)-step nilpotent varieties, up to equivalence, containing the variety \(\A\) as its abelian core and whose cosmash product is isomorphic with the given bifunctor \(B\), provided it is symmetric and bi-cocontinuous. This becomes possible by first proving that just like abelian varieties are known to be categories of modules over unitary rings, \(2\)-step nilpotent varieties are equivalent with categories of modules over square rings~\cite{BHP}, as had been suggested to us by T.\ Pirashvili in private communication. We also provide an example where two non-equivalent two-nilpotent semi-abelian categories give rise to the same symmetry isomorphism.

\section{The Ganea term in semi-abelian homology}\label{Section Ganea}

By Theorem~5.9 in~\cite{EverVdL1}, any short exact sequence
\begin{equation}\label{Extension}
  \xymatrix{0 \ar[r] & K \ar@{{ |>}->}[r]^{k} & B \ar@{-{ >>}}[r]^{f} & A \ar[r] & 0}
\end{equation}
induces a five-term exact sequence
\[
  \xymatrix{\H_{2}B \ar[r] & \H_{2}A \ar[r] & \frac{K}{[K,B]} \ar[r] & \H_{1}B \ar@{-{ >>}}[r] & \H_{1}A \ar[r] & 0\text{.}}
\]
When \(f\) is central, the sequence simplifies, since then \([K,B]=0\). We prove that, using the tensor product defined above, this resulting sequence may be prolonged with an additional term \(K\tensor B\) on the left called the \defn{Ganea term} (Theorem~\ref{Theorem-Ganea}). This result was first obtained in the context of groups by Ganea~\cite{Ganea, Eckmann-Hilton-Stammbach} and later considered in several other situations~\cite{Lue:Ganea, MR897010, Casas:CELA, CP, Pira:Ganea, Casas:Ganea, AriasLadra}. (Note, however, that this five-term exact sequence is also the tail end of a long exact homology sequence~\cite{Tomasthesis, GVdL2}.) On the other hand, the result in~\cite{Lue:Ganea} will \emph{not} be a consequence of the following theorem, since the central extensions considered in that paper are more general.

Here we have to restrict ourselves to the context of an \defn{algebraically coherent} semi-abelian category~\cite{acc}. Algebraic coherence is a relatively mild condition satisfied by many semi-abelian categories, including the categories of groups, crossed modules, and Lie and Leibniz algebras---in fact, all \emph{Orzech categories of interest}~\cite{Orzech} are examples---but excluding for instance the category of loops as well as certain categories of algebras, cf.\ the introduction. Formally, it holds when the forgetful functor \(\Pt_G(\X)\to \X\) is coherent (i.e., it not only preserves finite limits, but also jointly extremal-epimorphic pairs of arrows; further details are given in Section~\ref{Section Tensor of G-actions}). Here we use the fact that in such a category, ternary commutators may be conveniently decomposed into a join of binary commutators: indeed, the \emph{Three Subobjects Lemma for normal subobjects} (Theorem 7.1 in~\cite{acc}) states that whenever \(K\), \(L\), \(M\) are normal subobjects of an object~\(X\) in an algebraically coherent semi-abelian category,
\begin{equation}\label{Equation Decompo}
  [K,L,M]=[[K,L],M]\join [[M,K],L]
\end{equation}
as subobjects of \(X\). A higher-order version of this result was obtained in the article~\cite{SVdL3}.

\begin{theorem}\label{Theorem-Ganea}
  Any central extension~\eqref{Extension} in a semi-abelian algebraically coherent category with enough projectives induces a six-term exact sequence
  \[
    {\xymatrix{K\tensor B \ar[r] & \H_{2}B \ar[r] & \H_{2}A \ar[r] & K \ar[r] & \H_{1}B \ar@{-{ >>}}[r] & \H_{1}A \ar[r] & 0\text{.}}}
  \]
\end{theorem}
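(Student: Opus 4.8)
The plan is to keep the segment $\H_{2}B \to \H_{2}A \to K \to \H_{1}B \to \H_{1}A \to 0$ exactly as the five-term exact sequence of \cite{EverVdL1} (Theorem~5.9), in which the term $K/[K,B]$ collapses to $K$ because centrality of the extension means precisely that $[K,B]=0$. Thus the only genuinely new content is exactness at $\H_{2}B$: I must produce a \emph{Ganea map} $K\tensor B\to \H_{2}B$ whose image equals $\Ker(\H_{2}B\to\H_{2}A)$. Since the category has enough projectives, I fix a projective presentation $0\to S\to Q\to B\to 0$ and compose with $f$ to obtain a presentation $0\to T\to Q\to A\to 0$ of $A$ sharing the same projective object $Q$; here $S\normal Q$, $T\normal Q$, $S\le T$, with $T/S\cong K$ and $Q/S\cong B$. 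Via the Hopf formula I then read off $\H_{2}B\cong (S\cap[Q,Q])/[S,Q]$ and $\H_{2}A\cong (T\cap[Q,Q])/[T,Q]$, with $\H_{2}(f)$ induced by the inclusion $S\le T$.

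First I would pin down the kernel of $\H_{2}(f)$. Centrality gives $[T,Q]\le S$ (the direct image of $[T,Q]$ along $Q\to Q/S$ is $[K,B]=0$), while trivially $[T,Q]\le[Q,Q]$; hence $[T,Q]\le S\cap[Q,Q]$ and $[S,Q]\le[T,Q]$. A short computation then shows $\Ker(\H_{2}B\to\H_{2}A)= [T,Q]/[S,Q]$, since a class in $(S\cap[Q,Q])/[S,Q]$ dies in $\H_{2}A$ exactly when it lies in $(S\cap[Q,Q])\cap[T,Q]=[T,Q]$.

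Next I build the Ganea map. The Higgins commutator map $T\cosmash Q\to Q$ has image $[T,Q]\le S\cap[Q,Q]$, so composing with the quotient onto $\H_{2}B$ yields $\phi\colon T\cosmash Q\to\H_{2}B$ with image $[T,Q]/[S,Q]$. I claim $\phi$ factors through $K\tensor B$. It factors first through $T\tensor Q=(T\cosmash Q)/N_{T,Q}$ by Proposition~\ref{prop:internal descr of tensor}, provided $\phi(N_{T,Q})=0$, i.e.\ $[T,Q,Q]\join[T,T,Q]\le[S,Q]$. This is the crux, and it is exactly where algebraic coherence enters: the Three Subobjects Lemma recalled above gives $[T,Q,Q]=[[T,Q],Q]\join[[Q,T],Q]$ and $[T,T,Q]=[[T,T],Q]\join[[Q,T],T]$, and since $[T,Q]\le S$ and $[T,T]\le[T,Q]\le S$, monotonicity of the commutator forces every joinand into $[S,Q]$ (using also $[S,T]\le[S,Q]$ as $T\le Q$). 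The induced map $T\tensor Q\to K\tensor B$ is a regular epimorphism by Proposition~\ref{Proposition-cosmash-to-Tensor}, and by the sequential right exactness of Proposition~\ref{Proposition-RightExact-Tensor} its kernel is generated by the images of $S\tensor Q$ and $T\tensor S$; these map under $\phi$ into $[S,Q]/[S,Q]=0$ and $[T,S]/[S,Q]=0$ respectively (again $[T,S]\le[S,Q]$). Hence $\phi$ descends to the desired $\omega\colon K\tensor B\to\H_{2}B$.

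Finally, because $T\cosmash Q\twoheadrightarrow T\tensor Q\twoheadrightarrow K\tensor B$ is a regular epimorphism, $\Im(\omega)=\Im(\phi)=[T,Q]/[S,Q]=\Ker(\H_{2}B\to\H_{2}A)$, which is precisely the missing exactness at $\H_{2}B$; combined with the five-term sequence this proves the theorem. It remains to observe that the construction is independent of the chosen presentation and natural in the extension, which follows from the usual comparison of projective presentations. I expect the main obstacle to be exactly the reduction $[T,Q,Q]\join[T,T,Q]\le[S,Q]$: without algebraic coherence a ternary commutator need not split into binary ones, so this step collapses — consistent with the fact that the hypotheses exclude loops.
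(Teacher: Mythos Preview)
Your argument is correct and follows essentially the same route as the paper's own proof: a shared projective presentation, the Hopf-formula identification of \(\Ker(\H_2 B\to\H_2 A)\) with \([T,Q]/[S,Q]\), then a factorisation of the commutator map through \(T\tensor Q\) via the Three Subobjects Lemma (using \([T,Q]\le S\) from centrality) and further through \(K\tensor B\) via sequential right exactness. The only cosmetic differences are your choice of letters \(S,T,Q\) for the paper's \(R,L,P\) and your slightly more explicit handling of both joinands of \(N_{T,Q}\) separately.
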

\begin{proof}
  We adapt the proof of Theorem~3.2 in~\cite{Eckmann-Hilton-Stammbach} to the present context. Let \(p\colon {P\to B}\) be a projective presentation of \(B\), so a regular epimorphism with a projective domain. Let \(R\) denote the kernel of~\(p\) and write~\(L\) for the kernel of \(f\comp p\).
  \begin{eqdiagr}
    \label{Diag Presentations}
    \begin{tikzcd}[squared]
      & R \arrow[d, nmono] \arrow[ld, nmono, dotted]                       \\
      L \pb{rd} \arrow[r, "l", nmono] \arrow[d, "\bar{p}"', repi] & P \arrow[r, "f \circ p"] \arrow[d, "p", repi] & A \arrow[d, equal] \\
      K \arrow[r, nmono]                                          & B \arrow[r, "f"', repi]                       & A
    \end{tikzcd}
  \end{eqdiagr}
  Note that \(f\comp p\) is a presentation of \(A\) since regular epimorphisms compose, so that \(L\to P\to A\) is a short exact sequence. Furthermore, \(\bar{p}\) is a regular epimorphism since those are pullback-stable, so that \(R\to L\to K\) is a short exact sequence as well. The Hopf formula for \(\H_{2}\) (\cite{EverVdL2}; see~\cite[Theorem~7.2]{EGVdL} for an efficient proof, and~\cite{RVdL3} for a more explicit account of the notations used here) tells us that \(\H_{2}f\) is the morphism
  \[
    \frac{R\meet[P,P]}{[R,P]}\to \frac{L\meet[P,P]}{[L,P]}
  \]
  canonically induced by the inclusion \(R \mono L\). Since
  \[
    \frac{R\meet[P,P]}{[L,P]}\to \frac{L\meet[P,P]}{[L,P]}
  \]
  is a monomorphism (as \(R \subobj L\)), the kernel of \(\H_{2}f\) coincides with the kernel of the induced corestriction
  \[
    \frac{R\meet[P,P]}{[R,P]}\to \frac{R\meet[P,P]}{[L,P]}\text{.}
  \]
  By the \(3\times 3\)-Lemma or the Noether Isomorphism Theorem, this kernel is the quotient \([L,P]/[R,P]\) (see diagram below).
  \[
    \begin{tikzcd}[squared=5.5]
      {[R,P]} \arrow[r, nmono] \arrow[d, equal] & {[L,P]} \arrow[r, repi] \arrow[d, nmono]        & \frac{[L,P]}{[R,P]} \arrow[d, nmono]      \\
      {[R,P]} \arrow[r, nmono] \arrow[d, repi]  & R \meet {[P,P]} \arrow[r, repi] \arrow[d, repi] & \frac{R\meet[P,P]}{[R,P]} \arrow[d, repi] \\
      0 \arrow[r, nmono]                        & \frac{R\meet[P,P]}{[L,P]} \arrow[r, equal]      & \frac{R\meet[P,P]}{[L,P]}
    \end{tikzcd}
  \]
  Hence it suffices to construct a regular epimorphism
  \[
    K\tensor B \cong \bigl(\tfrac{L}{R}\tensor \tfrac{P}{R}\bigr)\to\frac{[L,P]}{[R,P]}\text{.}
  \]
  (Diagram~\eqref{Diag Presentations} explains where the isomorphisms come from since \(L \to K\) and \({P \to B}\) are regular epimorphisms hence cokernels of their kernels.) We only need to check that the canonical morphism \(e_{L,P}^{P}\colon {L\cosmash P\to [L,P]}\), which is the image of \(\CoindArr{l \and 1_P} \comp \iota_{L,P}\), is compatible with the respective quotients (namely, the two quotients by \(R\), the one defining the bilinear product, and the quotient by \([R,P]\)).

  First of all, by algebraic coherence, we may find the dotted arrow completing the next commutative square.
  \[
    \xymatrix{L\cosmash P \ar@{-{ >>}}[d]_-{\eta_{L,P}} \ar@{-{ >>}}[r]^-{e^P_{L,P}} & \ar@{-{ >>}}[d]^-q [L,P]\\
    L\tensor P \ar@{.{ >>}}[r]_-g & \frac{[L,P]}{[R,P]}}
  \]
  Indeed, by \eqref{Equation Decompo} we have that \([L,P,P]=[[L,P],P]\). However, \([L,P]\leq R\) by centrality of~\(f\) (since \(p([L,P]) = [p(L),p(P)] = [K,B]\), the latter being trivial by definition of centrality). Therefore, we have \([L,L,P] \subobj [L,P,P] \subobj [R,P]\) so that the composite \(L \cosmash P \to [L,P] \to [L,P]/[R,P]\) is trivial on \(N_{L,P} \subobj L \cosmash P\) and thus, by Proposition~\ref{prop:internal descr of tensor}, induces the dotted arrow \(g\). Note that this \(g\) is a regular epimorphism.

  The rest of the proof is explained by means of the diagram
  \[
    \xymatrix{R\tensor R \ar[d] \ar[r] & R\tensor P \ar[d] \ar@{-{ >>}}[r] & R\tensor B \ar[d] \ar[r] & 0\\
    L\tensor R \ar@{-{ >>}}[d] \ar[r] & L\tensor P \ar@{-{ >>}}[r] \ar@{-{ >>}}[d] & L\tensor B \ar[r] \ar@{-{ >>}}[d] & 0\\
    K\tensor R \ar[d] \ar[r] & K\tensor P \ar@{-{ >>}}[r] \ar[d] & K\tensor B \pushout \ar[r] \ar[d] & 0\\
    0 & 0 & 0}
  \]
  whose marked square is a pushout since all rows and columns are exact (on the right) by Proposition~\ref{Proposition-RightExact-Tensor}. On the one hand, \(g\) vanishes on~\(R\tensor P\), since \(R \subobj L\) so \(e_{L,P}^{P}\colon {L\cosmash P\to [L,P]}\) restricts to \(e_{R,P}^{P}\colon {R\cosmash P\to [R,P]}\), which is then killed by the quotient. On the other hand, \(g(\Im(L\tensor R\to L\tensor P))=q([L,R])\leq q([P,R])\) is null in the quotient \([L,P]/[R,P]\). Therefore, our morphism \(g \from L \tensor P \to [L,P]/[R,P]\) factors through the cokernels \(L \tensor B\) and \(K \tensor P\), these two arrows then inducing an arrow from \(K \tensor B\) to \([L,P]/[R,P]\), as wanted, by the universal property of the pushout. Finally, this arrow is indeed a regular epimorphism, since so is \(g\).
\end{proof}

\subsection{Application: The lower central series.}
If \(X\) is \(n\)-nilpotent for some \(n\geq 2\) (as in Subsection~\ref{Subsection n-nilpotent core}, meaning that \(\gamma_{n+1}(X)=0\)), then the induced short exact sequence
\[
  \xymatrix{0 \ar[r] & \gamma_{n}(X) \ar@{{ |>}->}[r] & X \ar@{-{ >>}}[r] & \frac{X}{\gamma_{n}(X)} \ar[r] & 0}
\]
is a central extension, because \([\gamma_{n}(X),X]\leq \gamma_{n+1}(X)\)~\cite[Proposition~2.21]{HVdL}. As a consequence, we find that the sequence
\[
  \xymatrix{\gamma_{n}(X)\tensor X \ar[r] & \H_{2}(X) \ar[r] & \H_{2}\bigl(\tfrac{X}{\gamma_{n}(X)}\bigr) \ar[r] & \gamma_{n}(X)}
\]
is exact. This generalises a result in~\cite{Ganea} from groups to algebraically coherent semi-abelian categories.

\section{Right exactness of cross-effects}\label{Section Cross-Effects}
We now focus our attention on the proof of the fact that bilinear products are sequentially right exact. To do so, we will actually prove a more subtle right-exactness result for all cross-effects of arbitrary functors.

\subsection{Preliminaries.}\label{Prelim Cross}
We recall how the definition of cross-effects of functors given in~\cite{Baues-Pirashvili} in the case of groups extends to a general categorical framework~\cite{Actions, Hartl-Vespa}.

Let \(F\colon {\C} \to {\D}\) be a functor from a pointed category with finite sums \({\C}\) to a pointed finitely complete category \({\D}\). For \(n\geq 1\), the \defn{\(n\)-th cross-effect of~\(F\)} is the multi-functor
\[
  \cre_n(F)\colon \C^{n} \to \D\text{,}
\]
defined by \(\cre_n(F)(X_1,\ldots,X_n)=\Ker(r)\), where
\[
  r\colon F(X_1 + \dots + X_n) \to \prod_{k=1}^n F(X_1 + \dots + \widehat{X_k} + \dots + X_n)
\]
is such that \(\pi_k\comp r=F(r_k)\) for all \(k\in\{1,\dots,n\}\) (for \(n=1\), \(\widehat{X_1}=0\)). Here \(r_k\) is the arrow induced by the inclusions \(i_{X_j} \from X_j \to X_1 + \dots + \widehat{X_k} + \dots + X_n\) for \(j \neq k\) and the trivial arrow \(0 \from X_k \to X_1 + \dots + \widehat{X_k} + \dots + X_n\). When \(n>1\), we usually write
\[
  F(X_1|\cdots|X_n)=\cre_n(F)(X_1,\ldots,X_n)
\]
and
\[
  \iota_{X_1,\ldots,X_n}=
  \iota_{X_1,\ldots,X_n}^F = \ker(r) \colon F(X_1|\cdots|X_n) \to F(X_1 + \cdots + X_n)\text{.}
\]
The functor \(\cre_n(F)\) acts on morphisms in the obvious way that makes~\(\iota_{X_1,\ldots,X_n}\) natural. When \(F\) is the identity functor \(1_{\X}\) of \(\X\) we regain the cosmash product
\[
  X_1\cosmash \cdots\cosmash X_n = 1_{\X}(X_1|\cdots|X_n)\text{.}
\]

\begin{figure}
  \begin{tikzcd}[squared=6]
    F(X|Y) \arrow[d, nmono] \arrow[rd, nmono]                                                                                                                          \\
    F(X|Y) \rtimes F(X) \arrow[r, nmono] \arrow[d, shift left] & F(X+Y) \arrow[r, shift left] \arrow[d, shift left] & F(Y) \arrow[d, shift left] \arrow[l, shift left] \\
    F(X) \arrow[r, equal] \arrow[u, shift left]                & F(X) \arrow[r, shift left] \arrow[u, shift left]   & 0 \arrow[l, shift left] \arrow[u, shift left]
  \end{tikzcd}
  \caption{Computing \(F(X|Y)\)}\label{Figure F(X|Y)}
\end{figure}

Note that \(\cre_1(F)(X) = \Ker(F(0)\colon F(X)\to F(0))\), so that \(\cre_1(F) \cong F\) when the functor \(F\) is reduced (=~preserves zero). This allows us to use the notation \(F(X_1|\cdots|X_n)\) without ambiguity even for \(n=1\). In what follows, we shall restrict ourselves to reduced functors. Further note that if the category \(\D\) is semi-abelian, then the converse holds as well: since \(0\colon X\to 0\) is a split epimorphism, its image \(F(0)\) through \(F\) is a normal epimorphism, so that it is the cokernel of its kernel; as a consequence, \(\cre_1(F) \cong F\) implies \(F(0)\cong 0\).

We generalise the folding operations of Subsection~\ref{Subsection-Folding} to an arbitrary reduced functor \(F\), as follows: given objects \(X\) and \(Y\) in~\(\C\), we consider
\[
  S_{1,2}^{X,Y}\colon {F(X|Y|Y)\to F(X|Y)}\qquad\text{and}\qquad S_{2,1}^{X,Y}\colon {F(X|X|Y)\to F(X|Y)}\text{,}
\]
canonically induced by the respective morphisms
\[
  F(1_X+\nabla_Y)\colon F(X+Y+Y)\to F(X+Y)
\]
and \(F(\nabla_X+1_Y)\colon F(X+X+Y)\to F(X+Y)\). Likewise, we define \((S_2^F)_X\) as the composite
\[
  \xymatrix{F(X|X)=\cre_2(F)(X,X) \ar[r]^-{\iota^F_{X,X}} & F(X+X) \ar[r]^-{F(\nabla_X)} & F(X)\text{.}}
\]

For the next proposition, we may adapt the proof of~\cite[Lemma~2.12]{HVdL} which treats the special case announced in Lemma~\ref{Lemma Smash of Sum}. (Note that, in that article, the symbol \(\tensor\) is used for the cosmash instead of \(\cosmash\).) It suffices to observe that for \(F=(-)\cosmash Z\), the diagram in Figure~\ref{Figure F(X|Y)} will give rise to Figure~\ref{Figure Ternary}. (This is part of the phenomenon that cross-effects can be constructed recursively: for instance, \(X_1\cosmash\cdots\cosmash X_n\cong (X_1\cosmash(-))(X_2|\cdots|X_n)\) as made explicit in~\cite[Proposition~4.1]{SVdL3}. Note that~\cite[Lemma~2.22]{HVdL-arXiv} discusses a slightly different type of recursion.)

\begin{proposition}\label{crossdecomp'}
  Suppose that \(\X\) is finitely cocomplete homological, \(\C\)~is pointed with binary sums and \(F\colon{\C\to \X}\) preserves zero. Then we have a decomposition
  \[
    F(X+Y)=\bigl(F(X|Y)\rtimes F(X)\bigr)\rtimes F(Y)
  \]
  for any \(X\), \(Y\) in \(\C\).\noproof
\end{proposition}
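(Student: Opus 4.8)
The plan is to realise $F(X+Y)$ as an iterated semidirect product by producing two split short exact sequences, assembled exactly into the $3\times 3$ diagram displayed just before the statement (which reproduces Figure~\ref{Figure Ternary} in the special case $F=(-)\cosmash Z$). Throughout I write $\iota_X\colon X\to X+Y$ and $\iota_Y\colon Y\to X+Y$ for the coproduct inclusions and $r_X=\DCoindArr{1_X \and 0}\colon X+Y\to X$, $r_Y=\DCoindArr{0 \and 1_Y}\colon X+Y\to Y$ for the two retractions, so that $r_X\iota_X=1_X$, $r_Y\iota_Y=1_Y$ and $r_X\iota_Y=0=r_Y\iota_X$. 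The symbol $\rtimes$ denotes the middle object of the relevant split extension, which is legitimate since in a homological category regular epimorphisms coincide with normal epimorphisms, i.e.\ cokernels of their kernels.

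First I would produce the outer split extension over $F(Y)$. Since $F(r_Y)\comp F(\iota_Y)=F(r_Y\iota_Y)=1_{F(Y)}$, the morphism $F(r_Y)\colon F(X+Y)\to F(Y)$ is a split (hence regular, hence normal) epimorphism with section $F(\iota_Y)$. Writing $V\DefEq\Ker(F(r_Y))$, this yields the split short exact sequence
\[
  \xymatrix{0 \ar[r] & V \ar@{{ |>}->}[r] & F(X+Y) \ar@<.5ex>@{-{ >>}}[r]^-{F(r_Y)} & F(Y) \ar[r] \ar@<.5ex>[l]^-{F(\iota_Y)} & 0}
\]
identifying $F(X+Y)$ with $V\rtimes F(Y)$.

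Next I would split $V$ itself over $F(X)$. The retraction $F(r_X)\colon F(X+Y)\to F(X)$ restricts along $V\mono F(X+Y)$ to a morphism $p\colon V\to F(X)$; and because $F$ is reduced, $F(r_Y)\comp F(\iota_X)=F(r_Y\iota_X)=F(0)=0$, so $F(\iota_X)$ factors through $V$, giving $s\colon F(X)\to V$ with $p\comp s=F(r_X\iota_X)=1_{F(X)}$. Thus $p$ is again a split epimorphism and the homological structure supplies
\[
  \xymatrix{0 \ar[r] & \Ker(p) \ar@{{ |>}->}[r] & V \ar@<.5ex>@{-{ >>}}[r]^-{p} & F(X) \ar[r] \ar@<.5ex>[l]^-{s} & 0,}
\]
so $V\cong\Ker(p)\rtimes F(X)$. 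It then remains to identify $\Ker(p)$: as the kernel of a map into a product is the intersection of the kernels of its components, $\Ker(p)=\Ker(F(r_X))\meet\Ker(F(r_Y))$, which is precisely the kernel of the comparison $\DIndArr{F(r_X) \and F(r_Y)}\colon F(X+Y)\to F(X)\times F(Y)$ defining $\cre_2(F)$. Hence $\Ker(p)=F(X|Y)$, and substituting back gives $F(X+Y)=\bigl(F(X|Y)\rtimes F(X)\bigr)\rtimes F(Y)$; stacking the two sequences reconstitutes Figure~\ref{Figure Ternary}.

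The computations are routine, so I do not expect a serious obstacle; the two points requiring care are that the ambient category is genuinely homological — so that each split epimorphism is the cokernel of its kernel and the notation $\rtimes$ is meaningful — and the clean identification of the common kernel $\Ker(F(r_X))\meet\Ker(F(r_Y))$ with the cross-effect $F(X|Y)$, which is where the definition of $\cre_2(F)$ enters and where one must stay unbothered by the ordering of the factors of $F(X)\times F(Y)$. Note that only reducedness of $F$ is used, not any preservation of sums — indeed, sum-preservation would force the cross-effect to vanish.
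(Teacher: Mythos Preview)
Your argument is correct and is precisely the one the paper intends: the proposition is stated without proof, the preceding paragraph indicating that one should adapt \cite[Lemma~2.12]{HVdL} by building the two split short exact sequences in the displayed diagram, which is exactly what you do. Your identification of $\Ker(p)$ with $F(X|Y)$ via $\Ker(F(r_X))\meet\Ker(F(r_Y))$ is the only point not made fully explicit in the paper's sketch, and it is correct.
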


\subsection{Exact forks.}\label{Subsection Basic Principle}
Our first aim is to prove that the cross-effects of a functor which preserves coequalisers of reflexive graphs still preserve coequalisers of reflexive graphs. Our proof shall be based on the following \defn{basic principle} concerning those coequalisers, valid in semi-abelian categories. (But not in merely homological ones!) Let
\begin{equation}\label{Refl-Coeq}
  \vcenter{\xymatrix@!0@R=5em@C=4em{R \ar@<1ex>[rr]^-{d} \ar@<-1ex>[rr]_-{c} \ar[rd]_-{r=\IndArr{d \and c}} && B \ar[ll]|-{e} \ar[dl] \ar[r]^-{f} & A\\
  & B\times_A B \ar@<1ex>[ru]^(.4){f_{1}} \ar@<-1ex>[ru]_(.4){f_{2}}}}
\end{equation}
be a reflexive graph with its coequaliser, the induced kernel pair \((B\times_A B,f_{1},f_{2})\) and the comparison morphism \(r\). Certainly both \(f\) and \(r\) are regular epimorphisms: the morphism \(f\) by definition, and \(r\) since it is the regular epi--part of the image factorisation of~\(\IndArr{d \and c} \from R \to B \times B\). (This image is a reflexive relation since so is \(R\), thus it is the kernel pair of its coequaliser, since we are in an exact Mal'tsev context, the conclusion following by uniqueness of the kernel pair.) But in fact, the converse also holds: given regular epimorphisms \(r\) and \(f\) as in the diagram, and such that \(f\) coequalises \(d\) and \(c\), the morphism \(f\) it is their coequaliser. (Here we only use that \(r\) is epic to check the universal property.) Hence any regular epimorphism \(f\) which coequalises \(d\) and \(c\) is their coequaliser if and only if the given morphism \(r\) is a regular epimorphism.

\begin{figure}
  \resizebox{\textwidth}{!}
  {\mbox{\(
  \xymatrix@!0@C=14em@R=5em{0 \ar[d] & 0 \ar@{.>}[d] & 0 \ar[d] & 0 \ar[d] \\
  F(X|R) \ar[r]^-{r'''} \ar@{{ |>}->}[d] & F(X|B)\times_{F(X|A)} F(X|B) \ar[d] \ar@<1ex>[r] \ar@<-1ex>[r] & F(X|B) \ar@{{ |>}->}[d] \ar[l] \ar[r]^-{F(1_{X}|f)} & F(X|A) \ar@{{ |>}->}[d]\\
  F(X+R) \ar@{}[rd]|-{\texttt{(i)}} \ar[r]^-{r''} \ar@{-{ >>}}[d] & F(X+B)\times_{F(X+A)} F(X+B) \ar[d] \ar@<1ex>[r] \ar@<-1ex>[r] & F(X+B) \ar@{}[rd]|-{\texttt{(ii)}} \ar@{-{ >>}}[d] \ar[l] \ar[r]^-{F(1_{X}+f)} & F(X+A) \ar@{-{ >>}}[d]\\
  F(X)\times F(R) \ar[d] \ar[r]_-{r'} & (F(X) \times F(B)) \times_{F(A)} (F(X) \times F(A)) \ar@{.>}[d] \ar@<1ex>[r] \ar@<-1ex>[r] & F(X)\times F(B) \ar[d] \ar[l] \ar[r]_-{F(1_{X})\times F(f)} & F(X)\times F(A) \ar[d]\\
  0 & 0 & 0 & 0}
  \)}}
  \caption{The functor \(F(X|-)\) applied to a reflexive graph}\label{RG}
\end{figure}

\begin{lemma}\label{Lemma-Reduced}
  Suppose that \(\X\) is a homological category, \(\C\) is pointed category with binary sums and \(F\colon{\C\to \X}\) preserves zero. Then for any \(X\), \(Y\in \C\) the morphism
  \[
    \IndArr{F(r_{X}) \and F(r_{Y})} \colon{F(X+Y)\to F(X)\times F(Y)}
  \]
  is a regular epimorphism. Hence also the comparison natural transformation
  \[
    F(X+(-))\To F(X)\times F(-)
  \]
  is regular-epic.
\end{lemma}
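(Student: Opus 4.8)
The plan is to reduce the claim to the unitality of the homological category $\X$ via the universal property of the coproduct $X+Y$. First I would record the elementary fact that $F$ carries zero morphisms to zero morphisms: a zero morphism factors through the zero object of $\C$, which $F$ sends to the zero object of $\X$, so its image factors through $0$ and is again zero. With this in hand, write $p\DefEq\IndArr{F(r_{X}) \and F(r_{Y})}\from F(X+Y)\to F(X)\times F(Y)$ and let $\iota_1\from X\to X+Y$, $\iota_2\from Y\to X+Y$ be the coproduct inclusions. Recall that $r_{X}\from X+Y\to X$ is the identity on $X$ and zero on $Y$, and symmetrically for $r_{Y}$.

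Next I would compute the composites of $p$ with $F(\iota_1)$ and $F(\iota_2)$. Since $r_{X}\comp \iota_1=\id{X}$, $r_{X}\comp \iota_2=0$, $r_{Y}\comp \iota_1=0$ and $r_{Y}\comp \iota_2=\id{Y}$, functoriality together with the observation above yields $F(r_{X})\comp F(\iota_1)=\id{F(X)}$ and $F(r_{Y})\comp F(\iota_1)=0$, and symmetrically for $\iota_2$. Reading these off componentwise shows that
\[
  p\comp F(\iota_1)=\IndArr{\id{F(X)} \and 0}
  \qquad\text{and}\qquad
  p\comp F(\iota_2)=\IndArr{0 \and \id{F(Y)}}
\]
are exactly the two product insertions of $F(X)\times F(Y)$.

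Finally I would invoke the structural properties of $\X$. As a homological category, $\X$ is pointed protomodular, hence unital, so the pair of product insertions $\IndArr{\id{F(X)} \and 0}$, $\IndArr{0 \and \id{F(Y)}}$ is jointly strongly epimorphic. Consequently $p$ is a strong (equivalently extremal) epimorphism: were $p=m\comp q$ for some monomorphism $m$, then both insertions $p\comp F(\iota_i)$ would factor through $m$, forcing $m$ to be an isomorphism. Since $\X$ is regular, strong and regular epimorphisms coincide, so $p$ is regular epic. The concluding assertion about the comparison $F(X+(-))\To F(X)\times F(-)$ being regular epic then holds pointwise, hence as a natural transformation. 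The only genuine input is this last step---the passage from the joint strong epimorphicity of the composites $p\comp F(\iota_i)$ to the strong epimorphicity of $p$ itself---which rests on unitality of $\X$ and on the identification of strong with regular epimorphisms in a regular category; everything else is a direct diagram chase.
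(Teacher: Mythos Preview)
Your proof is correct and follows essentially the same approach as the paper's. The paper packages the argument slightly differently by forming the coproduct $F(X)+F(Y)$ in~$\X$ and observing that the triangle
\[
  F(X)+F(Y)\xrightarrow{\CoindArr{F(\iota_1)\and F(\iota_2)}} F(X+Y)\xrightarrow{p} F(X)\times F(Y)
\]
composes to the comparison $r_{F(X),F(Y)}$, which is regular epic by unitality, whence so is~$p$; but this is just your joint-strong-epimorphicity argument rephrased through the universal property of the coproduct.
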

\begin{proof}
  Since the functor \(F\) preserves zero, the triangle
  \[
    \xymatrix@!0@R=5em@C=6em{F(X)+F(Y) \ar[rr]^-{\DCoindArr{F(i_{X}) \and F(i_{Y})}} \ar@{-{ >>}}[rd]_(.4){r= \left\links\begin{smallmatrix}
          1_{F(X)} & 0        \\
          0        & 1_{F(Y)}
        \end{smallmatrix}\right\rechts} && F(X+Y) \ar[ld]^(.4){\DIndArr{F(r_{X}) \and F(r_{Y})}} \\
    & F(X)\times F(Y) }
  \]
  commutes. The result follows, since \(r\) is a regular epimorphism (because we are in a unital context). For the natural transformation, the claim follows since regular epimorphisms are pointwise.
\end{proof}

\begin{theorem}\label{Theorem-Preservation-Coequalisers}
  Suppose that \(\X\) is semi-abelian and \(\C\) is pointed with binary sums.
  Let \(F\colon{\C\to \X}\) be a functor which preserves zero and coequalisers of reflexive graphs. For any object~\(X\) of \(\C\), the induced functor \(F(X|-)\colon{\C\to \X}\) also preserves coequalisers of reflexive graphs. Hence, by induction (see~\cite[Section~2.19]{HVdL-arXiv}), so do all resulting functors \(F(X_1|\cdots|X_{k-1}|-|X_k|\cdots|X_n)\).
\end{theorem}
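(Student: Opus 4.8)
The plan is to fix a reflexive coequaliser $R \rightrightarrows B \xrightarrow{f} A$ in $\C$ and to show directly that $F(X|-)$ sends it to a coequaliser, using the \emph{basic principle} of Subsection~\ref{Subsection Basic Principle}: in a semi-abelian category, a reflexive fork $F(X|R) \rightrightarrows F(X|B) \xrightarrow{F(X|f)} F(X|A)$ whose right leg is a regular epimorphism coequalising the pair is a coequaliser if and only if the comparison morphism $r''' \colon F(X|R) \to F(X|B)\times_{F(X|A)} F(X|B)$ to the kernel pair is a regular epimorphism. Since $F(X|f)$ automatically coequalises $F(X|d)$ and $F(X|c)$ by functoriality of $F(X|-)$ (as $fd=fc$), the whole statement reduces to two claims: that $F(X|f)$ is a regular epimorphism, and that $r'''$ is a regular epimorphism.

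To prove these I would assemble the diagram of Figure~\ref{RG}, obtained by applying the three comparison functors $F(X|-)$, $F(X+(-))$ and $F(X)\times F(-)$ to the reflexive graph together with its coequaliser. These fit into a pointwise short exact sequence of functors: the columns indexed by $R$, $B$ and $A$ are the defining short exact sequences of the cross-effect, their right-hand legs being regular epimorphisms by Lemma~\ref{Lemma-Reduced} (this rests on the semidirect decomposition of Proposition~\ref{crossdecomp'}); the remaining column, indexed by the kernel pair, is short exact because a kernel pair is a finite limit and the rowwise comparison respects it. The three rows are then the forks produced by the three functors, linked by these short exact columns. Next I would observe that the middle and bottom rows are \emph{already} coequalisers: the coproduct functor $X+(-)\colon \C\to\C$ preserves all colimits (a coproduct being itself a colimit, it commutes with reflexive coequalisers), so $F(X+(-))$ preserves the given coequaliser because $F$ does; and $F(X)\times(-)$ preserves reflexive coequalisers in $\X$ by the basic principle again, since it carries the regular epimorphism $F(R)\to F(B)\times_{F(A)}F(B)$ to $1_{F(X)}\times(-)$ of it. Consequently the comparisons $r''$ (middle) and $r'$ (bottom), and the legs $F(X+f)$ and $F(X)\times F(f)$, are all regular epimorphisms.

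It then remains to transport regularity upward along the short exact columns. Viewing $(r''',r'',r')$ as a morphism from the $R$-column to the kernel-pair column, we have a morphism of short exact sequences whose middle and bottom components $r''$ and $r'$ are regular epimorphisms, and $r'''$ is the induced map on kernels; Lemma~\ref{Lemma-LeftRight}\eqref{reg po iff k repi} then identifies the regularity of $r'''$ with the square between the middle and bottom rows being a regular pushout. The analogous argument applied to the $B$- and $A$-columns yields that $F(X|f)$ is a regular epimorphism. The basic principle now delivers the top row as a coequaliser. For the multi-variable statement, one feeds the functor $G=F(X_1|-)$ — which preserves zero (as $F(X_1|0)=\Ker(F(X_1)\to F(X_1)\times 0)=0$) and, by the above, reflexive coequalisers — back into the single-variable result and iterates along the recursion $F(X_1|\cdots|X_n)\cong G(X_2|\cdots|X_n)$.

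I expect the main obstacle to be precisely this final transport step. A morphism of short exact sequences with regular epimorphic middle and bottom components need \emph{not} have a regular epimorphic kernel component, so nothing can be concluded formally; the genuine content is to verify that the squares arising between the middle and bottom rows are \emph{regular pushouts} (double extensions in the sense of~\cite{Bourn2003}), which is exactly where the Barr-exactness of $\X$ — and not merely its homological structure, as flagged in the excerpt — is indispensable. Establishing this regular-pushout condition, together with the short exactness of the kernel-pair column, is the technical heart of the argument.
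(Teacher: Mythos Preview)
Your plan is the paper's own, and you have correctly isolated the only nontrivial step: verifying that the two squares between the middle and bottom rows of Figure~\ref{RG} --- call them \texttt{(i)} (indexed by \(R\) and the kernel pair) and \texttt{(ii)} (indexed by \(B\) and \(A\)) --- are double extensions. Everything else you wrote (the basic-principle reduction, the fact that the middle and bottom rows are reflexive coequalisers, the use of Lemma~\ref{Lemma-LeftRight}\eqref{reg po iff k repi} to read off the regularity of \(r'''\) and \(F(X|f)\) from those squares, and the inductive conclusion) is exactly how the paper proceeds. One caution: do not assert short exactness of the kernel-pair column up front; its middle-to-bottom arrow being a regular epimorphism is essentially the content of \texttt{(ii)} being a double extension, so it is an output of the argument rather than an input. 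The paper instead argues that kernels commute with kernel pairs (a finite-limit fact) to identify the top of that column, and then extracts the regular epimorphisms on kernels from \texttt{(i)} and \texttt{(ii)} via Lemma~\ref{Lemma-LeftRight}.

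Here is how the paper discharges the two double-extension checks you flagged. For \texttt{(ii)}, it invokes~\cite[Corollary~3.10]{EGVdL}: in a semi-abelian category, coequalisers in \(\Ext(\X)\) are computed degree-wise, which says precisely that the square formed by \(F(1_X+f)\) and \(F(1_X)\times F(f)\) is a double extension. For \texttt{(i)}, the trick is to observe that the composite vertical arrows \(F(X+R)\to F(X)\times F(R)\to F(R)\) and \(F(X+B)\times_{F(X+A)}F(X+B)\to F(X)\times(F(B)\times_{F(A)}F(B))\to F(B)\times_{F(A)}F(B)\) are \emph{compatibly split} by \(F(i_R)\) and \(F(i_B)\times_{F(i_A)}F(i_B)\); hence the induced map on kernels \(\Ker(r'')\to\Ker(r')\) is a split, hence regular, epimorphism, and Lemma~\ref{Lemma-LeftRight}\eqref{reg po iff k repi} gives \texttt{(i)}. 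So your diagnosis of the obstacle is accurate, and these two short arguments are all that is missing from your proposal.
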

\begin{proof}
  Consider in \(\C\) a reflexive graph with its coequaliser~\eqref{Refl-Coeq} and the induced diagram in~\(\X\)---Figure~\ref{RG}---that shows how the functor \(F(X|-)\) works on this reflexive graph. By the ``basic principle'' it suffices to prove that both \(F(1_{X}|f)\) and~\(r'''\) are regular epimorphisms.

  In the bottom row, the morphisms \(r'\) and \(F(1_{X})\times F(f)\) are regular-epic by the assumption that \(F\) preserves coequalisers of reflexive graphs and the fact that also the product functor \(F(X)\times(-)\) does. Indeed, products preserve regular epimorphisms and kernel pairs, while \((F(X) \times F(B)) \times_{F(A)} (F(X) \times F(A)) \cong F(X)\times (F(B)\times_{F(A)} F(B))\), since kernel pairs commute with products. In particular, \(F(1_{X})\times F(f)\) is the coequaliser of the morphisms \(F(1_{X})\times F(d)\) and \(F(1_{X})\times F(c)\).

  In the middle row, the morphisms \(r''\) and \(F(1_{X}+f)\) are regular-epic because the sum functor \(X+(-)\) and the functor \(F\) preserve coequalisers of reflexive graphs. In particular, \(F(1_{X}+f)\) is the coequaliser of \(F(1_{X}+d)\) and \(F(1_{X}+c)\).

  The four lower vertical arrows in the diagram are regular epimorphisms by Lemma~\ref{Lemma-Reduced} and by the fact that \(r'\) is a regular epimorphism.

  In the category of extensions \(\Ext(\X)\), coequalisers are computed degree-wise by~\cite[Corollary~3.10]{EGVdL}. In fact, this result says that the commutative square \texttt{(ii)} in Figure~\ref{RG} may be considered as a double extension in~\(\X\).

  Also the square \texttt{(i)} is a double extension in~\(\X\). To see this, consider the following diagram with exact rows, in which \(r'=1_{F(X)}\times \overline{r}\).
  \[
    \resizebox{\textwidth}{!}{
    \xymatrix{0 \ar[r] & \Ker(r'') \ar@{{ |>}->}[r] \ar@{.>}[d] & F(X+R) \ar@{}[rd]|-{\texttt{(i)}} \ar@{-{ >>}}[r]^-{r''} \ar@{-{ >>}}[d] & F(X+B)\times_{F(X+A)} F(X+B) \ar[r] \ar@{-{ >>}}[d] & 0\\
    0 \ar[r] & \Ker(r') \ar@{=}[d] \ar@{{ |>}->}[r] & F(X)\times F(R) \pullback \ar@{-{ >>}}[r]^-{r'} \ar[d]_-{\pi_{F(R)}} & F(X)\times (F(B)\times_{F(A)} F(B)) \ar[r] \ar[d]^-{\pi_{F(B)\times_{F(A)} F(B)}} & 0\\
    0 \ar[r] & \Ker(r') \ar@{{ |>}->}[r] & F(R) \ar@{-{ >>}}[r]_-{\overline{r}} & F(B)\times_{F(A)} F(B) \ar[r] & 0}}
  \]
  The right and middle composed vertical arrows in it are compatibly split epimorphisms: the first one is split by \(F(i_R)\) and the second one by \(F(i_B) \times_{F(i_A)} F(i_B)\) as in the diagram
  \[
    \begin{tikzcd}[column sep=tiny, cramped]
      F(B) \times_{F(A)} F(B) \arrow[rr, dashed] \arrow[rd] \arrow[dd] &                                                          & F(X+B) \times_{F(X+A)} F(X+B) \arrow[rd] \arrow[dd]                     \\
      & F(B) \arrow[rr, "F(i_B)" near start, crossing over]      &                                                     & F(X+B) \arrow[dd] \\
      F(B) \arrow[rr, "F(i_B)" near end] \arrow[rd]                    &                                                          & F(X+B) \arrow[rd]                                                       \\
      & F(A) \arrow[from=uu, crossing over] \arrow[rr, "F(i_A)"] &                                                     & F(X+A)
    \end{tikzcd}
  \]
  Hence the left hand side dotted arrow is a split, hence a regular, epimorphism. Lemma~\ref{Lemma-LeftRight} now implies that the square \texttt{(i)} is a double extension (=~regular pushout square).

  Since kernels commute with kernel pairs the second column is exact, so that applying Lemma~\ref{Lemma-LeftRight}(2) on Figure~\ref{RG} implies that~\(F(1_{X}|f)\) and~\(r'''\) are regular-epic, and the result follows by the ``basic principle''.
\end{proof}

\begin{corollary}\label{Corollary-Coequalisers}
  For any object \(X\) in a semi-abelian category \(\X\), the induced functor \(X\cosmash(-)\colon{\X\to \X}\) preserves coequalisers of reflexive graphs.\noproof
\end{corollary}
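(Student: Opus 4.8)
The plan is to recognise this corollary as the special case $F = 1_{\X}$ of Theorem~\ref{Theorem-Preservation-Coequalisers}. First I would take $\C = \X$ and let $F$ be the identity functor $1_{\X} \colon \X \to \X$. The hypotheses of that theorem require only that $\X$ be semi-abelian (assumed) and that $\C = \X$ be pointed with binary sums (automatic, since semi-abelian categories are pointed and have finite coproducts), together with the conditions on $F$. The identity functor trivially preserves the zero object, and it preserves coequalisers of reflexive graphs because it preserves all colimits; both verifications are immediate.

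The one point worth spelling out is the identification of the relevant cross-effect with the cosmash functor. As recalled in the Preliminaries of this section, when $F = 1_{\X}$ the $n$-th cross-effect recovers the $n$-fold cosmash product, since $X_1 \cosmash \cdots \cosmash X_n = 1_{\X}(X_1 | \cdots | X_n)$. Specialising to the binary case and fixing the first slot at $X$ while leaving the second free yields a natural isomorphism of functors $1_{\X}(X | -) \cong X \cosmash (-) \colon \X \to \X$.

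With this identification in hand, Theorem~\ref{Theorem-Preservation-Coequalisers} applied to $F = 1_{\X}$ asserts precisely that $1_{\X}(X | -)$, and hence $X \cosmash (-)$, preserves coequalisers of reflexive graphs, giving the corollary. There is no genuine obstacle here: all of the substantive work has already been carried out in Theorem~\ref{Theorem-Preservation-Coequalisers}, and what remains is merely the bookkeeping of specialising to the identity functor and invoking the cross-effect/cosmash identification.
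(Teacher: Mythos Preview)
Your proposal is correct and matches the paper's intended argument exactly: the corollary is stated without proof precisely because it is the instance $F = 1_{\X}$, $\C = \X$ of Theorem~\ref{Theorem-Preservation-Coequalisers}, using the identification $1_{\X}(X|-) = X\cosmash(-)$ recalled in the Preliminaries.
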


\begin{lemma}[Proposition~3.9 in~\cite{EverVdL2}]
  \label{lem:coeq and coker}
  Let \begin{tikzcd}[cramped]
    R \arrow[r, "d", shift left=1.75] \arrow[r, "c"', shift right=1.75] & B \arrow[r, "f"] \arrow[l, "e" description] & A
  \end{tikzcd} be a \defn{reflexive fork}: \(c\comp e=1_B=d\comp e\) and \(f\comp c=f\comp d\) . Then
  \[
    \begin{tikzcd}
      \Ker(d) \arrow[rr, "c \circ \ker(d)"] &  & B \arrow[r, "f"] & A \arrow[r] & 0
    \end{tikzcd}
  \]
  is a cokernel is and only if \(f\) is a coequaliser of \(c\) and \(d\).\noproof
\end{lemma}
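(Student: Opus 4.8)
The plan is to show directly that $f$ is the cokernel of the composite $c \comp \ker(d)$. Write $k \DefEq \ker(d) \colon \Ker(d) \to R$, and recall that, the graph being reflexive, $d$ and $c$ share a common section $e$, so that $d \comp e = 1_B = c \comp e$. First I would check that $f \comp c \comp k = 0$: since $f$ coequalises $d$ and $c$ we have $f \comp c = f \comp d$, whence $f \comp c \comp k = f \comp d \comp k = 0$, the last equality because $d \comp k = 0$ by definition of $k$.

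For the universal property, let $g \colon B \to C$ be an arbitrary morphism with $g \comp c \comp k = 0$; I must produce a unique $h \colon A \to C$ such that $h \comp f = g$. Since $f$ is the coequaliser of $d$ and $c$, and coequalisers are epic, it suffices to prove the single equality $g \comp d = g \comp c$ of morphisms $R \to C$: the factorisation $h$ and its uniqueness then follow immediately from the universal property of the coequaliser.

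The heart of the argument — and the only place where the ambient structure is used essentially — is this equality $g \comp d = g \comp c$. Here I would invoke protomodularity: in a pointed protomodular (in particular, semi-abelian) category, for the split epimorphism $d$ with section $e$ and kernel $k = \ker(d)$, the pair $(k,e)$ is jointly strongly epimorphic, so in particular jointly epic. It therefore suffices to check that $g \comp d$ and $g \comp c$ agree after precomposition with $k$ and with $e$. Precomposing with $e$ gives $g \comp d \comp e = g = g \comp c \comp e$, using $d \comp e = 1_B = c \comp e$. Precomposing with $k$ gives $g \comp d \comp k = 0$ (as $d \comp k = 0$) and $g \comp c \comp k = 0$ (the hypothesis on $g$). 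Thus $g \comp d$ and $g \comp c$ coincide on a jointly epimorphic pair, hence are equal.

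The main obstacle is therefore conceptual rather than computational: it lies in isolating the correct categorical input, namely the joint (strong) epimorphicity of the kernel and the section of a split epimorphism, which is precisely the protomodularity axiom and is exactly what makes the homological/semi-abelian hypothesis indispensable. Once that is in hand, the remainder is formal: combining the vanishing $f \comp c \comp k = 0$ with the universal property established above exhibits $f$ as $\Coker(c \comp k)$, which is the assertion.
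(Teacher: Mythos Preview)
Your proof is correct. The paper itself gives no proof of this lemma: it is stated with \verb|\noproof| and attributed to Proposition~3.9 of~\cite{EverVdL2}. Your argument via protomodularity --- using that the kernel $k$ and the section $e$ of the split epimorphism $d$ are jointly (strongly) epic --- is precisely the standard one, and it matches what the referenced source does.
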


\begin{proposition}\label{Proposition-Rightex}
  Suppose that \(\C\) is pointed with binary coproducts, \(\X\) is semi-abelian and \(F\colon{\C\to \X}\) preserves zero. Then \(F\) preserves coequalisers of reflexive graphs if and only if every cokernel
  \[
    \xymatrix{K \ar[r]^-{k } & B \ar@{-{ >>}}[r]^-{f} & A \ar[r] & 0}
  \]
  in \(\C\) gives rise to a right exact sequence
  \begin{equation}\label{Sequence-Rightex}
    \xymatrix@=4em{
    F(K|B) \rtimes F(K) \ar[rr]^-{\DCoindArr{(S^{F}_{2})_{B}\comp F(k |1_{B}) \and F(k )}} && F(B) \ar@{-{ >>}}[r]^-{F(f)} & F(A) \ar[r] & 0}
  \end{equation}
  in~\(\X\)---see Subsection~\ref{Prelim Cross} for the notation \(S^{F}_{2}\).
\end{proposition}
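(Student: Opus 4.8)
The plan is to present every cokernel in $\C$ as the coequaliser of a single canonical reflexive graph, and then to translate the two conditions into one another by applying Lemma~\ref{lem:coeq and coker} in $\X$ together with the cross-effect decomposition of Proposition~\ref{crossdecomp'}. Given a cokernel $K\xrightarrow{k} B\xrightarrow{f} A\to 0$ in $\C$, note first that $f$ is the coequaliser of the reflexive graph $K+B\rightrightarrows B$ whose legs are $d=\CoindArr{k \and 1_{B}}$ and $c=\CoindArr{0 \and 1_{B}}$, with common section $\iota_{2}\colon B\to K+B$: coequalising $d$ and $c$ forces $k(K)$ to be identified with $0$, so the coequaliser is $\Coker(k)=f$. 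Assuming $F$ preserves coequalisers of reflexive graphs, applying $F$ yields a reflexive graph $(F(K+B),F(d),F(c),F(\iota_{2}))$ in the semi-abelian category $\X$ with coequaliser $F(f)$, and Lemma~\ref{lem:coeq and coker}, applied to the leg $F(c)$, shows that
\[
  \Ker(F(c))\xrightarrow{\;F(d)\comp\ker(F(c))\;} F(B)\xrightarrow{\;F(f)\;} F(A)\to 0
\]
is a cokernel, hence a right exact sequence.

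It then remains to identify this sequence with~\eqref{Sequence-Rightex}, which is the computational heart of the argument. Since $c=\CoindArr{0 \and 1_{B}}$ is exactly the map that kills $K$, Proposition~\ref{crossdecomp'} (with $X=K$, $Y=B$) identifies $\Ker(F(c))$ with $F(K|B)\rtimes F(K)$, the inclusion being $\iota^{F}_{K,B}$ on the cross-effect factor and $F(\iota_{1})$ on the $F(K)$ factor. Postcomposing with $F(d)$, the $F(K)$ factor gives $F(d)\comp F(\iota_{1})=F(d\comp\iota_{1})=F(k)$, the second component of~\eqref{Sequence-Rightex}. On the cross-effect factor I factor $d=\CoindArr{k \and 1_{B}}$ as $\nabla_{B}\comp(k+1_{B})$ and use naturality of the cross-effect inclusion along the morphism $(k,1_{B})$:
\[
  F(d)\comp\iota^{F}_{K,B}=F(\nabla_{B})\comp F(k+1_{B})\comp\iota^{F}_{K,B}=F(\nabla_{B})\comp\iota^{F}_{B,B}\comp F(k|1_{B})=(S^{F}_{2})_{B}\comp F(k|1_{B}),
\]
the last step being the definition $(S^{F}_{2})_{B}=F(\nabla^{2}_{B})\comp\iota^{F}_{B,B}$. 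Hence $F(d)\comp\ker(F(c))=\CoindArr{(S^{F}_{2})_{B}\comp F(k|1_{B}) \and F(k)}$ and the displayed cokernel is precisely~\eqref{Sequence-Rightex}.

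For the converse I run these identifications backwards. The hypothesis says exactly that, for the canonical graph attached to any cokernel, $F(f)$ is a cokernel of $F(d)\comp\ker(F(c))$; by the basic principle of Subsection~\ref{Subsection Basic Principle}, which in the semi-abelian category $\X$ makes the condition ``$F(f)$ is the coequaliser of $F(d)$ and $F(c)$'' equivalent to ``$F(f)$ is a cokernel of $F(d)\comp\ker(F(c))$'' once $F(f)$ is a regular epimorphism coequalising the legs, this is precisely the statement that $F$ preserves the coequaliser of every canonical reflexive graph $K+B\rightrightarrows B$. To upgrade this to preservation of an arbitrary reflexive coequaliser $(R,d,c,e)$, I would compare the given graph --- through the coproduct structure of $\C$ and the split epimorphism $c$ --- with a canonical graph computing the same coequaliser, and transport preservation along the comparison. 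I expect the main obstacle to be precisely this matching and reduction: in the forward direction, recognising $\Ker(F(c))$ as $F(K|B)\rtimes F(K)$ and the folded composite $F(d)\comp\iota^{F}_{K,B}$ as $(S^{F}_{2})_{B}\comp F(k|1_{B})$; and in the converse, reducing a general reflexive coequaliser to the canonical ones furnished by the hypothesis.
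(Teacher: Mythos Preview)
Your forward direction is correct and essentially identical to the paper's: you build the same canonical reflexive graph \(K+B\rightrightarrows B\), apply \(F\), invoke Lemma~\ref{lem:coeq and coker} (with the roles of the two legs swapped, which is harmless), identify \(\Ker(F(\CoindArr{0\and 1_B}))\) via Proposition~\ref{crossdecomp'}, and compute \(F(\CoindArr{k\and 1_B})\comp\iota^F_{K,B}=(S^F_2)_B\comp F(k|1_B)\) by the factorisation \(\CoindArr{k\and 1_B}=\nabla_B\comp(k+1_B)\). This matches the paper line for line.

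The converse, however, is left as a gap. You correctly observe that the hypothesis yields preservation of the coequaliser of every \emph{canonical} graph \(K+B\rightrightarrows B\), and you say you ``would compare the given graph with a canonical graph computing the same coequaliser'', but you do not carry this out, and the vague appeal to ``the coproduct structure of \(\C\) and the split epimorphism \(c\)'' is not a proof. The paper's concrete mechanism is the one you are missing: given an arbitrary reflexive graph \((R,d,c,e)\) with coequaliser \(f\), take its \emph{normalisation} \(K=\Ker(d)\), \(k=c\comp\ker(d)\), which is a cokernel by Lemma~\ref{lem:coeq and coker}. The hypothesis applied to this cokernel, read back through the forward computation, says that \(F(f)\) is the coequaliser of the canonical graph \(F(K+B)\rightrightarrows F(B)\); in particular \(F\) preserves regular epimorphisms. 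Now both \(R\) and \(K+B\) admit regular epimorphisms onto the kernel pair \(B\times_A B\) (this is the basic principle of~\ref{Subsection Basic Principle}), and since \(F\) preserves these, \(F(R)\) and \(F(K+B)\) both surject onto the same reflexive graph \(F(B\times_A B)\) in \(\X\); as regular quotients do not change the coequaliser in a semi-abelian category, the coequaliser of \(F(R)\) equals that of \(F(K+B)\), namely \(F(f)\). Note also that your attribution of the equivalence ``\(F(f)\) is the coequaliser'' \(\Leftrightarrow\) ``\(F(f)=\Coker(F(d)\comp\ker(F(c)))\)'' to the basic principle is slightly off: it is rather Lemma~\ref{lem:coeq and coker} together with an elementary universal-property argument.
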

\begin{proof}
  Suppose that \(F\) preserves coequalisers of reflexive graphs. For a cokernel as above, the diagram
  \[
    \xymatrix@=4em{K+B \ar@<1ex>[r]^-{\DCoindArr{k \and 1_{B}}} \ar@<-1ex>[r]_-{\DCoindArr{0 \and 1_{B}}} & B \ar[l]|-{i_{B}} \ar@{-{ >>}}[r]^-{f} & A}
  \]
  is a reflexive graph with its coequaliser, hence so is its image
  \begin{equation}\label{RG-Coeq}
    \xymatrix@=4em{F(K+B) \ar@<1.2ex>[r]^-{F\DCoindArr{k \and 1_{B}}} \ar@<-1.2ex>[r]_-{F\DCoindArr{0 \and 1_{B}}} & F(B) \ar[l]|-{F(i_{B})} \ar@{-{ >>}}[r]^-{F(f)} & F(A)}
  \end{equation}
  through \(F\). Since the kernel of \(F\CoindArr{0 \and 1_{B}}\) is \(F(K|B)\rtimes F(K)\) by Proposition~\ref{crossdecomp'}, the sequence
  \[
    \xymatrix@=5em{
    F(K|B) \rtimes F(K) \ar[r]^-{F\DCoindArr{k \and 1_{B}}\circ j} & F(B) \ar@{-{ >>}}[r]^-{F(f)} & F(A) \ar[r] & 0}
  \]
  is a cokernel by Lemma~\ref{lem:coeq and coker}, where \(j\colon{F(K|B) \rtimes F(K) \to F(K+B)}\) is the canonical inclusion, a normal monomorphism. Hence already the morphism \(F\CoindArr{k \and 1_{B}}\comp j\), as any normalisation of a reflexive graph, is proper: it is a composite of a split epimorphism with a kernel, so an image of a kernel along a regular epimorphism, which in a semi-abelian category is always a normal monomorphism. Furthermore, this morphism decomposes on the semi-direct product as claimed: first of all, \(F\CoindArr{k \and 1_{B}}\comp F(i_{K})=F(k)\); secondly,
  \begin{equation*}\label{Fiota=S2F}
    \begin{aligned}
      F\CoindArr{k \and 1_{B}}\comp \iota_{K,B} & = F\bigl(\nabla_{B}\bigr)\comp F(k +1_{B})\comp\iota_{K,B}                                            \\
                                                & = F\bigl(\nabla_{B}\bigr)\comp\iota_{B,B}\comp F(k |1_{B}) = (S^{F}_{2})_{B}\comp F(k |1_{B})\text{.}
    \end{aligned}
  \end{equation*}

  Conversely, let
  \[
    \xymatrix@C=4em{R \ar@<1ex>[r]^-{d} \ar@<-1ex>[r]_-{c} & B \ar[l]|-{e} \ar@{-{ >>}}[r]^-{f} & A}
  \]
  be a reflexive graph with its coequaliser. Then its normalisation
  \[
    \xymatrix@C=4em{\Ker(d) \ar[r]^-{c\circ\ker (d)} & B \ar@{-{ >>}}[r]^-{f} & A \ar[r] & 0}
  \]
  is a cokernel by Lemma~\ref{lem:coeq and coker}, hence for \(k =c\comp\ker (d)\colon{K=\Ker(d)\to B}\) we obtain the right exact sequence~\eqref{Sequence-Rightex}. By the same computations as for the first implication, we find that \(F\CoindArr{k \and 1_{B}}\comp j\) is the normalisation of the reflexive graph in~\eqref{RG-Coeq}. By the converse implication in Lemma~\ref{lem:coeq and coker} then, we see that \(F(f)\) is its coequaliser. Note that, in particular, this shows that \(F\) preserves regular epimorphisms. Finally, since the kernel pair of \(f\) is a regular quotient of both \(R\) and \(K+B\), by the basic principle stated at the beginning of the section (see \eqref{Refl-Coeq}), this proves the statement. Indeed, \(F\) preserves regular epimorphisms and regular quotients do not change the coequaliser.
\end{proof}

Combined with Theorem~\ref{Theorem-Preservation-Coequalisers}, this gives us:

\begin{theorem}\label{Theorem-Rightex}
  Suppose that \(\C\) is pointed with binary coproducts, \(\X\) is semi-abelian and \(F\colon{\C\to \X}\) preserves zero and coequalisers of reflexive graphs. Consider an object \(X\) and a cokernel
  \[
    \xymatrix{K \ar[r]^-{k } & B \ar@{-{ >>}}[r]^-{f} & A \ar[r] & 0}
  \]
  in \(\C\). Then we obtain the right exact sequence
  \[
    \resizebox{\textwidth}{!}
    {\xymatrix@=5em{F(X|K|B) \rtimes F(X|K) \ar[rr]^-{\DCoindArr{S_{1,2}^{X,B}\circ F(1_{X}|k |1_{B}) \and F(1_{X}|k )}} && F(X|B) \ar@{-{ >>}}[r]^-{F(1_{X}|f)} & F(X|A) \ar[r] & 0}}
  \]
  in~\(\X\).\noproof
\end{theorem}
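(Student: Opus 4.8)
The plan is to reduce the statement to the one-variable characterisation already established in Proposition~\ref{Proposition-Rightex}, applied to the partial cross-effect functor $F(X|-)$. First I would fix the object~$X$ and set $G\DefEq F(X|-)\colon \C\to \X$. Since cross-effects are bireduced, $G$ preserves the zero object: indeed $G(0)=F(X|0)=\cre_2(F)(X,0)$ is the kernel of the (identity) isomorphism $F(X+0)\to F(X)\times F(0)$, hence trivial. As $F$ preserves zero and coequalisers of reflexive graphs by hypothesis, Theorem~\ref{Theorem-Preservation-Coequalisers} applies and tells us that $G=F(X|-)$ again preserves zero and coequalisers of reflexive graphs.

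Next I would apply the forward direction of the equivalence in Proposition~\ref{Proposition-Rightex} to the functor $G$ and the given cokernel $K\xrightarrow{k}B\twoheadrightarrow A\to 0$ in~$\C$. This immediately yields a right exact sequence
\[
  G(K|B)\rtimes G(K) \xrightarrow{\ \DCoindArr{(S^{G}_{2})_{B}\comp G(k|1_{B}) \and G(k)}\ } G(B) \xrightarrow{G(f)} G(A) \to 0
\]
in~$\X$. It then remains only to rewrite every term of this sequence in terms of $F$ itself.

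The translation rests on the recursive construction of cross-effects recalled in Section~\ref{Section Cross-Effects} and made precise in~\cite[Proposition~4.1]{SVdL3}: the cross-effect of $G=F(X|-)$ unfolds as $\cre_2\bigl(F(X|-)\bigr)(Y,Z)\cong F(X|Y|Z)$, naturally in $Y$ and $Z$. Hence $G(K|B)\cong F(X|K|B)$, $G(K)=F(X|K)$, $G(B)=F(X|B)$ and $G(A)=F(X|A)$, while on morphisms $G(k)=F(1_{X}|k)$, $G(f)=F(1_{X}|f)$ and $G(k|1_{B})=F(1_{X}|k|1_{B})$. Substituting these identifications recovers precisely the domain, codomain, the regular epimorphism $F(1_{X}|f)$, and the second component $F(1_{X}|k)$ of the comparison map in the statement.

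The one point that genuinely needs checking --- and which I expect to be the main, if mild, obstacle --- is that the folding component $(S^{G}_{2})_{B}\colon G(B|B)\to G(B)$ coincides with $S_{1,2}^{X,B}\colon F(X|B|B)\to F(X|B)$. Unwinding the definition of $S^{F}_{m}$ from Section~\ref{Section Cross-Effects}, the morphism $(S^{G}_{2})_{B}$ is induced by $\cre_2(G)(B,B)\xrightarrow{\iota}G(B+B)\xrightarrow{G(\nabla_{B})}G(B)$, that is, under the recursion above, by $F(X|B|B)\to F(X|B+B)\xrightarrow{F(1_{X}|\nabla_{B})}F(X|B)$. This is exactly the partial folding of the two $B$-slots that defines $S_{1,2}^{X,B}$, the $X$-slot being left untouched by $\nabla_{B}$. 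Once this compatibility of the recursive folding operations is confirmed, the sequence produced by Proposition~\ref{Proposition-Rightex} is literally the asserted one, and nothing further is required.
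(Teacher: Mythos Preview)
Your proposal is correct and follows exactly the route the paper intends: the theorem is stated with a \verb|\noproof| marker immediately after the sentence ``Combined with Theorem~\ref{Theorem-Preservation-Coequalisers}, this gives us,'' so the argument is precisely to apply Proposition~\ref{Proposition-Rightex} to the functor \(G=F(X|-)\), which preserves reflexive coequalisers by Theorem~\ref{Theorem-Preservation-Coequalisers}, and then unwind the recursive description of cross-effects. Your verification that \((S^{G}_{2})_{B}\) coincides with \(S_{1,2}^{X,B}\) is the only point the paper leaves implicit, and you handle it correctly.
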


\begin{proof}[Proof of Theorem~\ref{Theorem-RightExact-HVdL}]\label{Proof of Theorem-RightExact-HVdL}
  Applying Theorem~\ref{Theorem-Rightex} to \(1_\X\), we obtain the exactness result for the functor \(1_\X(X|-)=X\cosmash(-)\colon{\X\to \X}\).
\end{proof}

\section{Abelian extensions}\label{Section Abelian Extensions}
We present another application of the right-exactness of cosmash products: a characterisation of the abelian extensions in a semi-abelian category.

There is a subtle difference between the concept of \defn{extension with abelian kernel}---any short exact sequence
\begin{equation}\label{exte}
  \xymatrix{0 \ar[r] & A \ar@{{ |>}->}[r]^-{a} & X \ar@{-{ >>}}[r]^-{p} & G \ar[r] & 0}
\end{equation}
where the kernel \(A\) is abelian---and the notion of \defn{abelian extension}, a regular epimorphism \(p\colon{X\to G}\) which is an abelian object in the (non-pointed) slice category \((\X\downarrow G)\). Since ``abelian object'' here means that \(p\) admits an internal Mal'tsev operation\footnote{In a pointed setting, these internal Mal'tsev operations would become internal abelian groups, but here such a reduction cannot be made. The pointed case---abelian objects in a category of points, which are precisely the Beck modules---is deferred to Section~\ref{Section Tensor of G-actions}.}, this amounts to the condition that the Smith/Pedicchio commutator \([X\times_{G}X,X\times_{G}X]\) is trivial (see, for instance, the analysis made in~\cite{Bourn-Janelidze:Torsors}, and~\cite{Smith,Pedicchio} for the definition of the commutator). We write \(\Ab\Ext(\X)\) for the full subcategory of \(\Ext(\X)\) determined by the abelian extensions in~\(\X\).

For some purposes in homological algebra, one needs extensions with abelian kernel to be abelian extensions. Certainly, any abelian extension has an abelian kernel, since taking the kernel preserves the internal abelian group structure. In general, the converse need not hold: unlike what happens for groups (where the so-called ``Smith is Huq'' condition~\cite{MFVdL} holds), in an arbitrary semi-abelian category an extension with abelian kernel need not be an abelian extension (see~\cite{Borceux-Bourn, Bourn2004, HVdL}). Using right exactness of cosmash products in the form of Theorem~\ref{Theorem-Rightex}, here we analyse the situation from the point of view of internal actions and ternary commutators.

Let us start with some preliminary reminders on internal actions. In a semi-abelian category \(\X\), an \defn{(internal) action} of an object \(X\) on an object \(A\) is an arrow \(\xi \colon A \talf X \to A\) making certain diagrams commute, where \(A \talf X\) is constructed by choosing a coproduct \(A+X\) of \(A\) and \(X\) then taking a kernel \(\kappa_{A,X} \colon A \talf X \to A+X\) of the arrow \(\CoindArr{0 \and 1_X} \colon A+X \to X\)~\cite[Sections~3.2--3.3]{BJK}. (This construction gives us a bifunctor \(\talf \colon \X \times \X \to \X\).) However, in the following, we will consider another point of view of internal actions. The cosmash product \(A \cosmash X\) is canonically included in \(A \talf X\) so we can consider the restriction \(\psi \colon A \cosmash X \to X\) of an action \(\xi \colon {A \talf X \to X}\), called the \defn{core} of \(\xi\). Conversely, an abstract morphism \(\psi \colon A \cosmash X \to X\) is called an \defn{action core} if it extends to an action \(\xi \colon {A \talf X \to X}\), and this happens if and only if \(\psi\) renders three diagrams involving (simple or nested) binary and ternary cosmash products of \(A\) and \(G\) commutative, see~\cite[Theorem 5.9]{Actions}. Whenever this is the case, the core \(\psi\) entirely determines the action \(\xi\); for further details, see~\cite[Section~0.1]{HVdL} and~\cite{Actions}.

\begin{lemma}\label{factoraction}
  Suppose that \(\X\) is semi-abelian, \(\psi\colon A\cosmash X\to A\) determines an action in~\(\X\) and \(p\colon {X \to G}\) a regular epimorphism such that there exists a (necessarily unique) factorisation
  \[
    \xymatrix{A\cosmash X \ar[r]^-{\psi} \ar@{-{ >>}}[d]_-{1_{A}\cosmash p} & A \ar@{=}[d] \\
    A\cosmash G \ar@{->}[r]_-{\varphi} & A}
  \]
  of \(\psi\). Then \(\varphi\) determines an action of \(G\) on \(A\).
\end{lemma}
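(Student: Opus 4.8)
The plan is to verify that $\varphi$ satisfies the three commutativity conditions of~\cite[Theorem~5.9]{Actions} that characterise action cores, obtaining each one by transporting the corresponding condition for $\psi$ along suitable regular epimorphisms induced by $p$. The basic mechanism is simple: since $p\colon X\to G$ is a regular epimorphism, so is $1_{A}\cosmash p\colon A\cosmash X\to A\cosmash G$, because cosmash products preserve regular epimorphisms~\cite[Proposition~2.9]{HVdL}. This is exactly what produces the (necessarily unique) factorisation $\varphi$ appearing in the statement, so that throughout we may use the defining relation $\varphi\comp(1_{A}\cosmash p)=\psi$. For the same reason, applying $p$ in any chosen slot of a (possibly nested) cosmash product of copies of $A$ and $X$ yields a regular, hence epimorphic, comparison morphism onto the analogous cosmash product with $G$ in place of $X$; composing such maps slot by slot (a composite of regular epimorphisms being again regular epic in a semi-abelian category) shows that the three relevant comparison morphisms
\[
  1_{A}\cosmash 1_{A}\cosmash p,\qquad 1_{A}\cosmash p\cosmash p,\qquad 1_{A}\cosmash(1_{A}\cosmash p)
\]
are all regular epimorphisms.

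First I would write down, for $\varphi$, the three diagrams of~\cite[Theorem~5.9]{Actions}: these are equations between pairs of composites whose common domains are the ternary cosmash products $A\cosmash A\cosmash G$ and $A\cosmash G\cosmash G$, together with a nested cosmash product such as $A\cosmash(A\cosmash G)$. For each equation, I would precompose both composites with the appropriate regular epimorphism from the list above. Using naturality of the canonical inclusions $\iota$ and of the folding operations $S_{1,2}^{A,-}$, $S_{2,1}^{A,-}$ in the variable in which $p$ is applied, each precomposed $\varphi$-equation rewrites, via $\varphi\comp(1_{A}\cosmash p)=\psi$, as precisely the corresponding $\psi$-equation. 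Since $\psi$ is assumed to be an action core, those $\psi$-equations hold. Because the precomposing morphism is in each case an epimorphism, right cancellation then yields the desired $\varphi$-equation. Once all three conditions are established, \cite[Theorem~5.9]{Actions} guarantees that $\varphi$ is an action core, that is, it extends to an action $A\talf G\to A$ of $G$ on $A$, as claimed.

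The main obstacle I expect is the bookkeeping needed to pair each $\varphi$-diagram with the correct regular epimorphism and to confirm that every intervening structural morphism is genuinely natural in the slot where $p$ is inserted, so that the relevant naturality squares commute and the reduction to the $\psi$-conditions is clean rather than merely formal. The nested condition on $A\cosmash(A\cosmash X)$ is the most delicate point, since it requires $1_{A}\cosmash(1_{A}\cosmash p)$ to be a regular epimorphism; this follows by invoking preservation of regular epimorphisms by cosmash products twice, after which the epi-cancellation argument applies uniformly to all three conditions.
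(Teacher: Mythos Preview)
Your approach is correct and runs parallel to the paper's, but works at a different level. The paper does not verify the three action-core conditions of~\cite[Theorem~5.9]{Actions} for $\varphi$; instead it first passes from the core $\psi$ to the full monad-algebra structure $\xi\colon A\talf X\to A$, observes that $1_{A}\talf p\colon A\talf X\to A\talf G$ is a regular epimorphism, and then checks the two monad-algebra axioms (unit and associativity) for the induced factorisation $\zeta\colon A\talf G\to A$ by the same precompose-and-cancel trick you use. The gain in the paper's route is economy: there are only two simple algebra equations rather than three action-core diagrams, and the required naturality squares (for the monad multiplication $\mu$ and unit $\eta$ of $A\talf(-)$) are standard, so no bookkeeping with folding operations or nested cosmash products is needed. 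Your route, by contrast, stays entirely within the cosmash framework and avoids ever mentioning $\talf$ or monads; it is closer in spirit to the rest of the section, at the cost of the bookkeeping you already anticipate.
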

\begin{proof}
  This can most conveniently be proved using the extension of \(\psi\) to an algebra structure \(\xi\colon A\talf X \to A\), cf.~\cite{Actions} and~\cite{Bourn-Janelidze:Semidirect, Janelidze}, and using the fact that~\(p\) induces a regular epimorphism \(1_{A}\talf p\colon A\talf X \to A\talf G\). This allows to check the algebra conditions for the factorisation \(\zeta\colon A\talf G \to A\) of \(\xi\) by composing with~\(1_{A}\talf p\) and using the obvious commutative diagrams.
\end{proof}

Recall from~\cite{HVdL, Actions} that for a given normal subobject \(A\normal X\), the \defn{conjugation action} of \(X\) on \(A\) is determined by the unique restriction of \(c^{X,X}\coloneq \nabla_X\comp \iota_{X,X}\colon X\cosmash X\to X\) to a morphism \(c^{A,X}\colon A\cosmash X\to A\). It, in turn, induces morphisms
\[
  c^{A,X}_{1,2}\coloneq c^{A,X}\comp S_{1,2}^{A,X}\colon A\cosmash X\cosmash X\to A
\]
and
\[
  c^{A,X}_{2,1}\coloneq c^{A,X}\comp S_{2,1}^{A,X}\colon A\cosmash A\cosmash X\to A\text{.}
\]

\begin{lemma}\label{Lemma-abker}
  Consider the short exact sequence~\eqref{exte} in a semi-abelian category. The conjugation action \(c^{A,X}\) factors through \(1_{A}\cosmash p\) to a morphism \(\psi_{p}\colon {A\cosmash G\to A}\) such that \(\psi_{p}\comp (1_{A}\cosmash p)=c^{A,X}\) if and only if \(A\) is abelian and~\({c_{2,1}^{A,X} =0}\). Moreover, when this happens, the induced morphism \(\psi_{p}\) determines an action of \(G\) on~\(A\).
\end{lemma}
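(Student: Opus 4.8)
The plan is to treat the ``moreover'' and the equivalence separately, the former being essentially immediate. By the recollection preceding the statement, the conjugation $c^{A,X}$ is the core of the conjugation action of $X$ on its normal subobject $A$, and in particular it determines an action of $X$ on $A$ in the sense required by Lemma~\ref{factoraction} (see~\cite{HVdL, Actions}). Hence, in the case where a factorisation $c^{A,X}=\psi_{p}\comp(1_{A}\cosmash p)$ does exist, I would apply Lemma~\ref{factoraction} directly, with $\psi=c^{A,X}$ and $\varphi=\psi_{p}$, to conclude that $\psi_{p}$ determines an action of $G$ on $A$. So the real work lies in the equivalence.

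For the equivalence, the idea is to present $1_{A}\cosmash p$ as a cokernel and then read off the factorisation criterion. Since $a$ is the kernel of the regular epimorphism $p$, the sequence $A\xrightarrow{a}X\xrightarrow{p}G\to 0$ is a cokernel, so I would apply Theorem~\ref{Theorem-RightExact-HVdL} with the fixed object $A$ in the first slot and this cokernel in the remaining slots. This exhibits $1_{A}\cosmash p\colon A\cosmash X\to A\cosmash G$ as the cokernel of
\[
  \DCoindArr{S_{1,2}^{A,X}\comp(1_{A}\cosmash a\cosmash 1_{X}) \and 1_{A}\cosmash a}\colon (A\cosmash A\cosmash X)\rtimes(A\cosmash A)\to A\cosmash X\text{.}
\]
Because $1_{A}\cosmash p$ is this cokernel, $c^{A,X}$ factors through it---necessarily uniquely, as $1_{A}\cosmash p$ is epic---if and only if $c^{A,X}$ composed with the displayed morphism vanishes. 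A morphism out of the semidirect product vanishes exactly when its restriction to the normal part $A\cosmash A\cosmash X$ and its composite with the section of $A\cosmash A$ both vanish, so the criterion becomes the conjunction of $c^{A,X}\comp(1_{A}\cosmash a)=0$ and $c^{A,X}\comp S_{1,2}^{A,X}\comp(1_{A}\cosmash a\cosmash 1_{X})=0$.

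It then remains to recognise these two conditions as ``$A$ abelian'' and ``$c_{2,1}^{A,X}=0$''. For the first, unfolding $c^{A,X}$ as the restriction of $c^{X,X}=\nabla_{X}\comp\iota_{X,X}$ identifies the image of $c^{A,X}\comp(1_{A}\cosmash a)$ with the Higgins commutator $[A,A]\normal A$ (concretely, $a\comp c^{A,X}\comp(1_{A}\cosmash a)=\CoindArr{a \and a}\comp\iota_{A,A}$), which vanishes precisely when $A$ is abelian. The second condition rests on the identity
\[
  c^{A,X}\comp S_{1,2}^{A,X}\comp(1_{A}\cosmash a\cosmash 1_{X})=c_{2,1}^{A,X}\colon A\cosmash A\cosmash X\to A\text{,}
\]
which I expect to be the main obstacle---not conceptually, but as the one point requiring careful bookkeeping. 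The plan is to push both sides down to the level of coproducts: post-composing with the monomorphism $a$ and using $a\comp c^{A,X}=\CoindArr{a \and 1_{X}}\comp\iota_{A,X}$, the naturality of the cross-effect inclusions $\iota$, and the defining identities $\iota_{A,X}\comp S_{1,2}^{A,X}=(1_{A}+\nabla_{X})\comp\iota_{A,X,X}$ and $\iota_{A,X}\comp S_{2,1}^{A,X}=(\nabla_{A}+1_{X})\comp\iota_{A,A,X}$, both composites should collapse to $\CoindArr{a \and a \and 1_{X}}\comp\iota_{A,A,X}$. As $a$ is monic, the two sides then agree, so the second condition is exactly $c_{2,1}^{A,X}=0$. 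Combining the two yields the stated equivalence; note that the image of this common composite is the ternary commutator $[A,A,X]$, which confirms that $c_{2,1}^{A,X}=0$ is genuinely stronger than abelianness of $A$, in line with the failure of extensions with abelian kernel to be abelian extensions.
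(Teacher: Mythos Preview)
Your proposal is correct and follows essentially the same route as the paper: present \(1_{A}\cosmash p\) as the cokernel supplied by Theorem~\ref{Theorem-RightExact-HVdL}, read off the two vanishing conditions on the semidirect summands, identify them with abelianness of \(A\) and \(c_{2,1}^{A,X}=0\), and then invoke Lemma~\ref{factoraction} for the ``moreover''. The only difference is that where the paper simply cites \cite[Lemma~3.14]{HVdL} for the identity \(c^{A,X}_{1,2}\comp(1_{A}\cosmash a\cosmash 1_{X}) = c^{A,X}_{2,1}\), you sketch a direct verification by pushing both sides down to \(\CoindArr{a \and a \and 1_{X}}\comp\iota_{A,A,X}\) and using that \(a\) is monic; this is exactly how that lemma is proved, so nothing is lost or gained.
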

\begin{proof}
  Using the presentation of the morphism \(1_A \cosmash p\) as a cokernel given by Theorem~\ref{Theorem-RightExact-HVdL}, the action \(c^{A,X}\) factors through \(1_{A}\cosmash p\) if and only if
  \[
    c^{A,X}\comp S_{1,2}^{A,X}\comp(1_{A}\cosmash a\cosmash 1_{X})
    \qquad\text{and}\qquad
    c^{A,X}\comp(1_{A}\cosmash a)
  \]
  are trivial. But \(c^{A,X}\comp(1_{A}\cosmash a)=1_{A}\comp c^{A,A}=c^{A,A}\) by naturality of the conjugation action, and
  \[
    {c^{A,X}_{1,2}\comp(1_{A}\cosmash a\cosmash 1_{X}) = c^{A,X}_{2,1}}
  \]
  by~\cite[Lemma~3.14]{HVdL}. Lemma~\ref{factoraction} now says that \(\psi_{p}\) determines an action.
\end{proof}

\begin{theorem}\label{Theorem-Extab}
  For an extension with abelian kernel~\eqref{exte} the following are equivalent:
  \begin{tfae}
    \item \(p\) is an abelian extension;
    \item the Smith/Pedicchio commutator \([X\times_{G}X,X\times_{G}X]\) vanishes;
    \item \([A,A,X]=0\);
    \item \(c_{2,1}^{A,X} =0\);
    \item the conjugation action \(c^{A,X}\) of \(X\) on \(A\) factors through a (necessarily unique) action \(\psi_{p}\) of \(G\) on \(A\);
    \item there exists a morphism \({\psi_{p}\colon A\cosmash G\to A}\) of which \(c^{A,X}\) is the \defn{pullback} \(p^{*}(\psi_{p})\coloneq \psi_p\comp (1_A\cosmash p)\) along \(p\); see Notation~3.9 in~\cite{HVdL}.
  \end{tfae}
\end{theorem}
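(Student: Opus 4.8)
The plan is to treat the conjugation action core $c^{A,X}$ as the hub of the argument: most of the listed conditions are mutually equivalent reformulations of the behaviour of this core, and only the passage to the Smith/Pedicchio commutator in (ii) really leaves the cosmash-product formalism. Accordingly I would first settle the block (iii)$\Leftrightarrow$(iv)$\Leftrightarrow$(v)$\Leftrightarrow$(vi), then invoke the definitional equivalence (i)$\Leftrightarrow$(ii), and leave the bridge between the two for last.

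For (iii)$\Leftrightarrow$(iv) I would compute the image in $X$ of the arrow $a\comp c_{2,1}^{A,X}$. Unfolding $S_{2,1}^{A,X}$ and the defining property $a\comp c^{A,X}=\nabla_X\comp\iota_{X,X}\comp(a\cosmash 1_X)$ of the conjugation core, together with the naturality of $\iota$, gives $a\comp c_{2,1}^{A,X}=\CoindArr{a \and a \and 1_X}\comp\iota_{A,A,X}$, whose image is by definition the ternary commutator $[A,A,X]\leq X$; since $a$ is monic, $c_{2,1}^{A,X}=0$ exactly when $[A,A,X]=0$. The equivalence (iv)$\Leftrightarrow$(v) is then immediate from Lemma~\ref{Lemma-abker}, whose standing hypothesis that $A$ be abelian is guaranteed by~\eqref{exte}; and (v)$\Leftrightarrow$(vi) is a matter of unwinding notation, since $p^{*}(\psi_p)$ has core $\psi_p\comp(1_A\cosmash p)$, so that (vi) asserts precisely the factorisation appearing in (v). Uniqueness of $\psi_p$ follows because $1_A\cosmash p$ is a regular epimorphism (cosmash products preserve these), and the fact that any factorising morphism is automatically an action is Lemma~\ref{factoraction}. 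Finally, (i)$\Leftrightarrow$(ii) is the definition recalled before the statement: an abelian object of $(\X\downarrow G)$ is one carrying an internal Mal'tsev operation, which for the kernel pair $\mathrm{Eq}(p)=X\times_G X$ is the vanishing of $[X\times_G X,X\times_G X]$, cf.~\cite{Bourn-Janelidze:Torsors}.

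The remaining and genuinely substantial step is to link the Smith condition (ii) to the ternary-commutator condition (iii); I expect this to be the main obstacle. I would argue through the kernel pair groupoid $X\times_G X\rightrightarrows X$, whose two vertex normal subobjects $\Ker(\pi_1)$ and $\Ker(\pi_2)$ are copies of $A$: admitting an internal Mal'tsev operation is the same as this groupoid being abelian. In one direction a descended $G$-action (condition (v)) presents $A$ as a Beck module over $G$ and $X$ as a torsor under it, from which a connector, and hence the Mal'tsev operation, is built; in the other, abelianness of the groupoid forces the conjugation action to be blind to the $A$-part of $X$, that is, to factor through $p$, which is again (v). The delicate point is that in a general semi-abelian category the Smith commutator need not agree with the Huq commutator of $\Ker(\pi_1)$ and $\Ker(\pi_2)$, and it is precisely the ternary commutator $[A,A,X]$ that measures the surplus of the Smith condition over the binary one. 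I would therefore aim to identify the normalisation of $[X\times_G X,X\times_G X]$ with $[A,A,X]$ directly, using the presentation of $1_A\cosmash p$ as a cokernel furnished by Theorem~\ref{Theorem-Rightex}: the right-exactness there is exactly what converts the groupoid/connector condition into the vanishing of $c_{2,1}^{A,X}$, closing the cycle.
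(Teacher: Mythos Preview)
Your treatment of the block (i)$\Leftrightarrow$(ii) and (iii)$\Leftrightarrow$(iv)$\Leftrightarrow$(v)$\Leftrightarrow$(vi) matches the paper's proof exactly: (i)$\Leftrightarrow$(ii) is definitional, (iii)$\Leftrightarrow$(iv) is the observation that $[A,A,X]$ is the image of $c_{2,1}^{A,X}$, and the remaining equivalences are Lemma~\ref{Lemma-abker} together with Lemma~\ref{factoraction}.

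Where you diverge is in the bridge (ii)$\Leftrightarrow$(iii). You flag this as ``the genuinely substantial step'' and sketch an argument via the kernel-pair groupoid, Beck modules and torsors, aiming to identify the normalisation of $[X\times_G X,X\times_G X]$ with $[A,A,X]$ through Theorem~\ref{Theorem-Rightex}. The paper, by contrast, simply invokes \cite[Theorem~4.4]{HVdL}, where this equivalence is already established (given that $A$ is abelian). So this is not an obstacle to be overcome here but a citation; your proposed route would in effect be reproving that result, and your sketch of it is vague enough (the torsor/connector step in particular) that it would need substantial work to stand on its own. Had the cited theorem not been available, your strategy of comparing the Smith commutator of the kernel pair to the ternary Higgins commutator via the right-exactness of the cosmash product would be a reasonable line of attack, but as written it is more of an outline than a proof.
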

Note that unlike in~(v), in~(vi) we do not assume a priori that the factorisation \(\psi_p\) determines an action.
\begin{proof}
  (i) \(\Leftrightarrow\) (ii) holds by definition and (ii) \(\Leftrightarrow\) (iii) by~\cite[Theorem~4.4]{HVdL}. We have (iii) \(\Leftrightarrow\) (iv) since the commutator \([A,A,X]\) is the image of the morphism \(c_{2,1}^{A,X}\colon A\cosmash A\cosmash X\to A\). (iv) \(\Leftrightarrow\) (v) \(\Leftrightarrow\) (vi) is Lemma~\ref{Lemma-abker}.
\end{proof}

\begin{corollary}\label{Corollary-AbExt}
  If \(\X\) is semi-abelian, then the inclusion \(\Ab\Ext(\X)\to \Ext(\X)\) has a left adjoint \(\ab\colon \Ext(\X)\to \Ab\Ext(\X)\) which takes an extension \(p\colon{X\to G}\) and maps it to its induced quotient
  \[
    \ab(p)\colon{\tfrac{X}{[A,A]\join[A,A,X]}\to G}\text{.}
  \]
\end{corollary}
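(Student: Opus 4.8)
The plan is to exhibit the quotient map as the unit of the desired reflection. Write $A\normal X$ for the kernel of the extension $p\colon X\to G$ and set $N\DefEq [A,A]\join[A,A,X]$. First I would record that $N$ is a normal subobject of $X$ contained in $A$. The inclusion $N\leq A$ follows from the distributive law (Lemma~\ref{Lemma-Join}): since $A\leq X$ we have $A\join X=X$, so $[A,A]\join[A,A,X]\leq[A,A\join X]=[A,X]\leq A$, the last step because $A\normal X$. Normality of $N$ in $X$ follows from the normality of commutators of normal subobjects of $X$ (here $A\normal X$, and all three slots of $[A,A,X]$ join to $X$) together with the fact that joins of normal subobjects are normal~\cite{Borceux-Semiab,Huq,EverVdLRCT}. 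Because $N\leq A=\Ker p$, the morphism $p$ factors through the quotient $\pi\colon X\to X/N$, giving an extension $\ab(p)\colon X/N\to G$ whose kernel is $A/N$ by the Noether Isomorphism Theorem. The unit is then the morphism of extensions $\eta_p\DefEq (\pi,1_G)\colon p\to\ab(p)$.

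Next I would check that $\ab(p)$ lands in $\Ab\Ext(\X)$. Its kernel $A/N$ is a quotient of $\ab(A)=A/[A,A]$, since $[A,A]\leq N$, and is therefore abelian. Moreover, as commutators are stable under direct images, applying $\pi$ to $[A,A,X]$ yields $[A/N,A/N,X/N]=\pi[A,A,X]=0$, the final equality holding because $[A,A,X]\leq N=\Ker\pi$. Thus $\ab(p)$ is an extension with abelian kernel for which $[A/N,A/N,X/N]=0$, so the implication from (iii) to (i) in Theorem~\ref{Theorem-Extab} shows it is an abelian extension.

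It then remains to establish the universal property of $\eta_p$. Let $q\colon Y\to H$ be an abelian extension with kernel $B\normal Y$, and let $(f,g)\colon p\to q$ be a morphism of extensions, so $q\comp f=g\comp p$. The key observation is that $f$ annihilates $N$: from the commuting square we get $f(A)\leq\Ker q=B$, whence, by direct-image stability and monotonicity of commutators, $f[A,A]\leq[B,B]$ and $f[A,A,X]\leq[B,B,Y]$. Both right-hand sides vanish — the first because the kernel $B$ of an abelian extension is abelian, the second by Theorem~\ref{Theorem-Extab} applied to $q$ — so $N\leq\Ker f$ and $f$ factors uniquely as $f=\bar f\comp\pi$. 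Setting the comparison to be $(\bar f,g)\colon\ab(p)\to q$, one has $(\bar f,g)\comp\eta_p=(f,g)$; the identity $q\comp\bar f=g\comp\ab(p)$ follows since $\pi$ is an epimorphism, and uniqueness follows for the same reason (with $g$ forced). This realises $\ab$ as left adjoint to the inclusion, with the stated formula, and functoriality of $\ab$ on morphisms is then automatic.

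The genuinely substantive input is Theorem~\ref{Theorem-Extab}, which is used twice: once to certify that the constructed quotient is an abelian extension, and once to force any $(f,g)$ into $q$ to kill the $[A,A,X]$-part of $N$. Everything else is routine bookkeeping with the direct-image and monotonicity rules for commutators and the basic principle that a quotient by a normal subobject contained in a kernel induces a factorisation. I expect the only point requiring mild care to be the normality of $N$ in $X$, which relies on standard (but not entirely trivial) facts about commutators of normal subobjects in semi-abelian categories.
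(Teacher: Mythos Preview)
Your overall strategy is correct and your verification of the universal property is more explicit than the paper's (which leaves that part implicit). The gap is precisely where you flag ``mild care'': your justification for the normality of \(N\) in \(X\) is not valid as stated. You argue that \([A,A]\) is normal in \(X\) as a ``commutator of normal subobjects''. In an arbitrary semi-abelian category this fails: the Higgins commutator of two normal subobjects need not be normal in the ambient object (this is the condition sometimes called (NH), not part of the semi-abelian axioms). The result you invoke from~\cite{MM-NC} only gives \([K,L]\normal X\) when \(K\join L=X\); for \([A,A]\) the join of the slots is \(A\), not \(X\). Your parenthetical remark about \([A,A,X]\) having slots joining to \(X\) is on firmer ground, but the ternary analogue is not something you can simply cite either.

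The paper closes this gap differently: instead of proving \([A,A]\) and \([A,A,X]\) separately normal, it shows directly that \([N,X]\leq N\) and then invokes~\cite[Theorem~6.3]{MM-NC}. Using Lemma~\ref{Lemma-Join} and the ``remove brackets / remove duplicates'' monotonicity rules of~\cite[Proposition~2.21]{HVdL} one computes
\[
  [[A,A]\join[A,A,X],X]=[[A,A],X]\join[[A,A,X],X]\join[[A,A],[A,A,X],X]\leq[A,A,X]\leq N\text{.}
\]
This is exactly the ``mild care'' you anticipated; once you replace your normality paragraph by this computation, your proof is complete.
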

\begin{proof}
  Note that that this quotient is a good candidate since we want the kernel to be abelian (\([A,A] = 0\)) and the commutator \([A,A,X]\) to vanish.

  Lemma~\ref{Lemma-Join} and the commutator calculus rules of~\cite[Proposition~2.21]{HVdL} may be used to prove that \([A,A]\join[A,A,X]\) is normal in \(X\): indeed
  \begin{align*}
    [[A,A]\join[A,A,X],X] & = [[A,A],X]\join[[A,A,X],X]\join [[A,A],[A,A,X],X] \\
                          & \leq [A,A,X]\join[A,A,X,X]\join [A,A,A,A,X,X]      \\
                          & \leq [A,A,X] \leq [A,A]\join[A,A,X]
  \end{align*}
  so that normality follows from~\cite[Theorem~6.3]{MM-NC}.

  Next, \(p\) does indeed factor through the given quotient, since by \cite[Theorem~6.3]{MM-NC},
  \[
    [A,A] \join [A,A,X] \subobj [A,A] \join [A,X] = [A,X] \subobj A
  \]
  because \(A\) is normal in \(X\). The quotient \(\ab(p)\) is indeed abelian, essentially because
  \[
    \Bigl[\tfrac{A}{[A,A]\join[A,A,X]},\tfrac{A}{[A,A]\join[A,A,X]},\tfrac{X}{[A,A]\join[A,A,X]}\Bigr] = \tfrac{[A,A,X]}{[A,A]\join[A,A,X]} = 0\text{,}
  \]
  with a similar computation to check that its kernel is abelian. The universal property is now easily checked using functoriality of the construction and the fact that this functor leaves abelian extensions fixed.
\end{proof}

\section{The tensor product of \texorpdfstring{\(G\)}{G}-representations}\label{Section Tensor of G-actions}
Internal actions over a fixed object \(G\) can be viewed as non-abelian \(G\)-modules. We may ask the question, what is their bilinear product. Since ultimately, only the result after abelianisation counts, what we really must understand is how to tensor two \emph{abelian} actions: so, two \emph{Beck modules} over \(G\) or, in other words, two \emph{\(G\)-representations}. Thus, the bilinear product yields a tensor product on Beck modules in any semi-abelian category, generalising the one of representations of groups and Lie algebras, as we shall prove in Example~\ref{Example Lie algebra representations} and Example~\ref{Example group representations}. Computing the bilinear product of two abelian \(G\)-actions crucially involves cosmash products of non-abelian actions, so we need to have a look at those first.

In this section, we view a \(G\)-action \(\xi\) on an object \(A\) as a split short exact sequence
\begin{equation}\label{Split extension induced by action xi}
  \xymatrix{0 \ar[r] & A \ar@{{ |>}->}[r]^-{k_\xi} & A\rtimes_\xi G \ar@{-{ >>}}@<.5ex>[r]^-{p_\xi} & G \ar@{{ >}->}@<.5ex>[l]^-{s_\xi} \ar[r] & 0}
\end{equation}
(via the semi-direct product equivalence~\cite{Bourn-Janelidze:Semidirect}). The category of \emph{all} \(G\)-actions may thus be seen as the semi-abelian category \(\Pt_G(\X)\) of points over \(G\) in \(\X\): split epimorphisms with codomain \(G\), each with a chosen splitting~\cite{Bourn1991,Borceux-Bourn}. Through this equivalence, a point \((p\colon X\to G,\; s\colon G\to X)\), \(p\comp s=1_G\), corresponds to the unique \(G\)-action \(\xi\) on the kernel \(A\) of \(p\) which turns \(X\) into the semi-direct product \(A\rtimes_\xi G\) with \(p_\xi=p\) and \(s_\xi=s\).

Thus the kernel functor \(U\colon\Pt_G(\X)\to \X\) plays the role of the forgetful functor sending an internal \(G\)-action to its underlying object (the object upon which \(G\) acts). Note that the functor \(U\) is an exact functor between semi-abelian categories (i.e., it preserves short exact sequences). It is, in fact, a right adjoint; the associated left adjoint sends an object \(X\) of \(\X\) to the point \((\CoindArr{1_G \and 0},i_1)\colon G+X\point G\), which determines a \(G\)-action on the object \(G\flat X\) called \defn{free \(G\)-action generated by \(X\)}.

\begin{example}[The cosmash product of free actions]\label{Cosmash of free actions}
  Recall that the product of two points \((p,s)\) and \((q,t)\) over \(G\) is obtained as the pullback of \(p\) and \(q\), while their sum is the pushout of \(s\) and \(t\). For instance, the coproduct of the free \(G\)-actions \((\CoindArr{1_G \and 0}, i_1)\colon {G+X\point G}\) and \((\CoindArr{1_G \and 0}, i_1)\colon {G+Y\point G}\) generated by objects \(X\) and \(Y\) is the free \(G\)-action \((\CoindArr{1_G \and 0}, i_1)\colon G+X+Y\point G\) generated by \(X+Y\).

  It now follows immediately from the definitions---see Figure~\ref{Figure Cross Of Flat}
  \begin{figure}
    \hfil\xymatrix@!0@C=7em@R=4em{G\flat(X+Y) \ar@{-{ >>}}[rrrr] \ar@{{ |>}->}[rd] &&&& G\flat X\times G\flat Y \ar@{{ |>}->}[dl]\\
    & G + X + Y \ar@{-{ >>}}[rr]^-{\left\links\begin{smallmatrix}
      i_{G} & i_{X} & 0     \\
      i_{G} & 0     & i_{Y}
    \end{smallmatrix}\right\rechts} \ar@{-{ >>}}@<.5ex>[rd] && (G+X)\times_G(G+Y) \ar@{-{ >>}}@<.5ex>[ld]\\
    && G \ar@{{ >}->}@<.5ex>[lu] \ar@{{ >}->}@<.5ex>[ru] \ar@{{ >}->}@<.5ex>[ld] \ar@{{ >}->}@<.5ex>[rd]^-{=}\\
    & K\rtimes G \ar@{-{ >>}}@<.5ex>[ur] \ar@{-{ >>}}[rr] \ar@{{ >}->}[uu] && G \ar@{{ >}->}[uu] \ar@{-{ >>}}@<.5ex>[ul]^-{=} \\
    K \ar@{{ |>}->}[ur] \ar@{{ |>}->}[uuuu] \ar@{-{ >>}}[rrrr] &&&& 0 \ar@{{ |>}->}[uuuu] \ar@{{ |>}->}[ul]}\hfil
    \caption{Calculating the cosmash product \(K\) of the free \(G\)-actions generated by \(X\) and \(Y\). All commutative squares in this diagram in \(\X\) are pullbacks; the middle square depicts \(K\rtimes G\) as the kernel of \(\left\links\begin{smallmatrix}
        i_{G} & i_{X} & 0     \\
        i_{G} & 0     & i_{Y}
      \end{smallmatrix}\right\rechts\) in \(\Pt_G(\X)\).}\label{Figure Cross Of Flat}
  \end{figure}
  ---that the kernel object~\(K\) in the short exact sequence
  \begin{equation*}\label{Kernel K}
    \xymatrix@=2em{0 \ar[r] & K \ar@{{ |>}->}[rr] && G+X+Y \ar@{-{ >>}}[rr]^-{\left\links\begin{smallmatrix}
      i_{G} & i_{X} & 0     \\
      i_{G} & 0     & i_{Y}
    \end{smallmatrix}\right\rechts} && (G+X)\times_G(G+Y) \ar[r] & 0}
  \end{equation*}
  could be denoted \((G\flat X)\cosmash_{G}(G\flat Y)\), since it is the underlying object of the \(G\)-action obtained as the cosmash product in \(\Pt_{G}(\X)\) of the free \(G\)-actions generated by \(X\) and \(Y\), which we here represent by means of their underlying objects \(G\flat X\) and \(G\flat Y\). On the other hand, since it is also the kernel of the canonical arrow \(G\flat (X+Y)\to G\flat X\times G\flat Y\), it is precisely the second cross effect \(G\flat(X|Y)\) of the endofunctor \(G\flat(-)\colon \X\to \X\), evaluated in \(X\) and \(Y\).
\end{example}

A \(G\)-action is abelian precisely when it is abelian as an extension; i.e., when it is a \defn{Beck module}~\cite{Beck,Barr-Beck} or \defn{\(G\)-representation}---see~\cite{Bourn-Janelidze:Semidirect,Bourn-Janelidze:Torsors} as well as Section~6 of~\cite{HVdL} where this is explained in detail from the viewpoint of the higher Higgins commutator.

In general, it is hard to compute the bilinear product of two \(G\)-actions, since the forgetful functor \(\Pt_G(\X)\to \X\) need not preserve coproducts. The situation simplifies a lot once we assume \(\X\) satisfies some appropriate additional conditions. Here we are interested in the following question:
\begin{quote}
  \emph{Under which conditions can the bilinear product of two \(G\)-actions be computed as the bilinear product of the underlying objects, equipped with a suitable \(G\)-action?}
\end{quote}
Surely, when this happens, the shape of the symmetry isomorphism is inherited from the underlying category \(\X\).

A first simplification occurs when \(\X\) is \emph{algebraically coherent}~\cite{acc}. As already mentioned in Section~\ref{Section Ganea}, this means that the forgetful functor \({\Pt_G(\X)\to \X}\) is coherent (i.e., it does not only preserve finite limits, but also jointly extremal-epimorphic pairs of arrows).

Proposition~6.9 in~\cite{acc} tells us that if \(\X\) is algebraically coherent, then the forgetful functor \(\Pt_G(\X)\to \X\) preserves binary Higgins commutators. It is easy to adapt the proof of this result to see that Higgins commutators of arbitrary length are preserved. Since the kernel functor reflects the zero object (which in the category \(\Pt_G(\X)\) is the point \((1_G,1_G)\)), a \(G\)-action on an object \(A\) is abelian or two-nilpotent whenever so is the underlying object \(A\).

For the forgetful functor \(\Pt_G(\X)\to \X\) to preserve cosmash products (so that, in particular, bilinear products are preserved, since the objects \(N_{X,Y}\) in Proposition~\ref{prop:internal descr of tensor} are constructed in terms of cosmash products only) we may use that it is always continuous (so that binary products and the terminal object are preserved) and exact (=~preserves short exact sequences), and ask that it preserves binary coproducts. This condition is strictly stronger than algebraic coherence. It happens to be equivalent to \defn{local algebraic cartesian closedness \LACC}---the condition that the change-of-base functors in the fibration of points are left adjoints~\cite{Gray2012, Bourn-Gray,GrayPhD}---when \(\X\) is a variety of universal algebras. Examples of semi-abelian \LACC{} categories are relatively scarce, though. The only ones currently known in the literature are: essentially affine categories (which include all additive categories)~\cite{Bourn1991}; internal groups in a cartesian closed category with pullbacks (which include the examples of classical groups, crossed modules and cocommutative Hopf algebras)~\cite{Bourn-Gray,GM-VdL1}; and internal Lie algebras in an additive cocomplete symmetric closed monoidal category (including classical Lie algebras)~\cite{GM-G}. On the other hand, almost any other known semi-abelian category is known \emph{not} to be \LACC\@: see, for instance~\cite{GM-VdL3} where the context of varieties of non-associative algebras over a field is treated in detail. There it is shown that over a field of characteristic zero, the variety of Lie algebras is the only non-abelian example.

\begin{example}[Lie algebra representations]\label{Example Lie algebra representations}
  We view a representation \((A,\xi)\) of an \(R\)-Lie algebra \(X\) as a morphism \(\xi\colon X\to \gl(A)\) where \(\gl(A)\) denotes the Lie algebra of linear endomorphisms of the \(R\)-module \(A\), whose Lie bracket is the commutator of endomorphisms. Here we use that \(\Lie_R\) is an algebraically coherent category (it being locally algebraically cartesian closed), which implies that a representation is the same thing as an action on an abelian object. The fact that \(\Lie_R\) is \LACC{} further predicts that the bilinear product of the representations \((A,\xi)\) and \((B,\zeta)\) is given by the bilinear product of the objects \(A\) and \(B\), equipped with a suitable action. Indeed, for any two such representations \((A,\xi)\) and \((B,\zeta)\), we may take their \defn{Kronecker sum}
  \[
    \xi\tensor \zeta\colon X\to \gl(A\tensor_RB)\colon x\mapsto \xi(x)\tensor 1_B + 1_A\tensor \zeta(x)\text{.}
  \]
  We claim that this provides the needed structure of an \(X\)-representation on the tensor product \(A\tensor_RB\). We thus find a categorical explanation for the use of the Kronecker sum in the tensor product of Lie algebra representations.

  Recall that a general \(X\)-action on a Lie algebra \(C\) is given by a Lie algebra morphism \(\theta\colon {X\to \Der(C)}\) where \(\Der(C)\) is the Lie algebra of derivations on \(C\), whose Lie bracket is the commutator \([D,D']=D\comp D'-D'\comp D\). A \defn{derivation} on \(C\) is an \(R\)-linear map \(D\colon C\to C\) such that \(D([x,y])=[D(x),y]+ [x,D(y)]\).

  We view the bilinear product of the representations \((A,\xi)\) and \((B,\zeta)\) as their cosmash product in the category \(\Nil_2(\Pt_X(\Lie_R))\), so the kernel of the comparison map from their sum \((A,\xi)+_2(B,\zeta)\) to their product \((A,\xi)\times(B,\zeta)\). Keeping in mind that \(\Lie_R\) is algebraically coherent (see the discussion above), we prove that \((A,\xi)+_2(B,\zeta)\) is\footnote{Proposition~\ref{coproduct in Nil_2(Alg-P)} in particular tells us that \([(a,0,0),(0,b,0)] = {(0,0,a\tensor b)}\).} the two-nilpotent Lie algebra \(A+_2B= A\oplus B\oplus (A\tensor_RB)\) equipped with the \(X\)-action
  \[
    \xi+_2 \zeta\colon X\to \Der(A\oplus B\oplus (A\tensor_RB))\colon x\mapsto (\xi(x) \oplus \zeta(x)\oplus (\xi(x)\tensor 1_B + 1_A\tensor \zeta(x)))\text{.}
  \]
  Upon taking the kernel of \((A,\xi)+_2(B,\zeta)\to (A,\xi)\times(B,\zeta)\), this action restricts to the Kronecker sum---which proves our claim.

  Let us verify that it does indeed satisfy the universal property of a coproduct of \((A,\xi)\) and \((B,\zeta)\) in \(\Nil_2(\Pt_X(\Lie_R))\). First of all, it is clear that the coproduct inclusions \(i_A\colon A\to A+_2B\) and \(i_B\colon B\to A+_2B\) are equivariant with respect to the given \(X\)-actions. Now given an \(X\)-action \((C,\theta)\) and equivariant Lie algebra morphisms \(f\colon A\to C\) and \(g\colon B\to C\), we need to check that the universally induced Lie algebra morphism \(\CoindArr{f \and g}\colon  A+_2B\to C\) is equivariant with respect to \(\xi+_2 \zeta\) and~\(\theta\). Indeed, for any  \((a,b,\sum_i(a_i\tensor b_i))\in A+_2B\) and any \(x\in X\), we have
  \begin{align*}
     & \CoindArr{f \and g}\bigl((\xi+_2 \zeta)(x)(a,b,  \sum_i(a_i\tensor b_i))\bigr)                                            \\
     & = \CoindArr{f \and g}\Bigl(\xi(x)(a), \zeta(x)(b),\sum_i\bigl(\xi(x)(a_i)\tensor b_i+a_i\tensor \zeta(x)(b_i)\bigr)\Bigr) \\
     & = f(\xi(x)(a)) + g(\zeta(x)(b)) + \sum_i \bigl([f(\xi(x)(a_i)), g(b_i)]+[f(a_i), g(\zeta(x)(b_i))]\bigr)                  \\
     & = \theta(x)(f(a)) + \theta(x)(g(b)) + \sum_i \bigl([\theta(x)(f(a_i)), g(b_i)]+[f(a_i), \theta(x)(g(b_i))]\bigr)          \\
     & = \theta(x)(f(a)) + \theta(x)(g(b)) + \sum_i \theta(x)\bigl([f(a_i), g(b_i)]\bigr)                                        \\
     & = \theta(x)\bigl(f(a) + g(b) + \sum_i [f(a_i), g(b_i)]\bigr)                                                              \\
     & = \theta(x)\bigl(\CoindArr{f \and g}(a,b,\sum_i(a_i\tensor b_i))\bigr)\text{,}
  \end{align*}
  where the fourth equality follows because \(\theta(x)\) is a derivation on \(C\).
\end{example}

The case of groups is similar:

\begin{example}[Group representations]\label{Example group representations}
  In the case of groups, a \(G\)-representation over \(\Z\) is the same thing as a \(\Z[G]\)-module, which can be described inside the category of groups as a split extension \eqref{Split extension induced by action xi} where the action \(\xi\) of \(G\) on the abelian group \(A\) is codified by means of a homomorphism \(\xi\colon G\to \Aut(A)\). We write \(ga\coloneq \xi(g)(a)\) for any \(g\in G\) and \(a\in A\). Given two representations \((A,\xi)\) and \((B, \zeta)\), their bilinear product is the tensor product \(A\tensor_\Z B\) (since \(\Gp\) is \LACC)\@, equipped with the action determined by \(g(a\tensor b)=ga\tensor gb\) (which is inherited from the action of \(G\) on \(A+B\) in the coproduct of \((A,\xi)\) and \((B, \zeta)\) in the category of \(G\)-actions, given by \(g(a_1\cdot b_1\cdot \cdots \cdot a_n\cdot b_n)=ga_1\cdot gb_1\cdot \cdots \cdot ga_n \cdot gb_n\), as is easily seen by checking the universal property of that coproduct). This is the usual tensor product of group representations.
\end{example}

\section{Further questions and remarks}\label{Section Further Questions}
We end this article with a few questions which remain open for now, and which in our opinion might lead to interesting further investigations.

\subsection{Bilinear products of representations}\label{reps}
In Section~\ref{Section Tensor of G-actions} we explained that if~\(\X\) is a locally algebraically cartesian closed semi-abelian category, then for any object \(G\) of \(\X\), the bilinear product of two \(G\)-representations is the bilinear product of the underlying objects, equipped with an appropriate \(G\)-action. The general situation, for arbitrary semi-abelian categories, appears to be much more complicated. It would seem interesting to obtain an explicit description of how to tensor two representations even in a category which is not \LACC\@, in particular in various varieties of loops and of related algebras. In fact, there exist varieties of loops \(\V\) where our bilinear product definitely differs from the usual tensor product of representations (in the sense of Beck modules in \(\V\), see \cite{Smith07}) since the latter is not of this type, for example when \(\V\) is the variety of Moufang loops; and similarly for the representations of Mal'tsev algebras, see \cite{P-I17}. This inconvenience disappears for certain types of representations considered in the literature (see \cite{Yamaguti63} for Mal'tsev algebras) which are more general than just Beck modules in \(\V\), in particular \cite{P-I17} for Beck modules in suitable categories larger than \(\V\). In such cases it may be interesting to compute and study our bilinear product of the original representations, that is of Beck modules in \(\V\), and to investigate potential applications.

In this direction, J.-M.\ P\'erez-Izquierdo recently communicated to us a formula allowing to compute the bilinear product of Beck modules in a variety of algebras over a given commutative ring with a single anticommutative bilinear  operation subject to an arbitrary set of multilinear relations (thus covering Mal'tsev algebras); and more generally, the second author exhibited a description of the bilinear product of representations in any semi-abelian variety, in terms of the binary and ternary cosmash product of the variety itself. These results will serve as a basis for further investigations, both of theoretical questions and of explicit examples.

Another interesting example are representations of cocommutative Hopf algebras over a field; in fact, the category of the latter is semi-abelian~\cite{GKV,GSV} so that the bilinear product is defined, and its computation is within reach as this category also is locally algebraically cartesian closed as explained above.

\subsection{Tensor products of topological vector spaces}
We defined the bilinear product in the context of a semi-abelian category, but this definition works as well for homological categories~\cite{Borceux-Bourn} with finite sums\footnote{Just keep in mind that certain results such as Theorem~\ref{Theorem-Preservation-Coequalisers} and its consequences might not hold at that level of generality.}. Hence we may ask the question, what it amounts to in the case of topological abelian groups or topological vector spaces. Is it a known tensor product? Note that only the nature of the topology is an issue, since algebraically these bilinear products are the classical tensor products.

\subsection{The variety generated by a quasivariety}
In algebra, any pointed protomodular quasivariety is an example of a homological category. We may wonder what is the relationship between the bilinear product in such a quasivariety and the bilinear product in the variety it generates (by closing under homomorphic images).

\subsection{When is the functor \(X\tensor(-)\) idempotent?}\label{Idempotency}
By Proposition~\ref{Prop pre SMC}, for any object \(X\) of a semi-abelian variety \(\X\), the functor \(X\tensor(-)\colon {\Ab(\X)\to \Ab(\X)}\) is a left adjoint. We may wonder when it is idempotent, so that it defines a reflector to an appropriate subcategory of \(\Ab(\X)\). A study of the properties of this subcategory---for instance, when is it Birkhoff?---may eventually lead to a description of the derived functors of the functor \(X\tensor(-)\) in terms of higher Hopf formulae following~\cite{EGVdL,Everaert-Gran-TT}.

\subsection{When is the bilinear product monoidal?}\label{Monoidality}
In this article, the main property we want a product on a category to satisfy is bilinearity. Associativity and existence of a unit are secondary here, and do indeed not need to hold in general, as we have seen in Subsection~\ref{Associative algebras} (no unit) and Example~\ref{non-associative bilinear product} (no associativity). The canonical symmetry isomorphism may cause further trouble in verifying coherence, since its sign may differ from the canonical twist of the tensor product. However, it would seem interesting to have a precise characterisation of those semi-abelian categories where the bilinear product defines a (closed) symmetric monoidal structure on the abelian core---cf.\ Proposition~\ref{Closed SMC}.

\subsection{Can we have both?}
Is it possible that a bilinear product satisfies idempotency as in~\ref{Idempotency} while being monoidal as in~\ref{Monoidality}? For varieties of universal algebras, this seems close to the situation where the reflector of \(\Ab(\X)\) to a subvariety \(\V\leq \Ab(\X)\) is of the form \(X\tensor(-)\colon \Ab(\X)\to\V\), for some \(X\) in \(\V\) which is a quotient of the unit object.

\subsection{What is the cosmash product in higher nilpotent cores good for?} In analogy with our bilinear product, one may consider the cosmash product of the \(n\)-nilpotent core of a semi-abelian category \(\X\) and ask for its role and properties. For \(n=3\), its values are still abelian, but it is biquadratic (actually of total degree \(3\) in a suitable sense) and factors through the reflection to the two-nilpotent instead of the abelian core in both variables. It thus probably preserves some non-abelian information of its factors, in spite of being itself abelian, similarly to the non-abelian tensor square of two-nilpotent groups, the structure of which was determined in~\cite{GoG}. Furthermore, it preserves coequalisers of reflexive graphs in both variables, and also filtered colimits if \(\X\) is a variety of universal algebras. So, it is a natural construction with good properties; however, we currently do not know whether it is related to the non-abelian tensor product \cite{Brown-Loday,dMVdL19.3}, to homology, or to any other topic relevant in the study of two-nilpotent groups or other algebraic objects; any result or even hint in this direction would thus be interesting and potentially useful!

\section*{Acknowledgements}
We would like to thank Jos\'e Manuel Casas for inciting us to study the Ganea term in a general categorical context. Moreover, the second and third authors gratefully acknowledge the former's longtime friend, Elisabeth Ullerich, for her warm hospitality—complete with delicious food—in September 2013. It was on her balcony, overlooking the breathtaking scenery of the Black Forest mountains, that we began to work on this paper. Last but not least, we wish to thank the referee, whose insightful comments and suggestions helped us to substantially improve the final version of the text.


\end{document}